\documentclass[10pt,leqno,twoside]{amsart}

\usepackage{enumerate}

\usepackage{amssymb}

\usepackage{amssymb,amsthm,amsmath,eucal,mathrsfs}
\usepackage{bm}

\setlength{\textwidth}{16cm}

\setlength{\textheight}{21.5cm}

\hoffset=-55pt

\newcommand*{\QEDB}{\hfill\ensuremath{\square}}%

\usepackage{tikz, subfigure, xcolor} 

\usepackage{pgfplots}

\usepackage{cite}

\usepackage{amsmath,verbatim}

\usepackage{amsthm}

\usepackage{amssymb}

\usepackage{amsfonts}

\usepackage{dsfont}

\usepackage{hyperref}

\newtheorem{theorem}{Theorem}[section]

\newtheorem{lemma}[theorem]{Lemma}

\newtheorem{proposition}[theorem]{Proposition}

\newtheorem{claim}{Claim}

\theoremstyle{definition}

\newtheorem{definition}[theorem]{Definition}

\newtheorem{remark}[theorem]{Remark}

\numberwithin{equation}{section}

%Mathematische Symbole

      % diagonal

      % distance

    % co-dimension

      % support

        %closure

 % Beweisende

       % span

 %Imagin????rteil nicht alsfraktur

 %Realteil nicht als fraktur

    %div anstatt geteilt

\providecommand{\norm}[1]{\lVert#1\rVert} %Norm

 % absolut value

%\providecommand{\au}[1]{\underline{a}} % length of the edges

 %differential

 %dx

 %dx

 %coefficient matrix

 %coefficient matrix

 %coefficient matrix

 %coefficient matrix

%mathematische Schriften

\newcommand{\1}{\mathbb{I}}

%%%%%%%%%%%%%%%%%%%%%%%%%%%%%%%%

%Kaliegraphie

%%%%%%%%%%%%%%%%%%%%%%%%%%%%%%%%

%Doppelt definieret symbole

\newcommand\restr[2]{{% we make the whole thing an ordinary symbol
  \left.\kern-\nulldelimiterspace % automatically resize the bar with \right
  #1 % the function
  \vphantom{\big|} % pretend it's a little taller at normal size
  \right|_{#2} % this is the delimiter
  }}

% David's macros

\usepackage{eucal}

\title[Point-interaction approximation for bubbles]{The point-interaction approximation for the fields generated by contrasted bubbles at arbitrary fixed frequencies}
\author[Ammari, Challa, Choudhury, Sini]{Habib Ammari $^* $, Durga Prasad Challa $^{**} $, Anupam Pal Choudhury $^{\dag} $, Mourad Sini$^{\ddag} $}

\subjclass[2010]{35R30, 35C20}
\keywords{bubbly media, Foldy-Lax approximation, effective medium theory.\\
$^* $ Department of Mathematics, ETH Z\"urich, R\"amistrasse 101, CH-8092 Z\"urich, Switzerland. E-mail: habib.ammari@math.ethz.ch\\
$^{**} $ Faculty of Mathematics, Indian Institute of Technology Tirupati, Tirupati, India. Email: chsmdp@iittp.ac.in. This author was partially supported by the Austrian Science Fund (FWF): P28971-N32 and DST SERB MATRICS (Mathematical Research Impact Centric Support) MTR/2017/000539.\\
$^{\dag}$ RICAM, Austrian Academy of Sciences,
Altenbergerstrasse 69, A-4040, Linz, Austria. Email: anupampcmath@gmail.com. This author is supported by the Austrian Science Fund (FWF): P28971-N32.  \\ 
$^{\ddag}$ RICAM, Austrian Academy of Sciences,
Altenbergerstrasse 69, A-4040, Linz, Austria.
Email: mourad.sini@oeaw.ac.at. This author is partially supported by the Austrian Science Fund (FWF): P28971-N32}

\allowdisplaybreaks
\begin{document}
\begin{abstract}  
 We deal with the linearized model of the acoustic wave propagation generated by small bubbles in the harmonic regime. We estimate the waves generated by a cluster of $M$ small bubbles, distributed in a bounded domain $\Omega$, with relative densities having contrasts of the order $a^{\beta}, \beta>0, $
 where $a$ models their relative maximum diameter, $a\ll 1$.
 We provide useful and natural conditions on the number $M$, the minimum distance and the contrasts parameter $\beta$ of the small bubbles under which the point interaction approximation 
 (called also the Foldy-Lax approximation) is valid. 
 
 With the regimes allowed by our conditions, we can deal with a general class of such materials. Applications of these expansions in material sciences and imaging are immediate. 
 For instance, they are enough to derive and justify the effective media of the cluster of the bubbles for a class of gases with densities having contrasts of the order $a^{\beta}$, $\beta \in (\frac{3}{2}, 2)$ and 
 in this case we can handle any fixed frequency. In the particular and important case $\beta=2$, we can handle any fixed frequency far or close (but distinct) from the corresponding Minnaert resonance. The cluster of the bubbles can be distributed to generate volumetric metamaterials but also low dimensional ones as  metascreens and metawires.   
\end{abstract}
\maketitle
\section{Introduction}
Diffusion by highly contrasted small particles is of fundamental importance in several branches of applied sciences as material sciences and imaging. 
We are interested in the models where these small particles have sizes at the micro scales as in the models related to gas bubbles.
To describe properly the mathematical model we are dealing with in this work, let us denote by  $\{D_{s}\}_{s=1}^{M}$ a finite collection of small particles in $\mathbb{R}^{3}$ of the form $D_{s}:= \delta B_{s}+z_{s}$, where
 $B_{s}$ are open, bounded (with Lipschitz boundaries), simply connected sets in $\mathbb{R}^{3}$ containing the origin, and $z_{s}$ specify the locations of the 
 particle. The parameter $\delta > 0 $ characterizes the smallness assumption on the particles. We shall further assume that the Lipschitz constants of the open sets $B_{s}$ are uniformly bounded. 
Let us consider piecewise constant densities of the form 
\begin{equation}
\rho_{\delta}(x)=\begin{cases}
                 \rho_{0}, \ x \in \mathbb{R}^{3}\diagdown \overline{\cup_{l=1}^{M} D_{l}},\\
                 \rho_{s}, \ x \in D_{s}, \ s=1,...,M, \end{cases}
\label{model1}
\end{equation}
%\end{comment}
%
%
and piecewise constant bulk modulus in the analogous form
\begin{equation}
k_{\delta}(x)=\begin{cases}
                k_{0}, \ x \in \mathbb{R}^{3}\diagdown \overline{\cup_{l=1}^{M} D_{l}},\\
                k_{s}, \ x \in D_{s}, \ s=1,...,M,
               \end{cases}
\label{model2}
\end{equation}
where $\rho_{0},\rho_{s}, k_{0},k_{s}$ are positive constants. Thus $\rho_{0}$ and $k_{0}$ denote the density and bulk modulus of the 
background medium and $\rho_{s}$ and $k_{s}$ denote the density and bulk modulus of the bubbles respectively.\\
We are interested in the following problem describing the acoustic scattering  by the collection of small bubbles $D_s, s=1, ..., M$:
\begin{equation}
\begin{cases}   \nabla. (\frac{1}{\rho_{0}} \nabla u) +\omega^{2} \frac{1}{k_{0}} u =0 \ \text{in} \ \mathbb{R}^{3}\diagdown \overline{\cup_{l=1}^{M} D_{l}},\\
   \nabla. (\frac{1}{\rho_{s}} \nabla u) +\omega^{2} \frac{1}{k_{s}} u =0 \ \text{in}\ D_{s},\ s=1,...,M,\\
   \left.u \right\vert_{-}-\left.u \right\vert_{+}=0, \ \text{on}\ \partial D_{s},\ s=1,...,M,\\
   \frac{1}{\rho_{s}}\left.\frac{\partial u}{\partial \nu^{s}} \right\vert_{-}-\frac{1}{\rho_{0}} \left.\frac{\partial u}{\partial \nu^{s}} \right\vert_{+} =0\ \text{on} \ \partial D_{s},\ s=1,...,M,
   %\frac{}{}-i \kappa U^{s} = , \ \vert x \vert \rightarrow \infty \ (S.R.C), 
   \end{cases}
\label{model3}
\end{equation}
where $\omega > 0$ is a given frequency and $\nu^s $ denotes the external unit normal to $\partial D_{s} $ . Here the total field $u:=u^{I}+u^{s}$, where $u^{I}$ denotes the incident field (we restrict to
plane incident waves) and $u^{s}$ denotes the scattered waves. The above set of equations has to be supplemented by the \textit{Sommerfeld radiation
condition} on $u^{s}$ which we shall henceforth refer to as $(S.R.C)$.\\ 
Keeping in mind the positivity of the bulk modulus, the above problem can be equivalently formulated as 
\begin{equation}
 \begin{cases}
  \Delta u + \kappa_{0}^{2} u =0 \ \text{in} \ \mathbb{R}^{3}\diagdown \overline{\cup_{l=1}^{M} D_{l}}, \\
   \Delta u + \kappa_{s}^{2} u =0 \ \text{in}\ D_{s},\ s=1,...,M,\\
   \left.u \right\vert_{-}-\left.u \right\vert_{+}=0, \ \text{on} \ \partial D_{s},\ s=1,...,M,\\
   \frac{1}{\rho_{s}}\left.\frac{\partial u}{\partial \nu^{s}} \right\vert_{-}-\frac{1}{\rho_{0}} \left.\frac{\partial u}{\partial \nu^{s}} \right\vert_{+} =0\ \text{on} \ \partial D_{s},\ s=1,...,M,\\
   \frac{\partial u^{s}}{\partial \vert x \vert}-i \kappa_{0} u^{s} =o (\frac{1}{\vert x \vert}) , \ \vert x \vert \rightarrow \infty \ (S.R.C),
 \end{cases}
\label{model4}
\end{equation}
where $\kappa_{0}^{2}=\omega^{2} \frac{\rho_0}{k_0}$ and $\kappa_{s}^{2}=\omega^{2} \frac{\rho_s}{k_s}$. As in \eqref{model3}, the total field
$u:=u^{I}+u^{s}$, with $u^{I}$ denoting the acoustic incident field and $u^{s}$ denoting the acoustic scattered field. The scattered field $u^s $ can be expanded as 
\[u^{s}(x,\theta)= \frac{e^{i \kappa_{0} \vert x \vert}}{\vert x \vert} u^{\infty}(\hat{x}, \theta)  + O(\vert x \vert^{-2}), \ \vert x \vert \rightarrow \infty, \]
where $\hat{x}:=\frac{x}{\vert x \vert} $ and $u^{\infty}(\hat{x}, \theta) $ denotes the far-field pattern corresponding to the unit vectors $\hat{x},\theta $, i.e. the incident and propagation directions respectively. 
\begin{definition} %[Definition \ref{Introduction-smallac-sdlp}.\ref{Def1}.] 
\label{Def1}
To describe the collection of small bubbles, we use the following parameters:
\begin{enumerate}%[\ref{Def1}.1.]
\item $
a:=\max\limits_{1\leq m\leq M } diam (D_m) ~~\big[=\delta \max\limits_{1\leq m\leq M } diam (B_m)\big],$

 \item  $
d:=\min\limits_{\substack{m\neq j\\1\leq m,j\leq M }} d_{mj},$
where $\,d_{mj}:=dist(D_m, D_j)$,

\item $\omega_{\max}$ as the upper bound of the used wave numbers, i.e. $\omega\in(0,\,\omega_{\max}]$,

\item there exist constants $\zeta_{m} \in (0,1] $ such that \[B^{3}_{\zeta_{m} \frac{a}{2}}(z_m) \subset D_m \subset B^{3}_{\frac{a}{2}}(z_m), \] where $B^{3}_{r} $ denotes a ball of radius $r$ and $\zeta_m $ are assumed to be uniformly bounded below by a positive constant.\\

%\bigskip
The distribution of the small bubbles is modeled as follows:
\item the number $M~:=~M(a)~:=~O(a^{-s})\leq M_{max} a^{-s}$ with a given positive constant $M_{max}$,\\
\item the minimum distance $d~:=~d(a)~\approx ~a^t$, i.e. $d_{min} a^t \leq d(a) \leq d_{max}a^t $, with given positive constants $d_{min}$ and $d_{max}$, \\
\item the coefficients $k_m, \rho_m$ satisfy the conditions: 
\begin{equation}
 \frac{\rho_m}{\rho_0}= C_{\rho} a^{\beta},\ \beta>0, (\mbox{ i.e. } \frac{\rho_m}{\rho_0} \ll 1),
 \label{constant}
 \end{equation}
 keeping the relative speed of propagation uniformly bounded, i.e. 
 \begin{equation}\label{speeds}
 \frac{\kappa^2_{m}}{\kappa^2_0}:= \frac{\rho_{m}k_{0}}{k_{m} \rho_{0}}=\frac{\rho_{m}}{\rho_{0}}\frac{k_{0}}{k_{m}} \sim 1, \mbox{ as } a \ll 1.
 \end{equation}
\end{enumerate}
Here the real numbers $s$, $t$ and $\beta$ are assumed to be non negative.
\bigskip

We call the upper bounds of the Lipschitz character of $B_m$'s, $M_{max}, d_{min}, d_{max}$ 
and $\omega_{max}$ the set of the a priori bounds. 
\end{definition}

\bigskip

The scattering problem described above models the acoustic wave diffracted in the presence of small bubbles. In this case, the parameter $\beta$ fixes the kind of medium we are considering, 
see \cite{Papanicoulaou-1, Papanicoulaou-2, Habib-bubbles, Papanicoulaou-3}. To state our results, let us first denote $\hat{A_{l}}:=\frac{1}{\vert{\partial D_l}\vert}\int_{ \partial D_l}\int_{ \partial D_l}\frac{(s-s')}{\vert{s-s'}\vert} \cdot\nu_{s'} \,d\sigma_{l}(s')\,d\sigma_{l}(s)$ and define 
\[\omega_{M}^2:=\frac{8\pi\; k_l}{(\rho_l-\rho_0) \hat{A_{l}}}.\]
Note that $\hat{A}_{l} $ is negative, as $\hat{A_{l}}=-\frac{2}{\vert \partial D_{l} \vert} \int_{\partial D_{l}} \int_{D_{l}} \frac{1}{\vert s-y \vert} dy \ d\sigma_{l}(s)$ by the divergence theorem, 
and since $\rho_l $ satisfy \eqref{constant} and $a \ll 1 $, it follows that $\omega_{M}^{2} $ is positive. 
In the case $\beta=2$, to simplify the exposition of the results, we assume that the constant $\omega_{M}^{2} $ is the same for all $l=1,\dots,M$. 
For example, this can hold if all the bubbles are identical in shape, and have the same density and bulk modulus. \\
Let \[\Phi_{\kappa}(x,y):= \frac{e^{i \kappa \vert x-y \vert}}{4\pi \vert x-y \vert}, \ \text{for} \ x,y \in \mathbb{R}^3, \] 
denote the fundamental solution of the Helmholtz equation in three dimensions with a fixed wave number $\kappa $. To $ \Phi_k$, we correspond its farfield pattern $ \Phi_{\kappa}^\infty(\hat{x}, y):= e^{-i \kappa \hat{x} \cdot y}$, where $\hat{x}=\frac{x}{\vert x \vert} $.\\
The main results of this work are stated in the following theorem.

\begin{theorem}\label{Main-theorem}
Under the conditions $0\leq t < \frac{1}{2},${\footnote{The condition that $t < \frac{1}{2} $, which is general enough for the applications described later, is imposed to
keep more generalities regarding the other parameters describing the composite (i.e. $s, \beta$ and $h_1$), and bypass some technicalities in the calculations. 
For example, we required this condition to derive the apriori estimates of the densities in Proposition \ref{Prop-phi-estimate}.}} $0 \leq s \leq \frac{3}{2},$ $\beta=1+\gamma,$ with $0 \leq \gamma \leq 1$ and $s+\gamma \leq 2$ we have the following expansions.
\begin{enumerate}
\item Assume that  $\gamma<1 $ or $\gamma=1$ with $\omega$ being away from $\omega_M$, i.e. $\vert 1-\frac{\omega^2_M}{\omega^2}\vert \geq l_0$ with a positive constant $l_0$ independent of $a, a \ll 1$. 
Then
\begin{equation}\label{Away-from-resonance}
 u^\infty(\hat{x}, \theta)= \sum^M_{m=1}\Phi_{\kappa_0}^\infty(\hat{x}, z_m)Q_m +O(a^{2-s}+a^{3-\gamma-s-2t})
\end{equation}
under the additional condition on $t$: $t\geq \frac{s}{3}.$

\item Assume that $\gamma =1$ and the frequency $\omega$ is near $\omega_M$, i.e. $1-\frac{\omega^2_M}{\omega^2}=l_Ma^{h_1},\; h_1 \in (0,1),\ l_M \neq 0$. Then

\begin{equation}\label{Near-resonance}
 u^\infty(\hat{x}, \theta)= \sum^M_{m=1}\Phi_{\kappa_0}^\infty(\hat{x}, z_m) Q_m+O(a^{2-s-2h_{1}}+a^{3-2t-2s-2h_{1}})
\end{equation}
under the additional conditions on $t$ and $h_1$ given by
\bigskip

\begin{itemize}
 \item $t\geq \frac{s}{3}$ and $s+h_1\leq 1$, if $l_M<0$. 
\bigskip

\item $t\geq \frac{s}{3} $, $t+h_1 \leq 1 $, $s+h_1<\min \{\frac{3}{2}-t, 2-h_1 \} $, if $l_M>0$.

\end{itemize}
\end{enumerate}
\bigskip

The vector $(Q_m)^M_{m=1}$ is the solution of the following algebraic system
\begin{equation}\label{LAS-1-theorem}
 {\bf{C_m}}^{-1} Q_m +\sum_{l\neq m}\Phi_{\kappa_0}(z_l, z_m)Q_l=-u^I(z_m),\; m=1, ..., M,
\end{equation}
with
\begin{equation}
 {\bf{C_m}}:=\frac{\kappa_m^2 \vert{ D_m}\vert}{\frac{\rho_{m}}{\rho_m-\rho_{0}}-\frac{1}{8\pi}\kappa_m^2{\hat{A}_m}}\; \mbox{ and } 
 \hat{A}_m:=\frac{1}{\vert{\partial D_m}\vert}\int_{ \partial D_m}\int_{ \partial D_m}\frac{(s-s')}{\vert{s-s'}\vert} \cdot\nu_{s'} \,d\sigma_{m}(s')\,d\sigma_{m}(s).
\end{equation}

 \noindent The algebraic system (\ref{LAS-1-theorem}) is invertible under one of the following conditions:
 \bigskip
 
 \begin{enumerate}
  \item The coefficients ${\bf{C_m}}$ are negative and $\max \vert {\bf{C_m}}\vert =O(a^s)$, as $a \ll 1$. This condition holds if
  \bigskip
  
  \begin{enumerate}
  \item $\gamma<1$ or $\gamma=1$ with $\omega$ being away from $\omega_M$ and we have the relations $ 0 \leq \gamma \leq 1, \; \gamma+s\leq 2$ and $ \frac{s}{3}\leq t \leq 1$.
  \bigskip
  
  \item $\gamma=1$ and the frequency $\omega$ approaches $\omega_M$ from below ($l_M<0$), i.e. $\omega < \omega_M$, and we have the relations $ \frac{s}{3} \leq t \leq 1$ and $1-h_{1}-s \geq 0 $.
\end{enumerate}
\bigskip

 \item The coefficients ${\bf{C_m}}$ are positive and one of the following conditions is fulfilled
 \begin{enumerate}
 \item $\max \vert {\bf{C_m}}\vert =O(a^t)$, as $a \ll 1$, and $\tau:=\min_{1\leq j,m \leq M,\ j\neq m} cos(\kappa_{0}\vert z_{m}-z_{j} \vert)>0$. 
 The first condition holds if $\gamma=1$, the frequency $\omega$ approaches $\omega_M$ from above ($l_M>0$), i.e. $\omega > \omega_M$, and we have the relations $0\leq t \leq 1-h_{1} $ and $s \leq 1 $.
 \bigskip

 \item $\max \vert {\bf{C_m}}\vert =O(a^s)$, as $a\ll 1$. This condition holds if $\gamma=1$ and the frequency $\omega$ approaches $\omega_M$ from above ($l_M>0$), 
 i.e. $\omega > \omega_M$, and we have the relations $ \frac{s}{3} \leq t \leq 1$ and $1-h_{1}-s \geq 0 $.
 \end{enumerate}
 \end{enumerate}
 \bigskip
 
The constants appearing in the error terms of (\ref{Away-from-resonance}) and (\ref{Near-resonance}) depend only on the a priori bounds mentioned above. 
In the case $\gamma=1$ (i.e. $\beta=2$), we assume that the constant $C_\rho$, defined in (\ref{constant}) is larger then a certain constant depending only on those a priori bounds.
\end{theorem}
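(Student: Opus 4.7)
The plan is to reformulate the transmission problem (\ref{model4}) as a system of boundary integral equations via single-layer potentials on each bubble boundary $\partial D_l$. Writing the scattered field outside as $u^s(x) = \sum_{l=1}^M \mathcal{S}_{\kappa_0}^{D_l}[\varphi_l](x)$ and the interior field as $u|_{D_s} = \mathcal{S}_{\kappa_s}^{D_s}[\psi_s]$, the Dirichlet and Neumann transmission conditions in (\ref{model4}) produce a coupled $2M$-by-$2M$ system for the densities $(\varphi_l,\psi_l)$. Eliminating $\psi_l$ using the invertibility of $\mathcal{S}_{\kappa_l}^{D_l}$ (valid since $\kappa_l^2/\kappa_0^2 \sim 1$ by (\ref{speeds})), one obtains a single system for $(\varphi_l)$ whose diagonal block on each bubble involves the Neumann--Poincar\'e operator $\mathcal{K}^{*}_{D_m}$ together with the coefficient $\rho_m/(\rho_m-\rho_0)$ coming from the flux jump. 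Since $\rho_m/\rho_0 = C_\rho a^\beta \ll 1$, this coefficient approaches a point of the spectrum of $\mathcal{K}^{*}_{D_m}$ as $a\to 0$; this spectral degeneracy, occurring on the constant mode, is the analytical source of the Minnaert resonance.

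Next I would perform a single-bubble asymptotic expansion. After rescaling $D_m$ to the reference domain $B_m$, I expand $\mathcal{S}_{\kappa_m}^{D_m}$ and $\mathcal{K}^{*,\kappa_m}_{D_m}$ in powers of $\kappa_m a \sim a$, keeping the leading zero-frequency single-layer and NP operators. Projecting the resulting scalar equation onto the constant eigenfunction on $\partial D_m$ produces an equation for the average $Q_m := \int_{\partial D_m}\varphi_m$ whose coefficient is precisely $\mathbf{C}_m^{-1}$, the two competing terms $\rho_m/(\rho_m-\rho_0)$ and $-\frac{1}{8\pi}\kappa_m^2 \hat A_m$ in its denominator cancelling exactly at $\omega=\omega_M$, which yields the formula for $\omega_M^2$. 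The orthogonal (non-constant) component of $\varphi_m$ is handled by the remaining spectrum of $\mathcal{K}^{*}_{D_m}$, which stays away from the bad value, and therefore contributes only to the error terms.

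For the cluster, I apply the Foldy--Lax strategy: replace $\Phi_{\kappa_0}(x,y)$ for $x\in\partial D_m$, $y\in\partial D_l$ ($l\neq m$) by $\Phi_{\kappa_0}(z_m,z_l)$, with remainder controlled by $a/d \sim a^{1-t}$. Projecting onto constants then gives the algebraic system (\ref{LAS-1-theorem}) for $(Q_m)$. Its invertibility rests on controlling $\sum_{l\neq m}|\Phi_{\kappa_0}(z_l,z_m)|$, which for $M \sim a^{-s}$ points in $\Omega$ with minimum spacing $d \sim a^t$ remains uniformly bounded exactly when $s \leq 3t$; this is the origin of the hypothesis $t \geq s/3$. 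When $\mathbf{C}_m<0$ with $\max|\mathbf{C}_m|=O(a^s)$, diagonal dominance yields invertibility directly. When $\mathbf{C}_m>0$ (approach to $\omega_M$ from above) the identity part no longer dominates, so coercivity must be extracted from the Hermitian part of the system matrix; this is why the condition $\tau=\min_{m\neq j}\cos(\kappa_0|z_m-z_j|)>0$ becomes necessary. The far-field formula then follows from $\Phi_{\kappa_0}^\infty(\hat x, y) = e^{-i\kappa_0 \hat x \cdot y} = e^{-i\kappa_0 \hat x \cdot z_m}+O(a)$ for $y\in\partial D_m$.

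The main obstacle will be the careful bookkeeping of four competing error sources: the single-bubble asymptotic error of size $a$ multiplied by $M \sim a^{-s}$ yielding $O(a^{2-s})$; the Foldy shift error of relative size $a/d = a^{1-t}$ multiplied by a density norm of order $a^{-(1+\gamma)}$ and summed over $M$ bubbles, producing $O(a^{3-\gamma-s-2t})$; and, in the near-resonance regime $\gamma=1$ with $|\omega^2-\omega_M^2|\sim a^{h_1}$, an additional amplification factor $a^{-h_1}$ (appearing squared in (\ref{Near-resonance})). Balancing these against $o(1)$ is what forces all the arithmetic constraints $s\leq 3/2$, $s+\gamma\leq 2$, $t\geq s/3$, $t+h_1\leq 1$, and the cases in $l_M\gtrless 0$. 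The single hardest technical step is obtaining uniform a priori estimates on the non-constant components of the densities $\varphi_l$ \emph{before} projection onto constants, since only such estimates allow one to rigorously extract the leading-order scalar Foldy--Lax system from the full integral system; this is the step which (per the footnote to the theorem) needs the restriction $t<1/2$ to make the underlying Neumann series for the off-diagonal coupling converge uniformly in $a$.
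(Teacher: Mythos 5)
Your overall architecture matches the paper's: layer-potential representation inside and out, elimination of the interior densities $\psi_l$, reduction to a scalar equation for $\phi_l$ whose diagonal coefficient $\lambda_l=\frac{1}{2}\frac{\rho_0+\rho_l}{\rho_0-\rho_l}$ degenerates to the constant-mode eigenvalue of $(K^0_{D_l})^*$ as $a\to 0$, projection onto constants to obtain the Foldy--Lax system \eqref{LAS-1-theorem}, and a sign-dependent invertibility analysis. However, there are two genuine gaps. First, you treat the non-constant component of $\phi_m$ as pure error, but the first moments $V_m=\int_{\partial D_m}(s-z_m)\phi_m\,d\sigma_m$ and the fluctuation term $\int_{\partial D_l}(A_l-\hat A_l)\phi_l\,d\sigma_l$ cannot be merely bounded by $O(a^2\Vert\phi_m\Vert)$ if one wants the stated rates: as the paper notes after \eqref{appro-scat-description-2}, the rough bound only yields an error $O(a^{1-s})$ in the far field, which diverges for $s>1$ (the theorem allows $s$ up to $\tfrac{3}{2}$). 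The paper's Propositions \ref{estimate-Vm} and \ref{Est-intAlminhatAlphil} resolve these moments to leading order \emph{in terms of the $Q_m$'s themselves} by feeding them back through the integral equation; without this step the error terms $O(a^{2-s}+a^{3-\gamma-s-2t})$ in \eqref{Away-from-resonance} are not reachable. Your own bookkeeping reflects this: a Foldy shift of relative size $a^{1-t}$ times a density of size $a^{-1-\gamma}$ summed over $a^{-s}$ bubbles gives $a^{-t-\gamma-s}$, not $a^{3-\gamma-s-2t}$, and the a priori bound actually proved in the paper is $\Vert\phi_l\Vert=O(a^{-\gamma-h})$, not $a^{-1-\gamma}$.

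Second, your invertibility mechanism in the case $\mathbf{C}_m<0$ rests on the claim that $\sum_{l\neq m}\vert\Phi_{\kappa_0}(z_l,z_m)\vert$ is uniformly bounded when $s\leq 3t$. This is false: by the layered counting \eqref{way-counting-1} that sum is $O(d^{-1})+O(d^{-3\alpha})=O(a^{-t})+O(a^{-s})$, which is unbounded whenever $s>0$ or $t>0$, so row-wise diagonal dominance against a diagonal of size $a^{-s}$ fails (the two sides are at best comparable). The constraint $t\geq s/3$ has a different origin entirely, namely the packing condition $M d^{3}\lesssim\vert\Omega\vert$, i.e. $a^{-s}a^{3t}=O(1)$. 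The paper's actual argument (Lemma \ref{Mazyawrkthmela}) is an $\ell^2$ quadratic-form estimate: in the negative case one bounds $\langle\mathbf{B}^r Q,Q\rangle$ by Cauchy--Schwarz against the Hilbert--Schmidt norm $\sqrt{MM_{max}}\bigl[d^{-2}+d^{-3\alpha}\bigr]^{1/2}$, and in the positive case one uses the sharper lower bound $\langle\mathbf{B}^r Q,Q\rangle\geq-\frac{3\tau}{5\pi d}\sum_m\vert Q_m\vert^2$ (imported from \cite{Challa-Sini-1}), which is where $\tau>0$ enters --- a point you did identify correctly. You would need to replace the diagonal-dominance step by such a quadratic-form argument, and supply the moment-resolution propositions, for the proof to close.
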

\begin{remark}\label{improved-estimate}
Let the vector $(Q_m)^M_{m=1}$ be the solution of the  algebraic system
\begin{equation}
 {\bf{C_m}}^{-1} Q_m +\sum_{l\neq m}\Phi_{\kappa_0}(z_l, z_m)Q_l=-u^I(z_m),\; m=1, ..., M,
 \notag
\end{equation}
where ${\bf{C_m}}^{-1} $ is now defined as 
\begin{equation}
  \mathbf{C}_m^{-1}= \frac{1}{[\kappa^{2}_{m}+\frac{\rho_m}{\rho_0}(\kappa_{m}^{2}-\kappa^{2}_{0})]} \Big[I_m^{d}-iI_m^{d} \frac{J_m}{[\kappa_m^2+\frac{\rho_{m}}{\rho_{0}}(\kappa_{m}^2-\kappa_{0}^2)]} \Big]+\frac{1}{\kappa^{2}_{m}}\Big[I_m^{s}-I^{d}_{m}\frac{(J_{m})^2}{\kappa_{m}^4} \Big]
  \notag
  \end{equation} 
   with
  \[
I_m^{d} :=\vert D_{m} \vert^{-1} \Big(\frac{\rho_m}{\rho_m-\rho_0}+\frac{1}{8\pi}\left[-\kappa_m^2-\frac{\rho_{m}}{\rho_{0}}(\kappa_m^2-\kappa_0^2)\right]{\hat{A}_m} \Big),
\]
\[
I_m^s:= i \Big[\frac{\kappa_{m}^{3}}{4\pi}-\frac{1}{32\pi^{2}} (\kappa_{0}-\kappa_{m}) \kappa^{2}_{m} \vert D_{m} \vert^{-1} \hat{A}_{m} Cap_{m} \Big]
\]
and 
\[
J_m:=\frac{\rho_{0}}{\rho_{m}-\rho_{0}} \frac{1}{4 \pi} (\kappa_{0}-\kappa_{m}) \kappa_{m}^{2} \Big[1+\frac{1}{8 \pi} \hat{A}_{m} \vert D_{m} \vert^{-1} Cap_{m} \Big] Cap_{m}\ ,
\]
where $Cap_{m}$ stands for the capacitance of $D_m$, see (\ref{capacitance}), which scales as $Cap_m =O(a)$.
\bigskip
As it is shown in Remark \ref{Improved-expansions}, if $\gamma =1$ and the frequency $\omega$ is near $\omega_M$, i.e. $1-\frac{\omega^2_M}{\omega^2}=l_Ma^{h_1},\; h_1\in (0, 1]${\footnote{The condition that $h_1\in (0, 1] $ appears naturally to ensure the invertibility of the algebraic system, see Remark \ref{extra-conditions}.}}, $l_M \neq 0$, 
the improved estimate 
\begin{equation}
 u^\infty(\hat{x}, \theta)= \sum^M_{m=1}\Phi_{\kappa_0}^\infty(\hat{x}, z_m) Q_m+O(a^{2-s-h_{1}}+a^{3-2t-2s-2h_{1}})
 \notag
\end{equation}
holds instead of \eqref{Near-resonance}. In addition, note that the case $h_1=1$ is not covered by Theorem \ref{Main-theorem}. 
\bigskip

Observe now that, in the case $\gamma=1$, $I_m^{d}=\vert D_{m} \vert^{-1} \Big(\frac{\rho_m}{\rho_m-\rho_0}-\frac{1}{8\pi}\kappa_m^2{\hat{A}_m} \Big)+O(a)$, $I_m^{s}
=i \Big[\frac{\kappa_{m}^{3}}{4\pi}-\frac{1}{32\pi^{2}} (\kappa_{0}-\kappa_{m}) \kappa^{2}_{m} \vert D_{m} \vert^{-1} \hat{A}_{m} Cap_{m} \Big] \sim 1$ and $J_m=O(a)$. 
Hence we can estimate $\mathbf{C}_m^{-1}$ as
$$
\mathbf{C}_m^{-1}=\frac{1}{\kappa^2_m}\Big ( \vert D_{m} \vert^{-1} \Big(\frac{\rho_m}{\rho_m-\rho_0}-\frac{1}{8\pi}\kappa_m^2{\hat{A}_m} \Big)+i \Big[\frac{\kappa_{m}^{3}}{4\pi}-\frac{1}{32\pi^{2}} (\kappa_{0}-\kappa_{m}) \kappa^{2}_{m} \vert D_{m} \vert^{-1} \hat{A}_{m} Cap_{m} \Big] \Big) +O(a)
$$                       
or
$$
\mathbf{C}_m^{-1}=\frac{ \hat{A}_m}{8\pi \vert D_{m} \vert} \Big(\frac{\omega^2_M}{\omega^2}-1 \Big)+i \Big[\frac{\kappa_{m}}{4\pi}-\frac{1}{32\pi^{2}} (\kappa_{0}-\kappa_{m}) \vert D_{m} \vert^{-1} \hat{A}_{m} Cap_{m} \Big]  +O(a).
$$
Now, if in addition the frequency $\omega$ is near $\omega_M$, i.e. $1-\frac{\omega^2_M}{\omega^2}=l_Ma^{h_1},\; h_1> 0,\ l_M \neq 0$, then after scaling, we derive
\begin{equation}\label{dominating-CM}
\mathbf{C}_m^{-1}=-\frac{ l_M \overline{\hat{A}}_m}{8\pi \vert B_{m} \vert} a^{h_1-1}+i \Big[\frac{\kappa_{m}}{4\pi}-\frac{1}{32\pi^{2}} (\kappa_{0}-\kappa_{m}) \vert B_{m} \vert^{-1} \overline{\hat{A}}_{m} Cap(B_m) \Big]  +O(a)
\end{equation}
where $\overline{\hat{A}}_{m}:=\frac{1}{\vert{\partial B_m}\vert}\int_{ \partial B_m}\int_{ \partial B_m}\frac{(s-s')}{\vert{s-s'}\vert} \cdot\nu_{s'} \,d\sigma_{m}(s')\,d\sigma_{m}(s)$ ans $Cap(B_m)$ is the capacitance of $B_m$. We see that this approximation makes sense for any $h_1$ in $(0, 1]$ and while for $h_1\in (0, 1)$, the first term is the most dominating, for $h_1=1$ the first two terms should be taken into account. 

\end{remark}

\bigskip
Related to the model (\ref{model4}), the asymptotic expansion is derived formally 
in \cite{Papanicoulaou-1, Papanicoulaou-2} and justified mathematically in \cite{Habib-bubbles} in the case $\beta=2$ when the frequency $\omega$ is close to the particular frequency $\omega_M$.
The particular frequency $\omega_M$  is shown to be an approximation, as $a \ll 1$, 
of a resonance known as the Minneart resonance, see \cite{Habib-Minnaert}. 
\bigskip

The approximations provided in Theorem \ref{Main-theorem} are valid for a large class of bubbles. In particular, they can be used to derive the equivalent effective media at least in the following class:
\bigskip

\begin{enumerate}
 \item If $\gamma \in (\frac{1}{2}, 1)$, which means that $\beta \in (\frac{3}{2}, 2)$, and for any frequency $\omega$.
 \bigskip
 
 \item If $\gamma =1$, i.e. $\beta =2$, for any frequency $\omega$ away or very close (but distinct) from $\omega_M$.
\end{enumerate}
\bigskip

The error in the approximation in  (\ref{Away-from-resonance}) is going to zero since $t<\frac{1}{2}$ and $s\leq \frac{3}{2}$. 
\bigskip

The error term in  \eqref{Near-resonance} goes to zero provided $s,h_{1}$ and $ t $ satisfy the condition 
\begin{equation}\label{condi-error}
 s<\min\{2-2h_1, \; \frac{3-2t-2h_1}{2}\}.
\end{equation}
These conditions are fulfilled, in particular, if
\begin{equation}
 s+h_1\leq 1,\; ~~ h_1<1\; \mbox{ and } t< \frac{1}{2}.
\end{equation}
These last conditions are the regimes in which one can derive the effective media. Actually, in the case $l_M>0$, one can also allow $s+h_1>1$ (but $s+h_1< \frac{3}{2}$) and hence generate potential walls 
which are completely reflecting any incident wave sent from outside of its support. This phenomenon is already observed and justified in the framework of acoustic waves in the presence of very large number of holes, see \cite{CMS-2017}.  \\
\bigskip
We also observe that, in our approximations, we can handle the case $s=0$ and  $h_1=1$, see Remark \ref{improved-estimate}. 
In this case, the error term goes also to zero as soon as $t<\frac{1}{2}$. 
Hence, as $a\rightarrow 0$, and choosing $t=0$ for instance, 
$u^\infty(\hat{x}, \theta)= \sum^M_{m=1}\Phi^\infty(x, z_m) Q_m$ and $(Q_m)^M_{m=1}$ is the solution of the algebraic system (\ref{LAS-1-theorem}) with
${\bf{C_m}}^{-1}:=-\frac{l_M \overline{\hat{A}}_m}{8\pi \vert B_{m} \vert}+i \Big[\frac{\kappa_{m}}{4\pi}-\frac{1}{32\pi^{2}} (\kappa_{0}-\kappa_{m}) \vert B_{m} \vert^{-1}
\overline{\hat{A}}_{m} Cap(B_m) \Big]$ .
This limiting field describes the Foldy-Lax field generated by the interaction of the point-like scatterers, given by the centers of our small bubbles, with scattering strengths modeled by $\bf{C_m}$'s, compare to \cite{Foldy, Lax} 
and see also \cite{Martin:2006}. 
This limiting field is also modeled by the Dirac-like potentials supported on the centers of the small bubbles, see \cite{A-G-H-H:AMS2005}.
 What we have shown then is that if we inject into the background small bubbles characterized by $\frac{\rho_m}{\rho_0}\approx a^2$ and the frequency $\omega$ near to the Minnaert resonance, as follows
$\frac{\omega^2}{\omega^2_M}=1+l\; a$, the  generated fields behave as the ones created by Dirac-like potentials (or point-like scatterers). This shows a way how to generate fields generated by singular potentials by
injecting regular small bubbles. These singular potentials are supported on points. In \cite{ACCS-effective-media}, we show how to generate fields due to potentials supported by $2$D surfaces (metasurfaces or metascreens) or $3$D domains (metamaterials) by injecting in the background bubbles distributed on $2D$ surfaces or in $3D$ domains. The case of $1D$ curves (i.e. metawires) will be treated separately in a forthcoming work as the scattering by $1D$ curves is more delicate, see \cite{B-R-K-L-R-2017} and the references therein. Thanks to the kind of approximations we provide in Theorem \ref{Main-theorem}, these different settings of the metamaterials can be treated in a unified way, namely as Dirac type potentials supported on curves, surfaces or domains.

The analysis of the wave propagation in the presence of highly heterogeneous media is an object of extensive study, see \cite{Pan:2000}. An important situation is when the heterogeneity of the medium 
is modeled by the high relative contrasts as the relative densities or the relative bulk modulus described above. 
In the recent years there was an increase of interest in describing the effective macroscopic models generated by periodically distributed microscopic structures characterized by high contrasted media, 
see \cite{L-S:2016} and the references therein, 
based on homogenization techniques. Compared to these works, we need to study the expansions of the fields generated by contrasted small bubbles that are not necessarily periodically distributed. 
Besides, we focus on the precise description of the dominant parts of the fields as the cluster of the bubbles becomes dense. 
For this, we propose to derive the point-interaction approximation of those fields using integral equation methods coupled with asymptotic expansions tools. This point-interaction approach has its roots back to the Foldy approximation method known in 
several branches of physics and engineering, see \cite{A-G-H-H:AMS2005, Martin:2006}. The advantage of this approach is that we can characterize clearly the dominant fields generated by the interaction of the small bubbles between each other 
and also with the background medium.  This approach was already tested and justified in our previous works \cite{Challa-Sini-2, A-A-C-K-S} when the contrast is modeled by high surface impedances and in \cite{Habib-bubbles} where the case 
$\gamma=1$ (i.e. $\beta=2$) and the frequency $\omega$ is close to the resonance $\omega_M$. The purpose of our work here is to extend those last results to more general values of $\beta$ and the whole range of frequencies 
$\omega$. 

Related to this bubbles model, other results were derived very recently. In particular, in the case $\beta =2$ and $\omega$ close to the resonance $\omega_M$, we find in \cite{H-F-G-L-Z-1} a 
mathematical framework for modeling metasurfaces with bubbles, in \cite{H-F-G-L-Z} a justification of the superfocusing
of acoustic waves in the presence of gas bubbles and in \cite{H-F-L-Y-Z} a justification of the bandgap opening due to periodically distributed bubbles.

As compared to the results in \cite{Habib-bubbles, H-F-G-L-Z}, using our derived estimates, we can handle not only frequencies near the Minnaert resonance but arbitrary other fixed frequencies\footnote{This, of course,  
means that we are in the Rayleigh regime, i.e. $\omega a <<1$ as $a<<1$.} and any gas modelled by densities having
contrasts of the order $a^{\beta}$ with $\beta \in (\frac{3}{2}, 2]$. 
In the particular case when $\beta=2$ and $\omega$ is close to the resonance $\omega_M$, we retrieve the results in \cite{Habib-bubbles}. Namely taking $s+h_1=1$ in (\ref{Near-resonance}), we see that we can get 
the effective media with an additive coefficient changing sign depending if $\omega$ is lower or higher than $\omega_M$. The equivalent medium is absorbing if $\omega < \omega_M$ and reflecting if $\omega >\omega_M$. 
In addition, and as discussed above, if $\omega$ is close to $\omega_M$ and $\omega > \omega_M$, i.e. with $l_M>0$ in (\ref{Near-resonance}), we can also take $1< s+h_1 <\frac{3}{2}$. In these cases, the equivalent media behave as extremely reflecting media
allowing no incident wave to penetrate inside it, see \cite{CMS-2017} for a different but related setting using holes.
The quantification and justification of these results are reported in \cite{ACCS-effective-media}. 

\bigskip

The rest of the paper is divided into the following sections. In section \ref{Proof-brief-description}, we describe briefly the main steps of the proof. 
In section \ref{Proof-main}, we give the core proof of the result while in section \ref{Proof-details}, we provide the detailed proofs of the main tools used in section 2.

\section{A brief description of the proof}\label{Proof-brief-description}

We recall that the single layer potential, double layer potential and the adjoint of the double layer potential are defined as
\begin{equation}\label{field-representation-description}
\begin{aligned}
&S^{\kappa}_{D_{m}}\phi(x):=\int_{\partial D_{m}} \Phi_{\kappa}(x,y) \phi(y) d\sigma_{m}(y),\\
&K^{\kappa}_{D_{m}}\phi(x):=\int_{\partial D_{m}} \partial_{\nu_{y}} \Phi_{\kappa}(x,y) \phi(y) d\sigma_{m}(y),\\
&(K^{\kappa}_{D_{m}})^{*}\phi(x):=\int_{\partial D_{m}} \partial_{\nu_{x}} \Phi_{\kappa}(x,y) \phi(y) d\sigma_{m}(y).
\end{aligned}
%\notag
\end{equation}
The problem \eqref{model4} is well-posed (see \cite{Ammari-Kang-2, Ammari-Kang-1, DK} for instance) and the total field can be represented as
\begin{equation}
u(x)=\begin{cases}
      u^{I}+\sum^{M}_{l=1} S^{\kappa_{0}}_{D_{l}} \phi_{l}(x), \ x \in \mathbb{R}^{3}\diagdown \overline{\cup_{l=1}^{M} D_{l}},   \\
       S^{\kappa_{s}}_{D_{s}} \psi_{s}(x) , \ x \in D_{s}, \ s=1,...,M,           \end{cases}
\label{sol10}
\end{equation}
where the densities $\phi_{l}, \psi_{l} $ satisfy the system of integral equations
\begin{align*}
 & \left.\Big(S^{\kappa_{i}}_{D_{i}} \psi_{i}-\sum_{l=1}^{M}S^{\kappa_{0}}_{D_{l}} \phi_{l} \Big) \right\vert_{\partial D_{i}} = u^I \vert_{\partial D_{i}}, \\
 & \frac{\rho_0}{\rho_{i}} \Big[\frac{1}{2} Id +(K^{\kappa_{i}}_{D_{i}})^{*} \Big]\psi_{i}(x)- 
 \Big[-\frac{1}{2} Id+(K^{\kappa_{0}}_{D_{i}})^{*} \Big] \phi_{i}(x) -\left.\sum_{{l=1}\atop {l\neq i}}^{M} \frac{\partial(S^{\kappa_{0}}_{D_{l}} \phi_{l})}{\partial \nu^{i}} \right\vert_{\partial D_{i}} 
 =\frac{\partial u^{I}}{\partial \nu^{i}}  \vert_{\partial D_{i}}.
 \end{align*}
As mentioned in the introduction, a characteristic of the bubbles' model is that the relative speed of propagation is uniformly bounded, precisely, we assumed that 
$ \frac{\kappa^2_{m}}{\kappa^2_0} \sim 1, \mbox{ as } a \ll 1.$ To describe the main steps of the proof, we assume in this section that $
 \frac{\kappa^2_{m}}{\kappa^2_0}=1, \mbox{ as } a \ll 1.$ Under this condition, the system of integral equations simplifies as follows.
For $l=1\dots,M$,
\begin{equation}
S^{\kappa_{0}}_{D_{l}} (\psi_{l}-\phi_{l})-\sum_{{m=1} \atop {m \neq l}}^{M} S^{\kappa_{0}}_{D_{m}} \phi_{m}= u^{I},\ \mbox{ on } {\partial D_{l}}, 
\label{mult-obs-102r-description}
\end{equation}
%
%\begin{equation}
\begin{align}\label{mult-obs-103r-description}
&\frac{\rho_{0}}{\rho_{l}} [\frac{1}{2} Id+(K^{\kappa_{0}}_{D_{l}})^{*}] \psi_{l}-[-\frac{1}{2} Id+(K^{\kappa_{0}}_{D_{l}})^{*}] \phi_{l} - \sum_{{m=1} \atop {m \neq l}}^{M} \frac{\partial (S^{\kappa_{0}}_{D_{m}} \phi_{m})}{\partial \nu^{l}} 
= \frac{\partial u^{I}}{\partial \nu^{l}},\  \mbox{ on } {\partial D_{l}}.
\end{align}
As both the left and right hand sides of (\ref{mult-obs-102r-description}), extended in $D_i, i=1,,,, M$, satisfy the same Helmholtz equation, they are equal there.
Taking the normal derivative from inside $D_{l}$, we have

\begin{align}\notag
 \frac{\rho_{0}}{\rho_{l}} \Big[\frac{1}{2} Id+(K^{\kappa_{0}}_{D_{l}})^{*} \Big] (\psi_{l}-\phi_{l})& - \sum_{{m=1} \atop {m \neq l}}^{M} \frac{\rho_{0}}{\rho_{l}} \frac{\partial (S^{\kappa_{0}}_{D_{m}}\phi_{m})}{\partial \nu^{l}} \Big\vert_{\partial D_{l}}
=\frac{\rho_{0}}{\rho_{l}} \frac{\partial u^{I}}{\partial \nu^{l}}\Big\vert_{\partial D_{l}}.
\end{align}
 Next we use this in the second identity \eqref{mult-obs-103r-description} to derive
\begin{equation} \label{System-phi-only}
-\frac{1}{2} \Big[\frac{\rho_{0}+\rho_{l}}{\rho_{0}-\rho_{l}}  \Big] \phi_{l} - (K^{\kappa_{0}}_{D_{l}})^{*} \phi_{l}- 
\sum_{{m=1} \atop {m \neq l}}^{M} \frac{\partial (S^{\kappa_{0}}_{D_{m}}\phi_{m})}{\partial \nu^{l}}\Big\vert_{\partial D_{l}} =  \frac{\partial u^{I}}{\partial \nu^{l}},\mbox{  on } \partial D_l,
\end{equation}
which we rewrite as
\begin{equation} \label{System-phi-only-2}
-\frac{1}{2} \Big[\frac{\rho_{0}+\rho_{l}}{\rho_{0}-\rho_{l}}  \Big] \phi_{l} - (K^{0}_{D_{l}})^{*} \phi_{l}- 
\sum_{{m=1} \atop {m \neq l}}^{M} \frac{\partial (S^{\kappa_{0}}_{D_{m}}\phi_{m})}{\partial \nu^{l}}\Big\vert_{\partial D_{l}} =  \frac{\partial u^{I}}{\partial \nu^{l}} +
[(K^{\kappa_{0}}_{D_{l}})^{*}-(K^{0}_{D_{l}})^{*}] \phi_{l},\; \mbox{  on } \partial D_l.
\end{equation}
The aim now is to estimate $\int_{\partial D_{l}} \phi_{l} \ d\sigma_{l} $ from \eqref{System-phi-only-2} above, leading us to the final algebraic system. 
By integrating \eqref{System-phi-only-2} on $\partial D_l$, and since $K^{{0}}_{D_{l}}(1)=-\frac{1}{2}$, we obtain
\begin{equation}
-\frac{1}{2} \Big[\frac{\rho_{0}+\rho_{l}}{\rho_{0}-\rho_{l}}-1 \Big] \int_{\partial D_{l}} \phi_{l} \ d\sigma_{l} =\int_{\partial D_{l}} \frac{\partial u^{I}}{\partial \nu^{l}} \ d\sigma_{l}
+ \int_{\partial D_{l}} \Big[(K^{\kappa_{0}}_{D_{l}})^{*}-(K^{0}_{D_{l}})^{*} \Big] \phi_{l} \ d\sigma_{l}
+\sum_{{m=1} \atop {m \neq l}}^{M} \int_{\partial D_{l}} \frac{\partial (S^{\kappa_{0}}_{D_{m}}\phi_{m})}{\partial \nu^{l}} \ d\sigma_{l}\\
\label{mult-obs-109r-description}
\end{equation}

Using the expansions of the fundamental solution, we derive the following estimates:
\begin{align} \label{claim-adjdblle-Lay-dif-prime-description}
\int_{\partial D_l}\left[(K^{\kappa_{0}}_{D_{l}})^{*}-(K^{0}_{D_{l}})^{*}\right]\phi_l(s) d\sigma_{}(s)
=&\frac{1}{8\pi}\kappa_0^2\int_{\partial D_l}\phi_l(s)\left[\int_{ \partial D_l}\frac{(s-t)}{\vert{s-t}\vert} \cdot\nu_t \,dt\right]d\sigma_{l}(s) \\
&\qquad \qquad -\frac{i}{4\pi}\kappa_0^3\vert{D_l}\vert\int_{\partial D_l}\phi_l(s)d\sigma_{l}(s) 
+ O\left(a^5\|\phi_l\|\right),\notag
\end{align}

 \begin{align}  \label{claim-normsingle-Lay--description}
 \int_{\partial D_l}\frac{\partial (S^{\kappa_{0}}_{D_{m}} \phi_{m})}{\partial \nu^{l}}
=&-\kappa_0^2\vert{ D_l}\vert\left[\Phi_{\kappa_0}(z_l,z_m)\int_{\partial D_m}\phi_m(s)\,d\sigma_{m}(s)\right.\\
&\left.+\nabla_t\Phi_{\kappa_0}(z_l,z_m)\cdot\int_{\partial D_m}(s-z_m)\phi_m(s)\,d\sigma_{m}(s)\right]\notag\\
&-\kappa_{0}^{2}\nabla_x\Phi_{\kappa_{0}}(z_{l},z_m)\cdot\Big[ \int_{D_{l}}(x-z_l) dx\Big] \int_{\partial D_{m}}   
\phi_{m}(s) \,d\sigma_{m}(s)+O\left(\frac{a^6}{d^3_{ml}}\|\phi_m\|\right),\, m\neq\;l,\notag
\end{align}

\begin{align}\label{claim-norm-inci--description}
\int_{\partial D_{l}} \frac{\partial u^{I}}{\partial \nu^{l}} 
=&-\kappa_0^2\,\vert{D_l}\vert\,u^{I}(z_l)+O\left(a^4\right).
\end{align}

We set $A_{l}$ to be the function defined on $\partial D_l$ by $A_{l}(s):=\int_{ \partial D_l}\frac{(s-t)}{\vert{s-t}\vert} \cdot\nu_t \,d\sigma_{l}(t)$. It is clear that $A_l$ scales as $a^2$. 
We denote its average as $\hat{A}_{l}:= \frac{1}{\vert \partial D_{l} \vert}\int_{\partial D_{l}} A_{l}(s)\ d\sigma_{l}(s)$ and write
\begin{align}\label{Al-description}
\int_{\partial D_l}\phi_l(s)\left[\int_{ \partial D_l}\frac{(s-t)}{\vert{s-t}\vert} \cdot\nu_t\,d\sigma_{l}(t)\right]d\sigma_{l}(s)=\hat{A}_{l} \int_{\partial D_{l}} \phi_{l} \ d\sigma_{l}(s) +
\int_{\partial D_l}\phi_l(s)(A_l(s)-\hat{A}_{l})\ d\sigma_{l}(s).
\end{align}
In addition, we have the estimate $ \norm{A_{l}-\hat{A}_{l}}_{L^{2}(\partial D_{l})}=O(a^3)
$ and we can use an approximation of $\int_{\partial D_l}\phi_l(s)(A_l(s)-\hat{A}_{l})\ d\sigma_{l}(s)$ of the form

\begin{equation}\label{Al-app}
 \int_{\partial D_l}\phi_l(s)(A_l(s)-\hat{A}_{l})\ d\sigma_{l}(s)=O(a^3 \Vert \phi_l\Vert).
\end{equation}

However, the following more precise  approximation is derived from the original system (\ref{System-phi-only}), or (\ref{System-phi-only-2}):
\begin{align}\label{mult-obs-108r-pr-th-description}
\int_{\partial D_l}\Big(A_l(s)-\hat{A}_l\Big)\phi_{l}\ d\sigma_{l}(s)
 {=}&-R_l\cdot \sum_{{m=1} \atop {m \neq l}}^{M} \nabla_s\Phi_{\kappa_0}(z_l,z_m) Q_m\\
 &+O\left(a^4+a^3\sum_{{m=1}\atop {m\neq l}}^{M}\frac{a^3}{d_{ml}^3}\|\phi_m\|+a^5\|\phi_l\| \right),\nonumber
 \end{align}
 where $R_l:=\int_{\partial D_l} \Big[\Big(\lambda_{l} Id+ K^{0}_{D_{l}} \Big)^{-1}(A_l(\cdot)-\hat{A}_l)\Big] (s)\nu_l(s)\,d\sigma_{l}(s) $ and $\lambda_l:=\frac{1}{2} \frac{\rho_0+\rho_l}{\rho_0-\rho_l} $.
 The value $R_l$ scales as $a^4$. 

\bigskip

Also, the term $\int_{\partial D_m}(s-z_m)\phi_m(s)\,d\sigma_{m}(s)$ can be estimated either directly, but roughly, as 
\begin{equation}\label{Vl-rough-estimates-description}
 \int_{\partial D_m}(s-z_m)\phi_m(s)\,d\sigma_{m}(s) =O(a^2\Vert \phi_m\Vert)
\end{equation}
or more precisely, by using the original system (\ref{System-phi-only}), as
\begin{equation}\label{Vl-estimates-description}
 \int_{\partial D_m}(s-z_m)\phi_m(s)\,d\sigma_{m}(s)=a^{3-\gamma}+O([a^3+lot ]\Vert \phi_m\Vert),
\end{equation}
where the abbreviation $lot$ denotes the lower order terms. \\
To give an idea on what to keep and what to neglect in these approximations, let us consider the regime where $\gamma=1$ and $s<1$.
\bigskip

As  both $A_l$ and $-\frac{1}{2} \Big[\frac{\rho_{0}+\rho_{l}}{\rho_{0}-\rho_{l}} -1 \Big]$ scale as $a^2$, the first term of (\ref{claim-adjdblle-Lay-dif-prime-description}) cannot be negleted.
Putting the second term of (\ref{claim-adjdblle-Lay-dif-prime-description}), the second and the third terms of (\ref{claim-normsingle-Lay--description}) as error terms and plugging 
(\ref{claim-adjdblle-Lay-dif-prime-description})-(\ref{claim-normsingle-Lay--description})-(\ref{claim-norm-inci--description})-(\ref{Al-description})-(\ref{mult-obs-108r-pr-th-description}), we derive the algebraic system, 
for the vector $(Q_l)^M_1$, with $Q_l:=\int_{\partial D_{l}} \phi_{l}\ d\sigma_{l}(s) $,

\begin{equation}\label{Particular-case-Minnaert}
\left[-\frac{1}{2} \Big[\frac{\rho_{0}+\rho_{l}}{\rho_{0}-\rho_{l}} -1 \Big]-\frac{\hat{A}_{l}}{8\pi}\kappa_0^2 \right]Q_l +\kappa_0^2\vert{ D_l}\vert \sum_{{m=1} \atop {m \neq l}}^{M}\Phi_{\kappa_0}(z_l,z_m)Q_m=
-\kappa_0^2\,\vert{D_l}\vert\,u^{I}(z_l)+O\left(a^4\right) +O\left(\sum_{{m=1} \atop {m \neq l}}^{M} \frac{a^5}{d^3_{ml}}\Vert \phi_m\Vert \right).
\end{equation}
By a careful counting of the bubbles, distributed in a bounded domain $\Omega$, we show that $\sum_{{m=1} \atop {m \neq l}}^{M} \frac{1}{d^3_{ml}}=O(d^{-3} \ln(d))$.

We show that the system (\ref{System-phi-only}) is invertible and that $\Vert \phi_l\Vert =O(a^{-1})$ for $\gamma=1$. In addition, as the domain $\Omega$ where we inject the bubbles is bounded hence
its volume will be of the order $d^{-3}$, hence $a^{s}\sim d^{-3}$ and then $d\sim a^{\frac{s}{3}}$. With these estimates, we see that the remaining terms of order $O(a^4) $ and 
$O(\sum_{{m=1} \atop {m \neq l}}^{M} \frac{a^5}{d^3_{ml}}\Vert \phi_m\Vert)\sim \frac{a^4 \ln(d)}{d^3} \sim a^3 a^{1-s}\ln(a)$ are dominated by the first term $\kappa_0^2\,\vert{D_l}\vert\,u^{I}(z_l)$ 
which is of the order $a^3$ (as $s<1$).
\bigskip

Now, from (\ref{field-representation-description}), we see that for $x,\; \vert x\vert >>1$, the scattered field $u^s(x):=u(x)-u^i(x)$ given by  
$u^s(x)=\sum^M_{i=1}\int_{\partial D_i}\Phi_k(x, y)\phi_i(z)d\sigma_i(y)$ can be estimated as:
\begin{equation}\label{appro-scat-description}
 u^s(x)=\sum^M_{i=1}\Big[\Phi_k(x, z_i)Q_i +\nabla \Phi_k(x, z_i)\int_{\partial D_i}(y-z_i)\phi_i(z)d\sigma_i(y) 
 +O(a^3 \Vert \phi_i\Vert)\Big].
\end{equation}
Using (\ref{Vl-estimates-description})\footnote{If we use (\ref{Vl-rough-estimates-description}) instead of (\ref{Vl-estimates-description}), then we will have an error of the order $a^{1-s}$ instead of $a^{2-s}$ in 
(\ref{appro-scat-description-2})} and the fact that $\Vert \phi_i\Vert =O(a^{-1})$, we obtain
\begin{equation}\label{appro-scat-description-2}
 u^s(x)=\sum^M_{i=1}\Phi_k(x, z_i)Q_i +a^{2-s}.
\end{equation}
Finally, combining (\ref{appro-scat-description-2}) and the algebraic system (\ref{Particular-case-Minnaert}), via its invertibility propertiy, we derive the final estimates of the scattered fields.

\bigskip
However, if we take into account the terms appearing in (\ref{claim-adjdblle-Lay-dif-prime-description})-(\ref{claim-normsingle-Lay--description})-(\ref{claim-norm-inci--description})) and the first order terms
in the approximation of (\ref{mult-obs-108r-pr-th-description}) and (\ref{Vl-estimates-description}) respectively, then we can handle, not only the case $\gamma=1$ and $s<1$, 
but a wider range of the parameters as described in our main theorem. 
\bigskip

The basic system (\ref{System-phi-only}) is derived under the conditions $\kappa_m=\kappa_0, m=1, ..., M$. In this case it involves only the densities $\phi_m$'s. 
If these conditions are not satisfied, then the corresponding system involves also the densities $\psi_m$'s. Hence, we need to handle this coupling.
However, apart from additional highly technical issues, the scheme of the proof is as described above.

\section{Proof of Theorem \ref{Main-theorem}}\label{Proof-main}
%{\color{blue}{
\subsection{Representation of the solutions}

As discussed in section \ref{Proof-brief-description}, the problem \eqref{model4} is well-posed and the total field can be represented as
\begin{equation}
u(x)=\begin{cases}
      u^{I}+\sum^{M}_{l=1} S^{\kappa_{0}}_{D_{l}} \phi_{l}(x), \ x \in \mathbb{R}^{3}\diagdown \overline{\cup_{l=1}^{M} D_{l}},   \\
       S^{\kappa_{s}}_{D_{s}} \psi_{s}(x) , \ x \in D_{s}, \ s=1,...,M,           \end{cases}
\label{sol1}
\end{equation}
where $\phi_{l}, \psi_{l} $ are appropriate densities. Observe that we represent the field inside each $D_{s},\ s=1,\dots,M, $ using a single density. This simplifies the presentation of the computations.\\ 
Using \eqref{sol1}, the jump conditions across the boundary of the obstacles in \eqref{model4} can be reformulated as follows. 
From the third identity in \eqref{model4}, we derive that on $\partial D_{s}$, 
%\begin{equation}
\begin{align*}
0 &= u \Big\vert_{+}(x)- u \Big\vert_{-}(x) = \left.u^{I} \right\vert_{\partial D_{s}}(x) +\sum_{l=1}^{M} S^{\kappa_{0}}_{D_{l}} \phi_{l}\Big\vert_{+} (x)- S^{\kappa_{s}}_{D_{s}} \psi_{s}\Big\vert_{-} (x).
\end{align*}
%\notag
%\end{equation}
Since the single-layer potentials are continuous, this implies that
\begin{equation}
S^{\kappa_{s}}_{D_{s}} \psi_{s}\Big\vert_{\partial D_{s}}(x)-\sum_{l=1}^{M} S^{\kappa_{0}}_{D_{l}} \phi_{l}\Big\vert_{\partial D_{s}}(x)=u^{I}\Big\vert_{\partial D_{s}}(x).
\label{jump1}
\end{equation}
Again
%\begin{equation}
\begin{align*}
\frac{1}{\rho_{0}} \frac{\partial u}{\partial \nu^{s}} \Big\vert_{+} (x) &=\frac{1}{\rho_{0}} \frac{\partial u^{I}}{\partial \nu^{s}} \Big\vert_{\partial D_{s}} (x) + \sum_{l=1}^{M} \frac{1}{\rho_{0}} \frac{\partial(S^{\kappa_{0}}_{D_{l}} \phi_{l})}{\partial \nu^{s}} \Big\vert_{+}(x), \quad 
\frac{1}{\rho_{s}} \frac{\partial u}{\partial \nu^{s}} \Big\vert_{-} (x) =  \frac{1}{\rho_{s}}  \frac{\partial (S^{\kappa_{s}}_{D_{s}} \psi_{s})}{\partial \nu^{s}} \Big\vert_{-} (x),
\end{align*}
%\notag
%\end{equation}
and therefore from the fourth identity in \eqref{model4}, we derive that
%\begin{equation}
\begin{align*}
0 &= \frac{1}{\rho_{0}} \left.\frac{\partial u}{\partial \nu^{s}} \right\vert_{+}(x) -\frac{1}{\rho_{s}} \left.\frac{\partial u}{\partial \nu^{s}} \right\vert_{-}(x) 
=\frac{1}{\rho_{0}} \left.\frac{\partial u^{I}}{\partial \nu^{s}} \right\vert_{\partial D_{s}}(x)+\left.\sum_{{l=1} \atop {l\neq s}}^{M} \frac{1}{\rho_{0}} \frac{\partial(S^{\kappa_{0}}_{D_{l}} \phi_{l})}{\partial \nu^{s}}  \right\vert_{\partial D_{s}} (x)\\
&\quad \qquad \qquad \qquad \qquad \qquad \qquad -\frac{1}{2 \rho_{0}}  \phi_{s}(x)+\frac{1}{\rho_{0}} (K^{\kappa_{0}}_{D_{s}})^{*} \phi_{s}(x)-\frac{1}{2 \rho_{s}} \psi_{s}(x)-\frac{1}{\rho_{s}} (K^{\kappa_{s}}_{D_{s}})^{*} \psi_{s}(x)
\end{align*}
%\notag
%\end{equation}
and hence
%\begin{equation}
\begin{align}\label{jump2}
\frac{1}{\rho_{0}} \left.\frac{\partial u^{I}}{\partial \nu^{s}} \right\vert_{\partial D_{s}}(x)= & -\frac{1}{\rho_{0}} \Big[-\frac{1}{2} Id+(K^{\kappa_{0}}_{D_{s}})^{*} \Big] \phi_{s}(x)+\frac{1}{\rho_{s}} \Big[\frac{1}{2} Id +(K^{\kappa_{s}}_{D_{s}})^{*} \Big]\psi_{s}(x) -\left.\sum_{{l=1}\atop {l\neq s}}^{M} \frac{1}{\rho_{0}}\frac{\partial(S^{\kappa_{0}}_{D_{l}} \phi_{l})}{\partial \nu^{s}} \right\vert_{\partial D_{s}} (x).
\end{align}
%\end{equation}
The following proposition guarantees the existence of densities $\phi_{l}, \psi_{l}$ satisfying \eqref{jump1} and \eqref{jump2}.
\begin{proposition}\label{density}
For each \[ \prod_{i=1}^{M} (F_{i},G_{i}) \in Y:=\prod_{i=1}^{M}[H^{1}(\partial D_{i}) \times L^{2}(\partial D_{i})], \] 
there exists a unique solution \[ \prod_{i=1}^{M} (\phi_{i},\psi_{i}) \in X:=\prod_{i=1}^{M} [L^{2}(\partial D_{i}) \times L^{2}(\partial D_{i})] \] to the system of integral equations
%\begin{equation}
 \begin{align*}
 & \left.\Big(S^{\kappa_{i}}_{D_{i}} \psi_{i}-\sum_{l=1}^{M}S^{\kappa_{0}}_{D_{l}} \phi_{l} \Big) \right\vert_{\partial D_{i}} = F_{i}, \\
 & \frac{\rho_0}{\rho_{i}} \Big[\frac{1}{2} Id +(K^{\kappa_{i}}_{D_{i}})^{*} \Big]\psi_{i}(x)- 
 \Big[-\frac{1}{2} Id+(K^{\kappa_{0}}_{D_{i}})^{*} \Big] \phi_{i}(x) -\left.\sum_{{l=1}\atop {l\neq i}}^{M} \frac{\partial(S^{\kappa_{0}}_{D_{l}} \phi_{l})}{\partial \nu^{i}} \right\vert_{\partial D_{i}} =G_{i}.
 \end{align*}
%\notag
%\end{equation}
\end{proposition}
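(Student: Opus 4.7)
The plan is to view the system as an operator equation $T(\bm{\psi}, \bm{\phi}) = (\bm{F}, \bm{G})$ for a bounded linear operator $T : X \to Y$, to show that $T$ is Fredholm of index zero, and then to establish injectivity; Proposition \ref{density} then follows from the Fredholm alternative. The required mapping properties (namely $S^\kappa_D : L^2(\partial D) \to H^1(\partial D)$ and $(\pm\tfrac12 Id + (K^\kappa_D)^*)$ bounded on $L^2(\partial D)$) are classical for Lipschitz domains.

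For the Fredholm property, I would decompose $T = T_d + T_c$, where $T_d$ is the block-diagonal part built from the single-obstacle transmission operator
\[
T_d^{(i)}(\psi_i,\phi_i) := \Big(S^{\kappa_i}_{D_i}\psi_i - S^{\kappa_0}_{D_i}\phi_i,\ \tfrac{\rho_0}{\rho_i}\big[\tfrac{1}{2}Id + (K^{\kappa_i}_{D_i})^*\big]\psi_i - \big[-\tfrac{1}{2}Id + (K^{\kappa_0}_{D_i})^*\big]\phi_i\Big),
\]
while $T_c$ collects the cross terms $S^{\kappa_0}_{D_l}\phi_l\vert_{\partial D_i}$ and $\partial_{\nu^i}(S^{\kappa_0}_{D_l}\phi_l)\vert_{\partial D_i}$ for $l\neq i$. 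Because the obstacles are pairwise disjoint, these coupling kernels are $C^\infty$ on $\partial D_i\times\partial D_l$, so $T_c : X \to Y$ is compact. Each $T_d^{(i)}$ is the classical boundary integral operator associated to the acoustic transmission problem on a single Lipschitz domain, which is Fredholm of index zero (see \cite{Ammari-Kang-1, Ammari-Kang-2}); alternatively, one can substitute $\psi_i$ from the first row (using invertibility of $S^{\kappa_i}_{D_i} : L^2\to H^1$, which holds for $a$ small enough since the interior Dirichlet eigenvalues of $D_i$ are of order $a^{-2}$) into the second to reduce each block to an operator on $L^2(\partial D_i)$ of the form $\tfrac12 Id + \mathrm{compact}$, manifestly Fredholm of index zero. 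Hence $T = T_d + T_c$ is Fredholm of index zero.

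For injectivity, suppose $F_i = G_i = 0$ for every $i$, and define $u$ from the densities by (\ref{sol1}). The vanishing of $F_i$ encodes continuity of $u$ across $\partial D_i$, while the vanishing of $G_i$, together with the jump relations $\partial_\nu(S^\kappa_D \eta)\vert_\pm = (\mp\tfrac{1}{2}Id + (K^\kappa_D)^*)\eta$, encodes the flux transmission condition $\tfrac{1}{\rho_0}\partial_\nu u\vert_+ = \tfrac{1}{\rho_i}\partial_\nu u\vert_-$ on $\partial D_i$. Being a finite sum of single-layer potentials in $\R^3\setminus\overline{\cup_l D_l}$, the field $u$ automatically satisfies the Sommerfeld radiation condition, so it solves (\ref{model4}) with $u^I\equiv 0$; well-posedness of that problem, quoted from \cite{Ammari-Kang-2, Ammari-Kang-1, DK}, forces $u \equiv 0$ in $\R^3$.

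Finally, I would extract $\phi_l = \psi_l = 0$ from $u \equiv 0$. Since $S^{\kappa_s}_{D_s}\psi_s$ vanishes in $D_s$ and is continuous across $\partial D_s$, its extension to $\R^3\setminus\overline{D_s}$ solves the exterior Dirichlet Helmholtz problem with wavenumber $\kappa_s$, zero boundary data, and the Sommerfeld condition; uniqueness of that problem yields $S^{\kappa_s}_{D_s}\psi_s \equiv 0$ in $\R^3$, and the jump of the normal derivative gives $\psi_s = 0$. For $\phi_l$, set $w := \sum_l S^{\kappa_0}_{D_l}\phi_l$; then $w \equiv 0$ outside $\cup_l\overline{D_l}$ and $(\Delta + \kappa_0^2)w = 0$ in each $D_i$ with $w\vert_{\partial D_i}=0$. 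The one genuine subtlety here — and the step I would expect to take the most care — is ensuring that $\kappa_0^2$ is not a Dirichlet eigenvalue of $-\Delta$ on any $D_i$; this is automatic once $a$ is small, since $\mathrm{diam}(D_i)\leq a$ forces the smallest such eigenvalue to be of order $a^{-2}\gg\kappa_0^2$. Consequently $w\equiv 0$ in each $D_i$, and the jump of $\partial_\nu w$ across $\partial D_i$ produces $\phi_i=0$, completing the argument.
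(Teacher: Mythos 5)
Your proof is correct and follows essentially the same route as the paper: a compact perturbation of a block-diagonal reference operator (the paper freezes the wavenumbers to $\kappa=0$ and quotes invertibility of the resulting Laplace transmission operator from Ammari--Kang, whereas you keep the actual wavenumbers and only need index-zero Fredholmness of each block), followed by injectivity via uniqueness for the homogeneous transmission problem and the same peeling-off of $\psi_i$ and $\phi_i$ through the jump relations and the non-eigenvalue condition for $a\ll1$. The one caveat concerns your alternative reduction of a diagonal block to ``$\tfrac12 Id+\mathrm{compact}$'': on merely Lipschitz boundaries $(K^{\kappa}_{D_i})^*$ is not compact, so the reduced block is really $\lambda_i Id+(K^{0}_{D_i})^*+\mathrm{compact}$ with $\vert\lambda_i\vert=\tfrac12\vert\tfrac{\rho_0+\rho_i}{\rho_0-\rho_i}\vert>\tfrac12$, whose Fredholmness rests on the $L^2$ spectral theory of $(K^{0}_{D_i})^*$ for Lipschitz domains rather than on compactness --- but your primary citation of the classical single-obstacle transmission theory already covers this.
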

\begin{proof}
We outline a proof of this proposition in section \ref{proof-density}.
\end{proof}
Note that in order to prove the representation \eqref{sol1}, we apply the above proposition with 
$ F_{i}:=u^{I}\vert_{\partial D_{i}}$ and $G_{i}:=\left.\frac{\partial u^{I}}{\partial \nu^{i}} \right\vert_{\partial D_{i}} $ respectively, where $i=1,\dots,M $. 
\subsection{Integral identities}
Let us recall that the capacitance $Cap_{l}$ is defined as  
\begin{equation}\label{capacitance}
 Cap_l:=\int_{\partial D_l}\left(S_{D_l}^0\right)^{-1} (1)(t)\ d\sigma_{l}(t).
\end{equation}
It is known that $Cap_m \approx \delta\;$ and hence $Cap_m \approx a$.\\
We note that the system of integral equations  \eqref{jump1}-\eqref{jump2} can be rewritten as follows. 
For $l=1\dots,M$,
\begin{equation}
S^{\kappa_{0}}_{D_{l}} (\psi_{l}-\phi_{l})-\sum_{{m=1} \atop {m \neq l}}^{M} S^{\kappa_{0}}_{D_{m}} \phi_{m}= u^{I} + [S^{\kappa_{0}}_{D_{l}}-S^{\kappa_{l}}_{D_{l}}]\psi_{l},\ \mbox{ on } {\partial D_{l}}, 
\label{mult-obs-102r}
\end{equation}
%
%\begin{equation}
\begin{align}\label{mult-obs-103r}
&\frac{\rho_{0}}{\rho_{l}} [\frac{1}{2} Id+(K^{\kappa_{0}}_{D_{l}})^{*}] \psi_{l}-[-\frac{1}{2} Id+(K^{\kappa_{0}}_{D_{l}})^{*}] \phi_{l} - \sum_{{m=1} \atop {m \neq l}}^{M} \frac{\partial (S^{\kappa_{0}}_{D_{m}} \phi_{m})}{\partial \nu^{l}} 
= \frac{\partial u^{I}}{\partial \nu^{l}} +\frac{\rho_{0}}{\rho_{l}} [(K^{\kappa_{0}}_{D_{l}})^{*}-(K^{\kappa_{l}}_{D_{l}})^{*}] \psi_{l},\  \mbox{ on } {\partial D_{l}}.
\end{align}
%\end{equation}
%
Our first step is to convert the system of double integral equations \eqref{mult-obs-102r}-\eqref{mult-obs-103r} for $(\phi_l,\psi_l)_{l=1}^{M} $ into a system of single integral equation for $(\phi_l)_{l=1}^{M} $. For this, we need to estimate first the source terms $[S^{\kappa_{0}}_{D_{l}}-S^{\kappa_{l}}_{D_{l}}]\psi_{l} $ and $[(K^{\kappa_{0}}_{D_{l}})^{*}-(K^{\kappa_{l}}_{D_{l}})^{*}] \psi_{l} $.\\
In the following two lemmas, we collect some approximations that we shall use.
\begin{lemma}\label{scaling-1}
The functions $\left(S_{D_l}^0\right)^{-1}\Bigg( \int_{\partial D_l}|\cdot-t|^n \phi_{l}(t)\ d\sigma_{l}(t)\Bigg) $ and $\left(S_{D_l}^0\right)^{-1}\Big((\cdot-z_l)^n\Big)$ exhibit the following approximate behavior.
%\begin{equation}
\begin{align}\label{Sinv_int_difpts_psi}
&\left\|\left(S_{D_l}^0\right)^{-1}\Bigg( \int_{\partial D_l}|\cdot-t|^n \phi_{l}(t) d\sigma_{l}(t)\Bigg)\right\|_{L^2(\partial D_l)}=O\Big(a^{n+1}\|{\phi}_l\|_{L^2(\partial D_l)}\Big),
\end{align}
%\end{equation}
%\begin{equation}
\begin{align}\label{InvSsmzl--ori}
&\Big\|\left(S_{D_l}^0\right)^{-1}\Big((\cdot-z_l)^n\Big)\Big\|_{L^2(\partial D_l)}=O(a^n).
\end{align}
%\end{equation} 
\end{lemma}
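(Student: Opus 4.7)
The plan is to prove both bounds by the standard rescaling argument: pull back from $D_l=\delta B_l+z_l$ to the reference domain $B_l$ (where the operator $S^{0}_{B_l}$ is bounded away from zero in a way controlled by the a priori Lipschitz constants), read off the scaling factors, then push back to $\partial D_l$.

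First, I record the basic scaling identity. If $y=\delta \tilde y+z_l$, then $d\sigma_{D_l}(y)=\delta^2 d\sigma_{B_l}(\tilde y)$ and $|x-y|=\delta|\tilde x-\tilde y|$, so a direct change of variables gives
\[
S^{0}_{D_l} g\,(x)=\delta\,\bigl(S^{0}_{B_l}\tilde g\bigr)(\tilde x), \qquad \tilde g(\tilde y):=g(\delta\tilde y+z_l).
\]
Consequently if $S^{0}_{D_l} g = f$ on $\partial D_l$, then $\tilde g = \delta^{-1}(S^{0}_{B_l})^{-1}\tilde f$, and combined with $\|g\|_{L^2(\partial D_l)}=\delta\,\|\tilde g\|_{L^2(\partial B_l)}$, we obtain
\[
\bigl\|(S^{0}_{D_l})^{-1}f\bigr\|_{L^2(\partial D_l)}=\bigl\|(S^{0}_{B_l})^{-1}\tilde f\bigr\|_{L^2(\partial B_l)}.
\]
Since $(S^{0}_{B_l})^{-1}:H^1(\partial B_l)\to L^2(\partial B_l)$ is bounded, and the operator norm is uniform in $l$ thanks to the uniform Lipschitz character of the $B_l$'s (this is the standard Verchota invertibility, see e.g.\ the references cited after \eqref{sol10}), it suffices to estimate the $H^1(\partial B_l)$-norm of the pullback $\tilde f$.

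For \eqref{InvSsmzl--ori}, the pullback of $f(x)=(x-z_l)^n$ is $\tilde f(\tilde x)=\delta^n\tilde x^n$, a smooth function on $\partial B_l$ whose $H^1(\partial B_l)$-norm is $O(\delta^n)$ uniformly in $l$ (because the diameter of $B_l$ is of order $1$ and its Lipschitz constants are uniformly bounded). The bound $O(a^n)$ follows since $\delta\simeq a$. For \eqref{Sinv_int_difpts_psi}, pulling back the integrand gives
\[
\tilde f(\tilde x)=\delta^{n+2}\!\int_{\partial B_l}|\tilde x-\tilde t|^n\,\tilde\phi_l(\tilde t)\,d\sigma_{B_l}(\tilde t).
\]
The kernel $(\tilde x,\tilde t)\mapsto|\tilde x-\tilde t|^n$ is Lipschitz (for $n\ge 1$) or weakly singular with the regularity of $|\cdot|^n$, so the integral operator maps $L^2(\partial B_l)\to H^1(\partial B_l)$ with a norm bounded uniformly in $l$. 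Therefore $\|\tilde f\|_{H^1(\partial B_l)}\le C\delta^{n+2}\|\tilde\phi_l\|_{L^2(\partial B_l)}=C\delta^{n+1}\|\phi_l\|_{L^2(\partial D_l)}$, using $\|\phi_l\|_{L^2(\partial D_l)}=\delta\|\tilde\phi_l\|_{L^2(\partial B_l)}$, and the claim follows.

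The only nontrivial point is the uniformity of the constants across $l$; this is exactly where the assumption that the Lipschitz characters of the $B_l$'s are uniformly bounded enters. Apart from that, the proof is a bookkeeping exercise in rescaling, and no new analytic ingredient beyond the invertibility of $S^{0}_{B_l}$ from $H^1$ to $L^2$ is needed.
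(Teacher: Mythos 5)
Your proof is correct and follows essentially the same route as the paper's: rescale to the reference domain $B_l$, use the uniform invertibility of $S^{0}_{B_l}$ from $H^1(\partial B_l)$ to $L^2(\partial B_l)$, and track the powers of $\delta\simeq a$ coming from the surface measure and the kernel. The identity $\|(S^{0}_{D_l})^{-1}f\|_{L^2(\partial D_l)}=\|(S^{0}_{B_l})^{-1}\tilde f\|_{L^2(\partial B_l)}$ you isolate is exactly the bookkeeping the paper performs.
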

\begin{proof}
We refer to section \ref{proof-scaling-1} for a proof of this lemma.
\end{proof}
\begin{lemma}\label{Claimest} The following approximations hold true.
 %
 %\begin{equation}
 \begin{align}  \label{claim-single-Lay-dif-2}
 \left[S^{\kappa_{0}}_{D_{m}}-S^{\kappa_{m}}_{D_{m}}\right]\psi_{m}(x)=&\frac{i}{4\pi}(\kappa_0-\kappa_m)\int_{\partial D_m}\psi_m(s)\,d\sigma_{m}(s)+
 \underbrace{\sum_{n=2}^\infty\frac{i^n(\kappa_0^n-\kappa_m^n)}{4\pi n!}\int_{\partial D_m}\vert{x-s}\vert^{n-1}\psi_m(s)\,d\sigma_{m}(s)}_{=: Err1_{m}=O\left(a^2\|\psi_m\|\right)},
 \end{align}
 %\end{equation}
%[(K^{\kappa_{0}}_{D_{l}})^{*}%-(K^{\kappa_{l}}_{D_{l}})^{*}] \psi_{(l)}
%\begin{equation}
\begin{align} \label{claim-adjdblle-Lay-dif-prime}
\int_{\partial D_m}\left[(K^{\kappa_{0}}_{D_{m}})^{*}-(K^{\kappa_{m}}_{D_{m}})^{*}\right]\psi_m(s) d\sigma_{m}(s)
=&\frac{1}{8\pi}(\kappa_0^2-\kappa_m^2)\int_{\partial D_m}\psi_m(s)\left[\int_{ \partial D_m}\frac{(s-t)}{\vert{s-t}\vert} \cdot\nu_t \,dt\right]d\sigma_{m}(s) \\
&\qquad \qquad -\frac{i}{4\pi}(\kappa_0^3-\kappa_m^3)\vert{D_m}\vert\int_{\partial D_m}\psi_m(s)d\sigma_{m}(s) 
+ \underbrace{Err2_{m}}_{[=O\left(a^5\|\psi_m\|\right)]},\notag
\end{align}
%\end{equation}
%
 
\begin{comment}
%\begin{equation}
\begin{align}  \label{claim-adjdblle-Lay-dif-ph}
\int_{\partial D_l}\left[(K^{\kappa_{0}}_{D_{l}})^{*}-(K^{0}_{D_{l}})^{*}\right]\phi_l(s) d\sigma_{l}(s)
 =&\frac{1}{8\pi}\kappa_0^2\int_{\partial D_l}\phi_l(s)\left[\int_{ \partial D_l}\frac{(s-t)}{\vert{s-t}\vert} \cdot\nu_t \,dt\right]d\sigma_{l}(s)\\
 &\qquad -\frac{i}{4\pi}\kappa_0^3\vert{D_l}\vert\int_{\partial D_l}\phi_l(s)d\sigma_{l}(s)+ \underbrace{Err3}_{=O\left(a^5\|\phi_l\|\right)},\notag
 \end{align}
%\end{equation}
 \end{comment}
 %
 %
 %\begin{equation}
 \begin{align}  \label{claim-normsingle-Lay}
 \int_{\partial D_l}\frac{\partial (S^{\kappa_{0}}_{D_{m}} \phi_{m})}{\partial \nu^{l}}
=&-\kappa_0^2\vert{ D_l}\vert\left[\Phi_{\kappa_0}(z_l,z_m)\int_{\partial D_m}\phi_m(s)\,d\sigma_{m}(s)\right.\\
&\left.+\nabla_t\Phi_{\kappa_0}(z_l,z_m)\cdot\int_{\partial D_m}(s-z_m)\phi_m(s)\,d\sigma_{m}(s)\right]\notag\\
&-\kappa_{0}^{2}\nabla_x\Phi_{\kappa_{0}}(z_{l},z_m)\cdot\Big[ \int_{D_{l}}(x-z_l) dx\Big] \int_{\partial D_{m}}   \phi_{m}(s) \,d\sigma_{m}(s)-\underbrace{Err4_{m}}_{=O\left(\frac{a^6}{d^3_{ml}}\|\phi_m\|\right)},\, m\neq\;l,\notag
\end{align}
%\end{equation}
%

%\begin{equation}
\begin{align}\label{claim-norm-inci}
\int_{\partial D_{l}} \frac{\partial u^{I}}{\partial \nu^{l}} 
=&-\kappa_0^2\,\vert{D_l}\vert\,u^{I}(z_l)+\underbrace{Err5_{l}}_{=O\left(a^4\right)},
\end{align}
%\end{equation}
%
\begin{equation}
\begin{split}
\psi_l-\phi_l=&\left(S^{{0}}_{D_{l}}\right)^{-1}\left(\frac{i(\kappa_0-\kappa_l)}{4\pi}Q_l\right)\\
&+\left(S^{{0}}_{D_{l}}\right)^{-1}\left(\sum_{m\neq\;l}\left[\left(1-\frac{i\kappa_l}{4\pi}Cap_l\right) \Phi_{\kappa_0}(z_l,z_m)+(s-z_l)\cdot\nabla_s\Phi_{\kappa_0}(z_l,z_m)\right]Q_m\right)\\
              &  +\left(S^{{0}}_{D_{l}}\right)^{-1}\left( \sum_{m\neq\;l}\nabla_t\Phi_{\kappa_0}(z_l,z_m)\cdot\,V_m\right)+\left(S^{{0}}_{D_{l}}\right)^{-1} u^{I}(z_l)\\
             &+Err6_{l} \, \Big[:=O\left(a+a^2\|\phi_l\|+\sum\limits_{m\neq\,l}\frac{a^3}{d^3}\|\phi_m\|\right)\Big],\mbox{ in }L^2,\label{claim- diff-phpsi}
\end{split}
\end{equation}
where $V_m:=\int_{\partial D_m}(s-z_m)\phi_{m}\;d\sigma_{m}(s) $.
\begin{equation}
\begin{split}             
&\int_{{\partial D_{l}}}\psi_l -\int_{{\partial D_{l}}}\phi_l=Cap_l\left(\frac{i(\kappa_0-\kappa_l)}{4\pi}Q_l\right)\\
&\qquad +\left(\sum_{m\neq\;l}\left[Cap_l \left(1-\frac{i\kappa_l}{4\pi}Cap_l\right)\Phi_{\kappa_0}(z_l,z_m)+\nabla_s\Phi_{\kappa_0}(z_l,z_m)\cdot\int_{\partial D_l}\left(S_{D_l}^0\right)^{-1}(\cdot-z_l)(s) d\sigma_{l}(s)\right]Q_m\right)\\
              &\qquad  +Cap_l\left( \sum_{m\neq\;l}\nabla_t\Phi_{\kappa_0}(z_l,z_m)\cdot\,V_m\right)+Cap_l\ u^{I}(z_l)\\
&\qquad +Err7_{l}\, \Big[:=O\left(a^2+a^3\|\phi_l\|+\sum\limits_{m\neq\,l}\frac{a^4}{d^3}\|\phi_m\|\right)\Big],\label{claim-int- diff-phpsi}
\end{split}
\end{equation}
%
%\begin{equation}
%\end{equation}
\end{lemma}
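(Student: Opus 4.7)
The seven approximations split into two groups. The first five, (\ref{claim-single-Lay-dif-2})--(\ref{claim-norm-inci}), are consequences of the series expansion of the Helmholtz fundamental solution combined with the divergence theorem; the last two, (\ref{claim- diff-phpsi}) and (\ref{claim-int- diff-phpsi}), are derived from the integral equation (\ref{mult-obs-102r}) by inverting a single-layer operator and expanding via Neumann series.

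For (\ref{claim-single-Lay-dif-2}) I use the absolutely convergent series
\[\Phi_{\kappa_0}(x,s) - \Phi_{\kappa_m}(x,s) = \sum_{n \geq 1} \frac{i^n(\kappa_0^n - \kappa_m^n)}{4\pi\, n!}\,|x-s|^{n-1},\]
keep the (constant) $n=1$ term as the main contribution, and estimate the tail via $|x-s| \leq a$ on $\partial D_m$ of area $O(a^2)$, which gives $Err1_m = O(a^2\|\psi_m\|)$ by Cauchy--Schwarz. For (\ref{claim-adjdblle-Lay-dif-prime}) I differentiate this series term-by-term in $x$: the $n=1$ term drops out, the $n=2$ term produces $(x-s)\cdot\nu_x/|x-s|$ (which becomes the stated integrand after Fubini and a relabeling $s \leftrightarrow t$), and the $n=3$ term yields $2(x-s)\cdot\nu_x$, whose surface integral is $3|D_m|$ by the divergence theorem together with $\int_{\partial D_m}\nu_s\,d\sigma = 0$. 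Identities (\ref{claim-normsingle-Lay}) and (\ref{claim-norm-inci}) follow from transferring the normal derivative to a volume Laplacian,
\[\int_{\partial D_l}\partial_{\nu^l}f\,d\sigma = \int_{D_l}\Delta f\,dx = -\kappa_0^2\int_{D_l} f\,dx,\]
valid for $f = u^I$ and for $f = \Phi_{\kappa_0}(\cdot, y)$ with $y \in \partial D_m$, $m \neq l$. A joint Taylor expansion of $\Phi_{\kappa_0}(x,y)$ around $(z_l, z_m)$ up to first order in each argument, controlled by $|\nabla^k\Phi_{\kappa_0}(z_l, z_m)| \lesssim d_{ml}^{-k-1}$ and $|D_l| \sim a^3$, produces the monopole--dipole structure with error $O(a^6/d_{ml}^3\|\phi_m\|)$; the analogue for $u^I$ has pointwise Taylor remainder $O(a^2)$ which, multiplied by $|D_l|$, gives $Err5_l = O(a^4)$.

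For (\ref{claim- diff-phpsi}) I first rearrange (\ref{mult-obs-102r}) into the equivalent form $S^{\kappa_l}_{D_l}\psi_l = u^I + \sum_{m=1}^M S^{\kappa_0}_{D_m}\phi_m$ on $\partial D_l$, from which
\[\psi_l - \phi_l = (S^{\kappa_l}_{D_l})^{-1}\Bigl[u^I + (S^{\kappa_0}_{D_l} - S^{\kappa_l}_{D_l})\phi_l + \sum_{m \neq l}S^{\kappa_0}_{D_m}\phi_m\Bigr].\]
I then expand the operator $(S^{\kappa_l}_{D_l})^{-1}$ in a Neumann series around $(S^0_{D_l})^{-1}$,
\[(S^{\kappa_l}_{D_l})^{-1} = (S^0_{D_l})^{-1} - (S^0_{D_l})^{-1}[S^{\kappa_l}_{D_l} - S^0_{D_l}](S^0_{D_l})^{-1} + \cdots,\]
apply (\ref{claim-single-Lay-dif-2}) (with $\kappa'=0$) to evaluate $S^{\kappa_l}_{D_l} - S^0_{D_l}$ on a constant $c$, which together with the definition (\ref{capacitance}) gives $\frac{i\kappa_l}{4\pi} c\, Cap_l + O(a^2)$, and then use the monopole--dipole expansion $S^{\kappa_0}_{D_m}\phi_m(s) = \Phi_{\kappa_0}(s,z_m)Q_m + \nabla_t\Phi_{\kappa_0}(s,z_m)\cdot V_m + O(a^3\|\phi_m\|)$ together with the further Taylor step $\Phi_{\kappa_0}(s,z_m) = \Phi_{\kappa_0}(z_l,z_m) + (s-z_l)\cdot\nabla_s\Phi_{\kappa_0}(z_l,z_m) + O(a^2/d_{ml}^3)$. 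The characteristic factor $1 - \frac{i\kappa_l}{4\pi}Cap_l$ multiplying $\Phi_{\kappa_0}(z_l,z_m)Q_m$ arises precisely from the first Neumann correction applied to this dominant constant input; the parallel corrections to $u^I(z_l)$ and to $\frac{i(\kappa_0-\kappa_l)}{4\pi}Q_l$ are of size $O(a)$ and $O(a^2\|\phi_l\|)$ respectively, hence absorbed into $Err6_l$.

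Finally, (\ref{claim-int- diff-phpsi}) follows by integrating (\ref{claim- diff-phpsi}) over $\partial D_l$, using $\int_{\partial D_l}(S^0_{D_l})^{-1}(1)\,d\sigma = Cap_l$ on the constant inputs and its vector analogue on the linear input $(s-z_l)$. The main obstacle throughout is the $L^2$ bookkeeping of the remainders $Err6_l$ and $Err7_l$: each contains $(S^0_{D_l})^{-1}$ applied to Taylor remainders of the kernel and to cross-terms from distant bubbles, and both must be controlled uniformly in $a$ and $d_{ml}$. Here Lemma \ref{scaling-1} is essential, as it provides the sharp scaling of $(S^0_{D_l})^{-1}$ acting on $\int_{\partial D_l}|\cdot-t|^n\phi_l(t)\,d\sigma_l(t)$ and on polynomial inputs $(\cdot - z_l)^n$; once these scalings are in hand, matching the explicit dominant terms against the algebraic combinations stated in the claim becomes mechanical.
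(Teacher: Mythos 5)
Your proposal follows essentially the same route as the paper: series expansion of the fundamental solution plus the divergence theorem for \eqref{claim-single-Lay-dif-2}--\eqref{claim-norm-inci} (the paper moves the adjoint onto $K^{\kappa}_{D_m}(1)$ and differentiates in $t$, which is the same computation as your Fubini-and-relabel version), and a Neumann-series inversion of the single-layer operator combined with the monopole--dipole Taylor expansion and Lemma \ref{scaling-1} for \eqref{claim- diff-phpsi}--\eqref{claim-int- diff-phpsi}, including the same origin of the factor $1-\frac{i\kappa_l}{4\pi}Cap_l$. The argument is correct; only minor intermediate bookkeeping (e.g.\ the constant in the $n=3$ divergence-theorem step and the power count in $Err5_l$) would need tightening, and neither affects the stated conclusions.
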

\begin{proof}
We defer the proof of the lemma to section \ref{Proof-Claimest}.
\end{proof}
We shall also sometimes use \eqref{claim-adjdblle-Lay-dif-prime} with $\kappa_{m}=0 $. In this case, we shall refer to $Err2_{m} $ as $Err3_{m} $.

\subsection{A way of counting the number of bubbles}

In the following sections, we will repeatedly need to estimate sums of the form $\sum^M_{i=1, i\neq j}f(z_i, z_j)$ with functions $f$ involving inverse power of distances, i.e. $\vert z_i-z_j\vert^{-k},\; k \in \mathbb{R}_+$. 
Here $z_j$ is the center of the small bubble $D_j$.
Following \cite{Challa-Sini-2}, we describe here a way how to handle these sums by a proper counting. To do it, for any $m=1,\dots,M,$ fixed, we distinguish between the points $z_j$, $j\neq\,m,$ by keeping them 
into different layers based on their distance from $D_m$.
\bigskip

Let $\Omega$ be a bounded domain, say of unit volume. The way how the bubbles $D_m, m=1, ..., M$, are distributed in $\Omega$ can be described as follows. Recall that $M=O(a^{-s})$.  
We shall divide $\Omega$ into $[a^{-s}]$ cubes $\Omega_m,\; m=1, ..., [a^{-s}]$,\footnote{For a given real and positive number $x$, we denote by $[x]$, the unique integer $n$ 
 such that $n\leq x \leq n+1$.} such that each 
$\Omega_m$ contains some of the $D_j$'s (and maybe non). It is natural then to assume that the number of bubbles in each $\Omega_m$, 
for $m=1, ..., [a^{-s}]$, is uniformly bounded in terms of $m$. 

 We take the cubes $\Omega_m$'s to be equal, up to translations, and distributed periodically in $\Omega$, i.e. the set of $\Omega_m$'s is a uniform grid of $\Omega$.
 Observe that even if the $\Omega_m$'s are periodically distributed, 
 the bubbles are still not periodically distributed as in each $\Omega_m$ we can have a different number of bubbles. Let us emphasize that any distribution of the bubbles can be
 structured with a periodic set of $\Omega_m$' as above. The case when the number of bubbles in each $\Omega_m$ is exactly one, 
 i.e. the distribution of the bubbles is periodic, is the situation dealt with in the homogenization theory.  
 
 \bigskip

For each $m$, $1\leq\,m\leq\,[a^{-s}],$ $\Omega_m$ is the cube whose sides have sizes which can be estimated as $(\frac{a}{2}+d^\alpha)$ with $0\leq\alpha\leq{1}$. 
Observe that $\alpha=1$ means that the corresponding $\Omega_m$ contains at most one bubble $D_m$. Recall that $d$ is of the order $a^t$, then we have the relation $3\alpha t=s$, see below.  
\bigskip

To estimate the aforementioned sums, we need to count the number of the bubbles. The idea is as follows. As the number of bubbles in each cube $\Omega_m$ is uniformly bounded in terms of $m$, 
we reduce the question to counting the $\Omega_m$'s. To this end, we use the periodic structure of the cubes $\Omega_m$'s. 
Precisely, for a given $m$, we take the cube $\Omega_m$ as the 'starting cube' and then the other cubes are attached to it
as to form different layers of cubes (i.e. as a Rubik cube with $\Omega_m$ in the center). Hence, starting from $\Omega_m$, the total cubes upto the $n^{th}$
layer consists of $(2n+1)^3$ cubes for $n=0,\dots,[d^{-\alpha}]$. It is clear then that the number of bubbles 
located in the $n^{th}$, $n\neq0$ layer  will be at most $[(2n+1)^3-(2n-1)^3]$ and their distance from $D_j$ is more than ${n}d^\alpha$.
\bigskip

With this way of counting, we deduce that for $j$ fixed
\begin{equation}
 \sum^M_{i=1, i\neq j}\vert z_i-z_j\vert^{-k}\leq \sum_{z_j,\; z_m \in \Omega_j, z_m\neq z_j}\vert z_m-z_j\vert^{-k} + \sum^{d^{-\alpha}}_{{l=1}\atop{l\neq j}}\sum_{z_i\in \Omega_l}\vert z_i-z_j\vert^{-k}
\end{equation}
where
$$
\sum_{z_j,\; z_m \in \Omega_j, z_m\neq z_j}\vert z_m-z_j\vert^{-k} =O(d^{- k })
$$
and 
$$
\sum^{d^{-\alpha}}_{{l=1}\atop {l \neq j}}\sum_{z_i\in \Omega_l}\vert z_i-z_j\vert^{-k}\leq \sum^{d^{-\alpha}}_{l=1}((2l+1)^3-(2l-1)^3)(ld^\alpha)^{-k} =O(\sum^{d^{-\alpha}}_{l=1} l^2 (ld^\alpha)^{-k})=O(d^{-\alpha\; k} \sum^{d^{-\alpha}}_{l=1} l^{2-k}). 
$$
Hence
\begin{equation}\label{way-counting}
\sum^M_{i=1, i\neq j}\vert z_i-z_j\vert^{-k}=O(d^{-k })+O(d^{-\alpha\; k} \sum^{d^{-\alpha}}_{l=1} l^{2-k}).
\end{equation}
Observe that for $k=0$, it is obvious that $\sum^M_{i=1, i\neq j}\vert z_i-z_j\vert^{-k}=M-1$. By the previous formulas we have $\sum^M_{i=1, i\neq j}\vert z_i-z_j\vert^{-k}=O(1) +O(d^{-3\alpha})$. Recalling the way we counted, 
$d^{3\alpha}$ is the volume of the $\Omega_j$'. Hence $d^{-3\alpha}$ is of the order of the number of the bubbles, i.e. $d^{-3\alpha}=O(M)$. As we set $M=O(a^{-s})$, we will be using the formula $3\alpha t =s$, 
and then $t\geq \frac{s}{3}$, as $\alpha \leq 1$.
\bigskip

For $k>0$,  we obtain the following formulas:
\begin{enumerate}
 \item If $k < 3$, then 
 \begin{equation}\label{way-counting-1}
\sum^M_{i=1, i\neq j}\vert z_i-z_j\vert^{-k}=O(d^{-k })+O(d^{-\alpha\; k} \sum^{d^{-\alpha}}_{l=1} l^{2-k}) =O(d^{-k}) +O(d^{-3\alpha}).
\end{equation}
\item If $k=3$, then
\begin{equation}\label{way-counting-2}
\sum^M_{i=1, i\neq j}\vert z_i-z_j\vert^{-k}=O(d^{-k })+O(d^{-3\alpha\; } \vert \ln(d) \vert) .
\end{equation}
\item If $k>3$, then 
 \begin{equation}\label{way-counting-3}
\sum^M_{i=1, i\neq j}\vert z_i-z_j\vert^{-k}=O(d^{-k })+O(d^{-\alpha\; k} \sum^{d^{-\alpha}}_{l=1} l^{2-k}) =O(d^{-k}) +O(d^{-\alpha k}).
\end{equation}

\end{enumerate}
If we do not count properly, then we would have $\sum^M_{i=1, i\neq j}\vert z_i-z_j\vert^{-k}=O(M\;d^{-k})=a^{-s -k\; t}$. Let us take $k=1$ as an example. We have shown above that 
$\sum^M_{i=1, i\neq j}\vert z_i-z_j\vert^{-k}=O(a^{-t})+O(d^{-3\alpha})=O(a^{-t})+O(a^{-3\alpha t})=O(a^{-t})+O(a^{-s})$ instead of $O(a^{-s-t})$. Hence, in this case, we gain an order of $a^{-t}$ as soon as $s\geq t$.
This kind of reasoning will be used in the next sections.

%\end{remark}

%
\subsection{The a priori approximation of the system of integral equations}
Our first step is to approximate the source term $[S^{\kappa_{0}}_{D_{l}}-S^{\kappa_{l}}_{D_{l}}]\psi_{l} $ in \eqref{mult-obs-102r}, that is,
%\begin{equation}
$S^{\kappa_{0}}_{D_{l}} (\psi_{l}-\phi_{l})-\sum_{{m=1} \atop {m \neq l}}^{M} S^{\kappa_{0}}_{D_{m}} \phi_{m}= u^{I} + [S^{\kappa_{0}}_{D_{l}}-S^{\kappa_{l}}_{D_{l}}]\psi_{l}$
%\notag
%\end{equation}
by single layers of the form $S^{\kappa_{0}}_{D_{l}} g_{l} $, where $g_l$ is the unique solution of the problem
\begin{equation}\label{def-Sg}
S^{\kappa_{0}}_{D_{l}} g_l(s)= \frac{i}{4\pi}(\kappa_0-\kappa_l)\int_{\partial D_l}\psi_l(t) d\sigma_{l}(t), \mbox { on } \partial D_l.
\end{equation}
Hence \[[S^{\kappa_{0}}_{D_{l}}-S^{\kappa_{l}}_{D_{l}}]\psi_{l}=S^{\kappa_{0}}_{D_{l}} g_{l}+Err1_{l}. \]
Let us define
\[ H1:= S^{\kappa_{0}}_{D_{l}} (\psi_{l}-\phi_{l}) - \sum_{{m=1} \atop {m \neq l}}^{M} S^{\kappa_{0}}_{D_{m}}\phi_{m}- u^{I} - S^{\kappa_{0}}_{D_{l}} g_l, \] and $\tilde{g}_l$ such that
\begin{equation}\label{def-SGer}
S^{\kappa_{0}}_{D_{l}} \tilde{g}_l = Err1_{l}.
\end{equation}
%\\-------------------------------------------------------------------------------------\\
Using \eqref{def-Sg} and \eqref{def-SGer} in \eqref{claim-single-Lay-dif-2}, it follows that $H1$ satisfies 
\begin{equation}
\begin{cases}
(\Delta+\kappa_{0}^{2}) (H1-S^{\kappa_{0}}_{D_{l}}\tilde{g}_l)=0 \ \text{in}\ D_{l},\\
H1=S^{\kappa_{0}}_{D_{l}}\tilde{g}_l \ \text{on}\ \partial D_{l}.
\end{cases}
\notag
\end{equation}
By the maximum principle, we can immediately conclude that $H1= S^{\kappa_{0}}_{D_{l}}\tilde{g}_l$ in $ D_{l} $.
Taking the normal derivative from inside $D_{l}$, we have
%\begin{equation}
\begin{align}\notag
\frac{\partial [S^{\kappa_{0}}_{D_{l}}(\psi_{l}-\phi_{l})] }{\partial \nu^{l}} \Big\vert_{\partial D_{l}}  - \sum_{{m=1} \atop {m \neq l}}^{M} \frac{\partial (S^{\kappa_{0}}_{D_{m}}\phi_{m})}{\partial \nu^{l}} \Big\vert_{\partial D_l} 
  =& \frac{\partial u^{I}}{\partial \nu^{l}}\Big\vert_{\partial D_{l}}+\frac{\partial (S^{\kappa_{0}}_{D_{l}}g_l)}{\partial \nu^{l}} \Big\vert_{\partial D_{l}}+Er_{l},
\end{align}
%\notag
%\end{equation}
where $Er_{l}:= \frac{\partial [S^{\kappa_{0}}_{D_{l}}\tilde{g}_{l}]}{\partial \nu^{l}} \Big\vert_{\partial D_{l}}$. This gives us
%\begin{equation}
\begin{align}\notag
 \frac{\rho_{0}}{\rho_{l}} \Big[\frac{1}{2} Id+(K^{\kappa_{0}}_{D_{l}})^{*} \Big] (\psi_{l}-\phi_{l})& - \sum_{{m=1} \atop {m \neq l}}^{M} \frac{\rho_{0}}{\rho_{l}} \frac{\partial (S^{\kappa_{0}}_{D_{m}}\phi_{m})}{\partial \nu^{l}} \Big\vert_{\partial D_{l}}
=\frac{\rho_{0}}{\rho_{l}} \frac{\partial u^{I}}{\partial \nu^{l}}\Big\vert_{\partial D_{l}}+\frac{\rho_{0}}{\rho_{l}}\Big[\frac{1}{2} Id+(K^{\kappa_{0}}_{D_{l}})^{*} \Big] g_l +Er_{2,l},
\end{align}
%\notag
%\end{equation}
where $Er_{2,l}:=\frac{\rho_{0}}{\rho_{l}} Er_{l} $. Next we use this in the second identity \eqref{mult-obs-103r} to derive
\begin{equation}
\begin{split}
 -\frac{1}{2} \Big[\frac{\rho_{0}}{\rho_{l}} +1 \Big]& \phi_{l} - \Big[\frac{\rho_{0}}{\rho_{l}}-1 \Big] (K^{\kappa_{0}}_{D_{l}})^{*} \phi_{l}- \sum_{{m=1} \atop {m \neq l}}^{M} \Big(\frac{\rho_{0}}{\rho_{l}}-1 \Big) \frac{\partial (S^{\kappa_{0}}_{D_{m}}\phi_{m})}{\partial \nu^{l}}\Big\vert_{\partial D_{l}}  \\
&= \Big(\frac{\rho_{0}}{\rho_{l}}-1 \Big) \frac{\partial u^{I}}{\partial \nu^{l}} \Big\vert_{\partial D_{l}} +\frac{\rho_{0}}{\rho_{l}}\Big[\frac{1}{2} Id+(K^{\kappa_{0}}_{D_{l}})^{*} \Big] g_l-\frac{\rho_{0}}{\rho_{l}} [(K^{\kappa_{0}}_{D_{l}})^{*}-(K^{\kappa_{l}}_{D_{l}})^{*}] \psi_{l} +Er_{2,l},\\
\end{split}
\notag
\end{equation}
which, in turn, implies that
\begin{equation} 
\begin{split}
& -\frac{1}{2} \Big[\frac{\rho_{0}+\rho_{l}}{\rho_{0}-\rho_{l}}  \Big] \phi_{l} - (K^{\kappa_{0}}_{D_{l}})^{*} \phi_{l}- \sum_{{m=1} \atop {m \neq l}}^{M} \frac{\partial (S^{\kappa_{0}}_{D_{m}}\phi_{m})}{\partial \nu^{l}}\Big\vert_{\partial D_{l}} \\
&\qquad =  \frac{\partial u^{I}}{\partial \nu^{l}} + \Big(1-\frac{\rho_{l}}{\rho_{0}} \Big)^{-1}\Bigg([(K^{\kappa_{l}}_{D_{l}})^{*}-(K^{\kappa_{0}}_{D_{l}})^{*}] \psi_{l} +[\frac{1}{2}Id+(K^{\kappa_{0}}_{D_{l}})^{*}] g_{l} 
 \Bigg)+Er_{3,l},\mbox{  on } \partial D_l,
\end{split}
\notag
\end{equation}
where $Er_{3,l}:=\Big(\frac{\rho_{0}}{\rho_{l}}-1 \Big)^{-1} Er_{2,l} $.
%-------------------------------------------------------------------------------------------------
Hence, we have
\begin{equation}
\begin{split}
 -\frac{1}{2} \Big[\frac{\rho_{0}+\rho_{l}}{\rho_{0}-\rho_{l}}  \Big] \phi_{l} - (K^{{0}}_{D_{l}})^{*} \phi_{l}
&=  \frac{\partial u^{I}}{\partial \nu^{l}} + \Big(1-\frac{\rho_{l}}{\rho_{0}} \Big)^{-1}\Bigg([(K^{\kappa_{l}}_{D_{l}})^{*}-(K^{\kappa_{0}}_{D_{l}})^{*}] \psi_{l} +[\frac{1}{2}Id+(K^{\kappa_{0}}_{D_{l}})^{*}] g_{l} 
 \Bigg)\\
&\qquad+\sum_{{m=1} \atop {m \neq l}}^{M} \frac{\partial (S^{\kappa_{0}}_{D_{m}}\phi_{m})}{\partial \nu^{l}}\Big\vert_{\partial D_{l}} +[(K^{{\kappa_0}}_{D_{l}})^{*} - (K^{{0}}_{D_{l}})^{*}] \phi_{l}+Er_{3,l},\ \mbox{  on } \partial D_l.
\end{split}
\label{mult-obs-108r-int}
\end{equation}
Our aim henceforth would be to derive a suitable approximate system for $\int_{\partial D_{l}} \phi_{l} \ d\sigma_{l} $ from \eqref{mult-obs-108r-int} above, leading us to the final algebraic system. To do so, we start by integrating \eqref{mult-obs-108r-int} on $\partial D_l$, and since $K^{{0}}_{D_{l}}(1)=-\frac{1}{2}$, it follows that
\begin{equation}
\begin{split}
-\frac{1}{2} \Big[\frac{\rho_{0}+\rho_{l}}{\rho_{0}-\rho_{l}}-1 \Big] \int_{\partial D_{l}} \phi_{l} \ d\sigma_{l} &=\int_{\partial D_{l}} \frac{\partial u^{I}}{\partial \nu^{l}} \ d\sigma_{l}
+ \int_{\partial D_{l}} \Big[(K^{\kappa_{0}}_{D_{l}})^{*}-(K^{0}_{D_{l}})^{*} \Big] \phi_{l} \ d\sigma_{l}
+\sum_{{m=1} \atop {m \neq l}}^{M} \int_{\partial D_{l}} \frac{\partial (S^{\kappa_{0}}_{D_{m}}\phi_{m})}{\partial \nu^{l}} \ d\sigma_{l}\\
&+ \Big(1-\frac{\rho_{l}}{\rho_{0}} \Big)^{-1}\int_{\partial D_l}\Bigg([(K^{\kappa_{l}}_{D_{l}})^{*}-(K^{\kappa_{0}}_{D_{l}})^{*}] \psi_{l} +[\frac{1}{2}Id+(K^{\kappa_{0}}_{D_{l}})^{*}] g_{l}
 \Bigg)\ d\sigma_{l}+Er_{4,l},
\end{split}
\label{mult-obs-109r}
\end{equation}
where $Er_{4,l}:=\int_{\partial D_{l}} Er_{3,l} $. For $s\in\partial D_l$, let us define \[A_l(s):=\int_{ \partial D_l}\frac{(s-t)}{\vert{s-t}\vert} \cdot\nu_t \,d\sigma_{l}(t).\]
From the definition, it easily follows that $\norm{A_{l}}_{L^{2}(\partial D_{l})}=O(a^3) $.
Next we observe that,
\begin{equation} 
\begin{split}
       Er_{4,l}&=\left(1-\frac{\rho_l}{\rho_0}\right)^{-1}\int_{\partial D_l} Er_{l}  
           =\left(1-\frac{\rho_l}{\rho_0}\right)^{-1}\int_{\partial D_l} [\frac{1}{2}Id+(K^{\kappa_{0}}_{D_{l}})^{*}]\tilde{g}_{l} \\
           &=\left(1-\frac{\rho_l}{\rho_0}\right)^{-1}\int_{\partial D_l} [-(K^{{0}}_{D_{l}})^{*}+(K^{\kappa_{0}}_{D_{l}})^{*}] \tilde{g}_{l}\\
%\end{aligned}
%\notag
%\end{equation}
%Since $\norm{(K^{{0}}_{D_{l}})^{*}-(K^{\kappa_{0}}_{D_{l}})^{*}}_{\mathcal{L}(L^2(\partial D),L^2(\partial D))}=\mathcal{O}(a^2) $, it follows that          
%\begin{equation}
%\begin{aligned}           
         %Er_4  &=O(a^2\|GER_l\|)\,\stackrel{\eqref{claim-single-Lay-dif-2},\eqref{def-SGer}}{=}O%\left(a^3\|\psi_l\|\right)\,=O\left(a^3\left[1+(1+(M-1)\frac{a}{d})\|\phi\|\right]\right)\\
    %   \,[=O\left(a^5\|\psi_l\|\right)=O\left(a^5\left[1+(1+(M-1)\frac{a}{d})\|\phi\|\right]\right)].        
     &=\left(1-\frac{\rho_l}{\rho_0}\right)^{-1} \frac{1}{8\pi}\kappa_0^2 \int_{\partial D_l} \tilde{g}_{l}(s)A_l(s) d\sigma_{l}(s)+O(a^{4} \norm{\tilde{g}_{l}}), \\
    % &\qquad \left.-\sum_{n=3}^\infty\frac{i^n \kappa_0^n}{4\pi n!}(n-1)\int_{\partial D_l} \tilde{g}_{l}(s)\left(\int_{\partial D_l}|s-t|^{n-2}\frac{(s-t)}{|s-t|}\cdot\nu_t\,d\sigma_{l}(t)\right) d\sigma_{l}(s)\right]. 
\end{split}
\notag
\end{equation}
where the last step follows from \eqref{claim-adjdblle-Lay-dif-prime} applied to $\tilde{g}_{l} $.\\
In order to check the behavior of the $Er_{4,l}$, let us consider its dominating term $\int_{\partial D_l} \tilde{g}_{l}(s)A_l(s) d\sigma_{l}(s)$. From the definition of $\tilde{g}_l$, it can be observed that the dominating term of $\tilde{g}_l$ is $\left(S_{ D_l}^{\kappa_0}\right)^{-1}\left(\int_{\partial D_l}|\cdot-t|\psi_l(t) d\sigma_{l}(t)\right)$ and hence it's sufficient to consider $\int_{\partial D_l}\left(S_{ D_l}^{0}\right)^{-1}\left(\int_{\partial D_l}|\cdot-t|\psi_l(t) d\sigma_{l}(t)\right) (s)\, A_l(s) d\sigma_{l}(s)$. Using \eqref{Sinv_int_difpts_psi}, we can deduce that
\begin{equation}
\begin{split}
&\left\vert\int_{\partial D_l}\left(S_{ D_l}^{0}\right)^{-1}\left(\int_{\partial D_l}|\cdot-t|\psi_l(t) d\sigma_{l}(t) \right) (s)\, A_l(s)\ d\sigma_{l}(s) \right\vert \\
&\leq \left\|\left(S_{ D_l}^{0}\right)^{-1}\left(\int_{\partial D_l}|\cdot-t|\psi_l(t) d\sigma_{l}(t)\right) \right\|_{{L}^2(\partial D_l)}\left\|A_l\right\|_{{L}^2(\partial D_l)}
{=}O\left(a^5\|\psi_l\|\right).
\end{split}
\label{Er4-estimatecomp-1}
\end{equation}
In addition, $\norm{\tilde{g}_{l}} \leq \norm{(S^{\kappa_{0}}_{D_{l}})^{-1}} \norm{Err1_{l}}_{H^1(\partial D_{l})} =O(a \norm{\psi_{l}}) $. Therefore $Er_{4,l}=O\left(a^5\|\psi_l\|\right) $.\\
In the following lemma, we note down some estimates for $g_{l} $ that we shall use.
\begin{lemma}\label{claim-g}
Similar to \eqref{claim-adjdblle-Lay-dif-prime}, $g_{l}$ satisfies the estimate
%\begin{equation}
 \begin{align}   \label{claim-adjdblle-Lay-dif-g}
 \int_{\partial D_l}\left[(K^{\kappa_{0}}_{D_{l}})^{*}-(K^{0}_{D_{l}})^{*}\right]g_l
  =&\frac{1}{8\pi}\kappa_0^2\int_{\partial D_l}g_l(s)\left[\int_{ \partial D_l}\frac{(s-t)}{\vert{s-t}\vert} \cdot\nu_t \,dt\right]d\sigma_{l}(s)\\ \nonumber
  &\qquad -\frac{i}{4\pi}\kappa_0^3\vert{D_l}\vert\int_{\partial D_l}g_l(s)d\sigma_{l}(s)+\underbrace{Err8_{l}}_{=O\left(a^5\|g_l\|\right)}.
  \end{align}
 % \end{equation}
 %
 Also
%\begin{equation}
\begin{align}\label{def-Sg-c}
 g_l (s)  
 %\frac{i}{4\pi}(\kappa_0-\kappa_l) \left(S^{{0}}_{D_l}\right)^{-1}\left(\int_{\partial D_l}\psi_l\right)+%{Err7}=\underbrace{Err11}_{[=O\left(a+a\|\phi_l\|+\sum\limits_{m\neq\,l}\frac{a^2}{d_{ml}}\|\phi_m\|%\right)]}\mbox{ in }L^2 \\ \notag
 =&\frac{i}{4\pi}(\kappa_0-\kappa_l)\left[Cap_l\ u^{I}(z_l)+\int_{\partial D_l}\phi_l+\sum_{m\neq\,l}\Phi_{\kappa_0}(z_l,z_m)\,Q_m Cap_l\right](S^{0}_{D_{l}})^{-1}\left(1\right)(s)\\ \notag
        &+Err9_{l}\Big[:=O\left(a^2+a^2\|\phi_l\|+\sum_{m\neq\,l}\frac{a^3}{d^2_{ml}}\|\phi_m\|\right)\Big], \mbox{ in } L^2. 
\end{align}
%\label{def-Sg-c}
%\end{equation}
\end{lemma}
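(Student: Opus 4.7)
The first identity \eqref{claim-adjdblle-Lay-dif-g} is essentially a relabeling: since the derivation of \eqref{claim-adjdblle-Lay-dif-prime} uses only the Taylor series of $\Phi_\kappa$ in the wavenumber $\kappa$ (with remainder in $L^2(\partial D_m)$), the same expansion specialized to $\kappa_m=0$ and applied to the density $g_l$ yields \eqref{claim-adjdblle-Lay-dif-g}, with $Err8_l=O(a^5\|g_l\|)$ obtained by the identical argument that produced $Err3_l$. No new estimates are required for this part.

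For \eqref{def-Sg-c}, the plan starts from the defining equation \eqref{def-Sg}. Its right-hand side $\frac{i}{4\pi}(\kappa_0-\kappa_l)\int_{\partial D_l}\psi_l\,d\sigma_l$ is a \emph{constant} on $\partial D_l$, so inverting $S^{\kappa_0}_{D_l}$ gives directly
\[
g_l(s)\;=\;\frac{i}{4\pi}(\kappa_0-\kappa_l)\Bigl(\int_{\partial D_l}\psi_l\,d\sigma_l\Bigr)\bigl(S^{\kappa_0}_{D_l}\bigr)^{-1}(1)(s).
\]
Two replacements then produce the stated formula. First, I would replace $\bigl(S^{\kappa_0}_{D_l}\bigr)^{-1}(1)$ by $\bigl(S^{0}_{D_l}\bigr)^{-1}(1)$. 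Writing $S^{\kappa_0}_{D_l}=S^0_{D_l}+E$ with $E\psi=\sum_{n\geq 1}\frac{i^n\kappa_0^n}{4\pi n!}\int |\cdot-t|^{n-1}\psi(t)\,d\sigma(t)$, a Neumann-series inversion together with Lemma \ref{scaling-1} (estimates \eqref{Sinv_int_difpts_psi}--\eqref{InvSsmzl--ori}) shows the resulting correction is controlled by $Cap_l$ times $\bigl(S^{0}_{D_l}\bigr)^{-1}(1)$, hence of size $O(a)$ in $L^2(\partial D_l)$ after multiplication by the constant prefactor $(\kappa_0-\kappa_l)\int\psi_l$, which must then be estimated by $\int\phi_l+O(\text{lower})$.

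Second, I would substitute for $\int_{\partial D_l}\psi_l$ using the already-established integrated identity \eqref{claim-int- diff-phpsi}, which supplies
\[
\int_{\partial D_l}\psi_l=\int_{\partial D_l}\phi_l+Cap_l\,u^I(z_l)+Cap_l\sum_{m\neq l}\Phi_{\kappa_0}(z_l,z_m)Q_m+R_l,
\]
where $R_l$ collects: the self-term $\frac{i(\kappa_0-\kappa_l)}{4\pi}Cap_l Q_l$, the correction $-\frac{i\kappa_l}{4\pi}Cap_l^2\Phi_{\kappa_0}Q_m$, the gradient terms involving $V_m$ and $\int_{\partial D_l}(S^0_{D_l})^{-1}(\cdot-z_l)$, and $Err7_l$. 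After multiplying by the constant $\frac{i(\kappa_0-\kappa_l)}{4\pi}\bigl(S^0_{D_l}\bigr)^{-1}(1)(s)$ and combining with the first replacement error, I would bound each of these contributions in $L^2(\partial D_l)$ using $\|(S^0_{D_l})^{-1}(1)\|_{L^2}=O(a^{-1/2})$, $Cap_l=O(a)$, $|\nabla_s\Phi_{\kappa_0}(z_l,z_m)|=O(d_{ml}^{-2})$, and $V_m=O(a^2\|\phi_m\|)$, collecting everything into $Err9_l=O\bigl(a^2+a^2\|\phi_l\|+\sum_{m\neq l}a^3 d_{ml}^{-2}\|\phi_m\|\bigr)$.

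The main obstacle will not be a conceptual one but rather the careful bookkeeping of scales: showing that each of the several sub-dominant contributions (the diagonal self-term $Cap_l Q_l$, the second-order piece $Cap_l^2\Phi_{\kappa_0}$, the gradient/$V_m$ terms, and $Err7_l$) fits into the claimed bound for $Err9_l$ once the prefactor $\frac{i(\kappa_0-\kappa_l)}{4\pi}(S^0_{D_l})^{-1}(1)$ is applied. This requires invoking the correct $L^2$-scaling of $(S^0_{D_l})^{-1}$ on constants and on translates $(\cdot-z_l)$ from Lemma \ref{scaling-1}, and using $Cap_l=O(a)$ consistently; once this tracking is performed, the formula \eqref{def-Sg-c} follows by direct collection of terms.
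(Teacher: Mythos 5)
Your route is the same as the paper's: for \eqref{claim-adjdblle-Lay-dif-g} one simply repeats the derivation of \eqref{claim-adjdblle-Lay-dif-prime} with the density $g_l$; for \eqref{def-Sg-c} one starts from \eqref{def-Sg}, uses the Neumann series $(S^{\kappa_0}_{D_l})^{-1}=\sum_{n\geq 0}(-1)^n\big[(S^{0}_{D_l})^{-1}S^{d_{\kappa_0}}_{D_l}\big]^n(S^{0}_{D_l})^{-1}$ to reduce to $(S^{0}_{D_l})^{-1}(1)$ at the cost of $O(a^2\|\psi_l\|)$, and then substitutes \eqref{claim-int- diff-phpsi} for $\int_{\partial D_l}\psi_l$, absorbing the sub-dominant terms ($Cap_lQ_l$, the $Cap_l^2$ piece, the $V_m$ and $\int(S^0_{D_l})^{-1}(\cdot-z_l)$ contributions, and $Err7_l$) into $Err9_l$. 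The one correction needed is your scaling $\|(S^{0}_{D_l})^{-1}(1)\|_{L^2(\partial D_l)}=O(a^{-1/2})$: by \eqref{InvSsmzl--ori} with $n=0$ this norm is $O(1)$ (the density scales like $a^{-1}$ pointwise while $|\partial D_l|^{1/2}\sim a$), and this is essential — with your value the diagonal term $\frac{i(\kappa_0-\kappa_l)}{4\pi}Cap_l\,Q_l$ multiplied by the prefactor would only give $O(a^{3/2}\|\phi_l\|)$, which does not fit inside the claimed $O(a^2\|\phi_l\|)$ in $Err9_l$; with the correct $O(1)$ all of your bookkeeping closes exactly as in the paper.
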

\begin{proof}
We defer the proof of this result to section \ref{proof-claim-g}.
\end{proof}
Now using lemma \ref{Claimest}, \eqref{mult-obs-109r} can be rewritten as
%\begin{equation}
\begin{align}\label{mult-obs-109r1}
&\frac{\rho_{l}}{\rho_{l}-\rho_{0}} \int_{\partial D_{l}} \phi_{l}\ d\sigma_{l}(s)+\sum_{{m=1} \atop {m \neq l}}^{M}\Big(\kappa_0^2\vert{ D_l}\vert\Big[\Phi_{\kappa_0}(z_l,z_m)\int_{\partial D_m}\phi_m(s)\,d\sigma_{m}(s)\\ \notag
&\qquad +\nabla_t\Phi_{\kappa_0}(z_l,z_m)\cdot\int_{\partial D_m}(s-z_m)\phi_m(s)\,d\sigma_{m}(s)\Big] \\ \notag
&\qquad+\kappa_{0}^{2}\nabla_x\Phi_{\kappa_{0}}(z_{l},z_m)\cdot\Big[ \int_{D_{l}}(x-z_l) dx\Big] \int_{\partial D_{m}}   \phi_{m}(s) \,d\sigma_{m}(s)+Err4_{m}\Big)\\ \notag
=&-\kappa_0^2\,\vert{D_l}\vert\,u^{I}(z_l)+Err5_{l}+\frac{1}{8\pi}\kappa_0^2\int_{\partial D_l}\phi_l(s)\left[\int_{\partial D_l}\frac{(s-t)}{\vert{s-t}\vert} \cdot\nu_t \,d\sigma_{l}(t)\right]d\sigma_{l}(s)\\ \notag
&-\frac{i}{4\pi}\kappa_0^3\vert{D_l}\vert\int_{\partial D_l}\phi_l(s)d\sigma_{l}(s)+Err3_{l} \\ \notag
&+\Big(1-\frac{\rho_{l}}{\rho_{0}} \Big)^{-1}\frac{1}{8\pi}(\kappa_l^2-\kappa_0^2)\int_{\partial D_l}\psi_l(s)\left[\int_{ \partial D_l}\frac{(s-t)}{\vert{s-t}\vert} \cdot\nu_t \,d\sigma_{l}(t)\right]d\sigma_{l}(s)\\ \notag
&-\Big(1-\frac{\rho_{l}}{\rho_{0}} \Big)^{-1}\frac{i}{4\pi}(\kappa_l^3-\kappa_0^3)\vert{D_l}\vert\int_{\partial D_l}\psi_l(s)d\sigma_{l}(s)-\Big(1-\frac{\rho_{l}}{\rho_{0}} \Big)^{-1}Err2_{l} \\ \notag
&+\Big(1-\frac{\rho_{l}}{\rho_{0}} \Big)^{-1}\frac{1}{8\pi}\kappa_0^2\int_{\partial D_l}g_l(s)\left[\int_{ \partial D_l}\frac{(s-t)}{\vert{s-t}\vert} \cdot\nu_t \,d\sigma_{l}(t)\right]d\sigma_{l}(s)\\ \notag
&-\Big(1-\frac{\rho_{l}}{\rho_{0}} \Big)^{-1}\frac{i}{4\pi}\kappa_0^3\vert{D_l}\vert\int_{\partial D_l}g_l(s)d\sigma_{l}(s)+\Big(1-\frac{\rho_{l}}{\rho_{0}} \Big)^{-1}Err8_{l} +Er_{4,l}.
\end{align}
%\end{equation}
% \%\%\%\%\%\%\%\%\%\%\%\%\%\%\%\%\%\%\%\%\%\%\%\%\%\%\%\%\%\%\%\%\%\%\%\%\%\%\%\%\%\%\%\%\% \\
%\newpage
%
Dividing by $\vert D_{l} \vert$ and making use of \eqref{claim- diff-phpsi},\eqref{claim-int- diff-phpsi} and \eqref{def-Sg-c} in \eqref{mult-obs-109r1}, we obtain %we obtain $\psi_l=\phi_l+\left(S^{{0}}_{D_{l}}\right)^{-1} u^{I}(z_l)+\left(S^{{0}}_{D_{l}}\right)^{-1}\left((s_l-z_l)\right)\cdot\nabla{u^{I}(z_l)}+Err8$.$\int_{{\partial D_{l}}}\psi_l-\int_{{\partial D_{l}}}\phi_l=C_l u^{I}(z_l)+\int_{\partial D_l}\left(S^{{0}}_{D_{l}}\right)^{-1}\left((s_l-z_l)\right)\cdot\nabla{u^{I}(z_l)}+Err9$.

%\begin{equation}\label{mult-obs-109r4}
\begin{align}\label{mult-obs-109r4}
&\frac{\rho_{l}}{\rho_{l}-\rho_{0}} \vert{ D_l}\vert^{-1} \int_{\partial D_{l}} \phi_{l}(s)\ d\sigma_{l}(s)\\ \notag
&+\kappa_0^2\sum_{{m=1} \atop {m \neq l}}^{M}\Big[\Phi_{\kappa_0}(z_l,z_m)\int_{\partial D_m}\phi_m(s)\,d\sigma_{m}(s) +\nabla_t\Phi_{\kappa_0}(z_l,z_m)\cdot\int_{\partial D_m}(s-z_m)\phi_m(s)\,d\sigma_{m}(s)\Big]\\ \notag
&+\kappa_{0}^{2}\vert{ D_l}\vert^{-1}\sum_{{m=1} \atop {m \neq l}}^{M}\nabla_x\Phi_{\kappa_{0}}(z_{l},z_m)\cdot\Big[ \int_{D_{l}}(x-z_l) dx\Big] \int_{\partial D_{m}}   \phi_{m}(s) \,d\sigma_{m}(s)\\ \notag
=&-\kappa_0^2\,u^{I}(z_l)+ \frac{1}{8\pi}\kappa_0^2\vert{ D_l}\vert^{-1}\int_{\partial D_l}\phi_l(s){A_l(s)}d\sigma_{l}(s)-\frac{i}{4\pi}\kappa_0^3\int_{\partial D_l}\phi_l(s)d\sigma_{l}(s)\\ \notag
&+\Big(1-\frac{\rho_{l}}{\rho_{0}} \Big)^{-1}\frac{1}{8\pi}(\kappa_l^2-\kappa_0^2)\vert{ D_l}\vert^{-1}\int_{\partial D_l}\phi_l(s){A_l(s)}d\sigma_{l}(s)
\\ \notag
&+\Big(1-\frac{\rho_{l}}{\rho_{0}} \Big)^{-1}\frac{1}{8\pi}(\kappa_l^2-\kappa_0^2)\vert{ D_l}\vert^{-1}\int_{\partial D_l}\left(S^{{0}}_{D_{l}}\right)^{-1}\Bigg(\frac{i(\kappa_0-\kappa_l)}{4\pi}Q_l\\ \notag
&\qquad \qquad +\sum_{m\neq\;l}\left[\left(1-\frac{i\kappa_l}{4\pi}Cap_l\right)\Phi_{\kappa_0}(z_l,z_m)+(\cdot-z_l)\cdot\nabla_s\Phi_{\kappa_0}(z_l,z_m)\right]Q_m \\ \notag
&\qquad \qquad+\sum_{m\neq\;l}\nabla_t\Phi_{\kappa_0}(z_l,z_m)\cdot\,V_m\Bigg)(s) {A_l(s)}d\sigma_{l}(s)
\\ \notag
&+\Big(1-\frac{\rho_{l}}{\rho_{0}} \Big)^{-1}\frac{1}{8\pi}(\kappa_l^2-\kappa_0^2)\vert{ D_l}\vert^{-1}\int_{\partial D_l}\left[\left(S^{{0}}_{D_{l}}\right)^{-1} u^{I}(z_l)+Err6_{l}\right]{A_l(s)}d\sigma_{l}(s)\\ \notag
&-\Big(1-\frac{\rho_{l}}{\rho_{0}} \Big)^{-1}\frac{i}{4\pi}(\kappa_l^3-\kappa_0^3)\int_{\partial D_l}\phi_l(s)d\sigma_{l}(s)\\ \notag
&-\Big(1-\frac{\rho_{l}}{\rho_{0}} \Big)^{-1}\frac{i}{4\pi}(\kappa_l^3-\kappa_0^3)
\Bigg[\frac{i(\kappa_0-\kappa_l)}{4\pi}Cap_l Q_l
+\sum_{m\neq\;l}\Big[Cap_l \left(1-\frac{i\kappa_l}{4\pi}Cap_l\right)\Phi_{\kappa_0}(z_l,z_m)\\ \notag
&\qquad \qquad \qquad +\nabla_s\Phi_{\kappa_0}(z_l,z_m)\cdot\int_{\partial D_l}\left(S_{D_l}^0\right)^{-1}(\cdot-z_l)(s) d\sigma_{l}(s)\Big]Q_m +Cap_l \sum_{m\neq\;l}\nabla_t\Phi_{\kappa_0}(z_l,z_m)\cdot\,V_m\Bigg]
\\ \notag
&-\Big(1-\frac{\rho_{l}}{\rho_{0}}\Big)^{-1}\frac{i}{4\pi}(\kappa_l^3-\kappa_0^3)\left[Cap_l u^{I}(z_l)+Err7_{l}\right]
\\ \notag
&+\Big(1-\frac{\rho_{l}}{\rho_{0}} \Big)^{-1}\frac{1}{8\pi}\kappa_0^2\vert{ D_l}\vert^{-1}\int_{\partial D_l} \frac{i}{4\pi}(\kappa_0-\kappa_l)\Big[Cap_l u^{I}(z_l)\\ \notag
&\qquad \qquad \qquad \qquad +Q_l+\sum_{m\neq\,l}\Phi_{\kappa_0}(z_l,z_m)\,Q_m Cap_l\Big](S^{0}_{D_{l}})^{-1}\left(1\right)(s){A_l(s)}d\sigma_{l}(s)\\ \notag
&+\Big(1-\frac{\rho_{l}}{\rho_{0}} \Big)^{-1}\frac{1}{8\pi}\kappa_0^2\vert{ D_l}\vert^{-1}\int_{\partial D_l}Err9_{l} \cdot {A_l(s)}\,d\sigma_{l}(s)+Er_{5,l},
\end{align}
%\notag
%\end{equation}
%\newpage
where
%\begin{equation}
\begin{align}\label{Er_5}
 Er_{5,l}&=\vert{D_l}\vert^{-1}Er_{4,l}+\vert{D_l}\vert^{-1}\Bigg[Err5_{l}-\sum_{{m=1} \atop {m \neq l}}^{M} Err4_{m} + \Bigg(Err3_{l}+\Big(1-\frac{\rho_{l}}{\rho_{0}} \Big)^{-1}(-Err2_{l}+Err8_{l})\Bigg)\Bigg]\\ \notag
 &=O(a^2\|\psi_l\|)+O\left(a+\sum_{m\neq\,l}\frac{a^3}{d^3_{ml}}\|\phi_m\|+a^2\|\psi_l\|+a^2\|\phi_l\|+a^2\|g_l\|\right)\\ \notag
 &=O(a^2\|\psi_l\|)+O\left(a+\sum_{m\neq\,l}\frac{a^3}{d^3_{ml}}\|\phi_m\|+a^2\|\psi_l\|+a^2\|\phi_l\|+a^3\|\psi_l\|\right)\\ \notag
 &=O\left(a+\sum_{m\neq\,l}\frac{a^3}{d^3_{ml}}\|\phi_m\|+a^2\|\phi_l\|+a^2\|\psi_l\|\right). \notag
\end{align}
%\end{equation}

Collecting the error terms together, we can rewrite \eqref{mult-obs-109r4} further as
%\begin{equation}
\begin{align}\label{mult-obs-109r6}
& \vert{ D_l}\vert^{-1} \int_{\partial D_{l}} \left(\frac{\rho_{l}}{\rho_{l}-\rho_{0}}+\frac{1}{8\pi}\left[\frac{\rho_{0}}{\rho_{l}-\rho_{0}}(\kappa_l^2-\kappa_0^2)-\kappa_0^2\right]{A_l(s)}\right) \phi_{l}(s) d\sigma_{l}(s)\\ \notag
&+\frac{i}{4\pi}\kappa_0^3\,Q_l+\frac{\rho_{0}}{\rho_{l}-\rho_{0}}\frac{i}{4\pi}(\kappa_0^3-\kappa_l^3)Q_l\\ \notag
&+\frac{\rho_{0}}{\rho_{l}-\rho_{0}}\frac{1}{16\pi^2}(\kappa_l^3-\kappa_0^3)(\kappa_0-\kappa_l)Cap_l Q_l\\ \notag
&+\frac{\rho_{0}}{\rho_{l}-\rho_{0}}\frac{i}{32\pi^2}(\kappa_l^2-\kappa_0^2)(\kappa_0-\kappa_l)\vert{ D_l}\vert^{-1}Q_l\int_{\partial D_l}\left(S^{{0}}_{D_{l}}\right)^{-1}\Big(1\Big)(s) {A_l(s)}d\sigma_{l}(s)
\\ \notag
&+\frac{\rho_{0}}{\rho_{l}-\rho_{0}} \frac{i}{32\pi^2}(\kappa_0-\kappa_l)\kappa_0^2\vert{ D_l}\vert^{-1}\left(\int_{\partial D_l} \left(S^{{0}}_{D_l}\right)^{-1}\left(1\right)(s){A_l(s)}d\sigma_{l}(s)\right)\;Q_l \;\\ \notag
&+\frac{\rho_{0}}{\rho_{l}-\rho_{0}}\frac{i}{4\pi}(\kappa_0^3-\kappa_l^3)
\sum_{m\neq\;l}\Big[Cap_l \left(1-\frac{i\kappa_l}{4\pi}Cap_l\right)\Phi_{\kappa_0}(z_l,z_m)\\ \notag
&\qquad \qquad +\nabla_s\Phi_{\kappa_0}(z_l,z_m)\cdot\int_{\partial D_l}\left(S_{D_l}^0\right)^{-1}(\cdot-z_l)(s) d\sigma_{l}(s)\Big]Q_m
\\ \notag
&+\frac{\rho_{0}}{\rho_{l}-\rho_{0}}\frac{1}{8\pi}(\kappa_l^2-\kappa_0^2)\vert{ D_l}\vert^{-1}\\ \notag
&\qquad\int_{\partial D_l}\left(S^{{0}}_{D_{l}}\right)^{-1}\Bigg(\sum_{m\neq\;l}Q_m\left[\left(1-\frac{i\kappa_l}{4\pi}Cap_l\right)\Phi_{\kappa_0}(z_l,z_m)+(\cdot-z_l)\cdot\nabla_s\Phi_{\kappa_0}(z_l,z_m)\right]\Bigg)(s) {A_l(s)}d\sigma_{l}(s)
\\ \notag
&+\frac{\rho_{0}}{\rho_{l}-\rho_{0}}\frac{i}{32\pi^2}\kappa_0^2(\kappa_0-\kappa_l)\vert{ D_l}\vert^{-1}\left[\sum_{m\neq\,l}\Phi_{\kappa_0}(z_l,z_m)\,Q_m Cap_l\right]\int_{\partial D_l} (S^{0}_{D_{l}})^{-1}\left(1\right)(s){A_l(s)}d\sigma_{l}(s)\\ \notag
&+\sum_{{m=1} \atop {m \neq l}}^{M}\left[\kappa_0^2\Phi_{\kappa_0}(z_l,z_m)\right]Q_m\\ \notag
&+\kappa_{0}^{2}\vert{ D_l}\vert^{-1}\sum_{{m=1} \atop {m \neq l}}^{M}\nabla_x\Phi_{\kappa_{0}}(z_{l},z_m)\cdot\Big[ \int_{D_{l}}(x-z_l) dx\Big] Q_m\\ \notag
&+\frac{\rho_{0}}{\rho_{l}-\rho_{0}}\frac{1}{8\pi}(\kappa_l^2-\kappa_0^2)\vert{ D_l}\vert^{-1}\Bigg(\sum_{m\neq\;l}\nabla_t\Phi_{\kappa_0}(z_l,z_m)\cdot\,V_m\Bigg)\int_{\partial D_l}\left(S^{{0}}_{D_{l}}\right)^{-1}\Big(1\Big)(s) {A_l(s)}d\sigma_{l}(s)
\\ \notag
&+\frac{\rho_{0}}{\rho_{l}-\rho_{0}}\frac{i}{4\pi}(\kappa_0^3-\kappa_l^3)
Cap_l \sum_{m\neq\;l}\nabla_t\Phi_{\kappa_0}(z_l,z_m)\cdot\,V_m
\\ \notag
&+\sum_{{m=1} \atop {m \neq l}}^{M}\left[\kappa_0^2\nabla_t\Phi_{\kappa_0}(z_l,z_m)\right]\cdot\, V_m\\ \notag
=&-\kappa_0^2\,u^{I}(z_l)
+\frac{\rho_{0}}{\rho_{l}-\rho_{0}}\frac{1}{8\pi}(\kappa_0^2-\kappa_l^2)\vert{ D_l}\vert^{-1}u^{I}(z_l)\int_{\partial D_l}\left(S^{{0}}_{D_{l}}\right)^{-1}\big(1\big)(s) {A_l(s)}d\sigma_{l}(s)
\\ \notag
&+\frac{\rho_{0}}{\rho_{l}-\rho_{0}}\frac{i}{4\pi}(\kappa_l^3-\kappa_0^3)Cap_l u^{I}(z_l)
\\ \notag
&+\frac{\rho_{0}}{\rho_{l}-\rho_{0}}\frac{i}{32\pi^2}\kappa_0^2(\kappa_l-\kappa_0)\vert{ D_l}\vert^{-1}Cap_l u^{I}(z_l)\int_{\partial D_l} (S^{0}_{D_{l}})^{-1}\left(1\right)(s){A_l(s)}d\sigma_{l}(s)+Er_{6,l}, \notag
\end{align} 
%\label{mult-obs-109r6}
%\end{equation}
where
%\begin{equation}
\begin{align} \label{Er_6}
 Er_{6,l}&=Er_{5,l}+\Big(1-\frac{\rho_{l}}{\rho_{0}} \Big)^{-1}\frac{1}{8\pi}(\kappa_l^2-\kappa_0^2)\vert{ D_l}\vert^{-1}\int_{\partial D_l}Err6_{l} \cdot {A_l(s)}d\sigma_{l}(s)\\ \notag
 &+\Big(1-\frac{\rho_{l}}{\rho_{0}} \Big)^{-1}\frac{1}{8\pi}\kappa_0^2\vert{ D_l}\vert^{-1}\int_{\partial D_l}Err9_{l} \cdot {A_l(s)}d\sigma_{l}(s)-\Big(1-\frac{\rho_{l}}{\rho_{0}}\Big)^{-1}\frac{i}{4\pi}(\kappa_l^3-\kappa_0^3)Err7_{l}\\ \notag
&= O\left(a+\sum_{m\neq\,l}\frac{a^3}{d^3_{ml}}\|\phi_m\|+a^2\|\phi_l\|+a^2\|\psi_l\|\right)+O\left(a+a^2\|\phi_l\|+\sum_{m\neq\,l}\frac{a^3}{d^3_{ml}}\|\phi_m\|\right)\\ \notag
&\quad+O\left(a^2+a^2\|\phi_l\|+\sum_{m\neq\,l}\frac{a^3}{d^2_{ml}}\|\phi_m\|\right)+O\left(a^2+a^3\|\phi_l\|+\sum_{m\neq\,l}\frac{a^4}{d^3_{ml}}\|\phi_m\|\right)\\ \notag
&=O\left(a+\sum_{m\neq\,l}\frac{a^3}{d^3_{ml}}\|\phi_m\|+a^2\|\phi_l\|+a^2\|\psi_l\|\right). \notag
\end{align}
%\label{Er_6}
%\end{equation}
%
In the following lemma, we state some identities concerning the integrals with $A_{l}$ in the integrand.
\begin{lemma}
The following identities hold true.
\begin{equation}
\int_{\partial D_l}[\left(S_{D_l}^0\right)^{-1} (1)](s) A_l(s) \,d\sigma_{l}(s) = -8\pi\vert{D_l}\vert,
\label{int-invs-Al}
\end{equation}
\begin{equation}
\int_{\partial D_l}[\left(S_{D_l}^0\right)^{-1} (\cdot-z_l)] (s) A_l(s) \,d\sigma_{l}(s) 
=-8\pi\int_{D_l}(x-z_l)\,dx.
\label{int-invs-Al-2}
\end{equation}
\end{lemma}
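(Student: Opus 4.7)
The plan is to reduce both identities to a single computation: rewrite $A_l(s)$ as a volume integral using the divergence theorem, swap the order of integration via Fubini, and then identify the inner integral as a single-layer potential evaluated \emph{inside} $D_l$, which can be computed by uniqueness of the Dirichlet problem for the Laplacian.

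First I would note the key observation $\frac{s-t}{|s-t|}=-\nabla_t|s-t|$, which gives, via the divergence theorem applied on $D_l$ (with outer normal $\nu_t$) together with $\Delta_t|s-t|=2/|s-t|$ for $t\neq s$,
\[
A_l(s)=-\int_{\partial D_l}\nabla_t|s-t|\cdot\nu_t\,d\sigma_l(t)=-2\int_{D_l}\frac{dy}{|s-y|}, \quad s\in\partial D_l,
\]
where the singularity at $t=s$ is integrable in three dimensions. This is precisely the rewriting already used after the definition of $\hat{A}_l$ in the introduction.

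Next, for (\ref{int-invs-Al}), let $\Psi_l:=(S_{D_l}^0)^{-1}(1)$, so that $S_{D_l}^0\Psi_l=1$ on $\partial D_l$. Plugging in the volume form of $A_l$ and exchanging the order of integration by Fubini, I would write
\[
\int_{\partial D_l}\Psi_l(s)A_l(s)\,d\sigma_l(s)=-2\int_{D_l}\left(\int_{\partial D_l}\frac{\Psi_l(s)}{|s-y|}\,d\sigma_l(s)\right)dy=-8\pi\int_{D_l}S_{D_l}^0\Psi_l(y)\,dy.
\]
The function $y\mapsto S_{D_l}^0\Psi_l(y)$ is harmonic in $D_l$ and, by continuity of the single layer across $\partial D_l$, has the boundary trace $1$; by uniqueness of the Dirichlet problem, $S_{D_l}^0\Psi_l\equiv 1$ in $D_l$, so the integral equals $|D_l|$ and the identity follows.

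The same argument handles (\ref{int-invs-Al-2}) component by component: with $\widetilde{\Psi}_l:=(S_{D_l}^0)^{-1}(\cdot-z_l)$ (applied to each Cartesian coordinate), one obtains
\[
\int_{\partial D_l}\widetilde{\Psi}_l(s)A_l(s)\,d\sigma_l(s)=-8\pi\int_{D_l}S_{D_l}^0\widetilde{\Psi}_l(y)\,dy,
\]
and since the linear function $y\mapsto y-z_l$ is harmonic in $D_l$ and has boundary trace $s-z_l$ on $\partial D_l$, uniqueness again gives $S_{D_l}^0\widetilde{\Psi}_l(y)=y-z_l$ inside $D_l$, yielding (\ref{int-invs-Al-2}). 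The only mild subtlety is justifying Fubini and the harmonic extension in $D_l$ with Lipschitz boundary, which I expect to be routine since $(S_{D_l}^0)^{-1}(1)$ and $(S_{D_l}^0)^{-1}(\cdot-z_l)$ lie in $L^2(\partial D_l)$ and $1/|s-y|$ is integrable over $\partial D_l\times D_l$.
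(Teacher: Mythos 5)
Your proposal is correct and follows essentially the same route as the paper: rewrite $A_l$ as $-2\int_{D_l}|s-y|^{-1}\,dy$, apply Fubini to recognize $-8\pi\int_{D_l}S_{D_l}^0\big((S_{D_l}^0)^{-1}(\cdot)\big)(y)\,dy$, and conclude by harmonicity and uniqueness of the Dirichlet problem. The paper treats the second identity with a one-line "similarly," which you have simply spelled out.
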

\begin{proof}
Observe that, 
%\begin{equation}
\begin{align*}
\int_{\partial D_l}[\left(S_{D_l}^0\right)^{-1} (1)](s) A_l(s) \,d\sigma_{l}(s)&=-2\int_{D_l}\int_{\partial D_l}[\left(S_{D_l}^0\right)^{-1} (1)] (s) \frac{1}{\vert{s-y}\vert} \,d\sigma_{l}(s)\, dy\\
&=-8\pi\int_{D_l}\int_{\partial D_l}[\left(S_{D_l}^0\right)^{-1} (1)] (s) \Phi_0(s,y)\,d\sigma_{l}(s)\, dy\\
&=-8\pi\int_{D_l}S_{D_l}^0\left(\left(S_{D_l}^0\right)^{-1} (1) \right)(y)\,dy.
\end{align*}
%\notag
%\end{equation}
%
Now, by denoting $f:=S_{D_l}^0\left(\left(S_{D_l}^0\right)^{-1} (1)\right)$, we can observe that $\Delta\,f=0$ in $D_l$ and $f=1$ on $\partial D_l$. 
Since the boundary integral equation has a unique solution $f=1$ in $\bar{D}$, we can conclude that
\begin{equation}
\int_{\partial D_l}[\left(S_{D_l}^0\right)^{-1} (1)](s) A_l(s) \,d\sigma_{l}(s)=-8\pi\vert{D_l}\vert.
\notag
\end{equation}
Similarly, we can prove \eqref{int-invs-Al-2}.
\end{proof}
Let us denote the average of $A_{l}$ as \[\hat{A}_{l}:= \frac{1}{\vert \partial D_{l} \vert}\int_{\partial D_{l}} A_{l}(s)\ d\sigma_{l}(s).\]
It is easy to see from the definitions that both $A_{l}$ and $\hat{A}_{l}$ scale as $O(a^3)$ in $L^{2}$ norm and therefore 
\begin{equation}
\norm{A_{l}-\hat{A}_{l}}_{L^{2}(\partial D_{l})}=O(a^3).
\label{estimate-Al}
\end{equation}
\begin{remark}
While dealing with the difference of $A_{l}$ and $\hat{A}_{l}$, one is tempted to believe that the difference behaves better than $A_{l}$ and such a result might be possible to derive using Poincar\'e type inequalities. But such a result is not true in this case. Indeed from the definition of $A_{l}$, we note that 
\begin{equation}
%\begin{aligned}
 A_l(s)=-2\int_{  D_l}\frac{1}{\vert{s-y}\vert}dy.
%\end{aligned}
\label{def-A_l}
\end{equation}
%
\begin{comment}
Let us denote the avarage of $A_{\epsilon,l}$ by $\hat{A}_l$, i.e.,
\begin{eqnarray}\label{def-hatA_epsl}
 \hat{A}_{\epsilon,l}&:=&\frac{1}{\vert{\partial D_l}\vert}\int_{ \partial D_l}\int_{ \partial D_l}\frac{(s-t)}{\vert{s-t}\vert} \cdot\nu_t \,dt\,ds\,=\,\delta^2 \hat{A}_{l},
\end{eqnarray}
where $$\hat{A}_{l}:=\frac{1}{\vert{\partial B_l}\vert}\int_{ \partial B_l}\int_{ \partial B_l}\frac{(\xi-\eta)}{\vert{\xi-\eta}\vert} \cdot\nu_\eta \,d\eta\,d\xi.$$
\end{comment}
%
Now, using the Poincar\'e type inequality mentioned in \cite[Proposition 3.2]{Aless-Morassi2002-Inv}, we can conclude that there exists a positive constant $C_A$ independent of $a$ such that
\begin{equation}
\begin{split}
\|A- \hat{A}_{l}\|^2_{L^2(\partial D_l)}
&\leq C_A\, a \int_{  D_l}\vert{\nabla A_l(x)}\vert^2 dx \leq 4C_A\, a \int_{  D_l} \left\vert\int_{  D_l}\frac{1}{\vert{x-y}\vert^2}dy\right\vert^2 dx \\
&\qquad \leq 4C_A\, a^6 \int_{  B_l} \left\vert\int_{  B_l}\frac{1}{\vert{\xi-\eta}\vert^2}d\xi\right\vert^2 d\eta =\tilde{C}_A a^6, 
\end{split}
\label{Poincare-A}
\end{equation}
where
$$\tilde{C}_A := 4 C_A \int_{  B_l} \left\vert\int_{  B_l}\frac{1}{\vert{\xi-\eta}\vert^2}d\xi\right\vert^2 d\eta.$$
\QEDB
\end{remark}
Now by making use of \eqref{int-invs-Al} and splitting $A_l$ as $\hat{A}_l+(A_l-\hat{A}_l)$, we can rewrite \eqref{mult-obs-109r6} as 
%\begin{equation}
\begin{align}\label{mult-obs-109r6-pr}
& \Bigg[\vert{ D_l}\vert^{-1}\left(\frac{\rho_{l}}{\rho_{l}-\rho_{0}}+\frac{1}{8\pi}\left[\frac{\rho_{0}}{\rho_{l}-\rho_{0}}(\kappa_l^2-\kappa_0^2)-\kappa_0^2\right]{\hat{A}_l}\right) +\frac{i}{4\pi}\kappa_0^3+\frac{\rho_{0}}{\rho_{l}-\rho_{0}}\frac{i}{4\pi}(\kappa_0^3-\kappa_l^3) \\
&+\frac{\rho_{0}}{\rho_{l}-\rho_{0}}\frac{1}{16\pi^2}(\kappa_l^3-\kappa_0^3)(\kappa_0-\kappa_l)Cap_l
-\frac{\rho_{0}}{\rho_{l}-\rho_{0}}\frac{i}{4\pi}(\kappa_l^2-\kappa_0^2)(\kappa_0-\kappa_l)
-\frac{\rho_{0}}{\rho_{l}-\rho_{0}} \frac{i}{4\pi}(\kappa_0-\kappa_l)\kappa_0^2\Bigg]\;Q_l \;  \nonumber\\
&+\frac{1}{8\pi}\left[\frac{\rho_{0}}{\rho_{l}-\rho_{0}}(\kappa_l^2-\kappa_0^2)-\kappa_0^2\right] \vert{ D_l}\vert^{-1} \int_{\partial D_{l}} \left(A_l(s)-\hat{A}_l(s)\right) \phi_{l}(s) d\sigma_{l}(s) \nonumber \\
&+
\Bigg[\frac{\rho_{0}}{\rho_{l}-\rho_{0}}\frac{i}{4\pi}(\kappa_0^3-\kappa_l^3)Cap_l \left(1-\frac{i\kappa_l}{4\pi}Cap_l\right)-\frac{\rho_{0}}{\rho_{l}-\rho_{0}}(\kappa_l^2-\kappa_0^2) \left(1-\frac{i\kappa_l}{4\pi}Cap_l\right)
 \nonumber \\
&-\frac{\rho_{0}}{\rho_{l}-\rho_{0}}\frac{i}{4\pi}\kappa_0^2(\kappa_0-\kappa_l)Cap_l+\kappa_0^2\Bigg]\sum_{{m=1} \atop {m \neq l}}^{M}\Phi_{\kappa_0}(z_l,z_m)Q_m \nonumber\\
&+\Bigg[\frac{\rho_{0}}{\rho_{l}-\rho_{0}}\frac{i}{4\pi}(\kappa_0^3-\kappa_l^3)\int_{\partial D_l}\left(S_{D_l}^0\right)^{-1}(\cdot-z_l)(s) d\sigma_{l}(s)-\frac{\rho_{0}}{\rho_{l}-\rho_{0}}(\kappa_l^2-\kappa_0^2)\vert{ D_l}\vert^{-1}\Big[ \int_{D_{l}}(x-z_l) dx\Big]
\nonumber \\
&+\kappa_{0}^{2}\vert{ D_l}\vert^{-1}\Big[ \int_{D_{l}}(x-z_l) dx\Big]\Bigg]\cdot\sum_{{m=1} \atop {m \neq l}}^{M}\nabla_x\Phi_{\kappa_{0}}(z_{l},z_m) Q_m \nonumber \\
&+\Bigg[\kappa_0^2 -\frac{\rho_{0}}{\rho_{l}-\rho_{0}}(\kappa_l^2-\kappa_0^2)+\frac{\rho_{0}}{\rho_{l}-\rho_{0}}\frac{i}{4\pi}(\kappa_0^3-\kappa_l^3)
Cap_l\Bigg]\sum_{{m=1} \atop {m \neq l}}^{M}\nabla_t\Phi_{\kappa_0}(z_l,z_m)\cdot\, V_m \nonumber\\
=&\Bigg[-\kappa_0^2
-\frac{\rho_{0}}{\rho_{l}-\rho_{0}}(\kappa_0^2-\kappa_l^2)
+\frac{\rho_{0}}{\rho_{l}-\rho_{0}}\frac{i}{4\pi}(\kappa_l^3-\kappa_0^3)Cap_l 
-\frac{\rho_{0}}{\rho_{l}-\rho_{0}}\frac{i}{4\pi}\kappa_0^2(\kappa_l-\kappa_0)Cap_l \Bigg]u^{I}(z_l)+Er_{6,l}.\nonumber
\end{align}
%\label{mult-obs-109r6-pr}
%\end{equation}
%
To proceed further, we shall use the following result describing the structure of $V_m$. Let $\overline{z}_{l}:=\frac{1}{\vert \partial D_{l} \vert} \int_{\partial D_{l}} s \ d\sigma_{l}(s) .$
\begin{proposition}\label{estimate-Vm}
The term $V_m$ can be written as \[V_m:=V^{dom}_{m}+V^{rem}_{m}, \]
where
%\begin{equation}
\begin{align}\label{V-dom}
&V^{dom}_{m}:=\sum_{{n=1} \atop {n \neq m}}^{M} (\overline{z}_{m}-z_{m}) \Big(\lambda_{m} -\frac{1}{2} \Big)^{-1} \kappa_0^2\vert{ D_m}\vert  \Phi_{\kappa_0}(z_m,z_n) Q_{n}-\frac{1}{8 \pi} \kappa_{0}^{2} (\overline{z}_{m}-z_{m}) (\lambda_{m}-\frac{1}{2})^{-1} \hat{A}_{m} Q_{m} \\
&-\frac{1}{8 \pi} \kappa_{0}^{2} \Big(1-\frac{\rho_{m}}{\rho_{0}}\Big)^{-1} (\overline{z}_{m}-z_{m}) (\lambda_{m}-\frac{1}{2})^{-1} \hat{A}_{m} Q_{m}\nonumber\\
&-\frac{1}{8\pi} \kappa_{0}^{2} \Big(1-\frac{\rho_{m}}{\rho_{0}}\Big)^{-1} (\overline{z}_{m}-z_{m}) (\lambda_{m}-\frac{1}{2})^{-1} \hat{A}_{m} Cap_{m} \sum_{{n=1} \atop {n \neq m}}^{M} \Phi_{\kappa_{0}}(z_{m},z_{n}) Q_{n} \nonumber\\
&-\frac{1}{8\pi} \kappa_{0}^{2} \Big(1-\frac{\rho_{m}}{\rho_{0}}\Big)^{-1} (\overline{z}_{m}-z_{m}) (\lambda_{m}-\frac{1}{2})^{-1}  \int_{\partial D_{m}} (A_{m}(s)-\hat{A}_{m}) \Big(\sum_{{n=1} \atop {n \neq m}}^{M} \Phi_{\kappa_{0}}(z_{m},z_{n}) Q_{n} \Big) (S^{0}_{D_{m}})^{-1}(s) d\sigma_{m}(s),\nonumber
\end{align}
%\label{V-dom}
%\end{equation}
and $V_{m}^{rem} $ satisfies the estimate
\begin{equation}
%\begin{aligned}
\vert V^{rem}_{m} \vert
=O(a^{3-\gamma})+O\Big( \Big(a^{3}+a^{4-\gamma}+\frac{a^{5-\gamma}}{d^2}+\frac{a^{5-\gamma}}{d^{3\alpha}}+\frac{a^4}{d^2}+\frac{a^4}{d^{3 \alpha}}+\frac{a^{6-\gamma}}{d^{3}}+\frac{a^{6-\gamma}}{d^{3 \alpha+1}} \Big) \norm{\phi} \Big).
%\end{aligned}
\label{V-rem}
\end{equation}
\end{proposition}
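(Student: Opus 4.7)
The plan is to view \eqref{mult-obs-108r-int} as a single integral equation for $\phi_m$, invert its principal operator $\lambda_m I + (K^0_{D_m})^*$ with $\lambda_m := \tfrac{1}{2}\tfrac{\rho_0+\rho_m}{\rho_0-\rho_m}$, and extract $V_m$ through a duality/adjoint argument. The cornerstone observation, which explains why the universal prefactor $(\overline{z}_m - z_m)(\lambda_m - \tfrac{1}{2})^{-1}$ appears in every term of $V^{dom}_m$, is that constants are eigenfunctions of $\lambda_m I + K^0_{D_m}$: since $K^0_{D_m}(1) = -\tfrac{1}{2}$, one has
\begin{equation*}
(\lambda_m I + K^0_{D_m})^{-1}(c) = \bigl(\lambda_m - \tfrac{1}{2}\bigr)^{-1} c \qquad \text{for every constant } c \in \mathbb{R}^3.
\end{equation*}

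Concretely, I would rewrite \eqref{mult-obs-108r-int} as $(\lambda_m I + (K^0_{D_m})^*)\phi_m = -R_m$, with $R_m$ gathering every source term on its right hand side (the incident normal derivative $\partial_{\nu^m}u^I$, the coupling $\sum_{n\neq m}\partial_{\nu^m}(S^{\kappa_0}_{D_n}\phi_n)$, the local correction $[(K^{\kappa_0}_{D_m})^* - (K^0_{D_m})^*]\phi_m$, and the two pieces with prefactor $(1-\rho_m/\rho_0)^{-1}$ involving $\psi_m$ and $g_m$). Inversion and the adjoint give
\begin{equation*}
V_m = -\int_{\partial D_m} \bigl[(\lambda_m I + K^0_{D_m})^{-1}(s - z_m)\bigr](s)\, R_m(s)\, d\sigma_m(s),
\end{equation*}
and I split $s - z_m = (\overline{z}_m - z_m) + (s - \overline{z}_m)$. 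The constant piece produces, via the eigenfunction identity, the scalar contribution $-(\lambda_m - \tfrac{1}{2})^{-1}(\overline{z}_m - z_m)\int_{\partial D_m} R_m\, d\sigma_m$, which I evaluate term-by-term using Lemma \ref{Claimest}: \eqref{claim-normsingle-Lay} yields the first term of $V^{dom}_m$ with factor $\kappa_0^2|D_m|\Phi_{\kappa_0}(z_m,z_n)Q_n$; \eqref{claim-adjdblle-Lay-dif-prime} applied to $\phi_m$ with $\kappa_m=0$ yields the second ($\hat{A}_m Q_m$) term. For the remaining three terms I use the observation $\int_{\partial D_m}[\tfrac{1}{2}I + (K^{\kappa_0}_{D_m})^*]g_m = \int_{\partial D_m}[(K^{\kappa_0}_{D_m})^* - (K^0_{D_m})^*]g_m$ (immediate from $K^0_{D_m}(1) = -\tfrac{1}{2}$), then \eqref{claim-adjdblle-Lay-dif-g}, the substitution \eqref{def-Sg-c} for $g_m$, the identity $\int_{\partial D_m}(S^0_{D_m})^{-1}(1)\,d\sigma_m = Cap_m$, together with the $\psi_m$-contribution expanded through \eqref{claim-int- diff-phpsi}; careful matching of leading orders yields the three $(1-\rho_m/\rho_0)^{-1}$-prefactored terms of $V^{dom}_m$, including the $(A_m - \hat{A}_m)$ corrector.

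The contribution of the mean-zero piece $(s-\overline{z}_m)$ together with all higher-order corrections goes into $V^{rem}_m$. One has $\|(\lambda_m I + K^0_{D_m})^{-1}(s - \overline{z}_m)\|_{L^2(\partial D_m)} = O(a^2)$, by uniform-in-$a$ invertibility of $\lambda_m I + K^0_{D_m}$ on the orthogonal complement of the constants, justified after the rescaling $D_m = z_m + a B_m$ and using that $\tfrac{1}{2}I + K^0_{B_m}$ has kernel spanned by constants while $\lambda_m - \tfrac{1}{2} = O(a^\beta)$ is a vanishing perturbation. The dominant $O(a^{3-\gamma})$ piece of \eqref{V-rem} arises from the pairing of $(\overline{z}_m - z_m)(\lambda_m - \tfrac{1}{2})^{-1}$ with the incident contribution $\int_{\partial D_m}\partial_{\nu^m}u^I \approx -\kappa_0^2|D_m|u^I(z_m)$, of size $O(a)\cdot O(a^{-\beta})\cdot O(a^3) = O(a^{4-\beta}) = O(a^{3-\gamma})$ since $\beta = 1+\gamma$. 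Collecting the remaining contributions from the mean-zero pairing and from $Err1_m,\ldots,Er_{3,m}$, and using the scalings $|D_m|\sim a^3$, $Cap_m\sim a$, $\hat{A}_m\sim a^3$, $\|A_m - \hat{A}_m\|_{L^2(\partial D_m)} = O(a^3)$, $(\lambda_m - \tfrac{1}{2})^{-1}\sim a^{-\beta}$, the a priori bounds on $\|\phi_l\|, \|\psi_m\|, \|g_m\|$, and the counting estimate \eqref{way-counting} for the $d^{-k}$ sums, yields the remainder bound \eqref{V-rem}.

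The main technical difficulty will be the consistent bookkeeping of cross-couplings: the $\psi_m$ and $g_m$ pieces of $R_m$ are themselves expressed through \eqref{claim- diff-phpsi}, \eqref{claim-int- diff-phpsi} and \eqref{def-Sg-c} in terms of the very unknowns $Q_n$, $V_n$ and $\phi_l$ that also enter the algebraic system. One must therefore verify that, after all substitutions, precisely the five scalar factors isolated above survive at the announced leading order, while the subleading combinations of $(\kappa_m^2 - \kappa_0^2)$-weighted terms from $\psi_m$ and $(\kappa_0 - \kappa_m)$-weighted terms from $g_m$ (coming with further $Cap_m$ factors) fit into \eqref{V-rem}.
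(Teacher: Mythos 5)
Your proposal follows essentially the same route as the paper's own proof in Section \ref{Vm}: dualize each source term of \eqref{mult-obs-108r1} against $[\lambda_m Id+K^{0}_{D_m}]^{-1}(s-z_m)$, split $s-z_m$ into the constant $(\overline{z}_m-z_m)$ plus the mean-free part $(s-\overline{z}_m)$, use the relation $[\lambda_m Id+K^{0}_{D_m}](1)=(\lambda_m-\tfrac12)$ for the constant piece and the uniform $L^2_0$-invertibility (giving $O(a^2)$) for the other, and then evaluate the resulting boundary integrals with Lemma \ref{Claimest}, the incident-field pairing producing exactly the leading $O(a^{3-\gamma})$ term of \eqref{V-rem}. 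The only bookkeeping difference is that in the paper the three $(1-\rho_m/\rho_0)^{-1}$-prefactored terms of $V^{dom}_m$ arise solely from the $\psi_m$-term $K^p_{\psi_m}$, while the $g_m$-type term $K^p_{Gg_m}$ is bounded wholesale by $O(a^3\Vert\psi_m\Vert)+O(a^{5}\rho_m^{-1}\Vert\psi_m\Vert)$ and absorbed entirely into $V^{rem}_m$.
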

\begin{proof}
We refer to section \ref{Vm} for a proof of this result.
\end{proof}
\begin{remark}\label{split-Vm}
We shall also sometimes, for the sake of the analysis, split the term $V^{dom}_m $ into $V^{dom}_{m,1} $ and $ V^{dom}_{m,2}$, where
\begin{align}
&V^{dom}_{m,1}:=-\frac{1}{8 \pi} \kappa_{0}^{2} (\overline{z}_{m}-z_{m}) (\lambda_{m}-\frac{1}{2})^{-1} \hat{A}_{m} Q_{m} -\frac{1}{8 \pi} \kappa_{0}^{2} \Big(1-\frac{\rho_{m}}{\rho_{0}}\Big)^{-1} (\overline{z}_{m}-z_{m}) (\lambda_{m}-\frac{1}{2})^{-1} \hat{A}_{m} Q_{m},
\label{Vm-dom-1}
\intertext{and }
\label{Vm-dom-2}
&V^{dom}_{m,2}:=\sum_{{n=1} \atop {n \neq m}}^{M} (\overline{z}_{m}-z_{m}) \Big(\lambda_{m} -\frac{1}{2} \Big)^{-1} \kappa_0^2\vert{ D_m}\vert  \Phi_{\kappa_0}(z_m,z_n) Q_{n}\\
&-\frac{1}{8\pi} \kappa_{0}^{2} \Big(1-\frac{\rho_{m}}{\rho_{0}}\Big)^{-1} (\overline{z}_{m}-z_{m}) (\lambda_{m}-\frac{1}{2})^{-1} \hat{A}_{m} Cap_{m} \sum_{{n=1} \atop {n \neq m}}^{M} \Phi_{\kappa_{0}}(z_{m},z_{n}) Q_{n}\nonumber\\
&-\frac{1}{8\pi} \kappa_{0}^{2} \Big(1-\frac{\rho_{m}}{\rho_{0}}\Big)^{-1} (\overline{z}_{m}-z_{m}) (\lambda_{m}-\frac{1}{2})^{-1}  \int_{\partial D_{m}} (A_{m}(s)-\hat{A}_{m}) \Big(\sum_{{n=1} \atop {n \neq m}}^{M} \Phi_{\kappa_{0}}(z_{m},z_{n}) Q_{n} \Big) (S^{0}_{D_{m}})^{-1}(s) d\sigma_{m}(s).\nonumber
\end{align}
%\label{Vm-dom-2}
%\end{equation}
Note that unlike the terms in $V^{dom}_{m,1} $, the terms in $V^{dom}_{m,2} $ contain a summation and this makes a difference in the manner we deal with these terms and therefore we chose to distinguish between them.\QEDB
\end{remark}
Next we need to deal with the quantities $V_{m} $ and link them to $Q_{m} $ to derive a closed system involving only $Q_{m}$. For this, we use the splitting of $V_{m}$ into its dominant and remainder parts, $V_{m}^{dom} $ and $V_{m}^{rem} $ respectively, to obtain
%\begin{equation}
\begin{align}\label{mult-obs-109r6-pr2}
& \Bigg[\vert{ D_l}\vert^{-1}\left(\frac{\rho_{l}}{\rho_{l}-\rho_{0}}+\frac{1}{8\pi}\left[\frac{\rho_{0}}{\rho_{l}-\rho_{0}}(\kappa_l^2-\kappa_0^2)-\kappa_0^2\right]{\hat{A}_l}\right) +\frac{i}{4\pi}\kappa_0^3+\frac{\rho_{0}}{\rho_{l}-\rho_{0}}\frac{i}{4\pi}(\kappa_0^3-\kappa_l^3)\\
&+\frac{\rho_{0}}{\rho_{l}-\rho_{0}}\frac{1}{16\pi^2}(\kappa_l^3-\kappa_0^3)(\kappa_0-\kappa_l)Cap_l
-\frac{\rho_{0}}{\rho_{l}-\rho_{0}}\frac{i}{4\pi}(\kappa_l^2-\kappa_0^2)(\kappa_0-\kappa_l)
-\frac{\rho_{0}}{\rho_{l}-\rho_{0}} \frac{i}{4\pi}(\kappa_0-\kappa_l)\kappa_0^2\Bigg]\;Q_l \;\nonumber\\
&+
\Bigg[\frac{\rho_{0}}{\rho_{l}-\rho_{0}}\frac{i}{4\pi}(\kappa_0^3-\kappa_l^3)Cap_l \left(1-\frac{i\kappa_l}{4\pi}Cap_l\right)-\frac{\rho_{0}}{\rho_{l}-\rho_{0}}(\kappa_l^2-\kappa_0^2)\left(1-\frac{i\kappa_l}{4\pi}Cap_l\right)
\nonumber\\
&-\frac{\rho_{0}}{\rho_{l}-\rho_{0}}\frac{i}{4\pi}\kappa_0^2(\kappa_0-\kappa_l)Cap_l+\kappa_0^2\Bigg]\sum_{{m=1} \atop {m \neq l}}^{M}\Phi_{\kappa_0}(z_l,z_m)Q_m\nonumber\\
&+\Bigg[\frac{\rho_{0}}{\rho_{l}-\rho_{0}}\frac{i}{4\pi}(\kappa_0^3-\kappa_l^3)\int_{\partial D_l}\left(S_{D_l}^0\right)^{-1}(\cdot-z_l)(s) d\sigma_{l}(s)-\frac{\rho_{0}}{\rho_{l}-\rho_{0}}(\kappa_l^2-\kappa_0^2)\vert{ D_l}\vert^{-1}\Big[ \int_{D_{l}}(x-z_l) dx\Big]
\nonumber\\
&+\kappa_{0}^{2}\vert{ D_l}\vert^{-1}\Big[ \int_{D_{l}}(x-z_l) dx\Big]\Bigg]\cdot\sum_{{m=1} \atop {m \neq l}}^{M}\nabla_x\Phi_{\kappa_{0}}(z_{l},z_m) Q_m\nonumber\\
&+\Bigg[\kappa_0^2 -\frac{\rho_{0}}{\rho_{l}-\rho_{0}}(\kappa_l^2-\kappa_0^2)+\frac{\rho_{0}}{\rho_{l}-\rho_{0}}\frac{i}{4\pi}(\kappa_0^3-\kappa_l^3)
Cap_l\Bigg]\sum_{{m=1} \atop {m \neq l}}^{M}\nabla_t\Phi_{\kappa_0}(z_l,z_m)\cdot\, V^{dom}_m \nonumber\\
=&\Bigg[-\kappa_0^2
-\frac{\rho_{0}}{\rho_{l}-\rho_{0}}(\kappa_0^2-\kappa_l^2)
\Bigg]u^{I}(z_l)-\frac{1}{8\pi}\left[\frac{\rho_{0}}{\rho_{l}-\rho_{0}}(\kappa_l^2-\kappa_0^2)-\kappa_0^2\right] \vert{ D_l}\vert^{-1} \int_{\partial D_{l}} \left(A_l(s)-\hat{A}_l(s)\right) \phi_{l}(s) d\sigma_{l}(s)\nonumber\\
&+O\left((\kappa_0-\kappa_l)a\right)+O\left(\sum_{{m=1} \atop {m \neq l}}^{M}\frac{1}{d_{ml}^2} \vert{V^{rem}_m}\vert\right)+Er_{6,l}.\nonumber
%=&\Bigg[-\kappa_0^2
%-\frac{\rho_{0}}{\rho_{l}-\rho_{0}}(\kappa_0^2-\kappa_l^2)
%\Bigg]u^{I}(z_l)+O\left((\kappa_0-\kappa_l)a\right)+O\left(\sum_{{m=1} \atop {m \neq l}}^{M}\frac{1}%{d_{ml}^2} \vert{V_m}\vert\right)+Er_6\nonumber\\
%&-\frac{1}{8\pi}\left[\frac{\rho_{0}}{\rho_{l}-\rho_{0}}(\kappa_l^2-\kappa_0^2)-\kappa_0^2\right] \vert{ %D_l}\vert^{-1} \int_{\partial D_{l}} \left(A_l(s)-\hat{A}_l(s)\right) \phi_{l}(s) ds.\nonumber
\end{align}
%\label{mult-obs-109r6-pr2}
%\end{equation}
%\subsubsection{In case of  non-balls for the case $\kappa_0\neq\kappa_l$:}
We next have to deal with the term $\int_{\partial D_{l}} \left(A_l(s)-\hat{A}_l(s)\right) \phi_{l}(s) d\sigma_{l}(s) $ in the above identity.
The following lemma provides an approximation of this term in terms of $Q_{l}$.
\begin{proposition}\label{Est-intAlminhatAlphil}
We have the following estimate
%\begin{equation}
\begin{align}\label{mult-obs-108r-pr-th}
\int_{\partial D_l}\Big(A_l(s)-\hat{A}_l\Big)\phi_{l}\ d\sigma_{l}(s)
 {=}&-R_l\cdot \sum_{{m=1} \atop {m \neq l}}^{M} \nabla_s\Phi_{\kappa_0}(z_l,z_m) Q_m\\
 &-\frac{\rho_{0}}{\rho_{l}-\rho_{0}}  \frac{i}{4\pi}(\kappa_0-\kappa_l)\big[Q_l+\sum_{m\neq\,l}Cap_l\Phi_{\kappa_0}(z_l,z_m)Q_m\big]\Big(8\pi\vert{D_l}\vert+\hat{A}_lCap_l\Big)\nonumber\\
 &+O\left(a^4+a^3\sum_{{m=1}\atop {m\neq l}}^{M}\frac{a^3}{d_{ml}^3}\|\phi_m\|+a^5\|\phi_l\|+a^5\|\psi_l\|\right),\nonumber
 \end{align}
 %\label{mult-obs-108r-pr-th}
 %\end{equation}
 where $R_l:=\int_{\partial D_l} \Big[\Big(\lambda_{l} Id+ K^{0}_{D_{l}} \Big)^{-1}(A_l(\cdot)-\hat{A}_l)\Big] (s)\nu_l(s)\,d\sigma_{l}(s) $ and $\lambda_l:=\frac{1}{2} \frac{\rho_0+\rho_l}{\rho_0-\rho_l} $.
\end{proposition}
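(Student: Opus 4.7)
The plan is to exploit duality with respect to the operator $\lambda_l Id + K^0_{D_l}$ acting on $L^2(\partial D_l)$. Since $A_l(s) - \hat{A}_l$ has mean zero on $\partial D_l$, and since $\lambda_l = \frac{1}{2}\frac{\rho_0+\rho_l}{\rho_0-\rho_l}$ approaches $\frac{1}{2}$ at a rate $O(a^\beta)$ but $(A_l - \hat{A}_l)$ is orthogonal to the kernel of $\frac{1}{2}Id + K^0_{D_l}$, the element
$$\eta_l := \bigl(\lambda_l Id + K^0_{D_l}\bigr)^{-1}(A_l - \hat{A}_l)$$
is well-defined with $\|\eta_l\|_{L^2(\partial D_l)}$ of the same order as $A_l-\hat{A}_l$, namely $O(a^3)$ by \eqref{estimate-Al}, and consequently $R_l = \int_{\partial D_l}\eta_l\,\nu_l\,d\sigma_l = O(a^4)$. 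Transposing the operator then gives
$$\int_{\partial D_l}(A_l - \hat{A}_l)\phi_l\,d\sigma_l = \int_{\partial D_l}\bigl[(\lambda_l Id + (K^0_{D_l})^*)\phi_l\bigr]\,\eta_l\,d\sigma_l,$$
which converts the task into testing the already-known source side of the integral equation \eqref{mult-obs-108r-int} against the dual function $\eta_l$.

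Now I would substitute for $-(\lambda_l Id + (K^0_{D_l})^*)\phi_l$ using \eqref{mult-obs-108r-int}, producing four families of source terms that I treat in turn. First, the incident normal derivative $\partial u^I/\partial \nu^l$, after Taylor expansion of $\nabla u^I$ at $z_l$, contributes $\nabla u^I(z_l)\cdot R_l = O(a^4)$, which is absorbed in the error. Second, the interaction sum $\sum_{m\ne l}\partial(S^{\kappa_0}_{D_m}\phi_m)/\partial\nu^l$, after expanding $\nabla S^{\kappa_0}_{D_m}\phi_m$ around $z_l$ and invoking $Q_m = \int \phi_m\,d\sigma_m$, yields the main term $-R_l\cdot\sum_{m\ne l}\nabla_s\Phi_{\kappa_0}(z_l,z_m)Q_m$, the remainder being controlled by $a^3\sum_{m\ne l}\frac{a^3}{d_{ml}^3}\|\phi_m\|$ through the bounds in Lemma \ref{Claimest}. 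Third, the operator difference $[(K^{\kappa_0}_{D_l})^* - (K^0_{D_l})^*]\phi_l$ is controlled by \eqref{claim-adjdblle-Lay-dif-prime}, producing an $O(a^5\|\phi_l\|)$ error. Fourth, the $\kappa_l$-correction $(1-\rho_l/\rho_0)^{-1}\bigl([(K^{\kappa_l}_{D_l})^* - (K^{\kappa_0}_{D_l})^*]\psi_l + [\frac{1}{2}Id + (K^{\kappa_0}_{D_l})^*]g_l\bigr)$ is expanded using \eqref{claim-adjdblle-Lay-dif-prime} on $\psi_l$ and the explicit form \eqref{def-Sg-c} of $g_l$ (noting $g_l = \tfrac{i(\kappa_0-\kappa_l)}{4\pi}[Cap_l\,u^I(z_l) + Q_l + \sum_{m\ne l}\Phi_{\kappa_0}(z_l,z_m)Q_m Cap_l](S^0_{D_l})^{-1}(1) + Err9_l$).

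The main technical obstacle is the extraction of the exact $(\kappa_0-\kappa_l)$ term
$$-\tfrac{\rho_0}{\rho_l-\rho_0}\tfrac{i}{4\pi}(\kappa_0-\kappa_l)\bigl[Q_l + \sum_{m\ne l}Cap_l\Phi_{\kappa_0}(z_l,z_m)Q_m\bigr]\bigl(8\pi|D_l| + \hat{A}_l Cap_l\bigr).$$
Getting the factor $8\pi|D_l| + \hat{A}_l Cap_l$ requires splitting $A_l = \hat{A}_l + (A_l - \hat{A}_l)$ inside the test, combining the contribution from $[\frac{1}{2}Id + (K^{\kappa_0}_{D_l})^*]g_l$ against $\eta_l$ with the $(K^{\kappa_l})^* - (K^{\kappa_0})^*$ contribution, and then using the explicit integral identities \eqref{int-invs-Al}–\eqref{int-invs-Al-2} together with $\int_{\partial D_l}(A_l-\hat{A}_l)\,d\sigma_l = 0$. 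Here one must guard against apparent cancellations between the two sources and verify that the residual is exactly the stated product, the extra factor $Cap_l$ arising from the $(S^0_{D_l})^{-1}(1)$ inside $g_l$ and the factor $8\pi|D_l|$ from \eqref{int-invs-Al}. The final error bound then follows by assembling $R_l = O(a^4)$, $\|A_l-\hat{A}_l\|_{L^2}=O(a^3)$, the bounds on $Err2_l, Err3_l, Err4_m, Err8_l, Err9_l$ from Lemmas \ref{Claimest} and \ref{claim-g}, and the counting argument \eqref{way-counting}, yielding the claimed remainder $O\bigl(a^4 + a^3\sum_{m\ne l}\tfrac{a^3}{d_{ml}^3}\|\phi_m\| + a^5\|\phi_l\| + a^5\|\psi_l\|\bigr)$.
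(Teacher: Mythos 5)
Your proposal is correct and follows essentially the same route as the paper's proof in Section \ref{proof-Al-hat}: transpose $(\lambda_l Id + K^{0}_{D_l})^{-1}$ onto $A_l-\hat A_l$, substitute the integral equation \eqref{mult-obs-108r-int}, estimate the four source families with the same lemmas, and extract the $(\kappa_0-\kappa_l)$ term via \eqref{int-invs-Al}. The only cosmetic differences are that you package the last source through the explicit form \eqref{def-Sg-c} of $g_l$ while the paper expands $[\tfrac12 Id+(K^{\kappa_0}_{D_l})^*](S^{\kappa_0}_{D_l})^{-1}(S^{\kappa_0}_{D_l}-S^{\kappa_l}_{D_l})\psi_l$ directly and then invokes \eqref{claim-int- diff-phpsi}, and that the term $[(K^{\kappa_l}_{D_l})^*-(K^{\kappa_0}_{D_l})^*]\psi_l$ in fact contributes only $O(a^5\|\psi_l\|)$, so no cancellation with it is needed to produce the factor $8\pi\vert D_l\vert+\hat A_l Cap_l$.
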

\begin{proof}
We defer the proof of this result to section \ref{proof-Al-hat}.
\end{proof}
Making use of Proposition \ref{Est-intAlminhatAlphil} we can simplify \eqref{mult-obs-109r6-pr2} as below.
%\begin{equation}
\begin{align}\label{mult-obs-109r6-pr3-int}
& \Bigg[\vert{ D_l}\vert^{-1}\left(\frac{\rho_{l}}{\rho_{l}-\rho_{0}}+\frac{1}{8\pi}\left[\frac{\rho_{0}}{\rho_{l}-\rho_{0}}(\kappa_l^2-\kappa_0^2)-\kappa_0^2\right]{\hat{A}_l}\right) +\frac{i}{4\pi}\kappa_0^3+\frac{\rho_{0}}{\rho_{l}-\rho_{0}}\frac{i}{4\pi}(\kappa_0^3-\kappa_l^3)\\
&+\frac{\rho_{0}}{\rho_{l}-\rho_{0}}\frac{1}{16\pi^2}(\kappa_l^3-\kappa_0^3)(\kappa_0-\kappa_l)Cap_l
-\frac{\rho_{0}}{\rho_{l}-\rho_{0}}\frac{i}{4\pi}(\kappa_l^2-\kappa_0^2)(\kappa_0-\kappa_l)
-\frac{\rho_{0}}{\rho_{l}-\rho_{0}} \frac{i}{4\pi}(\kappa_0-\kappa_l)\kappa_0^2\Bigg]\;Q_l \; \nonumber\\
&+
\Bigg[\frac{\rho_{0}}{\rho_{l}-\rho_{0}}\frac{i}{4\pi}(\kappa_0^3-\kappa_l^3)Cap_l \left(1-\frac{i\kappa_l}{4\pi}Cap_l\right)-\frac{\rho_{0}}{\rho_{l}-\rho_{0}}(\kappa_l^2-\kappa_0^2)\left(1-\frac{i\kappa_l}{4\pi}Cap_l\right)\nonumber
\\
&-\frac{\rho_{0}}{\rho_{l}-\rho_{0}}\frac{i}{4\pi}\kappa_0^2(\kappa_0-\kappa_l)Cap_l+\kappa_0^2\Bigg]\sum_{{m=1} \atop {m \neq l}}^{M}\Phi_{\kappa_0}(z_l,z_m)Q_m \nonumber\\
&+\Bigg[\frac{\rho_{0}}{\rho_{l}-\rho_{0}}\frac{i}{4\pi}(\kappa_0^3-\kappa_l^3)\int_{\partial D_l}\left(S_{D_l}^0\right)^{-1}(\cdot-z_l)(s) d\sigma_{l}(s)-\frac{\rho_{0}}{\rho_{l}-\rho_{0}}(\kappa_l^2-\kappa_0^2)\vert{ D_l}\vert^{-1}\Big[ \int_{D_{l}}(x-z_l) dx\Big]\nonumber
 \\
&+\kappa_{0}^{2}\vert{D_l}\vert^{-1}\Big[ \int_{D_{l}}(x-z_l) dx\Big]\Bigg]\cdot\sum_{{m=1} \atop {m \neq l}}^{M}\nabla_x\Phi_{\kappa_{0}}(z_{l},z_m) Q_m \nonumber\\
&+\Bigg[\kappa_0^2 -\frac{\rho_{0}}{\rho_{l}-\rho_{0}}(\kappa_l^2-\kappa_0^2)+\frac{\rho_{0}}{\rho_{l}-\rho_{0}}\frac{i}{4\pi}(\kappa_0^3-\kappa_l^3)
Cap_l\Bigg]\sum_{{m=1} \atop {m \neq l}}^{M}\nabla_t\Phi_{\kappa_0}(z_l,z_m)\cdot\, V^{dom}_m \nonumber\\
=&\Bigg[-\kappa_0^2
-\frac{\rho_{0}}{\rho_{l}-\rho_{0}}(\kappa_0^2-\kappa_l^2)
\Bigg]u^{I}(z_l)+O\left((\kappa_0-\kappa_l)a\right)+O\left(\sum_{{m=1} \atop {m \neq l}}^{M}\frac{1}{d_{ml}^2} \vert{V^{rem}_m}\vert\right)+Er_{6,l} \nonumber\\
&-\frac{1}{8\pi}\left[\frac{\rho_{0}}{\rho_{l}-\rho_{0}}(\kappa_l^2-\kappa_0^2)-\kappa_0^2\right] \vert{ D_l}\vert^{-1} \Bigg[-R_l\cdot \sum_{{m=1} \atop {m \neq l}}^{M} \nabla_s\Phi_{\kappa_0}(z_l,z_m) Q_m \nonumber\\
 &-\frac{\rho_{0}}{\rho_{l}-\rho_{0}}  \frac{i}{4\pi}(\kappa_0-\kappa_l)\big[Q_l+\sum_{m\neq\,l}Cap_l\Phi_{\kappa_0}(z_l,z_m)Q_m\big]\Big(8\pi\vert{D_l}\vert+\hat{A}_lCap_l\Big) \nonumber\\
 &+O\left(a^4+a^3\sum_{{m=1}\atop {m\neq l}}^{M}\frac{a^3}{d_{ml}^3}\|\phi_m\|+a^5\|\phi_l\|+a^5\|\psi_l\|\right)\Bigg].\nonumber
\end{align}
%\label{mult-obs-109r6-pr3-int}
%\end{equation}
%
%
We can rewrite it further as, by making use of \eqref{InvSsmzl--ori} and the expression for $V^{dom}_{m} $,
%\begin{equation}
\begin{align}\label{mult-obs-109r6-pr3}
& \Bigg[\vert{ D_l}\vert^{-1}\left(\frac{\rho_{l}}{\rho_{l}-\rho_{0}}+\frac{1}{8\pi}\left[\frac{\rho_{0}}{\rho_{l}-\rho_{0}}(\kappa_l^2-\kappa_0^2)-\kappa_0^2\right]{\hat{A}_l}\right) +\frac{i}{4\pi}\kappa_0^3+\frac{\rho_{0}}{\rho_{l}-\rho_{0}}\frac{i}{4\pi}(\kappa_0^3-\kappa_l^3)\\
&
-\frac{\rho_{0}}{\rho_{l}-\rho_{0}}\frac{i}{4\pi}\Big[(\kappa_l^2-\kappa_0^2)(\kappa_0-\kappa_l)
+(\kappa_0-\kappa_l)\kappa_0^2\Big]\;\nonumber\\
&-\frac{i}{32\pi^2}(\kappa_0-\kappa_l)\left[\frac{\rho_{0}}{\rho_{l}-\rho_{0}}(\kappa_l^2-\kappa_0^2)-\kappa_0^2\right] \vert{ D_l}\vert^{-1}\frac{\rho_{0}}{\rho_{l}-\rho_{0}}  \left(8\pi\vert{D_l}\vert+\hat{A}_lCap_l\right)\Bigg]\;Q_l \nonumber\\
&+\Bigg[\kappa_0^2-\frac{\rho_{0}}{\rho_{l}-\rho_{0}}(\kappa_l^2-\kappa_0^2)\left(1-\frac{i\kappa_l}{4\pi}Cap_l\right)+\frac{\rho_{0}}{\rho_{l}-\rho_{0}}\frac{i}{4\pi}(\kappa_0^3-\kappa_l^3)Cap_l-\frac{\rho_{0}}{\rho_{l}-\rho_{0}}\frac{i}{4\pi}\kappa_0^2(\kappa_0-\kappa_l)Cap_l\nonumber
\\
&-\frac{i}{32\pi^2}(\kappa_0-\kappa_l)\left[\frac{\rho_{0}}{\rho_{l}-\rho_{0}}(\kappa_l^2-\kappa_0^2)-\kappa_0^2\right] \vert{ D_l}\vert^{-1}\frac{\rho_{0}}{\rho_{l}-\rho_{0}}  \left(8\pi\vert{D_l}\vert+\hat{A}_lCap_l\right)Cap_l\Bigg]\sum_{{m=1} \atop {m \neq l}}^{M}\Phi_{\kappa_0}(z_l,z_m)Q_m \nonumber\\
&+\Bigg[-\frac{\rho_{0}}{\rho_{l}-\rho_{0}}(\kappa_l^2-\kappa_0^2)\vert{ D_l}\vert^{-1}\Big[ \int_{D_{l}}(x-z_l) dx\Big]
+\kappa_{0}^{2}\vert{ D_l}\vert^{-1}\Big[ \int_{D_{l}}(x-z_l) dx\Big] 
\nonumber\\
&\qquad -\frac{1}{8\pi}\left[\frac{\rho_{0}}{\rho_{l}-\rho_{0}}(\kappa_l^2-\kappa_0^2)-\kappa_0^2\right] \vert{ D_l}\vert^{-1} R_l\Bigg]\cdot\sum_{{m=1} \atop {m \neq l}}^{M}\nabla_x\Phi_{\kappa_{0}}(z_{l},z_m) Q_m\nonumber\\
&+\Bigg[\kappa_0^2 -\frac{\rho_{0}}{\rho_{l}-\rho_{0}}(\kappa_l^2-\kappa_0^2)+\frac{\rho_{0}}{\rho_{l}-\rho_{0}}\frac{i}{4\pi}(\kappa_0^3-\kappa_l^3)
Cap_l\Bigg]\kappa_{0}^{2} \sum_{{m=1} \atop {m \neq l}}^{M} (\overline{z}_{m}-z_{m}) (\lambda_{m}-\frac{1}{2})^{-1} \nabla_t\Phi_{\kappa_0}(z_l,z_m)\cdot\nonumber\\
&\qquad \Big[-\frac{1}{8 \pi} \hat{A}_{m} Q_{m} -\frac{1}{8 \pi} \Big(1-\frac{\rho_{m}}{\rho_{0}} \Big)^{-1} \hat{A}_{m} Q_{m}+ \vert D_{m} \vert \sum_{{n=1} \atop {n \neq m}}^{M} \Phi_{\kappa_{0}}(z_{m},z_{n}) Q_{n}\nonumber\\
&\qquad \qquad -\frac{1}{8 \pi} \Big(1-\frac{\rho_{m}}{\rho_{0}} \Big)^{-1} \hat{A}_{m} Cap_{m} \sum_{{n=1} \atop {n \neq m}}^{M} \Phi_{\kappa_{0}}(z_{m},z_{n}) Q_{n}\nonumber\\
&\qquad \qquad -\frac{1}{8\pi} \Big(1-\frac{\rho_{m}}{\rho_{0}} \Big)^{-1} \Big(\sum_{{n=1} \atop {n \neq m}}^{M} \Phi_{k_{0}}(z_{m},z_{n}) Q_{n} \Big) \int_{\partial D_{m}} (A_{m}(s)-\hat{A}_{m}) (S^{0}_{D_{m}})^{-1}(1)(s)\ d\sigma_{m}(s) \Big] \nonumber\\
=&\Bigg[-\kappa_0^2
-\frac{\rho_{0}}{\rho_{l}-\rho_{0}}(\kappa_0^2-\kappa_l^2)
\Bigg]u^{I}(z_l)\nonumber\\
&+O\left((\kappa_0-\kappa_l)a\right)+O\left(\sum_{{m=1} \atop {m \neq l}}^{M}\frac{1}{d_{ml}^2} \vert{V^{rem}_m}\vert\right)+Er_{6,l}+O\left(a+\sum_{{m=1}\atop {m\neq l}}^{M}\frac{a^3}{d_{ml}^3}\|\phi_m\|+a^2\|\phi_l\|+a^2\|\psi_l\|\right)\nonumber\\
=&-\kappa_l^2u^{I}(z_l)+O\left((\kappa_0-\kappa_l)a\right)+O\left(\sum_{{m=1} \atop {m \neq l}}^{M}\frac{1}{d_{ml}^2} \vert{V^{rem}_m}\vert\right) \nonumber \\
&+Er_{6,l}+O\left(a+\sum_{{m=1}\atop {m\neq l}}^{M}\frac{a^3}{d_{ml}^3}\|\phi_m\|+a^2\|\phi_l\|+a^2\|\psi_l\|\right)+O(\rho_{l}(\kappa_{0}-\kappa_{l})),\nonumber
\end{align}
%\label{mult-obs-109r6-pr3}
%\end{equation}
where to get the last identity, we use the fact that
%\begin{equation}
 \begin{align}
&\frac{\rho_{0}}{\rho_{l}-\rho_{0}}(\kappa_0^2-\kappa_l^2)\,=\,\left(1-\frac{\rho_{l}}{\rho_{0}}\right)^{-1}(\kappa_l^2-\kappa_0^2)\,=\,\kappa_l^2-\kappa_0^2+O\left(\rho_l(\kappa_0-\kappa_l)\right).
\label{obs1coeui}
\end{align}
%\end{equation}
Let us write 
%\begin{equation}
\begin{align}\label{def-of-Jl}
J_l&:=\Bigg[\kappa_0^2-\frac{\rho_{0}}{\rho_{l}-\rho_{0}}(\kappa_l^2-\kappa_0^2)\left(1-\frac{i\kappa_l}{4\pi}Cap_l\right)+\frac{\rho_{0}}{\rho_{l}-\rho_{0}}\frac{i}{4\pi}(\kappa_0^3-\kappa_l^3)Cap_l-\frac{\rho_{0}}{\rho_{l}-\rho_{0}}\frac{i}{4\pi}\kappa_0^2(\kappa_0-\kappa_l)Cap_l
\\
&\textcolor{blue}{-}\frac{i}{32\pi^2}(\kappa_0-\kappa_l)\left[\frac{\rho_{0}}{\rho_{l}-\rho_{0}}(\kappa_l^2-\kappa_0^2)-\kappa_0^2\right] \vert{ D_l}\vert^{-1}\frac{\rho_{0}}{\rho_{l}-\rho_{0}}  \left(8\pi\vert{D_l}\vert+\hat{A}_lCap_l\right)Cap_l\Bigg]\nonumber\\
&=\kappa_l^2+O\left(\rho_l(\kappa_0-\kappa_l)\right)+iO\left((\kappa_0-\kappa_l)a\right), \ (\text{using \eqref{obs1coeui}}),\nonumber
%&=\kappa_l^2+O\left((\kappa_0-\kappa_l)a\right),\nonumber
\end{align}
%\label{def-of-Jl}
%\end{equation}
where using \eqref{obs1coeui}, it follows that the real part $J_{l}^{r} $ of $J_{l}$ satisfies
\begin{equation}
J_{l}^{r}=\kappa_0^2-\frac{\rho_{0}}{\rho_{l}-\rho_{0}}(\kappa_l^2-\kappa_0^2)=\kappa_l^2+\frac{\rho_{l}}{\rho_{0}}(\kappa^{2}_{l}-\kappa^{2}_{0})+O\left(\rho_l^2(\kappa_0-\kappa_l)\right).
\notag
\end{equation}
Let
%\begin{equation}
\begin{align}\label{def-of-Il}
I_l&:= \Bigg[\vert{ D_l}\vert^{-1}\left(\frac{\rho_{l}}{\rho_{l}-\rho_{0}}+\frac{1}{8\pi}\left[\frac{\rho_{0}}{\rho_{l}-\rho_{0}}(\kappa_l^2-\kappa_0^2)-\kappa_0^2\right]{\hat{A}_l}\right) +\frac{i}{4\pi}\kappa_0^3+\frac{\rho_{0}}{\rho_{l}-\rho_{0}}\frac{i}{4\pi}(\kappa_0^3-\kappa_l^3)\\
&\qquad -\frac{\rho_{0}}{\rho_{l}-\rho_{0}}\frac{i}{4\pi}\Big[(\kappa_l^2-\kappa_0^2)(\kappa_0-\kappa_l)
+(\kappa_0-\kappa_l)\kappa_0^2\Big]\;\nonumber\\
&\qquad-\frac{i}{32\pi^2}(\kappa_0-\kappa_l)\left[\frac{\rho_{0}}{\rho_{l}-\rho_{0}}(\kappa_l^2-\kappa_0^2)-\kappa_0^2\right] \vert{ D_l}\vert^{-1}\frac{\rho_{0}}{\rho_{l}-\rho_{0}}  \left(8\pi\vert{D_l}\vert+\hat{A}_lCap_l\right)\Bigg]\nonumber\\
&= \vert{ D_l}\vert^{-1}\left(\frac{\rho_l}{\rho_l-\rho_0}+\frac{1}{8\pi}\left[-\sum_{n=0}^\infty\left(\frac{\rho_{l}}{\rho_{0}}\right)^n (\kappa_l^2-\kappa_0^2)-\kappa_0^2\right]{\hat{A}_l}\right) +\frac{i}{4\pi}\kappa_0^3\nonumber\\
&\quad -\left[1+\sum_{n=1}^\infty\left(\frac{\rho_{l}}{\rho_{0}}\right)^n\right]\frac{i}{4\pi}(\kappa_0^3-\kappa_l^3)
+\left[1+\sum_{n=1}^\infty\left(\frac{\rho_{l}}{\rho_{0}}\right)^n\right]
\frac{i}{4\pi}\Big[(\kappa_l^2-\kappa_0^2)(\kappa_0-\kappa_l)
+(\kappa_0-\kappa_l)\kappa_0^2\Big]\;\nonumber\\
&\quad -\frac{i}{32\pi^2}(\kappa_0-\kappa_l)\left[\left[1+\sum_{n=1}^\infty\left(\frac{\rho_{l}}{\rho_{0}}\right)^n\right](\kappa_l^2-\kappa_0^2)+\kappa_0^2\right] \vert{ D_l}\vert^{-1}\left[1+\sum_{n=1}^\infty\left(\frac{\rho_{l}}{\rho_{0}}\right)^n\right] \left(8\pi\vert{D_l}\vert+\hat{A}_l Cap_l\right)\nonumber\\
&=I_l^\prime+O\left(a^{-1}(\kappa_0-\kappa_l)\rho_l^2\right)+iO\left((\kappa_0-\kappa_l)\rho_l\right),\nonumber
\end{align}
%\label{def-of-Il}
%\end{equation}
where 
\begin{equation}
\begin{split}
I_l^\prime&:= \vert{ D_l}\vert^{-1}\left(\frac{\rho_l}{\rho_l-\rho_0}+\frac{1}{8\pi}\left[-\kappa_l^2-\frac{\rho_{l}}{\rho_{0}}(\kappa_l^2-\kappa_0^2)\right]{\hat{A}_l}\right)+\frac{i}{4\pi}\kappa_0^3-\frac{i}{4\pi}(\kappa_0^3-\kappa_l^3)\\
&+\frac{i}{4\pi}\Big[(\kappa_l^2-\kappa_0^2)(\kappa_0-\kappa_l)
+(\kappa_0-\kappa_l)\kappa_0^2\Big]\;-\frac{i}{32\pi^2}(\kappa_0-\kappa_l)\kappa_l^2 \vert{ D_l}\vert^{-1}(8\pi\vert{D_l}\vert+\hat{A}_lCap_l)\\
&= \vert D_{l} \vert^{-1} \Big(\frac{\rho_l}{\rho_l-\rho_0}+\frac{1}{8\pi}\left[-\kappa_l^2-\frac{\rho_{l}}{\rho_{0}}(\kappa_l^2-\kappa_0^2)\right]{\hat{A}_l} \Big)+ i \Big[\frac{\kappa_{l}^{3}}{4\pi}-\frac{1}{32\pi^{2}} (\kappa_{0}-\kappa_{l}) \kappa^{2}_{l} \vert D_{l} \vert^{-1} \hat{A}_{l} Cap_{l} \Big],
\end{split}
\label{def-of-Ilp}
\end{equation}
\begin{equation}
\begin{split}
F_l&:=-\frac{\rho_{0}}{\rho_{l}-\rho_{0}}(\kappa_l^2-\kappa_0^2)\vert{ D_l}\vert^{-1}\Big[ \int_{D_{l}}(x-z_l) dx\Big]
+\kappa_{0}^{2}\vert{ D_l}\vert^{-1}\Big[ \int_{D_{l}}(x-z_l) dx\Big]\\
&\qquad -\frac{1}{8\pi}\left[\frac{\rho_{0}}{\rho_{l}-\rho_{0}}(\kappa_l^2-\kappa_0^2)-\kappa_0^2\right] \vert{ D_l}\vert^{-1} R_l\\
&=F_l^\prime+O((\kappa_0-\kappa_l)\rho_la) \qquad (\text{using \eqref{obs1coeui}}),
\end{split}
\label{def-of-Fl}
\end{equation}
where
\begin{equation}
\begin{aligned}
F_l^\prime&:=
\kappa_l^2\vert{ D_l}\vert^{-1} \Big[\int_{D_{l}}(x-z_l) dx \Big]+\frac{\kappa_l^2}{8\pi} \vert{ D_l}\vert^{-1} R_l.
\end{aligned}
\label{def-of-Flp}
\end{equation}
We can observe that $J_l$ is not scaling. Also, let us denote the dominating term of $I_l^\prime$ by \[ I_l^{\prime^d}:= \vert D_{l} \vert^{-1} \Big(\frac{\rho_l}{\rho_l-\rho_0}-\frac{1}{8\pi}\kappa_l^2 \hat{A}_l \Big),\] and the remaining terms (of $O(1)$) by \[ I_l^{\prime^s}:=- \frac{1}{8\pi}\frac{\rho_{l}}{\rho_{0}}(\kappa_l^2-\kappa_0^2){\hat{A}_l} \vert D_{l} \vert^{-1} + i \Big[\frac{\kappa_{l}^{3}}{4\pi}-\frac{1}{32\pi^{2}} (\kappa_{0}-\kappa_{l}) \kappa^{2}_{l} \vert D_{l} \vert^{-1} \hat{A}_{l} Cap_{l} \Big].\] Then 
\begin{equation}
\begin{split}
\frac{1}{J_l}&=\left[J_l^r+iJ_{l}^i\right]^{-1}=\frac{1}{(J_l^r)^2}\left[J_l^r-iJ_{l}^i\right]\sum_{n=0}^\infty(-1)^n  \left(\frac{J_l^i}{J_l^r}\right)^{2n} \\
&=\frac{1}{J_l^r}\left[1-i\frac{J_{l}^i}{J_l^r}\right]\sum_{n=0}^\infty(-1)^n \left(\frac{J_l^i}{J_l^r}\right)^{2n}\\
&=\frac{1}{J_l^r}+O((\kappa_0-\kappa_l)a) \qquad[\mbox{since}\ J_l^i=O((\kappa_0-\kappa_l)a)]\\
&=\frac{1}{\kappa_l^2+O((\kappa_0-\kappa_l)\rho_l)}+O((\kappa_0-\kappa_l)a)
=\frac{1}{\kappa_l^2}+O((\kappa_0-\kappa_l)\rho_l)+O((\kappa_0-\kappa_l)a)\\ 
&=\frac{1}{\kappa_l^2}+O((\kappa_0-\kappa_l)a),
\end{split}
\label{inv-of-Jl-int}
\end{equation}
and hence
\begin{equation}
\begin{aligned}
\frac{F_l^\prime}{J_l}&=&\frac{F_l^\prime}{\kappa_l^2}+O(a^2).
\end{aligned}
\label{Fl-by-Jl}
\end{equation}
Also note that using \eqref{obs1coeui}, we have
\begin{equation}
\begin{aligned}
J_l^i&=\frac{\rho_{0}}{\rho_{l}-\rho_{0}}\Bigg[\frac{1}{4\pi}\kappa_l Cap_l(\kappa_l^2-\kappa_0^2)+\frac{1}{4\pi}(\kappa_0^3-\kappa_l^3)Cap_l-\frac{1}{4\pi}\kappa_0^2(\kappa_0-\kappa_l)Cap_l
\\
&\qquad -\frac{1}{32\pi^2}(\kappa_0-\kappa_l)\left[\frac{\rho_{0}}{\rho_{l}-\rho_{0}}(\kappa_l^2-\kappa_0^2)-\kappa_0^2\right] \vert{ D_l}\vert^{-1}  \left(8\pi\vert{D_l}\vert+\hat{A}_lCap_l\right)Cap_l\Bigg]\\
&=\frac{\rho_{0}}{\rho_{l}-\rho_{0}} \Big[\frac{1}{4 \pi} (\kappa_{0}-\kappa_{l}) \kappa_{l}^{2} Cap_{l}+\frac{1}{32 \pi^{2}} (\kappa_{0}-\kappa_{l}) \kappa_{l}^{2} \hat{A}_{l} \vert D_{l} \vert^{-1} Cap_{l}^{2} \Big] + O(a \rho_{l} (\kappa_{0}-\kappa_{l}))\\
&=J_l^{i^\prime}+O((\kappa_0-\kappa_l)a \rho_l),
\end{aligned}
\label{def-of-Jli}
\end{equation}
where
\begin{equation}
J_l^{i^\prime}:=\frac{\rho_{0}}{\rho_{l}-\rho_{0}} \frac{1}{4 \pi} (\kappa_{0}-\kappa_{l}) \kappa_{l}^{2} \Big[1+\frac{1}{8 \pi} \hat{A}_{l} \vert D_{l} \vert^{-1} Cap_{l} \Big] Cap_{l}.
\label{def-of-Jlip}
\end{equation}
Therefore, since $J_l^i=O((\kappa_0-\kappa_l)a))$, we can write
\begin{equation}
\begin{aligned}
\frac{I_l^\prime}{J_l}&=I_l^\prime\left[J_l^r+iJ_{l}^i\right]^{-1}=\frac{I_l^\prime}{(J_l^r)^2}\left[J_l^r-iJ_{l}^i\right]\sum_{n=0}^\infty(-1)^n  \left(\frac{J_l^i}{J_l^r}\right)^{2n} \\
&=\frac{I_l^{\prime^d}+I_l^{\prime^s}}{J_l^r}\left[1-i\frac{J_{l}^i}{J_l^r}\right]\sum_{n=0}^\infty(-1)^n \left(\frac{J_l^i}{J_l^r}\right)^{2n}\\
&=\frac{1}{[\kappa^{2}_{l}+\frac{\rho_l}{\rho_0}(\kappa_{l}^{2}-\kappa^{2}_{0})]} \Big[I_l^{\prime^d}-iI_l^{\prime^d} \frac{J_l^{i^\prime}}{[\kappa_l^2+\frac{\rho_{l}}{\rho_{0}}(\kappa_{l}^2-\kappa_{0}^2)]} \Big]+\frac{1}{\kappa^{2}_{l}}\Big[I_l^{\prime^s}-I^{\prime^d}_{l}\frac{(J^{i^\prime}_{l})^2}{\kappa_{l}^4} \Big]
+O((\kappa_0-\kappa_l)a),
\end{aligned} 
\label{Il-by-Jl}
\end{equation}
Using (\ref{obs1coeui}-\ref{def-of-Fl}), we can rewrite \eqref{mult-obs-109r6-pr3} as
\begin{equation}
\begin{aligned}
\frac{I_l^\prime}{J_l} Q_l +&\sum_{{m=1} \atop {m\neq\;l}}^{M}\Phi_{\kappa_0}(z_l,z_m)Q_m
+\frac{F_l^\prime}{J_l} \cdot\sum_{{m=1} \atop {m\neq\;l}}^{M}\nabla_x\Phi_{\kappa_0}(z_l,z_m)Q_m\\
&+\frac{1}{J_{l}} \Bigg[\kappa_0^2 -\frac{\rho_{0}}{\rho_{l}-\rho_{0}}(\kappa_l^2-\kappa_0^2)+\frac{\rho_{0}}{\rho_{l}-\rho_{0}}\frac{i}{4\pi}(\kappa_0^3-\kappa_l^3)
Cap_l\Bigg]\kappa_{0}^{2} \sum_{{m=1} \atop {m \neq l}}^{M} (\overline{z}_{m}-z_{m}) (\lambda_{m}-\frac{1}{2})^{-1} \nabla_t\Phi_{\kappa_0}(z_l,z_m)\cdot\\
&\qquad \Big[-\frac{1}{8 \pi} \hat{A}_{m} Q_{m} -\frac{1}{8 \pi} \Big(1-\frac{\rho_{m}}{\rho_{0}} \Big)^{-1} \hat{A}_{m} Q_{m}+ \vert D_{m} \vert \sum_{{n=1} \atop {n \neq m}}^{M} \Phi_{\kappa_{0}}(z_{m},z_{n}) Q_{n}\\
&\qquad \qquad -\frac{1}{8 \pi} \Big(1-\frac{\rho_{m}}{\rho_{0}} \Big)^{-1} \hat{A}_{m} Cap_{m} \sum_{{n=1} \atop {n \neq m}}^{M} \Phi_{\kappa_{0}}(z_{m},z_{n}) Q_{n}\\
&\qquad \qquad -\frac{1}{8\pi} \Big(1-\frac{\rho_{m}}{\rho_{0}} \Big)^{-1} \Big(\sum_{{n=1} \atop {n \neq m}}^{M} \Phi_{\kappa_{0}}(z_{m},z_{n}) Q_{n} \Big) \int_{\partial D_{m}} (A_{m}(s)-\hat{A}_{m}) (S^{0}_{D_{m}})^{-1}(1)(s)\ d\sigma_{m}(s) \Big] \\
=&-\frac{\kappa_l^2}{J_l} u^{I}(z_l)+\frac{1}{J_l} \Big[Er_6+O(a)+O(a^2\|\phi_l\|)+\sum^{M}_{{m=1} \atop {m\neq\;l}}O\left(\frac{a^3}{d^3_{ml}}\|\phi_m\|\right)+O(a^2\|\psi_l\|)+O\left(\sum_{{m=1} \atop {m \neq l}}^{M}\frac{1}{d^2_{ml}}\vert{V^{rem}_m}\vert\right)\\
&+ O((\kappa_{0}-\kappa_{l})\rho_{l})+ O((\kappa_{0}-\kappa_{l}) \rho_{l}^{2} \norm{\phi_{l}})+ O((\kappa_{0}-\kappa_{l})a \rho_{l} \norm{\phi_{l}})+O((\kappa_{0}-\kappa_{l}) a^{2} \rho_{l} (\sum_{{m=1} \atop {m \neq l}}^{M}\frac{1}{d^2_{ml}}\norm{\phi_{m}}))\Big].
\end{aligned}
\label{mult-obs-109r6-pr4int}
\end{equation}
Now let us assume that $\rho_{l}\simeq a^{1+\gamma}$, with $\gamma \geq 0$. Then from \eqref{V-rem}, we can deduce that 
\begin{equation}
\begin{aligned}
\sum_{{m=1} \atop {m \neq l}}^{M}\frac{1}{d^2_{ml}}\vert{V^{rem}_m}\vert &=O\Big(\frac{a^{3-\gamma}}{d^2}+\frac{a^{3-\gamma}}{d^{3\alpha}}\Big)\\
&+O\Big(\Big(\frac{a^3}{d^2}+\frac{a^3}{d^{3\alpha}}+\frac{a^{4-\gamma}}{d^2}+\frac{a^{4-\gamma}}{d^{3\alpha}}+\frac{a^4}{d^4}+\frac{a^4}{d^{2+3\alpha}}+\frac{a^4}{d^{6\alpha}}+\frac{a^{5-\gamma}}{d^4}+\frac{a^{5-\gamma}}{d^{2+3\alpha}}+\frac{a^{5-\gamma}}{d^{6\alpha}}\\
&\qquad \qquad +\frac{a^{6-\gamma}}{d^5}+\frac{a^{6-\gamma}}{d^{3+3\alpha}}+\frac{a^{6-\gamma}}{d^{6\alpha+1}} \Big)\norm{\phi}\Big).
\end{aligned}
\label{V-rem-er}
\end{equation}
%\newpage
%
Let us define $\mathbf{C}_l^{-1}:= \frac{1}{[\kappa^{2}_{l}+\frac{\rho_l}{\rho_0}(\kappa_{l}^{2}-\kappa^{2}_{0})]} \Big[I_l^{\prime^d}-iI_l^{\prime^d} \frac{J_l^{i^\prime}}{[\kappa_l^2+\frac{\rho_{l}}{\rho_{0}}(\kappa_{l}^2-\kappa_{0}^2)]} \Big]+\frac{1}{\kappa^{2}_{l}}\Big[I_l^{\prime^s}-I^{\prime^d}_{l}\frac{(J^{i^\prime}_{l})^2}{\kappa_{l}^4} \Big]$ . 
Then, using \eqref{obs1coeui},\eqref{inv-of-Jl-int} and \eqref{V-rem-er}, we can rewrite \eqref{mult-obs-109r6-pr4int} as
\begin{equation}
\begin{aligned}
\mathbf{C}_{l}^{-1} Q_l +&\sum_{{m=1} \atop {m\neq\;l}}^{M}\Phi_{\kappa_0}(z_l,z_m)Q_m
+\frac{F_l^\prime}{\kappa_{l}^{2}} \cdot\sum_{{m=1} \atop {m\neq\;l}}^{M}\nabla_x\Phi_{\kappa_0}(z_l,z_m)Q_m\\
&+\kappa_{0}^{2} \sum_{{m=1} \atop {m \neq l}}^{M} (\overline{z}_{m}-z_{m}) (\lambda_{m}-\frac{1}{2})^{-1} \nabla_t\Phi_{\kappa_0}(z_l,z_m)\cdot 
\Big[-\frac{1}{4 \pi} \hat{A}_{m} Q_{m} + \vert D_{m} \vert \sum_{{n=1} \atop {n \neq m}}^{M} \Phi_{\kappa_{0}}(z_{m},z_{n}) Q_{n}\\
%&\qquad \qquad -\frac{1}{8 \pi} \Big(1-\frac{\rho_{m}}{\rho_{0}} \Big)^{-1} \hat{A}_{m} Cap_{m} \sum_{{n=1} \atop {n \neq m}}^{M} \Phi_{\kappa_{0}}(z_{m},z_{n}) Q_{n}\\
&\qquad \qquad -\frac{1}{8\pi} \Big(1-\frac{\rho_{m}}{\rho_{0}} \Big)^{-1} \Big(\sum_{{n=1} \atop {n \neq m}}^{M} \Phi_{\kappa_{0}}(z_{m},z_{n}) Q_{n} \Big) \int_{\partial D_{m}} A_{m}(s) (S^{0}_{D_{m}})^{-1}(1)(s)\ d\sigma_{m}(s) \Big]
\end{aligned}
\label{mult-obs-109r6-pr4}
\end{equation}
\begin{equation}
\begin{aligned}
&=-u^{I}(z_{l})+O\Big(a+\frac{a^{3-\gamma}}{d^{2}}+\frac{a^{3-\gamma}}{d^{3\alpha}}+\Big(a^{2}+\frac{a^3}{d^3}+\frac{a^3}{d^{3\alpha+1}}+\frac{a^{4-\gamma}}{d^2}+\frac{a^{4-\gamma}}{d^{3\alpha}}+\frac{a^4}{d^4}+\frac{a^4}{d^{2+3\alpha}}+\frac{a^4}{d^{6\alpha}}\\
&\qquad \qquad \qquad \qquad \qquad \qquad \qquad \qquad+\frac{a^{5-\gamma}}{d^4}+\frac{a^{5-\gamma}}{d^{2+3\alpha}}+\frac{a^{5-\gamma}}{d^{6\alpha}}+\frac{a^{6-\gamma}}{d^5}+\frac{a^{6-\gamma}}{d^{3+3\alpha}}+\frac{a^{6-\gamma}}{d^{6\alpha+1}} \Big)\norm{\phi} \Big). 
\end{aligned}
\notag
\end{equation}
\begin{remark}\label{sign}
We note that the dominant part of $\mathbf{C}_l^{-1} $ is given by $\frac{I_{l}^{\prime^d}}{\kappa_{l}^{2}}= \frac{\vert D_{l} \vert^{-1}}{\kappa_{l}^{2}} \left[\frac{\rho_{l}}{\rho_{l}-\rho_{0}}-\frac{1}{8 \pi} \kappa_{l}^{2} \hat{A}_{l} \right]$. When $\gamma<1 $ or the frequency is away from the resonance, the sign of the real part $(\mathbf{C}_l^{-1})^{r} $ of $\mathbf{C}_l^{-1} $ is given by the sign of the term $\frac{\vert D_{l} \vert^{-1}}{\kappa_{l}^{2}} \cdot \frac{\rho_{l}}{\rho_{l}-\rho_{0}} $ which is negative. Therefore in this case, $(\mathbf{C}_l^{-1})^{r}<0,\ \forall \ l=1,\dots,M. $\\
When the frequency $\omega $ is near the resonance, we can write $1-\left(\frac{\omega_{M}}{\omega} \right)^{2}=l_{M} a^{h_{1}} $. In this case, we can write
\begin{equation}
\begin{aligned}
\frac{I_{l}^{\prime^d}}{\kappa_{l}^{2}}&=\frac{\hat{A}_{l} \vert D_l \vert^{-1} \frac{\rho_{l}}{k_l}}{8 \pi} \left[\frac{\omega_{M}^{2}}{\omega^{2}}-1 \right]=\frac{\hat{A}_{l} \vert D_l \vert^{-1} \frac{\rho_{l}}{k_l}}{8 \pi} \Big[-l_{M} a^{h_{1}}\Big].
\end{aligned}
\notag
\end{equation}
Therefore if $l_{M}>0 $, $(\mathbf{C}_l^{-1})^{r}>0,\ \forall\ l=1,\dots,M $  and if $l_{M}<0 $, $(\mathbf{C}_l^{-1})^{r}<0,\ \forall\ l=1,\dots,M $.
\QEDB
\end{remark}

 \begin{lemma}\label{Mazyawrkthmela}%[Based on work of Maz'ya]
 For $m=1,\cdots,M $, let us define 
 \begin{equation}
 \begin{aligned}
 &Y_{m}:=-u^{I}(z_{m})+O\Big(a+\frac{a^{3-\gamma}}{d^{2}}+\frac{a^{3-\gamma}}{d^{3\alpha}}+\Big(a^{2}+\frac{a^3}{d^3}+\frac{a^3}{d^{3\alpha+1}}+\frac{a^{4-\gamma}}{d^2}+\frac{a^{4-\gamma}}{d^{3\alpha}}+\frac{a^4}{d^4}+\frac{a^4}{d^{2+3\alpha}}+\frac{a^4}{d^{6\alpha}}\\
&\qquad \qquad \qquad \qquad \qquad \qquad \qquad \qquad+\frac{a^{5-\gamma}}{d^4}+\frac{a^{5-\gamma}}{d^{2+3\alpha}}+\frac{a^{5-\gamma}}{d^{6\alpha}}+\frac{a^{6-\gamma}}{d^5}+\frac{a^{6-\gamma}}{d^{3+3\alpha}}+\frac{a^{6-\gamma}}{d^{6\alpha+1}} \Big)\norm{\phi} \Big).
 \end{aligned}
 \notag
 \end{equation}
Then the algebraic system \eqref{mult-obs-109r6-pr4} is invertible provided 
\begin{itemize}
\item in the case when $(\mathbf{C}_l^{-1})^{r}>0,\ \forall\ l=1,\dots,M $,
\begin{equation}
\begin{aligned}
& \frac{\min\limits_{1\leq m\leq M}{\mathbf{C}_m^r}}{(\max\limits_{1\leq m\leq M} \vert{\mathbf{C}_m}\vert)^2} \geq \frac{3\tau}{5\pi\,d}+\max\limits_{1\leq{m}\leq{M}} \left\vert\frac{F_m^\prime}{\kappa_m^2}\right\vert C \sqrt{M M_{max}}  \left[\frac{1}{ d^4 }+\frac{1}{ d^{5\alpha}}\right]^\frac{1}{2}\\
&\qquad \qquad \qquad +C a^{2-\gamma} \sqrt{M M_{max}}  \left[\frac{1}{ d^4 }+\frac{1}{ d^{5\alpha}}\right]^\frac{1}{2}+C a^{3-\gamma} M M_{max} \left[\frac{1}{ d^2 }+\frac{1}{ d^{3\alpha}}\right]^\frac{1}{2}\left[\frac{1}{ d^4 }+\frac{1}{ d^{5\alpha}}\right]^\frac{1}{2},
\end{aligned}
\label{condition-inv}
\end{equation}
\item in the case when $(\mathbf{C}_l^{-1})^{r}<0,\ \forall\ l=1,\dots,M $,
\begin{equation}
\begin{aligned}
&\frac{\min\limits_{1\leq m\leq M}{\vert \mathbf{C}_m^r \vert}}{(\max\limits_{1\leq m\leq M} \vert{\mathbf{C}_m}\vert)^2} \geq C \sqrt{M M_{max}} \left[\frac{1}{ d^2 }+\frac{1}{ d^{3\alpha}}\right]^\frac{1}{2}+\max\limits_{1\leq{m}\leq{M}} \left\vert\frac{F_m^\prime}{\kappa_m^2}\right\vert C \sqrt{M M_{max}}  \left[\frac{1}{ d^4 }+\frac{1}{ d^{5\alpha}}\right]^\frac{1}{2}\\
&\qquad \qquad \qquad +C a^{2-\gamma} \sqrt{M M_{max}}  \left[\frac{1}{ d^4 }+\frac{1}{ d^{5\alpha}}\right]^\frac{1}{2}+C a^{3-\gamma} M M_{max} \left[\frac{1}{ d^2 }+\frac{1}{ d^{3\alpha}}\right]^\frac{1}{2}\left[\frac{1}{ d^4 }+\frac{1}{ d^{5\alpha}}\right]^\frac{1}{2},
%&-C \sqrt{M M_{max}} \left[\frac{1}{ d^2 }+\frac{1}{ d^{3\alpha}}\right]^\frac{1}{2}\sum_{m=1}^M \vert Q_m\vert^2
\end{aligned}
\label{condition-inv-2}
\end{equation}
\end{itemize}
and the solution vector $Q_m,\ m=1, ..., M,$  satisfies either the estimate
\begin{equation}
\begin{aligned}
 \sum_{m=1}^{M}\vert{Q_m}\vert^{2}
&\leq 4 \ (\max\limits_{1\leq m\leq M} \vert{\mathbf{C}_m}\vert)^2 \sum_{m=1}^{M}\vert{Y_m}\vert^{2} \\
&\quad \Big( \frac{\min\limits_{1\leq m\leq M}{\vert \mathbf{C}_m^r \vert}}{\max\limits_{1\leq m\leq M} \vert{\mathbf{C}_m}\vert} -\Big(\frac{3\tau}{5\pi\,d}+\max\limits_{1\leq{m}\leq{M}} \left\vert\frac{F_m^\prime}{\kappa_m^2}\right\vert C \sqrt{M M_{max}}  \left[\frac{1}{ d^4 }+\frac{1}{ d^{5\alpha}}\right]^\frac{1}{2}\\
&\quad +C a^{2-\gamma} \sqrt{M M_{max}}  \left[\frac{1}{ d^4 }+\frac{1}{ d^{5\alpha}}\right]^\frac{1}{2}+C a^{3-\gamma} M M_{max} \left[\frac{1}{ d^2 }+\frac{1}{ d^{3\alpha}}\right]^\frac{1}{2}\left[\frac{1}{ d^4 }+\frac{1}{ d^{5\alpha}}\right]^\frac{1}{2}  \Big) \max\limits_{1\leq m\leq M} \vert{\mathbf{C}_m}\vert \Big)^{-2},
\end{aligned}
\label{mazya-fnlinvert-small-ela-2}
\end{equation}
\begin{comment}
if 
\begin{equation}
\begin{aligned}
&\frac{\min \mathbf{C}_m^r}{\max \vert{\mathbf{C}_m}\vert^2}\geq \frac{3\tau}{5\pi\,d}+\max_l\frac{F_l^\prime}{\kappa_l^2}\frac{(\kappa_0+1)}{4\pi}\frac{\sqrt{M(M-1)}}{d^2}\\
&
\end{aligned}
\notag
\end{equation}
\end{comment}
where $\tau:=\min_{l\neq\,m}\cos(\kappa\vert{z_m-z_l}\vert)\geq\;0$, or the estimate
\begin{equation}
\begin{aligned}
 \sum_{m=1}^{M}\vert{Q_m}\vert^{2}
&\leq 4 \ (\max\limits_{1\leq m\leq M} \vert{\mathbf{C}_m}\vert)^2 \sum_{m=1}^{M}\vert{Y_m}\vert^{2} \\
&\quad \Big( \frac{\min\limits_{1\leq m\leq M}{\vert \mathbf{C}_m^r \vert}}{\max\limits_{1\leq m\leq M} \vert{\mathbf{C}_m}\vert} -\Big(C \sqrt{M M_{max}} \left[\frac{1}{ d^2 }+\frac{1}{ d^{3\alpha}}\right]^\frac{1}{2}+\max\limits_{1\leq{m}\leq{M}} \left\vert\frac{F_m^\prime}{\kappa_m^2}\right\vert C \sqrt{M M_{max}}  \left[\frac{1}{ d^4 }+\frac{1}{ d^{5\alpha}}\right]^\frac{1}{2}\\
&\quad +C a^{2-\gamma} \sqrt{M M_{max}}  \left[\frac{1}{ d^4 }+\frac{1}{ d^{5\alpha}}\right]^\frac{1}{2}+C a^{3-\gamma} M M_{max} \left[\frac{1}{ d^2 }+\frac{1}{ d^{3\alpha}}\right]^\frac{1}{2}\left[\frac{1}{ d^4 }+\frac{1}{ d^{5\alpha}}\right]^\frac{1}{2}  \Big) \max\limits_{1\leq m\leq M} \vert{\mathbf{C}_m}\vert \Big)^{-2},
\end{aligned}
\label{mazya-fnlinvert-small-ela-3}
\end{equation}
where $C$ is a constant (depending on the Lipschitz characters of the obstacles) uniformly bounded with respect to $a$. 
%
\begin{comment}
\begin{equation}\label{Linfty-norm-estimate-algebraic-system}
 \vert{Q_l}\vert
\leq 2 \left( \frac{\min\limits_{1\leq m\leq M}{\mathbf{C}_m^r}}{\max\limits_{1\leq m\leq M} \vert{\mathbf{C}_m}\vert} - \left(\frac{3\tau}{5\pi\,d}+\max\limits_{1\leq l\leq M} \frac{F_l^\prime}{\kappa_l^2} \frac{(\kappa_0+1)}{4\pi}\frac{\sqrt{M(M-1)}}{d^2}\right)\max\limits_{1\leq m\leq M} \vert{\mathbf{C}_m}\vert \right)^{-1} \sqrt{M}\max\limits_{1\leq m\leq M} \vert{\mathbf{C}_m}\vert\max_{1\leq\,m\leq\,M}\vert{Y_m}\vert.
\end{equation}
\end{comment}
\end{lemma}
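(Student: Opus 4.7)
The plan is to rewrite the algebraic system \eqref{mult-obs-109r6-pr4} as a matrix equation $\mathcal{A}\,Q=Y$ on $\mathbb{C}^M$, where the diagonal of $\mathcal{A}$ consists of the entries $(\mathbf{C}_l^{-1})_{l=1}^M$ and the off-diagonal part $\mathcal{K}$ collects three kinds of interaction kernels: (i) the scalar Helmholtz kernel $\Phi_{\kappa_0}(z_l,z_m)$, (ii) the gradient term $\tfrac{F_l'}{\kappa_l^2}\cdot\nabla_x\Phi_{\kappa_0}(z_l,z_m)$, and (iii) the iterated-sum term built from $(\overline{z}_m-z_m)(\lambda_m-\tfrac12)^{-1}\nabla_t\Phi_{\kappa_0}(z_l,z_m)$ coupled to an inner sum over $n\neq m$ of $\Phi_{\kappa_0}(z_m,z_n)\,Q_n$. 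The goal is to establish coercivity of $\mathcal{A}$ on $\ell^2(\mathbb{C}^M)$, since invertibility of $\mathcal{A}$ and the claimed $\ell^2$-estimates on $Q$ then follow by standard linear algebra: the lower bound $\|\mathcal{A}Q\|_{\ell^2}\geq c(a)\|Q\|_{\ell^2}$ combined with $\|Y\|_{\ell^2}=\|\mathcal{A}Q\|_{\ell^2}$ yields both \eqref{mazya-fnlinvert-small-ela-2} and \eqref{mazya-fnlinvert-small-ela-3}.

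I would first multiply the $l$-th equation by $\overline{Q_l}$, sum in $l$, and take real parts. The diagonal produces $\sum_l(\mathbf{C}_l^{-1})^r|Q_l|^2$, and by Remark \ref{sign} the sign of $(\mathbf{C}_l^{-1})^r$ is uniform in $l$ in all regimes under consideration, so after multiplying through by $\max_l|\mathbf{C}_l|$ the diagonal modulus is bounded below by $\tfrac{\min_l|\mathbf{C}_l^r|}{(\max_l|\mathbf{C}_l|)^2}\sum_l|Q_l|^2$ (using $(\mathbf{C}_l^{-1})^r=\mathbf{C}_l^r/|\mathbf{C}_l|^2$). For term (i) in case (a), where $(\mathbf{C}_l^{-1})^r>0$, the real part of $\Phi_{\kappa_0}(z_l,z_m)$ equals $\tfrac{\cos(\kappa_0|z_l-z_m|)}{4\pi|z_l-z_m|}$, which under the hypothesis $\tau>0$ has the same sign as the diagonal; combining $|\overline{Q_l}Q_m|\leq\tfrac12(|Q_l|^2+|Q_m|^2)$ with $|z_l-z_m|\geq d$ then produces the diagonal-competing contribution $\tfrac{3\tau}{5\pi d}$ appearing in \eqref{condition-inv}. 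In case (b), where $(\mathbf{C}_l^{-1})^r<0$, one instead bounds (i) by its Hilbert--Schmidt norm via Cauchy--Schwarz and the counting formula \eqref{way-counting-1} with $k=2$, producing the factor $C\sqrt{MM_{\max}}[d^{-2}+d^{-3\alpha}]^{1/2}$ of \eqref{condition-inv-2}.

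The gradient term (ii) is handled by the pointwise estimate $|\nabla_x\Phi_{\kappa_0}(z_l,z_m)|\leq C|z_l-z_m|^{-2}$, one application of Cauchy--Schwarz in $m$, and the counting lemma \eqref{way-counting-3} with $k=4$, yielding the factor $\max_l|F_l'/\kappa_l^2|\cdot C\sqrt{MM_{\max}}[d^{-4}+d^{-5\alpha}]^{1/2}$. The iterated term (iii) is the most delicate: after using $|\overline{z}_m-z_m|=O(a)$, $(\lambda_m-\tfrac12)^{-1}=O(a^{-1-\gamma})$, $|\hat A_m|=O(a^2)$, $Cap_m=O(a)$, and $|\nabla_t\Phi_{\kappa_0}(z_l,z_m)|\leq C|z_l-z_m|^{-2}$, the $Q_m$-parts (those stemming from $-\tfrac{1}{4\pi}\hat A_m Q_m$, etc.) can be treated by a single outer Cauchy--Schwarz producing the contribution $Ca^{2-\gamma}\sqrt{MM_{\max}}[d^{-4}+d^{-5\alpha}]^{1/2}$, while the genuinely bilinear parts (those with $\sum_{n\neq m}\Phi_{\kappa_0}(z_m,z_n)Q_n$) must be estimated by first bounding the inner $n$-sum by Cauchy--Schwarz and \eqref{way-counting-1}, and then bounding the outer $m$-sum by a second Cauchy--Schwarz and \eqref{way-counting-3}; this produces the contribution $Ca^{3-\gamma}MM_{\max}[d^{-2}+d^{-3\alpha}]^{1/2}[d^{-4}+d^{-5\alpha}]^{1/2}$. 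The identities \eqref{int-invs-Al} and \eqref{int-invs-Al-2} are used to simplify the inner structure. Assembling these four contributions and requiring that the diagonal coercivity constant strictly dominates their sum reproduces exactly \eqref{condition-inv} and \eqref{condition-inv-2}.

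The main obstacle I anticipate is the bookkeeping in step (iii): the double summation with two distinct Helmholtz kernels at different spatial scales forces a two-layer application of the counting lemma, and the precise power of $d^{-\alpha}$ in the bracket $[d^{-4}+d^{-5\alpha}]^{1/2}$ (rather than the naive $[d^{-4}+d^{-4\alpha}]^{1/2}$) reflects exactly the extra factor absorbed when the outer $m$-sum is bounded after the inner $n$-sum has already been estimated. A secondary, more technical point is the careful verification that the sign of $(\mathbf{C}_l^{-1})^r$ is genuinely uniform in $l$ in every regime considered, as the entire coercivity argument collapses otherwise. Once these two items are handled, the conclusion is immediate: rearranging the coercivity inequality and bounding the right-hand side by $\|Y\|_{\ell^2}\|Q\|_{\ell^2}$ (or $\sqrt{M}\max_l|Y_l|\cdot\|Q\|_{\ell^2}$) gives exactly the displayed $\ell^2$-bounds in the two cases.
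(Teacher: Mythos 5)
Your overall architecture coincides with the paper's: the same splitting of the system \eqref{mult-obs-109r6-pr4} into a diagonal block $\mathbf{C_I}$, the Helmholtz block $\mathbf{B}$, the gradient block $\mathbf{B}^\prime$ and the iterated blocks $\mathbf{R}_1=\mathbf{P}\mathbf{P}_1$, $\mathbf{R}_2=\mathbf{P}\mathbf{P}_2$; the same real/imaginary quadratic-form coercivity strategy; and the same Cauchy--Schwarz plus counting-lemma estimates for $\mathbf{B}^\prime$, $\mathbf{R}_1$, $\mathbf{R}_2$, which do reproduce the factors $\max_m\vert F_m^\prime/\kappa_m^2\vert\,C\sqrt{MM_{max}}[d^{-4}+d^{-5\alpha}]^{1/2}$, $Ca^{2-\gamma}\sqrt{MM_{max}}[d^{-4}+d^{-5\alpha}]^{1/2}$ and $Ca^{3-\gamma}MM_{max}[d^{-2}+d^{-3\alpha}]^{1/2}[d^{-4}+d^{-5\alpha}]^{1/2}$. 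The final passage from coercivity to the $\ell^2$ bounds on $Q$ is also as in the paper.

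There is, however, a genuine gap in your treatment of the block $\mathbf{B}$ in the case $(\mathbf{C}_l^{-1})^{r}>0$. First, positivity of the entries $\cos(\kappa_0\vert z_l-z_m\vert)/(4\pi\vert z_l-z_m\vert)$ does not give any sign to the quadratic form $\langle\mathbf{B}^r\mathbf{Q}^r,\mathbf{Q}^r\rangle$, because the components of $\mathbf{Q}^r$ have mixed signs: entrywise positivity is not positive semidefiniteness. Second, the elementary bound you propose, $\vert\overline{Q_l}Q_m\vert\le\tfrac12(\vert Q_l\vert^2+\vert Q_m\vert^2)$ combined with $\vert z_l-z_m\vert\ge d$, yields at best $\tfrac{1}{4\pi}\max_l\sum_{m\neq l}\vert z_l-z_m\vert^{-1}\sum_m\vert Q_m\vert^2$, and by \eqref{way-counting-1} with $k=1$ this is $O(d^{-1}+d^{-3\alpha})\sum_m\vert Q_m\vert^2=O(d^{-1}+M)\sum_m\vert Q_m\vert^2$. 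The term of order $M\sim a^{-s}$ dominates $\tfrac{3\tau}{5\pi d}\sim a^{-t}$ whenever $s>t$, so your argument does not yield \eqref{estBrQrquad}; it would impose on the positive case essentially the same, much more restrictive, condition as \eqref{condition-inv-2}, erasing exactly the improvement that \eqref{condition-inv} encodes. The lower bound $\langle\mathbf{B}^r\mathbf{Q}^r,\mathbf{Q}^r\rangle+\langle\mathbf{B}^r\mathbf{Q}^i,\mathbf{Q}^i\rangle\ge-\tfrac{3\tau}{5\pi d}\sum_m\vert Q_m\vert^2$ is a nontrivial spectral estimate, imported in the paper from \cite{Challa-Sini-1}; it rests on the near positive semidefiniteness of the matrix $\bigl(\cos(\kappa_0\vert z_l-z_m\vert)/(4\pi\vert z_l-z_m\vert)\bigr)_{l\neq m}$, i.e.\ on the positivity of the low-frequency single-layer kernel, not on a row-sum or arithmetic--geometric-mean argument. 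You must either invoke that result or reproduce its proof; as written, case \eqref{condition-inv} of the lemma is not established. (A minor further remark: the bracket $[d^{-4}+d^{-5\alpha}]^{1/2}$ already arises in the single-sum term $\mathbf{B}^\prime$ from the counting formula with $k=4$, so it is not, as you suggest, an artifact of the two-layer summation in the iterated term.)
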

\begin{proof}
We refer to section \ref{alg-sys} for a proof of this lemma.
\end{proof}
\begin{remark}
From \eqref{mazya-fnlinvert-small-ela-2}, we conclude that
\begin{equation}
\begin{aligned}
 \sum_{m=1}^{M}\vert{Q_m}\vert
&\leq 2 M \ (\max\limits_{1\leq m\leq M} \vert{\mathbf{C}_m}\vert) \max_{1\leq\,m\leq\,M}\vert{Y_m}\vert \\
&\quad \Big( \frac{\min\limits_{1\leq m\leq M}{\vert \mathbf{C}_m^r \vert}}{\max\limits_{1\leq m\leq M} \vert{\mathbf{C}_m}\vert} -\Big(\frac{3\tau}{5\pi\,d}+\max\limits_{1\leq{m}\leq{M}} \left\vert\frac{F_m^\prime}{\kappa_m^2}\right\vert C \sqrt{M M_{max}}  \left[\frac{1}{ d^4 }+\frac{1}{ d^{5\alpha}}\right]^\frac{1}{2}\\
&\quad +C a^{2-\gamma} \sqrt{M M_{max}}  \left[\frac{1}{ d^4 }+\frac{1}{ d^{5\alpha}}\right]^\frac{1}{2}+C a^{3-\gamma} M M_{max} \left[\frac{1}{ d^2 }+\frac{1}{ d^{3\alpha}}\right]^\frac{1}{2}\left[\frac{1}{ d^4 }+\frac{1}{ d^{5\alpha}}\right]^\frac{1}{2}  \Big) \max\limits_{1\leq m\leq M} \vert{\mathbf{C}_m}\vert \Big)^{-1},
\end{aligned}
\label{L1-norm-estimate-algebraic-system}
\end{equation}
while \eqref{mazya-fnlinvert-small-ela-3} yields
\begin{equation}
\begin{aligned}
 \sum_{m=1}^{M}\vert{Q_m}\vert
&\leq 2 M \ (\max\limits_{1\leq m\leq M} \vert{\mathbf{C}_m}\vert) \max_{1\leq\,m\leq\,M}\vert{Y_m}\vert \\
&\quad \Big( \frac{\min\limits_{1\leq m\leq M}{\vert \mathbf{C}_m^r \vert}}{\max\limits_{1\leq m\leq M} \vert{\mathbf{C}_m}\vert} -\Big(C \sqrt{M M_{max}} \left[\frac{1}{ d^2 }+\frac{1}{ d^{3\alpha}}\right]^\frac{1}{2}+\max\limits_{1\leq{m}\leq{M}} \left\vert\frac{F_m^\prime}{\kappa_m^2}\right\vert C \sqrt{M M_{max}}  \left[\frac{1}{ d^4 }+\frac{1}{ d^{5\alpha}}\right]^\frac{1}{2}\\
&\quad +C a^{2-\gamma} \sqrt{M M_{max}}  \left[\frac{1}{ d^4 }+\frac{1}{ d^{5\alpha}}\right]^\frac{1}{2}+C a^{3-\gamma} M M_{max} \left[\frac{1}{ d^2 }+\frac{1}{ d^{3\alpha}}\right]^\frac{1}{2}\left[\frac{1}{ d^4 }+\frac{1}{ d^{5\alpha}}\right]^\frac{1}{2}  \Big) \max\limits_{1\leq m\leq M} \vert{\mathbf{C}_m}\vert \Big)^{-1}.
\end{aligned}
\label{L1-norm-estimate-algebraic-system-2}
\end{equation}
\QEDB
\end{remark}
\begin{remark}\label{extra-conditions}
We note that if 
\begin{equation}
\begin{aligned}
&(\max\limits_{1\leq m\leq M} \vert{\mathbf{C}_m}\vert)\Big[ \frac{3\tau}{5 \pi d}+\max\limits_{1\leq{m}\leq{M}} \left\vert\frac{F_m^\prime}{\kappa_m^2}\right\vert C \sqrt{M M_{max}}  \left[\frac{1}{ d^4 }+\frac{1}{ d^{5\alpha}}\right]^\frac{1}{2}\\
&\qquad \qquad \qquad +C a^{2-\gamma} \sqrt{M M_{max}}  \left[\frac{1}{ d^4 }+\frac{1}{ d^{5\alpha}}\right]^\frac{1}{2}+C a^{3-\gamma} M M_{max} \left[\frac{1}{ d^2 }+\frac{1}{ d^{3\alpha}}\right]^\frac{1}{2}\left[\frac{1}{ d^4 }+\frac{1}{ d^{5\alpha}}\right]^\frac{1}{2}\Big]=O(1),
%&-C \sqrt{M M_{max}} \left[\frac{1}{ d^2 }+\frac{1}{ d^{3\alpha}}\right]^\frac{1}{2}\sum_{m=1}^M \vert Q_m\vert^2
\end{aligned}
\notag
\notag
\end{equation}
then the condition \eqref{condition-inv} holds. Since in this case the frequency is near the resonance $\omega_M$ (and $l_M>0 $), this is equivalent to the condition
\begin{equation}
a^{1-h_{1}}\cdot O(a^{-t}+a^{1-s-t}+a^{2-\gamma-s-t}+a^{3-\gamma-2s-t} )=O(1).
\label{lmg0}
\end{equation}
This leads to the additional conditions $2-h_{1}-s-t \geq 0 $ and $1-t-h_{1}\geq 0 $ provided $\gamma+s \leq 2$. As in this case $\gamma =1$ and hence $s\leq 1$, the conditions reduce to $1-t-h_{1}\geq 0 $ and $s\leq 1$. Note that \eqref{lmg0} also gives rise to the condition $2(a) $ in Theorem \ref{Main-theorem}.\\
Similarly if 
\begin{equation}
\begin{aligned}
&(\max\limits_{1\leq m\leq M} \vert{\mathbf{C}_m}\vert)\Big[ C \sqrt{M M_{max}} \left[\frac{1}{ d^2 }+\frac{1}{ d^{3\alpha}}\right]^\frac{1}{2}+\max\limits_{1\leq{m}\leq{M}} \left\vert\frac{F_m^\prime}{\kappa_m^2}\right\vert C \sqrt{M M_{max}}  \left[\frac{1}{ d^4 }+\frac{1}{ d^{5\alpha}}\right]^\frac{1}{2}\\
&\qquad \qquad \qquad +C a^{2-\gamma} \sqrt{M M_{max}}  \left[\frac{1}{ d^4 }+\frac{1}{ d^{5\alpha}}\right]^\frac{1}{2}+C a^{3-\gamma} M M_{max} \left[\frac{1}{ d^2 }+\frac{1}{ d^{3\alpha}}\right]^\frac{1}{2}\left[\frac{1}{ d^4 }+\frac{1}{ d^{5\alpha}}\right]^\frac{1}{2}\Big]=O(1),
%&-C \sqrt{M M_{max}} \left[\frac{1}{ d^2 }+\frac{1}{ d^{3\alpha}}\right]^\frac{1}{2}\sum_{m=1}^M \vert Q_m\vert^2
\end{aligned}
\notag
\end{equation}
then the condition \eqref{condition-inv-2} holds true. When the frequency is away from the resonance $\omega_M$, and as we take $3\alpha t=s$ with $\alpha \in (0, 1]$, this is equivalent to the condition
\begin{equation}
a^{2-\gamma}\cdot O(a^{-s}+a^{1-s-t}+a^{2-\gamma-s-t}+a^{3-\gamma-2s-t} )=O(1),
\notag
\end{equation}
which holds if $\gamma+s\leq 2, \frac{s}{3}\leq t \leq 1$ and $ 0 \leq \gamma \leq 1$. This also gives rise to the condition $1(a) $ in Theorem \ref{Main-theorem}. \\
If the frequency is near the resonance $\omega_M$ (and $l_M<0 $), and as we take $3\alpha t=s$ with $\alpha \in (0, 1]$, then the condition is equivalent to
\begin{equation}
a^{1-h_{1}}\cdot O(a^{-s}+a^{1-s-t}+a^{2-\gamma-s-t}+a^{3-\gamma-2s-t} )=O(1),
\label{lml0}
\end{equation}
which holds provided $1-h_{1}-s \geq 0,\; \gamma+s\leq 2, 0\leq t \leq  1, 0 \leq \gamma \leq 1$ and $ \frac{s}{3}\leq t $. Again as here also $\gamma=1$ and hence $s\leq 1$, these conditions
reduce to $1-h_{1}-s \geq 0$ and $\frac{s}{3}\leq t\leq 1$. Also note that \eqref{lml0} gives rise to the condition $1(b) $ in Theorem \ref{Main-theorem}. \\
The condition $2(b) $ follows from the fact that a condition similar to \eqref{lml0} can also be derived when the coefficients ${\bf{C_m}}$ are positive.
\QEDB
\end{remark}
%
\begin{comment}
 
 \end{comment}
%
From \eqref{L1-norm-estimate-algebraic-system} or \eqref{L1-norm-estimate-algebraic-system-2} and the definition of $Y_{m}$, we can derive the a priori estimate
%\begin{equation}
  \begin{align} \label{alg-system-koneqkl-Q-generalshape}
 \sum_{m=1}^M\vert{Q_m}\vert
 &= O\Big(M\max\vert{\mathbf{C}_m}\vert \Big[1+a+\frac{a^{3-\gamma}}{d^{2}}+\frac{a^{3-\gamma}}{d^{3\alpha}}+\Big(a^{2}+\frac{a^3}{d^3}+\frac{a^3}{d^{3\alpha+1}}+\frac{a^{4-\gamma}}{d^2}+\frac{a^{4-\gamma}}{d^{3\alpha}}+\frac{a^4}{d^4}+\frac{a^4}{d^{2+3\alpha}}+\frac{a^4}{d^{6\alpha}}\nonumber\\
&\qquad \qquad \qquad \qquad+\frac{a^{5-\gamma}}{d^4}+\frac{a^{5-\gamma}}{d^{2+3\alpha}}+\frac{a^{5-\gamma}}{d^{6\alpha}}+\frac{a^{6-\gamma}}{d^5}+\frac{a^{6-\gamma}}{d^{3+3\alpha}}+\frac{a^{6-\gamma}}{d^{6\alpha+1}} \Big)\norm{\phi} \Big]\Big)\nonumber\\
&= O\Big(M\max\vert{\mathbf{C}_m}\vert \Big[1+a+a^{3-\gamma-2t}+a^{3-\gamma-s}+\Big(a^{2}+a^{3-3t}+a^{3-s-t}\nonumber\\
&\qquad \qquad \qquad \qquad+a^{4-\gamma-2t}+a^{4-\gamma-s}+a^{4-4t}+a^{4-s-2t}+a^{4-2s}\nonumber\\
&\qquad \qquad \qquad \qquad+a^{5-\gamma-4t}+a^{5-\gamma-s-2t}+a^{5-\gamma-2s}+a^{6-\gamma-5t}+a^{6-\gamma-s-3t}+a^{6-\gamma-2s-t} \Big)\norm{\phi} \Big]\Big)\nonumber\\
&=O\Big(M\max\vert{\mathbf{C}_m}\vert \Big[1+a+\left(a^2+a^{3-3t}+a^{3-s-t}+a^{4-2s} \right)\norm{\phi} \Big] \Big),
  \end{align}
 % \label{alg-system-koneqkl-Q-generalshape}
 % \end{equation}
assuming that $0\leq t < \frac{1}{2}, \; 0 \leq s \leq \frac{3}{2},\; 0 \leq \gamma \leq 1$ and $s+\gamma \leq 2$.\\
Using \eqref{alg-system-koneqkl-Q-generalshape}, we also note that
%\begin{equation}
\begin{align}\label{term-1}
\frac{F_l^\prime}{\kappa_l^2}\cdot\sum_{m\neq\;l}\nabla_x\Phi_{\kappa_0}(z_l,z_m)Q_m &=O\Big(\frac{a}{d^2} \sum_{m=1}^{M} \vert Q_m \vert \Big)\\
&=O\left(a^{1-2t} M\max\vert{\mathbf{C}_m}\vert \Big[1+a+\left(a^2+a^{3-3t}+a^{3-s-t}+a^{4-2s} \right)\norm{\phi} \Big]\right),\nonumber
\end{align}
%\end{equation}
%\begin{equation}
\begin{align}\label{term-2}
&-\frac{1}{4 \pi} \kappa_{0}^{2} \sum_{{m=1} \atop {m \neq l}}^{M} (\overline{z}_{m}-z_{m}) (\lambda_{m}-\frac{1}{2})^{-1} \nabla_t\Phi_{\kappa_0}(z_l,z_m) \hat{A}_{m} Q_{m}\\
&=\Big(\frac{a^{2-\gamma}}{d^2} \sum_{m=1}^{M} \vert Q_m \vert \Big)
=O\left(a^{2-\gamma-2t} M\max\vert{\mathbf{C}_m}\vert \Big[1+a+\left(a^2+a^{3-3t}+a^{3-s-t}+a^{4-2s} \right)\norm{\phi} \Big] \right),\nonumber
\end{align}
%\label{term-2}
%\end{equation}
{ and }
%\begin{equation}
\begin{align}\label{term-3}
&\kappa_{0}^{2} \sum_{{m=1} \atop {m \neq l}}^{M} (\overline{z}_{m}-z_{m}) (\lambda_{m}-\frac{1}{2})^{-1} \nabla_t\Phi_{\kappa_0}(z_l,z_m)\cdot 
\Big[\vert D_{m} \vert \sum_{{n=1} \atop {n \neq m}}^{M} \Phi_{\kappa_{0}}(z_{m},z_{n}) Q_{n}\\
&\qquad \qquad -\frac{1}{8\pi} \Big(1-\frac{\rho_{m}}{\rho_{0}} \Big)^{-1} \Big(\sum_{{n=1} \atop {n \neq m}}^{M} \Phi_{\kappa_{0}}(z_{m},z_{n}) Q_{n} \Big) \int_{\partial D_{m}} A_{m}(s) (S^{0}_{D_{m}})^{-1}(1)(s)\ d\sigma_{m}(s) \Big]\nonumber\\
&\qquad \qquad \qquad=O\left(a^{3-\gamma} \left(\sum_{m \neq l} \frac{1}{d^{2}_{ml}}\right) \frac{1}{d} \sum_{n=1}^{M} \vert Q_n \vert \right)=O\left(a^{3-\gamma} \left(\frac{1}{d^3}+\frac{1}{d^{3\alpha+1}} \right)  \sum_{n=1}^{M} \vert Q_n \vert\right)\nonumber\\
&\qquad \qquad \qquad=O\Big(a^{3-\gamma-s-t} M\max\vert{\mathbf{C}_m}\vert \Big[1+a+\left(a^2+a^{3-3t}+a^{3-s-t}+a^{4-2s} \right)\norm{\phi} \Big]\Big).\nonumber
\end{align}
%\label{term-3}
%\end{equation}
Therefore we can rewrite \eqref{mult-obs-109r6-pr4} as
% \begin{equation}
  \begin{align}\label{mult-obs-109r6-pr5-smpl}
&\mathbf{C}_l^{-1}Q_l +\sum_{{m=1} \atop {m\neq\;l}}^{M}\Phi_{\kappa_0}(z_l,z_m)Q_m =-u^{I}(z_l)+O \left(a+a^{3-\gamma-s}+ \left(a^2+a^{3-3t}+a^{3-s-t}+a^{4-2s} \right)\norm{\phi} \right)\\
&+O\Big((a^{1-2t}+a^{2-\gamma-2t}+a^{3-\gamma-s-t}) M\max\vert{\mathbf{C}_m}\vert \Big[1+a+\left(a^2+a^{3-3t}+a^{3-s-t}+a^{4-2s} \right)\norm{\phi} \Big]\Big).\nonumber
\end{align}
%\label{mult-obs-109r6-pr5-smpl}
%\end{equation}
%
\begin{comment}

\end{comment}
 %
%
%
 % \%\%\%\%\%\%\%\%\%\%\%\%\%\%\%\%\%\%\%\%\%\%\%\%\%\%\%\%\%\%\%\%\%\%\%\%\%\%\%\%\%\%\%\%\%\newline
%\textbf{Justification of \eqref{cmp-4prppr-koneqkl}}:
 %\clearpage
\subsection{The final approximations}
\begin{proposition}\label{Prop-phi-estimate} Let the parameters $t, s, \gamma$ satisfy the conditions $0\leq t < \frac{1}{2}, \; 0 \leq s \leq \frac{3}{2},\; 0 \leq \gamma \leq 1, \ s+\gamma \leq 2$, $\frac{s}{3}\leq t $ 
and let $ M\max\vert{\mathbf{C}_m}\vert=O(a^{-h}),\;  h < \frac{1}{2}$. Then for every $l$, we have
  $$\Vert \phi_l\Vert =O(a^{-\gamma})+O(a^{-\gamma-h}).$$
 \end{proposition}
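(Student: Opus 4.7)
The plan is to invert the principal operator on the left of \eqref{mult-obs-108r-int} on $L^2(\partial D_l)$ and then estimate the resulting right-hand side term by term, using the auxiliary lemmas already established and the $L^1$-bound \eqref{L1-norm-estimate-algebraic-system} on $\{Q_m\}$. First I would rewrite \eqref{mult-obs-108r-int} in the form
\begin{equation}
(\lambda_l I + (K^{\kappa_0}_{D_l})^{*})\phi_l \,=\, -\frac{\partial u^I}{\partial \nu^l} - \sum_{m\neq l}\frac{\partial(S^{\kappa_0}_{D_m}\phi_m)}{\partial\nu^l}\Big|_{\partial D_l} - \Big(1-\tfrac{\rho_l}{\rho_0}\Big)^{-1}\Big\{[(K^{\kappa_l}_{D_l})^{*}-(K^{\kappa_0}_{D_l})^{*}]\psi_l + [\tfrac{1}{2}I+(K^{\kappa_0}_{D_l})^{*}]g_l\Big\} - Er_{3,l},
\notag
\end{equation}
with $\lambda_l = \tfrac12\tfrac{\rho_0+\rho_l}{\rho_0-\rho_l}$. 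Rescaling by $\xi = (x-z_l)/\delta$, the principal operator becomes $\lambda_l I + (K^{\kappa_0\delta}_{B_l})^{*}$ on $L^2(\partial B_l)$; since $\rho_l/\rho_0 = O(a^{1+\gamma})$ we have $\lambda_l - \tfrac12 = O(a^{1+\gamma})$ and $\kappa_0\delta = O(a)$, so this is a small perturbation of the $L^2$-invertible operator $\tfrac12 I + (K^{0}_{B_l})^{*}$ (Verchota's theorem on Lipschitz domains with a priori bounded Lipschitz character). A Neumann series argument then yields a uniform-in-$a$ bound on its inverse, whence, back in $D_l$ coordinates, $\norm{\phi_l}_{L^2(\partial D_l)} \leq C\,\norm{\mathrm{RHS}}_{L^2(\partial D_l)}$ with $C$ depending only on the a priori bounds.

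Next I would estimate the right-hand side piece by piece and then close via Lemma~\ref{Mazyawrkthmela}. The incident-field term contributes $O(a)$; the $\psi_l$-coupling, by combining the operator version of \eqref{claim-single-Lay-dif-2} with the elimination \eqref{claim- diff-phpsi} of $\psi_l-\phi_l$, reduces to $O(a^2)\norm{\phi_l} + O(a^2|Q_l|)$, the first piece being absorbable on the left for $a$ small; the $g_l$-term $[\tfrac12 I + (K^{\kappa_0}_{D_l})^{*}]g_l$ is controlled via \eqref{def-Sg-c} and Lemma~\ref{claim-g} by $O(|Q_l|) + O(a) + O\bigl(a\sum_{m\neq l}|Q_m|/d_{ml}\bigr)$; and the multi-scatter sum is expanded via \eqref{claim-normsingle-Lay} to give $O\bigl(a\sum_{m\neq l}|Q_m|/d_{ml}^2\bigr)$ plus subleading corrections. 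The hypothesis $M\max|\mathbf{C}_m|=O(a^{-h})$ together with \eqref{L1-norm-estimate-algebraic-system} yields $\sum_m|Q_m|=O(a^{-h})$ and hence $|Q_l|=O(a^{-h})$. To bound the multi-scatter sum I would apply Cauchy--Schwarz together with the counting formula \eqref{way-counting-3} at $k=4$, producing $\sum_{m\neq l}d_{ml}^{-4} = O(a^{-4t}+a^{-4s/3})$; combined with $\sum_m|Q_m|^2 \leq (\sum|Q_m|)(\max|Q_m|) = O(a^{-2h})$, this gives $a\sum_{m\neq l}|Q_m|/d_{ml}^2 = O(a^{1-h-2t}+a^{1-h-2s/3})$. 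Under the standing hypotheses $t<\tfrac12$, $s\leq\tfrac32$, $\gamma\geq 0$, and $s+\gamma\leq 2$, both exponents are at least $-\gamma-h$ (since $1+\gamma\geq 2t$ and $3(1+\gamma)/2\geq s$), so this fits inside $O(a^{-\gamma-h})$. Collecting all contributions produces $\norm{\phi_l} \leq O(a) + O(|Q_l|) + O(a^{-\gamma-h})$, and since $a \leq a^{-\gamma}$ and $a^{-h}\leq a^{-\gamma-h}$ for $\gamma\geq 0$, this yields the claimed bound $O(a^{-\gamma}) + O(a^{-\gamma-h})$.

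The main obstacle, and the reason the hypotheses $t<\tfrac12$ and $h<\tfrac12$ enter in an essential way, is the self-referential nature of the right-hand side: the terms in $\psi_l$, $g_l$ and the multi-scatter coupling all bring back the densities $\phi_m$ and the integrals $Q_m$; the stated parameter ranges are precisely those under which the counting-lemma powers of $d^{-1}$ arising from these couplings can be absorbed into $O(a^{-\gamma-h})$ without a loss. A secondary technical point is ensuring that the uniform $L^2$-invertibility of $\lambda_l I + (K^{\kappa_0}_{D_l})^{*}$ survives the rescaling; this is where the assumption $\rho_l/\rho_0\ll 1$ (equivalently $\gamma\geq 0$) is used, since it keeps $\lambda_l$ away from the spectrum of $-(K^{0}_{B_l})^{*}$ at $-\tfrac12$.
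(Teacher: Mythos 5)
There is a genuine gap, and it sits at the foundation of your argument: the claim that $\lambda_l I+(K^{\kappa_0}_{D_l})^{*}$ is invertible on $L^2(\partial D_l)$ with a bound uniform in $a$ is false, and a Neumann series cannot produce it. Since $\rho_l/\rho_0=O(a^{1+\gamma})\ll 1$, we have $\lambda_l=\tfrac12\tfrac{\rho_0+\rho_l}{\rho_0-\rho_l}\to\tfrac12$, so $-\lambda_l\to-\tfrac12$, which \emph{is} an eigenvalue of $(K^{0}_{D_l})^{*}$ (because $(\tfrac12 I+K^{0}_{D_l})(1)=0$, so $-\tfrac12$ is an eigenvalue of $K^{0}_{D_l}$ and hence of its adjoint). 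You have the spectral picture backwards: the smallness of $\rho_l/\rho_0$ pushes $\lambda_l$ \emph{toward} the singular value, not away from it; this near-resonance is precisely the mechanism behind the Minnaert phenomenon the paper studies. Consequently $\Vert[\lambda_l I+(K^{0}_{D_l})^{*}]^{-1}\Vert_{\mathcal{L}(L^2,L^2)}=O(|\lambda_l-\tfrac12|^{-1})=O(\rho_0/\rho_l)=O(a^{-1-\gamma})$, and the inverse is uniformly bounded only on the mean-free subspace $L^2_0(\partial D_l)$. Your conclusion $\Vert\phi_l\Vert\leq C\Vert\mathrm{RHS}\Vert$ with $C=O(1)$ would in fact yield $\Vert\phi_l\Vert=O(a)+O(a^{-h})$, which is not what the proposition asserts and is not true; the $a^{-\gamma}$ factors in the statement are not decorative padding (your step ``$a\leq a^{-\gamma}$'') but arise exactly from paying the factor $\rho_l^{-1}$ on the non-mean-free parts of the right-hand side: $\Vert[\lambda_l I+(K^{\kappa_0}_{D_l})^{*}]^{-1}\partial_{\nu}u^{I}\Vert=O(a\cdot a^{-1-\gamma})=O(a^{-\gamma})$ gives the first term, and the non-mean-free part of the interaction terms gives $O(a\,\rho_l^{-1}\sum_m|Q_m|)=O(a^{-\gamma-h})$, the second.

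The proof therefore must be organized around a decomposition of the right-hand side into mean-free and non-mean-free components, which is what the paper does: it writes the system as $(\bm{L}+\bm{K})\phi=-\partial_\nu u^I+E_1+E_2$, isolates the pieces $\bm{\Phi}^{c}_{0}\phi$, $\bm{\nabla^{1}\Phi}^{c}_{0}\phi$, $[\bm{\nabla^{1}\Phi}^{c}_{\kappa_0}-\bm{\nabla^{1}\Phi}^{c}_{0}]\phi_z$ and $E_2$ that lie in $L^2_0$ (e.g.\ $\int_{\partial D_l}\partial_{\nu^l}\Phi_0(s,z_m)\,d\sigma_l(s)=\int_{D_l}\Delta\Phi_0(x,z_m)\,dx=0$ for $z_m\notin D_l$), applies the uniformly bounded inverse to those, and pays $O(\rho_l^{-1})$ on the remainder. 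Your term-by-term accounting would have to be redone entirely under this splitting; in particular the couplings through $V_m$ (which you omit) must be tracked separately through $V_m^{dom}$ and $V_m^{rem}$ before the self-referential inequality for $\Vert\phi\Vert$ can be closed. As written, the proposal does not establish the proposition.
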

 \begin{proof}
 We refer to section \ref{phi-estimate} for a proof of this result.
 \end{proof}
 \begin{remark}\label{mcm}
 In case $\gamma<1 $ or when the frequency is away from the resonance, we have $\max\vert{\mathbf{C}_m}\vert=O(a^{2-\gamma}) $ whence it follows using 
 $s+\gamma\leq 2 $ that $M\max\vert{\mathbf{C}_m}\vert=O(a^{2-\gamma-s})=O(1) $. Therefore in this case, we have $h\leq 0 $ and then $\Vert \phi_l\Vert =O(a^{-\gamma}) $.\\
 In contrast with this case, near the resonance $\max_{l} \vert \mathbf{C}_{l} \vert= O(a^{1-h_{1}}) $ where $h_{1} \geq 0 $. Therefore $M \max_{l} \vert \mathbf{C}_{l} \vert=O(a^{1-s-h_{1}})=O(a^{-h}) $.
Now if $l_M<0 $, we need the condition $1-h_{1}-s\geq 0 $, see Remark \ref{extra-conditions}, which implies that $h\leq 0 $ and then $\Vert \phi_l\Vert =O(a^{-\gamma}) $. But if $l_M>0 $, we have only the condition 
$1-h_1\geq t$ and $s\leq 1$ and it is possible to have $-1+s+h_{1}>0$ and hence $h>0 $.
  \QEDB
 \end{remark}
Combining \eqref{mult-obs-109r6-pr5-smpl} and Proposition \ref{Prop-phi-estimate}, we deduce that under the conditions 
$0\leq t < \frac{1}{2}, \; 0 \leq s \leq \frac{3}{2},\; 0 \leq \gamma \leq 1, s+\gamma \leq 2$ and $1-2t-h>0 $ with $ h < \frac{1}{2} $, \footnote{Here, we do the computations when $h\geq0$ in which case 
$\Vert \phi_l\Vert =O(a^{-\gamma-h})$. For the case $h<0$, we have $\Vert \phi_l\Vert =O(a^{-\gamma})$.} the vectors $(Q_l)^M_{l=1}$ satisfy

\begin{equation}
\begin{aligned}
&\mathbf{C}_l^{-1}Q_l +\sum_{{m=1}\atop {m\neq\;l}}^{M}\Phi_{\kappa_0}(z_l,z_m)Q_m 
   =-u^{I}(z_l)+O \left(a+a^{2-\gamma-h}+a^{3-\gamma-3t-h}+a^{3-\gamma-s-t-h}+a^{4-\gamma-2s-h}  \right)\\
&+O\Big((a^{1-2t}+a^{2-\gamma-2t}+a^{3-\gamma-s-t}) a^{-h} \Big[1+a+a^{2-\gamma-h}+a^{3-\gamma-h-3t}+a^{3-\gamma-h-s-t}+a^{4-\gamma-h-2s}  \Big]\Big)\\
&=-u^{I}(z_l)+O \Big(a+a^{2-\gamma-h}+a^{3-\gamma-h-3t}+a^{3-\gamma-h-s-t}+a^{4-\gamma-h-2s}+
a^{1-2t-h} \Big)\\
&=-u^{I}(z_l)+O \Big(a^{4-\gamma-h-2s}+a^{1-2t-h} \Big),
\end{aligned}
\label{mult-obs-109r6-pr5-smpl-final}
\end{equation}
 where 
 \begin{equation}
  \mathbf{C}_l^{-1}= \frac{1}{[\kappa^{2}_{l}+\frac{\rho_l}{\rho_0}(\kappa_{l}^{2}-\kappa^{2}_{0})]} \Big[I_l^{\prime^d}-iI_l^{\prime^d} \frac{J_l^{i^\prime}}{[\kappa_l^2+\frac{\rho_{l}}{\rho_{0}}(\kappa_{l}^2-\kappa_{0}^2)]} \Big]+\frac{1}{\kappa^{2}_{l}}\Big[I_l^{\prime^s}-I^{\prime^d}_{l}\frac{(J^{i^\prime}_{l})^2}{\kappa_{l}^4} \Big]
  \end{equation} 
   with
  \begin{equation}
  \begin{aligned}
I_l^{\prime} :=I_l^{\prime^d}+I_l^{\prime^s} :=\vert D_{l} \vert^{-1} \Big(\frac{\rho_l}{\rho_l-\rho_0}+\frac{1}{8\pi}\left[-\kappa_l^2-\frac{\rho_{l}}{\rho_{0}}(\kappa_l^2-\kappa_0^2)\right]{\hat{A}_l} \Big)+ i \Big[\frac{\kappa_{l}^{3}}{4\pi}-\frac{1}{32\pi^{2}} (\kappa_{0}-\kappa_{l}) \kappa^{2}_{l} \vert D_{l} \vert^{-1} \hat{A}_{l} Cap_{l} \Big]
\end{aligned}
\notag
\end{equation}
and 
\[
J_l^{i^\prime}=\frac{\rho_{0}}{\rho_{l}-\rho_{0}} \frac{1}{4 \pi} (\kappa_{0}-\kappa_{l}) \kappa_{l}^{2} \Big[1+\frac{1}{8 \pi} \hat{A}_{l} \vert D_{l} \vert^{-1} Cap_{l} \Big] Cap_{l}.
\]
We recall that we have the representation of the scattered field of the form $u^s(x, \theta)=\sum^M_{m=1}S^{\kappa_{0}}_{D_m}\phi_m $.

From this we can deduce that the far-field can be approximated as
\[
u^\infty(\hat{x}, \theta)=\sum^M_{m=1}\Phi_{\kappa_{0}}^\infty(\hat{x}, z_m)Q_m +\sum_{m=1}^{M} \nabla\Phi_{\kappa_{0}}^\infty(\hat{x}, z_m)\cdot V_{m} + O(M  a^{3-\gamma}).
\]
Now under the assumptions on $t,s,\gamma,h $, using \eqref{V-rem}, we see that
%\begin{equation}
\begin{align*}
\sum_{m=1}^{M} \nabla\Phi_{\kappa_{0}}^\infty(\hat{x}, z_m)\cdot V^{rem}_{m}&= O(M \vert V^{rem}_{m}\vert)\\
&=O\Big(a^{3-\gamma-s}+a^{4-2\gamma-s-h}+a^{5-2\gamma-s-h-2t}+a^{5-2\gamma-h-2s}\\
&\qquad \qquad +a^{4-\gamma-s-h-2t}+a^{4-\gamma-h-2s}+a^{6-2\gamma-s-h-3t}+a^{6-2\gamma-h-2s-t}\Big)\\
&=O\left(a^{3-\gamma-s}+a^{4-2\gamma-s-h}+a^{1-h}+a^{2-s-h} \right).
\end{align*}
%\notag
%\end{equation}
Similarly
%\begin{equation}
\begin{align*}
\sum_{m=1}^{M} \nabla\Phi_{\kappa_{0}}^\infty(\hat{x}, z_m)\cdot V^{dom}_{m,1}&= -\frac{\kappa_{0}^{2}}{8 \pi} \sum_{m=1}^{M} \nabla\Phi_{\kappa_{0}}^\infty(\hat{x}, z_m) \left(1+ \Big(1-\frac{\rho_m}{\rho_0} \Big)^{-1}\right) (\overline{z}_m-z_m) \left(\lambda_m-\frac{1}{2}\right)^{-1} \hat{A}_m Q_m  \\
&=O\left(a^{2-\gamma} \sum_{m=1}^{M} \nabla\Phi_{\kappa_{0}}^\infty(\hat{x}, z_m) Q_{m} \right),
\end{align*}
%\notag
%\end{equation}

%\begin{equation}
\begin{align*}
&\sum_{m=1}^{M} \nabla\Phi_{\kappa_{0}}^\infty(\hat{x}, z_m)\cdot V^{dom}_{m,2}=\kappa_{0}^{2} \sum_{{m=1} \atop {m \neq l}}^{M} (\overline{z}_{m}-z_{m}) (\lambda_{m}-\frac{1}{2})^{-1} \nabla\Phi^{\infty}_{\kappa_0}(\hat{x},z_m)\cdot \\
&\qquad \qquad \qquad \qquad \Big[ \vert D_{m}\vert -\frac{1}{8\pi} \Big(1-\frac{\rho_{m}}{\rho_{0}} \Big)^{-1}  \int_{\partial D_{m}} A_{m}(s) (S^{0}_{D_{m}})^{-1}(1)(s)\ d\sigma_{m}(s)  \Big] \left(\sum_{{n=1} \atop {n \neq m}}^{M} \Phi_{\kappa_{0}}(z_{m},z_{n}) Q_{n}\right)\\
&=\kappa_{0}^{2} \sum_{{m=1} \atop {m \neq l}}^{M} (\overline{z}_{m}-z_{m}) (\lambda_{m}-\frac{1}{2})^{-1} \cdot \nabla\Phi^{\infty}_{\kappa_0}(\hat{x},z_m)
\Big[ \vert D_{m}\vert -\frac{1}{8\pi} \Big(1-\frac{\rho_{m}}{\rho_{0}} \Big)^{-1}  \int_{\partial D_{m}} A_{m}(s) (S^{0}_{D_{m}})^{-1}(1)(s)\ d\sigma_{m}(s)  \Big]\\
&\qquad \qquad \Big(-\mathbf{C}_{m}^{-1} Q_{m}-u^{I}(z_m)+O \left(a^{4-\gamma-h-2s}+a^{1-2t-h} \right) \Big)\\
&=\underbrace{- \kappa_{0}^{2} \sum_{{m=1} \atop {m \neq l}}^{M} (\overline{z}_{m}-z_{m}) (\lambda_{m}-\frac{1}{2})^{-1} \cdot \nabla\Phi^{\infty}_{\kappa_0}(\hat{x},z_m)\Big[ \vert D_{m}\vert -\frac{1}{8\pi} \Big(1-\frac{\rho_{m}}{\rho_{0}} \Big)^{-1}  \int_{\partial D_{m}} A_{m}(s) (S^{0}_{D_{m}})^{-1}(1)(s)\ d\sigma_{m}(s)  \Big] \mathbf{C}^{-1}_{m} Q_m}_{=O(a \sum_{m=1}^{M} \nabla \Phi_{\kappa_0}^{\infty}(\hat{x},z_m) Q_m)}\\
&\quad +O(a^{3-\gamma-s})+O\Big(a^{3-\gamma-s} \left( a^{4-\gamma-h-2s}+a^{1-2t-h} \right) \Big).
\end{align*}
%\notag
%\end{equation}
%
Thus under the conditions $0\leq t < \frac{1}{2}, \; 0 \leq s \leq \frac{3}{2},\; 0 \leq \gamma \leq 1$,$s+\gamma \leq 2$ and $1-2t-h>0 $ with $ h<\frac{1}{2} $, using \eqref{alg-system-koneqkl-Q-generalshape} and proposition \ref{Prop-phi-estimate}, we have the following estimate for the far-field:
%\begin{equation}
\begin{align}\label{far-field}
 u^\infty(\hat{x}, \theta)&=\sum^M_{m=1}\Phi_{\kappa_{0}}^\infty(\hat{x}, z_m)Q_m -\frac{\kappa_{0}^{2}}{8 \pi} \sum_{m=1}^{M} \nabla\Phi_{\kappa_{0}}^\infty(\hat{x}, z_m) \left(1+ \Big(1-\frac{\rho_m}{\rho_0} \Big)^{-1}\right) (\overline{z}_m-z_m) \left(\lambda_m-\frac{1}{2}\right)^{-1} \hat{A}_m Q_m\\
 &- \kappa_{0}^{2} \sum_{{m=1} \atop {m \neq l}}^{M} (\overline{z}_{m}-z_{m}) (\lambda_{m}-\frac{1}{2})^{-1} \cdot \nabla\Phi^{\infty}_{\kappa_0}(\hat{x},z_m)  \mathbf{C}^{-1}_{m} Q_m \nonumber\\
 &\Big[ \vert D_{m}\vert -\frac{1}{8\pi} \Big(1-\frac{\rho_{m}}{\rho_{0}} \Big)^{-1}  \int_{\partial D_{m}} A_{m}(s) (S^{0}_{D_{m}})^{-1}(1)(s)\ d\sigma_{m}(s)  \Big] \nonumber\\
 &+O(a^{3-\gamma-s}) +O\Big(a^{3-\gamma-s} \left( a^{4-\gamma-h-2s}+a^{1-2t-h} \right) \Big)
 +O\left(a^{3-\gamma-s}+a^{4-2\gamma-s-h}+a^{1-h}+a^{2-s-h} \right)\nonumber\\
 &=\sum^M_{m=1}\Phi_{\kappa_{0}}^\infty(\hat{x}, z_m)Q_m +O\left(a \sum_{m=1}^{M} \nabla\Phi_{\kappa_{0}}^\infty(\hat{x}, z_m) Q_{m} \right)+O(a^{3-\gamma-s})\nonumber \\
 &\qquad +O\Big(a^{3-\gamma-s} \left( a^{4-\gamma-h-2s}+a^{1-2t-h} \right) \Big) 
  +O\left(a^{3-\gamma-s}+a^{4-2\gamma-s-h}+a^{1-h}+a^{2-s-h} \right)\nonumber\\
 &=\sum^M_{m=1}\Phi_{\kappa_{0}}^\infty(\hat{x}, z_m)Q_m +O(a^{1-h}+a^{5-2s-\gamma-2h})+O(a^{3-\gamma-s})
 +O\Big(a^{3-\gamma-s} \left( a^{4-\gamma-h-2s}+a^{1-2t-h} \right) \Big)\nonumber\\
 &\qquad +O\left(a^{4-2\gamma-s-h}+a^{2-s-h} \right).\nonumber
 %&=\sum^M_{m=1}\Phi_{\kappa_{0}}^\infty(\hat{x}, z_m) Q_m+O(a^{3-\gamma-s}) \nonumber\\
 %&\qquad \qquad +O\Big(M\max\vert{\mathbf{C}_m} \vert \Big[a+a^{3-\gamma-3t}+a^{3-\gamma-s-t}+a^{4-\gamma-2s}
 %+a^{1-2t} M\max\vert{\mathbf{C}_m}\vert  \Big] \Big).\nonumber
 \end{align}
% \label{far-field}
%\end{equation}
Next let us observe that for $l=1,\dots,M, $ we can write
$\mathbf{C}^{-1}_{l}=\frac{I^{\prime^d}_{l}}{\kappa_{l}^{2}}+Rem_{l}, $
where $\vert Rem_{l} \vert=O(a^{-1+\gamma}) $.\\
Let the vectors $(\tilde{Q}_{l})^{M}_{l=1} $ satisfy the algebraic system
\begin{equation}
\frac{I^{\prime^d}_{l}}{\kappa_{l}^{2}} \tilde{Q}_l +\sum_{{m=1}\atop {m\neq\;l}}^{M}\Phi_{\kappa_0}(z_l,z_m)\tilde{Q}_m =-u^{I}(z_l).
\label{Q-tilde}
\end{equation}
Note that the dominant terms in the systems satisfied by $(Q_{l})^{M}_{l=1} $ and $(\tilde{Q}_{l})^{M}_{l=1} $ are same and therefore the algebraic system \eqref{Q-tilde} is invertible under the same conditions. \\
From \eqref{mult-obs-109r6-pr5-smpl-final} and \eqref{Q-tilde}, we observe that the vector $(\tilde{Q}_{l}-Q_{l})^{M}_{l=1} $ satisfies the algebraic system
\begin{equation}
\frac{I^{\prime^d}_{l}}{\kappa_{l}^{2}} (\tilde{Q}_l-Q_{l}) +\sum_{{m=1}\atop {m\neq\;l}}^{M}\Phi_{\kappa_0}(z_l,z_m)(\tilde{Q}_m-Q_{m}) =Rem_{l} Q_{l} +O(a^{4-\gamma-2s-h}+a^{1-2t-h}).
\notag
\end{equation}
Therefore
%\begin{equation}
\begin{align*}
\sum_{l=1}^{M} \vert \tilde{Q}_l-Q_{l} \vert^2&\leq C \left(\max_{l} \left\vert \frac{\kappa_{l}^{2}}{I^{\prime^d}_{l}}\right\vert \right)^2 \left[\sum_{l=1}^{M} \vert Rem_{l}\vert^2 \vert Q_{l}\vert^2 + \sum_{l=1}^{M} \vert O(a^{4-\gamma-2s-h}+a^{1-2t-h}) \vert^2 \right]\\
& \leq C (\max_{l} \vert \mathbf{C}_{l}\vert)^2 \left[\max_{l} \vert Rem_{l}\vert^2 \sum_{l=1}^{M} \vert Q_{l}\vert^2 + \sum_{l=1}^{M} \vert O(a^{4-\gamma-2s-h}+a^{1-2t-h}) \vert^2  \right]\\
&\leq C M (\max_{l} \vert \mathbf{C}_{l}\vert)^4 (\max_{l} \vert Rem_{l}\vert)^2 \left[O(1+a^{3-s-t-\gamma-h}+a^{4-\gamma-2s-h}) \right]^2 \\
&\qquad + C M (\max_{l} \vert \mathbf{C}_{l}\vert)^2 \left[O(a^{4-\gamma-2s-h}+a^{1-2t-h}) \right]^2,
\end{align*}
%\notag
%\end{equation}
where $C$ is some generic constant. Hence
%begin{equation}
\begin{align*}
\left(\sum_{l=1}^{M} \vert \tilde{Q}_l-Q_{l} \vert \right)^2 \leq M \sum_{l=1}^{M} \vert \tilde{Q}_l-Q_{l} \vert^2  &\leq C M^2 (\max_{l} \vert \mathbf{C}_{l}\vert)^4 (\max_{l} \vert Rem_{l}\vert)^2 \left[O(1+a^{3-s-t-\gamma-h}+a^{4-\gamma-2s-h}) \right]^2 \\
&\qquad + C M^2 (\max_{l} \vert \mathbf{C}_{l}\vert)^2 \left[O(a^{4-\gamma-2s-h}+a^{1-2t-h}) \right]^2,
\end{align*}
%\notag
%\end{equation}
which gives
%\begin{equation}
\begin{align}\label{Q-diff}
\sum_{l=1}^{M} \vert \tilde{Q}_l-Q_{l} \vert &\leq C M (\max_{l} \vert \mathbf{C}_{l}\vert)^2 (\max_{l} \vert Rem_{l}\vert) \left[O(1+a^{3-s-t-\gamma-h}+a^{4-\gamma-2s-h}) \right] \\
&\qquad + C M (\max_{l} \vert \mathbf{C}_{l}\vert) \left[O(a^{4-\gamma-2s-h}+a^{1-2t-h}) \right]\nonumber\\
&\leq C M (\max_{l} \vert \mathbf{C}_{l}\vert)^2 a^{-1+\gamma} \left[O(1+a^{3-s-t-\gamma-h}+a^{4-\gamma-2s-h}) \right]+ O(a^{4-\gamma-2s-2h}+a^{1-2t-2h}).\nonumber
\end{align} 
%\label{Q-diff}
%\end{equation}
Using \eqref{Q-diff} in \eqref{far-field}, we obtain
%\begin{equation}
\begin{align}\label{far-field-final}
u^\infty(\hat{x}, \theta)&=\sum^M_{m=1}\Phi_{\kappa_{0}}^\infty(\hat{x}, z_m)\tilde{Q}_m+ \sum^M_{m=1}\Phi_{\kappa_{0}}^\infty(\hat{x}, z_m)(Q_m-\tilde{Q}_m) +O(a^{1-h}+a^{5-2s-\gamma-2h})\\
&\qquad+O(a^{3-\gamma-s})
 +O\Big(a^{3-\gamma-s} \left( a^{4-\gamma-h-2s}+a^{1-2t-h} \right) \Big)
 +O\left(a^{4-2\gamma-s-h}+a^{2-s-h} \right)\nonumber\\
&=\sum^M_{m=1}\Phi_{\kappa_{0}}^\infty(\hat{x}, z_m)\tilde{Q}_m +O(a^{1-h}+a^{5-2s-\gamma-2h})\nonumber\\
&\qquad+O(a^{3-\gamma-s})
 +O\Big(a^{3-\gamma-s} \left( a^{4-\gamma-h-2s}+a^{1-2t-h} \right) \Big)
 +O\left(a^{4-2\gamma-s-h}+a^{2-s-h} \right)\nonumber\\
&\qquad +M(\max_{l} \vert \mathbf{C}_{l}\vert)^2 a^{-1+\gamma} \left[O(1+a^{3-s-t-\gamma-h}+a^{4-\gamma-2s-h}) \right]+ O(a^{4-\gamma-2s-2h}+a^{1-2t-2h}).\nonumber
\end{align}
%\label{far-field-final}
%\end{equation}
Finally from \eqref{far-field-final}, we can conclude the following:
\begin{itemize}
\item We recall that when $\gamma<1 $ or we are away from the resonance, we have $M \max_{l} \vert \mathbf{C}_{l}\vert=O(a^{2-\gamma-s}) $ and hence $h=\gamma+s-2 \leq 0$ (see remark \ref{mcm}). Also $\max_{l} \vert \mathbf{C}_{l}\vert=O(a^{2-\gamma}) $. Therefore \eqref{far-field-final} reduces to
%\begin{equation}
\begin{align}\label{far-field-final-1}
u^{\infty}(\hat{x},\theta)&=\sum^M_{m=1}\Phi_{\kappa_{0}}^\infty(\hat{x}, z_m)\tilde{Q}_m +O(a+a^{5-2s-\gamma})\\
&\qquad+O(a^{3-\gamma-s})
 +O\Big(a^{3-\gamma-s} \left( a^{4-\gamma-2s}+a^{1-2t} \right) \Big)
 +O\left(a^{4-2\gamma-s}+a^{2-s} \right)\nonumber\\
&\qquad +a^{3-\gamma-s}\ O(1+a^{3-s-t-\gamma}+a^{4-\gamma-2s})+ O(a^{4-\gamma-2s}+a^{5-2\gamma-2s-2t})\nonumber\\
&=\sum^M_{m=1}\Phi_{\kappa_{0}}^\infty(\hat{x}, z_m)\tilde{Q}_m+O(a^{2-s}+a^{4-\gamma-2s}+a^{5-2\gamma-2s-2t})\nonumber\\
&=\sum^M_{m=1}\Phi_{\kappa_{0}}^\infty(\hat{x}, z_m)\tilde{Q}_m+O(a^{2-s}+a^{3-\gamma-s-2t}),\nonumber
\end{align}
%\label{far-field-final-1}
%\end{equation}
where in the last line we use the fact $s+\gamma \leq 2$.
\item We recall that near the resonance $\max_{l} \vert \mathbf{C}_{l} \vert=O(a^{1-h_{1}}) $ and therefore $O(a^{-h})=M \max_{l} \vert \mathbf{C}_{l} \vert=O(a^{-s+1-h_{1}}) $ leading to the condition $h=-1+s+h_{1} $.\\
Also since $\gamma=1 $, we obtain that $M(\max_{l} \vert \mathbf{C}_{l}\vert)^2 a^{-1+\gamma}=O(a^{s-2h}) $. Then from \eqref{far-field-final}, we conclude that  
%\begin{equation}
\begin{align}\label{far-field-final-2}
u^{\infty}(\hat{x},\theta)&=\sum^M_{m=1}\Phi_{\kappa_{0}}^\infty(\hat{x}, z_m)\tilde{Q}_m +O(a^{1-h}+a^{5-2s-\gamma-2h})\\
&\qquad+O(a^{3-\gamma-s})
 +O\Big(a^{3-\gamma-s} \left( a^{4-\gamma-h-2s}+a^{1-2t-h} \right) \Big)
 +O\left(a^{4-2\gamma-s-h}+a^{2-s-h} \right)\nonumber\\
&\qquad +a^{s-2h}\ O(1+a^{3-s-t-\gamma-h}+a^{4-\gamma-2s-h}) + O(a^{4-\gamma-2s-2h}+a^{1-2t-2h})\nonumber\\
&=\sum^M_{m=1}\Phi_{\kappa_{0}}^\infty(\hat{x}, z_m)\tilde{Q}_m+O(a^{1-h}+a^{2-s-2h}+a^{1-2t-2h}+a^{s-2h})\nonumber\\
&=\sum^M_{m=1}\Phi_{\kappa_{0}}^\infty(\hat{x}, z_m)\tilde{Q}_m+O(a^{2-s-h_{1}}+a^{4-3s-2h_{1}}+a^{3-2t-2s-2h_{1}}+a^{2-s-2h_{1}})\nonumber\\
&=\sum^M_{m=1}\Phi_{\kappa_{0}}^\infty(\hat{x}, z_m)\tilde{Q}_m+O(a^{3-2t-2s-2h_{1}}+a^{2-s-2h_{1}})\nonumber
\end{align}
%\label{far-field-final-2}
%\end{equation}
where in the last line we use the fact that $s\leq 1$ (as we have $s+\gamma\leq 2$ and $\gamma=1$).
\begin{enumerate}
 \item When $l_M<0$, then we have seen that we need that $h\leq 0$, see Remark \ref{mcm} and Remark \ref{extra-conditions}). Hence in this case we need  $s+h_1\leq 1$. 

\item  When $l_M>0$, we need only $h<\frac{1}{2}$ which means that $s+h_1< \frac{3}{2}$.

\end{enumerate}

Note that the error term in \eqref{far-field-final-2} goes to zero provided $s,h_{1}$ and $ t $ satisfy the condition 
\begin{equation}\label{condi-errorb}
 s<\min\{2-2h_1, \; \frac{3-2t-2h_1}{2}\}.
\end{equation}
These conditions are fulfilled if
\begin{equation}
 s+h_1\leq 1,\; ~~ h_1<1\; \mbox{ and } t < \frac{1}{2}.
\end{equation}
These last conditions are the regimes in which one can derive the effective media. Actually, in the case $l_M>0$, one can allow $s+h_1>1$ (but $s+h_1< \min \{ \frac{3}{2}-t, 2-h_1 \}$) and hence generate very large effective potentials.  
\end{itemize}
\begin{remark}\label{Improved-expansions}
Note that in the algebraic system \eqref{Q-tilde}, unlike that in the case of \eqref{mult-obs-109r6-pr5-smpl-final} , we have only used the (leading) term $\frac{I^{\prime^d}_{l}}{\kappa_{l}^{2}} $ instead of $\mathbf{C}^{-1}_{l} $. Now suppose that the vectors $(\tilde{Q}_{l})^{M}_{l=1} $ satisfy the algebraic system 
\begin{equation}
 \mathbf{C}^{-1}_{l} \tilde{Q}_l +\sum_{{m=1}\atop {m\neq\;l}}^{M}\Phi_{\kappa_0}(z_l,z_m)\tilde{Q}_m =-u^{I}(z_l).
\notag
\end{equation}
Then proceeding as in the derivation of \eqref{far-field-final-2}, when the frequency is near the Minnaert resonance, we can obtain the improved estimate
\begin{equation}
u^{\infty}(\hat{x},\theta) =\sum^M_{m=1}\Phi_{\kappa_{0}}^\infty(\hat{x}, z_m)\tilde{Q}_m+O(a^{3-2t-2s-2h_{1}}+a^{2-s-h_{1}}).
\notag
\end{equation} 
The error term in the above estimate goes to zero under the same conditions as in the case of \eqref{far-field-final-2}. This can be observed if we take $Rem_l=0 $  and continue as in the derivation of \eqref{far-field-final-2} from the algebraic system \eqref{Q-tilde}, now obviously replacing $\frac{I^{\prime^d}_{l}}{\kappa_{l}^{2}} $ by $\mathbf{C}^{-1}_{l} $.
\end{remark}

\section{Proofs of auxiliary results}\label{Proof-details}
\subsection{Proof of Proposition \ref{density}:}\label{proof-density}
%\begin{proof}
%$\frac{\mu_0}{\mu_{i}} \Big[\frac{1}{2} Id +(K^{\kappa_{i}}_{D_{i}})^{*} \Big]\psi_{i}(x)- \Big[-\frac{1}{2} Id+(K^{\kappa_{0}}_{D_{i}})^{*} \Big] \phi_{i}(x) -\left.\sum_{{l=1}\atop {l\neq i}}^{M} \frac{\partial(S^{\kappa_{0}}_{D_{l}} \phi_{l})}{\partial \nu^{i}} \right\vert_{\partial D_{i}}  $
Let us define operators $T,T_{0}:X \rightarrow Y$ by  
\begin{equation}
\begin{aligned}
 \Big(\phi_{1},\psi_{1},...,\phi_{M},\psi_{M} \Big) \mapsto \Bigg(&\left.\Big(S^{\kappa_{1}}_{D_{1}} \psi_{1}-\sum_{l=1}^{M}S^{\kappa_{0}}_{D_{l}} \phi_{l} \Big) \right\vert_{\partial D_{1}},\\
 &\frac{\rho_0}{\rho_{1}} \Big[\frac{1}{2} Id +(K^{\kappa_{1}}_{D_{1}})^{*} \Big]\psi_{1}- \Big[-\frac{1}{2} Id+(K^{\kappa_{0}}_{D_{1}})^{*} \Big] \phi_{1}-\left.\sum_{{l=1}\atop {l\neq 1}}^{M} \frac{\partial(S^{\kappa_{0}}_{D_{l}} \phi_{l})}{\partial \nu^{1}} \right\vert_{\partial D_{1}},...,\\ 
 &\left.\Big(S^{\kappa_{M}}_{D_{M}} \psi_{M}-\sum_{l=1}^{M}S^{\kappa_{0}}_{D_{l}} \phi_{l} \Big) \right\vert_{\partial D_{M}},\\
 &\frac{\rho_0}{\rho_{M}} \Big[\frac{1}{2} Id +(K^{\kappa_{M}}_{D_{M}})^{*} \Big]\psi_{M}- \Big[-\frac{1}{2} Id+(K^{\kappa_{0}}_{D_{M}})^{*} \Big] \phi_{M}-\left.\sum_{{l=1}\atop {l\neq M}}^{M} \frac{\partial(S^{\kappa_{0}}_{D_{l}} \phi_{l})}{\partial \nu^{M}} \right\vert_{\partial D_{M}}  \Bigg)
 \end{aligned}
 \notag
\end{equation}
and
\begin{equation}
\begin{aligned}
 \Big(\phi_{1},\psi_{1},...,\phi_{M},\psi_{M} \Big) \mapsto &\Big(\Big(S^{0}_{D_{1}} \psi_{1}-S^{0}_{D_{1}} \phi_{1} \Big) \Big\vert_{\partial D_{1}},\ \frac{\rho_0}{\rho_{1}} \Big[\frac{1}{2} Id +(K^{0}_{D_{1}})^{*} \Big]\psi_{1}- \Big[-\frac{1}{2} Id+(K^{0}_{D_{1}})^{*} \Big] \phi_{1},...,\\ 
 &\Big(S^{0}_{D_{M}} \psi_{M}-S^{0}_{D_{M}} \phi_{M} \Big) \Big\vert_{\partial D_{M}},\ \frac{\rho_0}{\rho_{M}} \Big[\frac{1}{2} Id +(K^{0}_{D_{M}})^{*} \Big]\psi_{M}- \Big[-\frac{1}{2} Id+(K^{0}_{D_{M}})^{*} \Big] \phi_{M} \Big).
 \end{aligned}
 \notag
\end{equation}
%\begin{claim}
 We note that the operator $T-T_{0}:X \rightarrow Y$ is a compact operator.
%\end{claim}
To see this, first of all we observe that the typical terms in $(T-T_{0})(\phi_{1},\psi_{1},...,\phi_{M},\psi_{M})$ are of the form
\begin{equation}
-\sum_{{l=1} \atop {l \neq i}}^{M} S_{D_{l}}^{\kappa_{0}} \phi_{l} +(S^{\kappa_{i}}_{D_{i}} -S^{0}_{D_{i}})\psi_{i} -(S^{\kappa_{0}}_{D_{i}}-S^{0}_{D_{i}})\phi_{i}
 \notag
\end{equation}
and
\begin{equation}
-\sum_{{l=1}\atop {l \neq i}}^{M} \frac{\partial(S^{\kappa_{0}}_{D_{l}} \phi_{l})}{\partial \nu^{i}} 
+\frac{\rho_{0}}{\rho_{i}} \Big[(K^{\kappa_i}_{D_i})^{*}-(K^{0}_{D_i})^{*} \Big]\psi_i - \Big[(K^{\kappa_0}_{D_i})^{*}-(K^{0}_{D_i})^{*} \Big]\phi_i.
 \notag
\end{equation}
The first term gives rise to a compact operator since $(S^{\kappa_{i}}_{D_{i}}-S^{0}_{D_{i}}), (S^{\kappa_{0}}_{D_{i}}-S^{0}_{D_{i}})$ are compact and also for $l \neq i$, $S^{\kappa_{0}}_{D_{l}} \phi_{l}$ is compact. A similar argument
works for the second term as well, and therefore it follows that $T-T_{0}$ is a compact operator.\\
%
%\begin{claim}
Also note that the operator $T_{0}$ is invertible.
%\end{claim}
This follows directly as in the proof of Theorem 11.4, Page 189, of \cite{Ammari-Kang-2}, observing that $ T_0$ is diagonal with a $2\times2$ operator, at the diagonal, of the form
\[(\phi_{l},\psi_{l}) \mapsto \Big(\Big(S^{0}_{D_{l}} \psi_{l}-S^{0}_{D_{l}} \phi_{l} \Big) \Big\vert_{\partial D_{l}},\ \frac{\rho_0}{\rho_{l}} \Big[\frac{1}{2} Id +(K^{0}_{D_{l}})^{*} \Big]\psi_{l}- \Big[-\frac{1}{2} Id+(K^{0}_{D_{l}})^{*} \Big] \phi_{l} \Big). \]
Therefore to prove that $T$ is invertible, it is sufficient to prove that $T$ is injective, as an application of the Fredholm alternative.\\
Let us therefore suppose that $T(\phi_{1},\psi_{1},...,\phi_{M},\psi_{M}) =0$. Then 
\begin{equation}
u(x)=\begin{cases}
      \sum^{M}_{l=1} S^{\kappa_{0}}_{D_{l}} \phi_{l}(x), \ x \in \mathbb{R}^{3}\diagdown \overline{\cup_{l=1}^{M} D_{l}}   \\
      S^{\kappa_{s}}_{D_{s}} \psi_{s}(x) , \ x \in D_{s}            \end{cases}
\notag
\end{equation}
 is the unique solution to the problem under consideration.\\
 Now 
 \[\int_{\partial D_{i}} \frac{\partial u}{\partial \nu}\Big\vert_{+} \bar{u}\ d\sigma=\frac{\rho_{0}}{\rho_{i}} \int_{\partial D_{i}} \frac{\partial u}{\partial \nu}\Big\vert_{-} \bar{u} d\sigma
 =\frac{\rho_{0}}{\rho_{i}} \int_{D_{i}} (\vert \nabla u\vert^{2}-\kappa_{i}^{2} \vert u\vert^{2}) \ dx
 \]
which is real and therefore
\[\text{Im} \ \int_{\partial D_{i}} \frac{\partial u}{\partial \nu} \Big\vert_{+} \bar{u}\ d \sigma=0, \]
which in turn implies that
$u=0$ in $\mathbb{R}^{3}\diagdown \cup_{l=1}^{M} D_{l}$.\\
Again, $u$ solves the equation
\[ 
\begin{aligned}
 &(\Delta + \kappa_{i}^{2}) u =0 \ \text{in} \ D_{i}, \\
 &u=\frac{\partial u}{\partial \nu^{i}}=0 \ \text{on} \ \partial D_{i}
 \end{aligned}
\]
and hence by unique continuation property, $u=0\  \text{in}\ D_{i}$.
Arguing similarly for each obstacle $D_{i}$, we obtain that $u = 0 \ \text{in}\ \mathbb{R}^{3}$. In particular, this implies that
\[\sum^{M}_{l=1} S^{\kappa_{0}}_{D_{l}} \phi_{l}(x)=0\ \text{on}\ \partial D_{i}.\]
Since $\Big(\Delta+\kappa_{0}^{2}\Big)(\sum^{M}_{l=1} S^{\kappa_{0}}_{D_{l}} \phi_{l})=0 \ \text{in}\ D_{i}$ and $\kappa_{0}^{2}$ is not a Dirichlet eigenvalue for $-\Delta$ on $D_{i}$ (which holds since $ a \ll 1$),
it follows that \[\sum^{M}_{l=1} S^{\kappa_{0}}_{D_{l}} \phi_{l}=0 \ \text{in} \ D_{i} \] and therefore since this is true for each $i$, we have
\begin{equation} 
\sum^{M}_{l=1} S^{\kappa_{0}}_{D_{l}} \phi_{l}=0\ \text{in}\ \mathbb{R}^{3}.
\label{apc35}
\end{equation}
Now for a fixed $i$, we know that whenever $l\neq i$, we have $\frac{\partial S^{\kappa_{0}}_{D_{l}} \phi_{l}}{\partial \nu^{i}}\Big\vert_{+}=\frac{\partial S^{\kappa_{0}}_{D_{l}}\phi_{l}}{\partial \nu^{i}}\Big\vert_{-}$
and therefore we obtain using \eqref{apc35},
\[\phi_{i}=\frac{\partial S^{\kappa_{0}}_{D_{i}} \phi_{i}}{\partial \nu^{i}}\Big\vert_{+}- \frac{\partial S^{\kappa_{0}}_{D_{i}} \phi_{i}}{\partial \nu^{i}}\Big\vert_{-}=0 \ \text{on}\ \partial D_{i}.\]
Next we prove that $\psi_{i}=0 \ \text{on}\ \partial D_{i}, \ \text{for all} \ i $.
Since we already know that $\phi_{i}=0 \ \forall \ i $, it follows from the form of the solution $u$ and the zero jump condition that
$S^{\kappa_{i}}_{D_{i}} \psi_{i}=0 \ \text{on} \ \partial D_{i}.$ 
Using this fact and the fact that $\Big(\Delta+\kappa^{2}_{i} \Big)  S^{\kappa_{i}}_{D_{i}} \psi_{i}=0 $ in $\mathbb{R}^{3}\diagdown \overline{D_{i}} $, we obtain that
$ S^{\kappa_{i}}_{D_{i}} \psi_{i}=0 \ \text{in} \ \mathbb{R}^{3}\diagdown \overline{D_{i}}$.
Recalling the fact that we have already proved that $u = 0 \ \text{in}\ D_{i}$ and hence $ S^{\kappa_{i}}_{D_{i}} \psi_{i}=0 \ \text{on} \ \partial D_{i},$ it therefore follows
that $S^{\kappa_{i}}_{D_{i}} \psi_{i}=0 \ \text{in}\ \mathbb{R}^{3}$. Now we can proceed just in the case of $\phi_{l}$ by using the Neumann jump of 
$ S^{\kappa_{i}}_{D_{i}} \psi_{i}$ across $D_{i}$ to obtain that $\psi_{i}=0$. 
\subsection{Proof of Lemma \ref{scaling-1}}\label{proof-scaling-1}

In order to prove \eqref{Sinv_int_difpts_psi}, we first set
$\left(S_{D_l}^0\right)^{-1}\Bigg( \int_{\partial D_l}|\cdot-t|^n \phi_{l}(t) d\sigma_{l}(t)\Bigg)=: \chi_{l}. $
As
\begin{equation}
\begin{aligned}
&\delta^{n+2} \int_{\partial B_{l}} \vert \hat{s}-\hat{t} \vert^{n} \hat{\phi}_{l}(\hat{t})\ d\hat{\sigma}_{l}(\hat{t})= \int_{\partial D_{l}} \vert s-t \vert^n \phi_{l}(t)\ d\sigma_{l}(t) =\Big[S^{0}_{D_{l}} \chi_{l} \Big](s) =\delta \Big[S^{0}_{B_{l}} \hat{\chi}_{l} \Big](\hat{s}), 
\end{aligned}
\end{equation}
then
%\begin{equation}
\begin{align*}
\hat{\chi}_{l}= \delta^{n+1} (S^{0}_{B_{l}})^{-1} \Big(\int_{\partial B_{l}} \vert \cdot-\hat{t} \vert^{n} \hat{\phi}_{l}(\hat{t})\ d\hat{\sigma}_{l}(\hat{t}) \Big) , 
\end{align*}
%\notag
%\end{equation}
where $[S^{0}_{B_{l}} \hat{\chi}_{l} ](\hat{s}):=\int_{\partial B_{l}} \frac{1}{4 \pi \vert \hat{s}-\hat{t} \vert} \hat{\chi}_{l}(\hat{t})\ d\hat{\sigma}_{l}(\hat{t})  $ and $d\hat{\sigma}_{l} $ denotes the surface measure on $\partial B_{l} $.\\
Therefore
%\begin{equation}
\begin{align*}
\frac{1}{\delta} \norm{\chi_{l}}_{L^{2}(\partial D_{l})}= \norm{\hat{\chi}_{l}}_{L^{2}(\partial B_{l})} &=\delta^{n+1} \norm{(S^{0}_{B_{l}})^{-1}}_{\mathcal{L}(H^1(\partial B_{l}),L^2(\partial B_{l}))} \Big\Vert\int_{\partial B_{l}} \vert \cdot- \hat{t} \vert^{n}\ \hat{\phi}_{l}(\hat{t})\ d\hat{\sigma}_{l}(\hat{t}) \Big\Vert_{H^1(\partial B_{l})} \\
&=O(\delta^{n+1} \norm{\hat{\phi}_{l}}_{L^{2}(\partial B_{l})}) = O(\delta^{n} \norm{\phi_{l}}_{L^{2}(\partial D_{l})}),
\end{align*}
%\notag
%\end{equation}
whence it follows that
%\begin{equation*}
\begin{align*}
\left\|\left(S_{D_l}^0\right)^{-1}\Bigg( \int_{\partial D_l}|\cdot-t|^n \phi_{l}(t) d\sigma_{l}(t)\Bigg)\right\|_{L^2(\partial D_l)}= \norm{\chi_{l}}_{L^{2}(\partial D_{l})} &= O\Big(\delta^{n+1} \norm{\phi_{l}}_{L^{2}(\partial D_{l})}\Big)\\
& = O\Big(a^{n+1} \norm{\phi_{l}}_{L^{2}(\partial D_{l})}\Big).
\end{align*}
%\notag
%\end{equation}
Similarly, to prove \eqref{InvSsmzl--ori}, we write $(S^{0}_{D_{l}})^{-1} \Big((\cdot-z_{l})^{n} \Big)= \eta_{l} $. Then 
\begin{align*}
\delta^{n} \hat{s}^{n}= (s-z_{l})^{n} &= \Big[S^{0}_{D_{l}} \eta_{l} \Big](s)=\delta \Big[S^{0}_{B_{l}} \hat{\eta}_{l} \Big](\hat{s}),
%\Rightarrow &\Big[S^{0}_{B_{l}} \hat{\eta}_{l} \Big](\hat{s})=\delta^{n-1} \hat{s}^{n} 
\end{align*}
%\notag
%\end{equation}
which implies
\begin{equation}
%\begin{aligned}
\hat{\eta}_{l}=\delta^{n-1} (S^{0}_{B_{l}})^{-1}(\cdot^{n}),
%\end{aligned}
\notag
\end{equation}
and hence 
%\begin{equation}
\begin{align*}
& \frac{1}{\delta} \norm{\eta_{l}}_{L^{2}(\partial D_{l})} = \norm{\hat{\eta}_{l}}_{L^{2}(\partial B_{l})}= \delta^{n-1} \norm{(S^{0}_{B_{l}})^{-1} (\cdot^{n})}_{L^{2}(\partial B_{l})}\\
\Rightarrow &\norm{(S^{0}_{D_{l}})^{-1} \Big((\cdot-z_{l})^{n} \Big)}_{L^{2}(\partial D_{l})}= \norm{\eta_{l}}_{L^{2}(\partial D_{l})} = \delta^{n} \norm{(S^{0}_{B_{l}})^{-1} (\cdot^{n})}_{L^{2}(\partial B_{l})} = O(a^{n}).
\end{align*}
%\notag
%\end{equation}

\subsection{Proof of Lemma \ref{Claimest}}\label{Proof-Claimest}
 In order to prove \eqref{claim-single-Lay-dif-2}, we proceed as follows. 
 \begin{equation}
 \begin{aligned}
 \left[S^{\kappa_{0}}_{D_{m}}-S^{\kappa_{m}}_{D_{m}}\right]&\psi_{m}(x)
 =\int_{\partial D_m}[\Phi_{\kappa_0}-\Phi_{\kappa_m}](x,s)\psi_m(s)\,d\sigma_{m}(s)\\ 
 &=\frac{i}{4\pi}(\kappa_0-\kappa_m)\int_{\partial D_m}\psi_m(s)\,d\sigma_{m}(s)+
 \underbrace{\sum_{n=2}^\infty\frac{i^n(\kappa_0^n-\kappa_m^n)}{4\pi n!}\int_{\partial D_m}\vert{x-s}\vert^{n-1}\psi_m(s)\,d\sigma_{m}(s)}_{=: Err1_{m}}\quad\\
  &=\frac{i}{4\pi}(\kappa_0-\kappa_m)\int_{\partial D_m}\psi_m(s)\,d\sigma_{m}(s)+
{O\left(\sum_{n=2}^\infty\frac{(\kappa_0^n+\kappa_m^n)}{4\pi n!}a^{n-1}\int_{\partial D_m}\vert\psi_m(s) \vert \ d\sigma_{m}(s) \right)}\\
  &=\frac{i}{4\pi}(\kappa_0-\kappa_m)\int_{\partial D_m}\psi_m(s)\,d\sigma_{m}(s)+
O\left(\frac{a^{2}}{4\pi}\left[\frac{\kappa_0^2}{1-\kappa_0\,a}+\frac{\kappa_m^2}{1-\kappa_m\,a}\right]\|\psi_m\|\right)\\
  &=\frac{i}{4\pi}(\kappa_0-\kappa_m)\int_{\partial D_m}\psi_m(s)\,d\sigma_{m}(s)+
O\left(a^{2}\|\psi_m\|\right),
\end{aligned}
\label{claim-single-Lay-dif-prf-1}
\end{equation}
whence \eqref{claim-single-Lay-dif-2} follows. Note that here we use the fact that $\vert \kappa_{0}a \vert, \vert \kappa_{m} a \vert<\frac{1}{2} $, which holds since $a \ll 1 $.\\ 

In order to prove \eqref{claim-adjdblle-Lay-dif-prime}, we first observe that
%{\color{blue}{
% \begin{equation}
 \begin{align*}
  \int_{\partial D_{l}}(K^{\kappa}_{D})^{*}\psi(s) d\sigma_{l}(s) = \int_{\partial D_{l}}\psi(s)(K^{\kappa}_{D})(1)(s) d\sigma_{l}(s)
  &= \int_{\partial D_{l}}\psi(s)\Bigg[\int_{\partial D_{l}}\frac{\partial}{\partial\nu_t}\Phi_{\kappa}(s,t) d\sigma_{l}(t)\Bigg]d\sigma_{l}(s) \\
  &= \int_{\partial D_{l}}\psi(s)\Bigg[\int_{ \partial D_{l}}\nabla_t\Phi_{\kappa}(s,t)\cdot\nu_t \ d\sigma_{l}(t)\Bigg]d\sigma_{l}(s).
 \end{align*}
 %\notag
% \end{equation}

 Then we can write

% \begin{equation}
 \begin{align*}
 \int_{\partial D_l}&\left[(K^{\kappa_{0}}_{D_{l}})^{*} -(K^{\kappa_{l}}_{D_{l}})^{*}\right]\psi_l(s) d\sigma_{l}(s)
 =\int_{\partial D_l}\psi_l(s)\Bigg[\int_{ \partial D_l}\nabla_t\Big(\Phi_{\kappa_0}-\Phi_{\kappa_l}\Big)(s,t)\cdot\nu_t \ d\sigma_{l}(t) \Bigg]d\sigma_{l}(s) \\
  =&\int_{\partial D_l}\psi_l(s)\Bigg[\int_{ \partial D_l}\nabla_t\Bigg(\sum_{n=1}^\infty\frac{i^n(\kappa_{0}^n-\kappa_{l}^n)}{4\pi\; n!}\vert{s-t}\vert^{n-1}\Bigg) \cdot\nu_t \ d\sigma_{l}(t)\Bigg]d\sigma_{l}(s) \\
    =&\int_{\partial D_l}\psi_l(s)\Bigg[\int_{ \partial D_l}\Bigg(\sum_{n=2}^\infty\frac{i^n(\kappa_{l}^n-\kappa_{0}^n)}{4\pi\; n!}(n-1)\vert{s-t}\vert^{n-2}\frac{(s-t)}{\vert{s-t}\vert}\Bigg) \cdot\nu_t d\sigma_{l}(t) \Bigg]d\sigma_{l}(s) \\
 =&\frac{1}{8\pi}(\kappa_0^2-\kappa_l^2)\int_{\partial D_l}\psi_l(s)\left[\int_{ \partial D_l}\frac{(s-t)}{\vert{s-t}\vert} \cdot\nu_t \,d\sigma_{l}(t)\right]d\sigma_{l}(s) \\
 &+\sum_{n=3}^\infty\frac{i^n(\kappa_{l}^n-\kappa_{0}^n)}{4\pi\; n!}(n-1)\int_{\partial D_l}\psi_l(s)\Bigg[\int_{ \partial D_l}\Bigg(\vert{s-t}\vert^{n-2}\frac{(s-t)}{\vert{s-t}\vert}\Bigg) \cdot\nu_t d\sigma_{l}(t) \Bigg]d\sigma_{l}(s) \\
=&\frac{1}{8\pi}(\kappa_0^2-\kappa_l^2)\int_{\partial D_l}\psi_l(s)\left[\int_{ \partial D_l}\frac{(s-t)}{\vert{s-t}\vert} \cdot\nu_t \,d\sigma_{l}(t) \right]d\sigma_{l}(s)-\frac{i}{4 \pi}(\kappa_0^3-\kappa_l^3)\vert{D_l}\vert\int_{\partial D_l}\psi_l(s)d\sigma_{l}(s) \\
 &+\underbrace{\sum_{n=4}^\infty\frac{i^n(\kappa_{l}^n-\kappa_{0}^n)}{4\pi\; n!}(n-1)\int_{\partial D_l}\psi_l(s)\Bigg[\int_{ \partial D_l}\Bigg(\vert{s-t}\vert^{n-2}\frac{(s-t)}{\vert{s-t}\vert}\Bigg) \cdot\nu_t d\sigma_{l}(t) \Bigg]d\sigma_{l}(s)}_{=:Err2_{l}=O({a^5}\|\psi_l\|)},
 \end{align*}
 %\notag
% \end{equation}
 where the last step follows from the divergence theorem. Thus \eqref{claim-adjdblle-Lay-dif-prime} follows. \\

 Next we establish \eqref{claim-normsingle-Lay}. For this, we first observe that
 %\begin{equation}
\begin{align*}
\int_{\partial D_{l}} \Big[\frac{\partial S^{\kappa_{0}}_{D_{m}}  \phi_{m}}{\partial \nu^{l}}  \Big](s)\ d\sigma_{l}(s)  
 &= \int_{\partial D_{l}} \Big[\int_{\partial D_{m}} \frac{\partial \Phi_{\kappa_{0}}(s,t)}{\partial \nu^{l}(s)} \phi_{m}(t) \ d\sigma_{m}(t) \Big] d\sigma_{l}(s)  \\
 &= \int_{\partial D_{m}} \Big[\int_{\partial D_{l}} \frac{\partial \Phi_{\kappa_{0}}(s,t)}{\partial \nu^{l}(s)} d\sigma_{l}(s) \Big] \phi_{m}(t) d\sigma_{m}(t) \\
 &= \int_{\partial D_{m}} \Big[\int_{D_{l}} \Delta \Phi_{\kappa_0}(x,t)\ dx \Big] \phi_{m}(t)\ d\sigma_{m}(t) \\
 &= -\kappa_{0}^{2} \int_{\partial D_{m}} \Big[\int_{D_{l}} \Phi_{\kappa_{0}}(x,t)\ dx \Big] \phi_{m}(t)\ d\sigma_{m}(t).
 \end{align*}
 %\notag
 %\end{equation}
 We can write this as
% \begin{equation}
 \begin{align*}
 \int_{\partial D_{l}} \Big[\frac{\partial S^{\kappa_{0}}_{D_{m}}  \phi_{m}}{\partial \nu^{l}}  \Big](s)\ d\sigma_{l}(s)  &=-\kappa_{0}^{2} \int_{\partial D_{m}} \Phi_{\kappa_{0}}(z_{l},t) \vert D_{l} \vert \phi_{m}(t) d \sigma_{m}(t)\\
  & -\kappa_{0}^{2} \int_{\partial D_{m}} \nabla_x\Phi_{\kappa_{0}}(z_{l},t)\cdot\Big[ \int_{D_{l}}(x-z_l) dx\Big]  \phi_{m}(t) d \sigma_{m}(t)\\
 &-\underbrace{\kappa_{0}^{2} \int_{\partial D_{m}} \Big[ \int_{D_{l}} (\Phi_{\kappa_{0}}(x,t)-\Phi_{\kappa_{0}}(z_l,t)-(x-z_l)\cdot\nabla_x\Phi_{\kappa_{0}}(z_{l},t)) dx\Big]  \phi_{m}(t) d \sigma_{m}(t)}_{=:G_{1_{ml}}^{\kappa_0}=O(\frac{a^6}{d_{ml}^3}\|\phi_m\|)} 
 \end{align*}
% \notag 
% \end{equation}
 which further implies that
 %\begin{equation}
 \begin{align*}
 \int_{\partial D_{l}} \Big[\frac{\partial S^{\kappa_{0}}_{D_{m}} \phi_{m}}{\partial \nu^{l}}  \Big](s)\ d\sigma_{l}(s) &=-\kappa_{0}^{2} \int_{\partial D_{m}} \Phi_{\kappa_{0}}(z_{l},z_{m}) \vert D_{l} \vert \phi_{m}(t) d \sigma_{m}(t)  \\
 &-\kappa_{0}^{2} \int_{\partial D_{m}} \vert D_{l} \vert \nabla_{y} \Phi_{\kappa_{0}}(z_{l},z_{m})\cdot (t-z_{m}) \phi_{m}(t) d\sigma_{m}(t)\\
 & -\underbrace{\kappa_{0}^{2} \int_{\partial D_{m}} \vert D_{l} \vert \big[ \Phi_{\kappa_{0}}(z_{l},t)- \Phi_{\kappa_{0}}(z_{l},z_{m})-\nabla_{t} \Phi_{\kappa_{0}}(z_{l},z_{m})\cdot (t-z_{m})\big] \phi_{m}(t) d\sigma_{m}(t)}_{=:G_{2_{ml}}^{\kappa_0}=O(\frac{a^6}{d_{ml}^3}\|\phi_m\|)}-{G_{1_{ml}}^{\kappa_0}} \\
 &-\kappa_{0}^{2} \int_{\partial D_{m}} \nabla_x\Phi_{\kappa_{0}}(z_{l},z_m)\cdot\Big[ \int_{D_{l}}(x-z_l) dx\Big]  \phi_{m}(t) d \sigma_{m}(t)\\
  &-\underbrace{\kappa_{0}^{2} \int_{\partial D_{m}} [\nabla_x \Phi_{\kappa_{0}}(z_{l},t)-\nabla_x\Phi_{\kappa_{0}}(z_{l},z_m)]\cdot\Big[ \int_{D_{l}}(x-z_l) dx\Big]  \phi_{m}(t) d \sigma_{m}(t)}_{=:G_{3_{ml}}^{\kappa_0}=O(\frac{a^6}{d_{ml}^3}\|\phi_m\|)}
  \end{align*}
  %\notag
  %\end{equation}
  and therefore
 % \begin{equation}
  \begin{align} \label{claim-normsingle-Lay-Prf}
 \int_{\partial D_{l}} \Big[\frac{\partial S^{\kappa_{0}}_{D_{m}} \phi_{m}}{\partial \nu^{l}}  \Big](s)\ d\sigma_{l}(s) &=-\kappa_{0}^{2} \Phi_{\kappa_{0}}(z_{l},z_{m}) \vert D_{l} \vert \int_{\partial D_{m}} \phi_{m}(t) d \sigma_{m}(t)\\
 & -\kappa_{0}^{2} \vert D_{l} \vert \nabla_{t} \Phi_{\kappa_{0}}(z_{l},z_{m})\cdot \int_{\partial D_{m}} (t-z_{m})  \phi_{m}(t) d\sigma_{m}(t)\nonumber\\
 &-\kappa_{0}^{2}\nabla_x\Phi_{\kappa_{0}}(z_{l},z_m)\cdot\Big[ \int_{D_{l}}(x-z_l) dx\Big] \int_{\partial D_{m}}   \phi_{m}(t) d \sigma_{m}(t) -\underbrace{[G_{1_{ml}}^{\kappa_0}+G_{2_{ml}}^{\kappa_0}+G_{3_{ml}}^{\kappa_0}]}_{=:Err4_{m}=O(\frac{a^6}{d_{ml}^3}\|\phi_m\|)}, \nonumber
 \end{align}
% \label{claim-normsingle-Lay-Prf}
% \end{equation}
 whence \eqref{claim-normsingle-Lay} follows. \\

% \%\%\%\%\%\%\%\%\%\%\%\%\%\%NNC\%\%\%\%\%\%\%\%\%\%\%\%\%\%\%\%\%\%\%
The proof of \eqref{claim-norm-inci} follows easily from the observation
%\begin{equation}
\begin{align}\label{claim-norm-inci-prf}
\int_{\partial D_{l}} \frac{\partial u^{I}}{\partial \nu^{l}} 
=&-\kappa_0^2\,\vert{D_l}\vert\,u^{I}(z_l)\underbrace{-{\kappa_0^2}  \int_{ D_l} (u^{I}(y)-u^{I}(z_l)\, dy}_{=:Err5_{l}=O(a^4)}.
\end{align}
%\label{claim-norm-inci-prf}
%\end{equation}
%
\begin{comment}
From \eqref{def-Sg}, we have that
   \begin{eqnarray}\label{def-Sg-c-1-pr}
 g_l (s_l) &=& \left(S^{k_{0}}_{D_{l}}\right)^{-1}\left(\frac{i}{4\pi}(\kappa_0-\kappa_l)\int_{\partial D_l}\psi_l\right)\nonumber\\
 &=& \frac{i}{4\pi}(\kappa_0-\kappa_l) \left(S^{k_{0}}_{D_{l}}\right)^{-1}\left(\int_{\partial D_l}\psi_l\right)\nonumber\\
  &=&\underbrace{ \frac{i}{4\pi}(\kappa_0-\kappa_l) \left(S^{{0}}_{D_l}\right)^{-1}\left(\int_{\partial D_l}\psi_l\right)+\underbrace{\frac{i}{4\pi}(\kappa_0-\kappa_l)\left(S^{{0}}_{D_l}\right)^{-1}\sum_{n=1}^\infty[-S^{d_{\kappa_0}}_{D_l}\left(S^{{0}}_{D_l}\right)^{-1}]^n\left(\int_{\partial D_l}\psi_l\right)}_{\textcolor{blue}{=: Err7}=O(a\int_{\partial D_l}\vert\psi_l\vert) \textcolor{blue}{\mbox{ in } L^2(\partial D_l)}}}_{:=\textcolor{blue}{Err11}}\nonumber\\
\end{eqnarray}
Also observe that
 \begin{eqnarray}\label{def-Sg-c-int-1}
 \int_{\partial D_l}g_l (s_l)   &=& \frac{i}{4\pi}(\kappa_0-\kappa_l) \left(\int_{\partial D_l}\psi_l\right)\;C_l + \underbrace{\int_{\partial D_l}\frac{i}{4\pi}(\kappa_0-\kappa_l)\left(S^{{0}}_{D_l}\right)^{-1}\sum_{n=1}^\infty[-S^{d_{\kappa_0}}_{D_l}\left(S^{{0}}_{D_l}\right)^{-1}]^n\left(\int_{\partial D_l}\psi_l\right)}_{\textcolor{blue}{=: Err12}=O(a^2\int_{\partial D_l}\vert\psi_l\vert)}\nonumber\\
\end{eqnarray}
\end{comment}
%
We shall next derive the approximations \eqref{claim- diff-phpsi} and \eqref{claim-int- diff-phpsi}. 
First of all, we note that
%
%\begin{enumerate}
%\item
%\begin{equation}
\begin{align}\label{exp-SDlDKO}
S_{D_l}^{d_{\kappa_0}}\phi_l(s)
&=\int_{\partial D_l} \frac{e^{i\kappa_0|s-y|}-1}{4\pi\, |s-y|}\phi_l(y)\,d\sigma_{l}{(y)}
=\int_{\partial D_l}\sum_{n=1}^\infty\frac{1}{4\pi}\frac{(i\kappa_0)^n}{n!}|s-y|^{n-1}\phi_l(y)\,d\sigma_{l}{(y)}\\
&=\frac{i\kappa_0}{4\pi}\int_{\partial D_l}\phi_l(y)\,d\sigma_{l}{(y)}+\int_{\partial D_l}\sum_{n=2}^\infty\frac{1}{4\pi}\frac{(i\kappa_0)^n}{n!}|s-y|^{n-1}\phi_l(y)\,d\sigma_{l}{(y)}\nonumber\\
&=\begin{cases}
\frac{i\kappa_0}{4\pi}Q_l+O(a^2\|\phi_l\|),\mbox{ in $L^\infty$ and $H^1$},\\
\frac{i\kappa_0}{4\pi}Q_l+O(a^3\|\phi_l\|),\mbox{ in $L^2$}.
\end{cases}\nonumber
\end{align}
%\label{exp-SDlDKO}
%\end{equation}
%
From  \eqref{mult-obs-102r}, we have that on $\partial D_{l}$,
%\begin{equation}
\begin{align*}
 \psi_{l}(s)&=\left(S^{\kappa_{l}}_{D_{l}}\right)^{-1}\left[u^{I} + \sum_{{m=1} }^{M} S^{\kappa_{0}}_{D_{m}} \phi_{m}\right](s)\\
 &=\left(S^{{0}}_{D_{l}}\right)^{-1}\left[u^{I} + S^{\kappa_{0}}_{D_{l}}\phi_{l}+\sum_{{m=1} \atop {m \neq l}}^{M} S^{\kappa_{0}}_{D_{m}} \phi_{m}\right](s)\\
 &\qquad +\left(S^{{0}}_{D_{l}}\right)^{-1}\left[\sum_{n=1}^\infty (-1)^{n} \left(S^{{d_{\kappa_l}}}_{D_{l}}\left(S^{{0}}_{D_{l}}\right)^{-1}\right)^n\left(u^{I} +S^{\kappa_{0}}_{D_{l}} \phi_l+ \sum_{{m=1}\atop m\neq l }^{M} S^{\kappa_{0}}_{D_{m}} \phi_{m}\right)\right](s)\\ 
  &=\left(S^{{0}}_{D_{l}}\right)^{-1}\left[u^{I}(z_l) +(s-z_l)\cdot\nabla u^I(z_l)+O(a^2) +S^{{0}}_{D_{l}}\phi_{l}+S^{d_{\kappa_{0}}}_{D_{l}}\phi_{l}+\sum_{{m=1} \atop {m \neq l}}^{M} S^{\kappa_{0}}_{D_{m}} \phi_{m}\right](s)\\
 &\qquad+\left(S^{{0}}_{D_{l}}\right)^{-1}\left[\sum_{n=1}^\infty (-1)^{n} \left(S^{{d_{\kappa_l}}}_{D_{l}}\left(S^{{0}}_{D_{l}}\right)^{-1}\right)^n\left(u^{I} +S^{{0}}_{D_{l}}\phi_{l}+S^{d_{\kappa_{0}}}_{D_{l}}\phi_{l}+ \sum_{{m=1}\atop m\neq l }^{M} S^{\kappa_{0}}_{D_{m}} \phi_{m}\right)\right](s),
 \end{align*}
 %\notag
 %\end{equation}
 where the approximation above is in a point-wise sense and $S^{d_{\kappa_{l}}}_{D_{l}} \phi(s):=\int_{\partial D_{l}} \frac{e^{i \kappa_{l} \vert s-t \vert}-1}{4\pi \vert s-t \vert} \phi(t) \ d\sigma_{l}(t) $.\\
 Using \eqref{InvSsmzl--ori} and \eqref{exp-SDlDKO}, we can write this as
 %\begin{equation}
 \begin{align*}
  \psi_{l}(s) &=\phi_l(s)+\left(S^{{0}}_{D_{l}}\right)^{-1}\left(u^{I}(z_l)\right)  +\left(S^{{0}}_{D_{l}}\right)^{-1}\left(S^{d_{\kappa_{0}}}_{D_{l}}-S^{d_{\kappa_{l}}}_{D_{l}}\right)\phi_{l}(s)+O(a)+O(a^2\|\phi_l\|) \\
   &+\left[\left(S^{{0}}_{D_{l}}\right)^{-1}\sum_{{m=1} \atop {m \neq l}}^{M} S^{\kappa_{0}}_{D_{m}} \phi_{m}\right]+\left(S^{{0}}_{D_{l}}\right)^{-1}\left[\sum_{n=1}^\infty (-1)^{n} \left(S^{{d_{\kappa_l}}}_{D_{l}}\left(S^{{0}}_{D_{l}}\right)^{-1}\right)^n\left( \sum_{{m=1}\atop m\neq l }^{M} S^{\kappa_{0}}_{D_{m}} \phi_{m}\right)\right]
   \end{align*}
  % \notag
  % \end{equation}
with error estimates in $L^2$ sense.
This further implies, using $(S^{d_{\kappa_0}}_{D_l}-S^{d_{\kappa_{l}}}_{D_{l}})\phi_{l}= (S^{\kappa_0}_{D_l}-S^{\kappa_l}_{D_l})\phi_{l} $ and \eqref{claim-single-Lay-dif-2}, that   
%\begin{equation}
\begin{align*}    
 \psi_{l}(s)  &=\phi_l(s)+\left(S^{{0}}_{D_{l}}\right)^{-1}\left(u^{I}(z_l)\right)+\frac{i(\kappa_0-\kappa_l)}{4\pi}Q_l\left(S^{{0}}_{D_{l}}\right)^{-1}(1)(s)\\
 &\qquad +\left(S^{{0}}_{D_{l}}\right)^{-1}\left(\int_{\partial D_l}\sum_{n=2}^\infty\frac{1}{4\pi}\frac{i^n(\kappa_0^n-\kappa_l^n)}{n!}|s-y|^{n-1}\phi_l(y)\,d\sigma_{l}{(y)}\right)(s)\\
   &\qquad +\left[\left(S^{{0}}_{D_{l}}\right)^{-1}\sum_{{m=1} \atop {m \neq l}}^{M} S^{\kappa_{0}}_{D_{m}} \phi_{m}\right]+\left(S^{{0}}_{D_{l}}\right)^{-1}\left[\sum_{n=1}^\infty (-1)^{n} \left(S^{{d_{\kappa_l}}}_{D_{l}}\left(S^{{0}}_{D_{l}}\right)^{-1}\right)^n\left(\sum_{{m=1}\atop m\neq l }^{M} S^{\kappa_{0}}_{D_{m}} \phi_{m}\right)\right]\\
  &\qquad +O(a)+O(a^2\|\phi_l\|), \mbox{ in } L^2,
\end{align*}
%\notag
%\end{equation}
and hence, keeping only the first terms in the two infinite series,
%\begin{equation}
\begin{align*}    
\psi_{l}(s) &=\phi_l(s)+\left(S^{{0}}_{D_{l}}\right)^{-1}\left(u^{I}(z_l)\right)+\frac{i(\kappa_0-\kappa_l)}{4\pi}Q_l\left(S^{{0}}_{D_{l}}\right)^{-1}(1)(s)\\
&\qquad +\frac{i^2(\kappa_0^2-\kappa_l^2)}{8\pi}\left(S^{{0}}_{D_{l}}\right)^{-1}\left(\int_{\partial D_l}|\cdot-y|\phi_l(y)\,d\sigma_{l}{(y)}\right)(s) \\
   &\qquad +\left[\left(S^{{0}}_{D_{l}}\right)^{-1}\sum_{{m=1} \atop {m \neq l}}^{M} S^{\kappa_{0}}_{D_{m}} \phi_{m}\right]-\left(S^{{0}}_{D_{l}}\right)^{-1}\left[S^{{d_{\kappa_l}}}_{D_{l}}\left(S^{{0}}_{D_{l}}\right)^{-1}\left(\sum_{{m=1}\atop m\neq l }^{M} S^{\kappa_{0}}_{D_{m}} \phi_{m}\right)\right]\\
   &\qquad +O\left(\sum_{{m=1}\atop m\neq l }^{M}\frac{a^3}{d_{lm}^2}\|\phi_m\|\right)+O(a)+O(a^2\|\phi_l\|). 
\end{align*}
%\notag
%\end{equation}
Using \eqref{Sinv_int_difpts_psi}, we can write this as
%\begin{equation}
\begin{align}    
 \psi_{l}(s) &=\phi_l(s)+\left(S^{{0}}_{D_{l}}\right)^{-1}\left(u^{I}(z_l)\right)  +\frac{i(\kappa_0-\kappa_l)}{4\pi}Q_l\left(S^{{0}}_{D_{l}}\right)^{-1}(1)(s)+\left[\left(S^{{0}}_{D_{l}}\right)^{-1}\sum_{{m=1} \atop {m \neq l}}^{M} S^{\kappa_{0}}_{D_{m}} \phi_{m}\right]\nonumber\\
         &-\left(S^{{0}}_{D_{l}}\right)^{-1}\left[S^{{d_{\kappa_l}}}_{D_{l}}\left(S^{{0}}_{D_{l}}\right)^{-1}\left(\sum_{{m=1}\atop m\neq l }^{M} S^{\kappa_{0}}_{D_{m}} \phi_{m}\right)\right]
   +O\left(\sum_{{m=1}\atop m\neq l }^{M}\frac{a^3}{d_{lm}^2}\|\phi_m\|\right)+O(a)+O(a^2\|\phi_l\|) \mbox{ in } L^2.
\label{mult-obs-102r-neq}
\end{align}
%\end{equation}

For a better understanding of the dominating terms in \eqref{mult-obs-102r-neq}, we next estimate the last two terms.\\ 
%\item 
Using Taylor series expansion, for $s\in \partial D_l$ and $t\in \partial D_m$, we can write
%\begin{equation}
\begin{align}\label{taylor-fund}
\Phi_{\kappa_0}(s,t)&=\Phi_{\kappa_0}(z_l,t)+(s-z_l)\cdot\nabla_s\Phi_{\kappa_0}(z_l,t)+\frac{1}{2}(s-z_l)^2 \cdot \nabla_s\nabla_s\Phi_{\kappa_0}(z_l,t)\\
&\qquad+\sum_{|\alpha|=3}\frac{|\alpha|}{\alpha!}(s-z_l)^\alpha\int_{0}^1(1-\beta)^{|\alpha|-1}D^{\alpha}_s\Phi_{\kappa_0}(z_l+\beta(s-z_l),t) d\beta\nonumber\\
&=\Phi_{\kappa_0}(z_l,z_m)+(t-z_m)\cdot\nabla_t\Phi_{\kappa_0}(z_l,z_m)+(s-z_l)\cdot\nabla_s\Phi_{\kappa_0}(z_l,z_m)\nonumber\\
&\qquad+\frac{1}{2}(s-z_l)^2 \cdot \nabla_s\nabla_s\Phi_{\kappa_0}(z_l,t)\nonumber\\
&\qquad+\sum_{|\alpha|=2}\frac{|\alpha|}{\alpha!}(t-z_m)^\alpha\int_{0}^1(1-r)^{|\alpha|-1}D^{\alpha}_t\Phi_{\kappa_0}(z_l,z_m+r(t-z_m))\, dr\nonumber\\
&\qquad+(s-z_l)\cdot\sum_{|\alpha|=1}\frac{|\alpha|}{\alpha!}(t-z_m)^\alpha\int_{0}^1(1-r)^{|\alpha|-1}D^{\alpha}_t\nabla_s\Phi_{\kappa_0}(z_l,z_m+r(t-z_m))\, dr\nonumber\\
&\qquad+\sum_{|\alpha|=3}\frac{|\alpha|}{\alpha!}(s-z_l)^\alpha\int_{0}^1(1-\beta)^{|\alpha|-1}D^{\alpha}_s\Phi_{\kappa_0}(z_l+\beta(s-z_l),t) \,d\beta,\nonumber
\end{align}
%\label{taylor-fund}
%\end{equation}
which gives us
%\begin{equation}
\begin{align}\label{expansion-SDml}
(S^{\kappa_{0}}_{D_{m}} \phi_{m})(s)
&=\Phi_{\kappa_0}(z_l,z_m)\,Q_m+\nabla_t\Phi_{\kappa_0}(z_l,z_m)\,\cdot\,V_m+(s-z_l)\cdot\nabla_s\Phi_{\kappa_0}(z_l,z_m)\,Q_m\\
&+\int_{\partial D_m}\Bigg(\frac{1}{2}(s-z_l)^2 \cdot \nabla_s\nabla_s\Phi_{\kappa_0}(z_l,t)\nonumber\\
&+\sum_{|\alpha|=2}\frac{|\alpha|}{\alpha!}(t-z_m)^\alpha\int_{0}^1(1-r)^{|\alpha|-1}D^{\alpha}_t\Phi_{\kappa_0}(z_l,z_m+r(t-z_m))\, dr\nonumber\\
&+(s-z_l)\cdot\sum_{|\alpha|=1}\frac{|\alpha|}{\alpha!}(t-z_m)^\alpha\int_{0}^1(1-r)^{|\alpha|-1}D^{\alpha}_t\nabla_s\Phi_{\kappa_0}(z_l,z_m+r(t-z_m))\, dr\nonumber\\
&+\sum_{|\alpha|=3}\frac{|\alpha|}{\alpha!}(s-z_l)^\alpha\int_{0}^1(1-\beta)^{|\alpha|-1}D^{\alpha}_s\Phi_{\kappa_0}(z_l+\beta(s-z_l),t) \,d\beta \Bigg)\phi_m(t) d\sigma_{m}(t).\nonumber
\end{align}
%\label{expansion-SDml}
%\end{equation}
%\item 
Using \eqref{InvSsmzl--ori} and \eqref{expansion-SDml}, we have
%\begin{equation}
\begin{align}\label{xyz}
\left(S_{D_l}^0\right)^{-1}(S^{\kappa_{0}}_{D_{m}} \phi_{m})(s)
&=\Phi_{\kappa_0}(z_l,z_m)\,Q_m \left(S_{D_l}^0\right)^{-1}(1)(s)+\nabla_t\Phi_{\kappa_0}(z_l,z_m)\,\cdot\,V_m\left(S_{D_l}^0\right)^{-1}(1)(s)\\
&\qquad+\Big[\left(S_{D_l}^0\right)^{-1}(\cdot-z_l)\Big](s)\cdot\nabla_s\Phi_{\kappa_0}(z_l,z_m)\,Q_m\nonumber\\
&\quad+\left(S_{D_l}^0\right)^{-1}\Bigg(\int_{\partial D_m}\Bigg[\frac{1}{2}(\cdot-z_l)^2 \cdot \nabla_s\nabla_s\Phi_{\kappa_0}(z_l,t)\nonumber\\
&\qquad+\sum_{|\alpha|=2}\frac{|\alpha|}{\alpha!}(t-z_m)^\alpha\int_{0}^1(1-r)^{|\alpha|-1}D^{\alpha}_t\Phi_{\kappa_0}(z_l,z_m+r(t-z_m))\, dr\nonumber\\
&\qquad+(s-z_l)\cdot\sum_{|\alpha|=1}\frac{|\alpha|}{\alpha!}(t-z_m)^\alpha\int_{0}^1(1-r)^{|\alpha|-1}D^{\alpha}_t\nabla_s\Phi_{\kappa_0}(z_l,z_m+r(t-z_m))\, dr\nonumber\\
&\qquad+\sum_{|\alpha|=3}\frac{|\alpha|}{\alpha!}(\cdot-z_l)^\alpha\int_{0}^1(1-\beta)^{|\alpha|-1}D^{\alpha}_s\Phi_{\kappa_0}(z_l+\beta(\cdot-z_l),t) \,d\beta \Bigg]\phi_m(t) d\sigma_{m}(t)\Bigg)(s).\nonumber
\end{align}
%\label{xyz}
%\end{equation}
%&=&\Phi_{\kappa_0}(z_l,z_m)\,Q_m \left(S_{D_l}^0\right)^{-1}(1)(s)+\nabla_t\Phi_{\kappa_0}(z_l,z_m)\,\cdot\,V_m\left(S_{D_l}^0\right)^{-1}(1)(s)\\
%&&\qquad+\left(S_{D_l}^0\right)^{-1}(\cdot-z_l)(s)\cdot\nabla_s\Phi_{\kappa_0}(z_l,z_m)\,Q_m+\left(S_{D_l}^0\right)^{-1}\Bigg(O\left(\frac{a^3}{d^3_{ml}}\right)_{L^\infty}\|\phi_m\|\Bigg)(s).\\
Therefore 
%\begin{equation}
\begin{align}\label{expansion-SinvlSDm}
\left(S_{D_l}^0\right)^{-1}(S^{\kappa_{0}}_{D_{m}} \phi_{m})(s)
&=\Phi_{\kappa_0}(z_l,z_m)\,Q_m \left(S_{D_l}^0\right)^{-1}(1)(s)+\nabla_t\Phi_{\kappa_0}(z_l,z_m)\,\cdot\,V_m\left(S_{D_l}^0\right)^{-1}(1)(s)\\
&\qquad+\Big[\left(S_{D_l}^0\right)^{-1}(\cdot-z_l)\Big](s)\cdot\nabla_s\Phi_{\kappa_0}(z_l,z_m)\,Q_m+O\left(\frac{a^3}{d^3_{ml}}\|\phi_m\|\right) \mbox{ in } L^2,\nonumber
\end{align}
%\label{expansion-SinvlSDm}
%\end{equation}
where the last term in \eqref{xyz}, which is of order $O\left(\frac{a^4}{d^4_{ml}}\|\phi_m\|\right) $, is absorbed in $O\left(\frac{a^3}{d^3_{ml}}\|\phi_m\|\right) $ as $\frac{a}{d_{ml}}=O(1) $. \\
%\item
To estimate the other term in \eqref{mult-obs-102r-neq}, we note that
%\begin{equation}
\begin{align*}
\left(S_{D_l}^0\right)^{-1}S_{D_l}^{d_{\kappa_l}}\left(S_{D_l}^0\right)^{-1}(S^{\kappa_{0}}_{D_{m}} \phi_{m})(s)
&=
\left(S_{D_l}^0\right)^{-1}\Bigg(\frac{i\kappa_l}{4\pi}\int_{\partial D_l}\left(S_{D_l}^0\right)^{-1}(S^{\kappa_{0}}_{D_{m}} \phi_{m})(t)\,d\sigma_{l}(t)\\
&\qquad+\int_{\partial D_l}\sum_{n=2}^\infty\frac{1}{4\pi}\frac{(i\kappa_l)^n}{n!}|s-t|^{n-1}\left(S_{D_l}^0\right)^{-1}(S^{\kappa_{0}}_{D_{m}} \phi_{m})(t)\,d\sigma_{l}(t)\Bigg)\\
&=
\left(S_{D_l}^0\right)^{-1}\Bigg(\frac{i\kappa_l}{4\pi}\int_{\partial D_l}\left(S_{D_l}^0\right)^{-1}(S^{\kappa_{0}}_{D_{m}} \phi_{m})(t)\,d\sigma_{l}(t)\\
&\qquad+\frac{(i\kappa_l)^2}{8\pi}\int_{\partial D_l}|s-t|\left(S_{D_l}^0\right)^{-1}(S^{\kappa_{0}}_{D_{m}} \phi_{m})(t)\,d\sigma_{l}(t)\\
&\qquad+\int_{\partial D_l}\sum_{n=3}^\infty\frac{1}{4\pi}\frac{(i\kappa_l)^n}{n!}|s-t|^{n-1}\left(S_{D_l}^0\right)^{-1}(S^{\kappa_{0}}_{D_{m}} \phi_{m})(t)\,d\sigma_{l}(t)\Bigg)\\
&=
\left(S_{D_l}^0\right)^{-1}\Bigg(\frac{i\kappa_l}{4\pi}\int_{\partial D_l}\left(S_{D_l}^0\right)^{-1}(S^{\kappa_{0}}_{D_{m}} \phi_{m})(t)\,d\sigma_{l}(t)\\
&+\frac{(i\kappa_l)^2}{8\pi}\int_{\partial D_l}|s-t|\left(S_{D_l}^0\right)^{-1}(S^{\kappa_{0}}_{D_{m}} \phi_{m})(t)\,d\sigma_{l}(t)\Bigg)
+O(\frac{a^3}{d_{ml}^2}\|\phi_m\|), \mbox{ in $L^2$ }.
\end{align*}
%\notag
%\end{equation}
Using \eqref{expansion-SinvlSDm}, we can further write this as
%\begin{equation}
\begin{align*}
&\left(S_{D_l}^0\right)^{-1}S_{D_l}^{d_{\kappa_l}}\left(S_{D_l}^0\right)^{-1}(S^{\kappa_{0}}_{D_{m}} \phi_{m})(s)\\
&\quad =
\frac{i\kappa_l}{4\pi}\left(S_{D_l}^0\right)^{-1}\Bigg(\Phi_{\kappa_0}(z_l,z_m)\,Q_m Cap_l+\nabla_t\Phi_{\kappa_0}(z_l,z_m)\,\cdot\,V_m Cap_l\\
&\qquad+\nabla_s\Phi_{\kappa_0}(z_l,z_m)\,Q_m\cdot\int_{\partial D_l}\Big[\left(S_{D_l}^0\right)^{-1}(\cdot-z_l)\Big](t) d\sigma_{l}(t)+O\left(\frac{a^4}{d^3_{ml}}\|\phi_m\|\right)_{L^\infty}\Bigg)\nonumber\\
&\qquad+\frac{(i\kappa_l)^2}{8\pi}\left(S_{D_l}^0\right)^{-1}\Bigg(\int_{\partial D_l}|s-t|\left(S_{D_l}^0\right)^{-1}(S^{\kappa_{0}}_{D_{m}} \phi_{m})(t)\,d\sigma_{l}(t)\Bigg)+O\Big(\frac{a^3}{d_{ml}^2}\|\phi_m\|\Big) \\
&\quad =
\frac{i\kappa_l}{4\pi}\Bigg(\Phi_{\kappa_0}(z_l,z_m)\,Q_m Cap_l+\nabla_t\Phi_{\kappa_0}(z_l,z_m)\,\cdot\,V_m Cap_l \\
&\qquad+\nabla_s\Phi_{\kappa_0}(z_l,z_m)\,Q_m\cdot\int_{\partial D_l}\Big[\left(S_{D_l}^0\right)^{-1}(\cdot-z_l)\Big](t) d\sigma_{l}(t)+O\left(\frac{a^4}{d^3_{ml}}\|\phi_m\|\right)_{L^\infty}\Bigg)\left(S_{D_l}^0\right)^{-1}(1)\\
&\qquad+\frac{(i\kappa_l)^2}{8\pi}\left(S_{D_l}^0\right)^{-1}\Bigg(\int_{\partial D_l}|s-t|\left(S_{D_l}^0\right)^{-1}(S^{\kappa_{0}}_{D_{m}} \phi_{m})(t)\,d\sigma_{l}(t)\Bigg)+O\Big(\frac{a^3}{d_{ml}^2}\|\phi_m\|\Big), \mbox{ in $L^2$ },
\end{align*}
%\notag
%\end{equation}
where by $(\cdot)_{L^{\infty}} $ we mean the point-wise error estimate. Therefore, using \eqref{InvSsmzl--ori} and the fact $V_m=O(a^2 \norm{\phi_m}) $, we get 
%\begin{equation}
\begin{align}\label{expansion-SinvDdSinvlSDm}
\left(S_{D_l}^0\right)^{-1}S_{D_l}^{d_{\kappa_l}}\left(S_{D_l}^0\right)^{-1}(S^{\kappa_{0}}_{D_{m}} \phi_{m})(s) &=
\frac{i\kappa_l}{4\pi}\Phi_{\kappa_0}(z_l,z_m)\,Q_m Cap_l\left(S_{D_l}^0\right)^{-1}(1)(s)\\
&+\frac{(i\kappa_l)^2}{8\pi}\left(S_{D_l}^0\right)^{-1}\Bigg(\int_{\partial D_l}|s-t|\left(S_{D_l}^0\right)^{-1}(S^{\kappa_{0}}_{D_{m}} \phi_{m})(t)\,d\sigma_{l}(t)\Bigg)+O(\frac{a^3}{d_{ml}^2}\|\phi_m\|)\nonumber\\
&=
\frac{i\kappa_l}{4\pi}\Phi_{\kappa_0}(z_l,z_m)\,Q_m Cap_l\left(S_{D_l}^0\right)^{-1}(1)(s)+O(\frac{a^3}{d_{ml}^2}\|\phi_m\|), \mbox{ in $L^2$ },\nonumber
\end{align}
%\label{expansion-SinvDdSinvlSDm}
%\end{equation}
where we have also used the estimate 
\[
 \left\|\left(S_{D_l}^0\right)^{-1}\Bigg( \int_{\partial D_l}|\cdot-t| \left(S_{D_l}^0\right)^{-1}(S^{\kappa_{0}}_{D_{m}} \phi_{m})(t)) d\sigma_{l}(t)\Bigg)(s)\right\|_{L^2(\partial D_l)}
 =O\left(\frac{a^4}{d_{ml}}\|{\phi}_m\|\right),
 \]
the proof of which follows by a similar argument as in the proof of lemma \ref{scaling-1}. \\

Now, by making use of (\ref{expansion-SinvlSDm}-\ref{expansion-SinvDdSinvlSDm}) in \eqref{mult-obs-102r-neq}, we obtain
%\begin{equation}
\begin{align}\label{mult-obs-102r3-1-3}
 \psi_{l}(s)
         =&\phi_l(s)+\left(S^{{0}}_{D_{l}}\right)^{-1}\left(u^{I}(z_l)\right)  +\frac{i(\kappa_0-\kappa_l)}{4\pi}Q_l\left(S^{{0}}_{D_{l}}\right)^{-1}(1)(s)\\
&+\sum_{{m=1} \atop {m \neq l}}^{M}\Bigg[\Phi_{\kappa_0}(z_l,z_m)\,Q_m \left(S_{D_l}^0\right)^{-1}(1)(s)+\nabla_t\Phi_{\kappa_0}(z_l,z_m)\,\cdot\,V_m\left(S_{D_l}^0\right)^{-1}(1)(s)\nonumber\\
&\qquad+\Big[\left(S_{D_l}^0\right)^{-1}(\cdot-z_l)\Big](s)\cdot\nabla_s\Phi_{\kappa_0}(z_l,z_m)\,Q_m-\frac{i\kappa_l}{4\pi}\Phi_{\kappa_0}(z_l,z_m)\,Q_m Cap_l\left(S_{D_l}^0\right)^{-1}(1)\Bigg]\nonumber\\
         &+O(a)+O(a^2\|\phi_l\|)+O\left(\sum_{{m=1}\atop m\neq l }^{M}\frac{a^3}{d_{lm}^3}\|\phi_m\|\right), \mbox{ in } L^2,\nonumber
\end{align}
%\label{mult-obs-102r3-1-3}
%\end{equation}
whence \eqref{claim- diff-phpsi} follows.
The approximation \eqref{claim-int- diff-phpsi} can then be obtained by integrating \eqref{mult-obs-102r3-1-3} on $\partial D_l$ as follows.
%\begin{equation}
\begin{align}\label{mult-obs-102r3-1-4}
\int_{\partial D_l}(\psi_{l}-\phi_{l})
=&Cap_l\ u^{I}(z_l)+\frac{i(\kappa_0-\kappa_l)}{4\pi}Q_lCap_l\\
&+\sum_{{m=1} \atop {m \neq l}}^{M}\Bigg[\Phi_{\kappa_0}(z_l,z_m)\,Q_m Cap_l+\nabla_t\Phi_{\kappa_0}(z_l,z_m)\,\cdot\,V_mCap_l\nonumber\\
&\qquad-\frac{i\kappa_l}{4\pi}\Phi_{\kappa_0}(z_l,z_m)\,Q_m Cap_l^2+\nabla_s\Phi_{\kappa_0}(z_l,z_m)\,Q_m\cdot\int_{\partial D_l}\left(S_{D_l}^0\right)^{-1}(\cdot-z_l)(s) d\sigma_{l}(s) \Bigg]      \nonumber\\
&+\underbrace{O\left(a^2+a^3\|\phi_l\|+\sum_{{m=1}\atop m\neq l }^{M}\frac{a^4}{d_{lm}^3}\|\phi_m\|\right)}_{=:Err7_{l}},\nonumber
\end{align}
%\label{mult-obs-102r3-1-4}
%\end{equation}
where $Err7_{l} $ is in the point-wise sense.
%
% \end{enumerate}
% \end{proof}
%
\subsection{Proof of lemma \ref{claim-g}}\label{proof-claim-g}
The proof of \eqref{claim-adjdblle-Lay-dif-g} follows as in the case of \eqref{claim-adjdblle-Lay-dif-prime}.
To prove \eqref{def-Sg-c}, we note that from the definition of $g_l$, we have 
%\begin{equation}
\begin{align*}
 g_l (s) =& \frac{i}{4\pi}(\kappa_0-\kappa_l)(S^{\kappa_{0}}_{D_{l}})^{-1}\left(\int_{\partial D_l}\psi_l\right)(s) \\
 =&\frac{i}{4\pi}(\kappa_0-\kappa_l)\left[(S^{0}_{D_{l}})^{-1}+\sum_{n=1}^{\infty}
 (-1)^{n}(S^{0}_{D_{l}})^{-1}\left(S^{d_{\kappa_{0}}}_{D_{l}}(S^{0}_{D_{l}})^{-1}\right)^n\right]\left(\int_{\partial D_l}\psi_l\right)(s) \\
  =&\frac{i}{4\pi}(\kappa_0-\kappa_l)(S^{0}_{D_{l}})^{-1}\left(\int_{\partial D_l}\psi_l\right)(s)+O(a^2\|\psi_l\|), \mbox{ in } L^2,
  \end{align*}
  %\notag
  %\end{equation}
where we use \eqref{Sinv_int_difpts_psi} applied to $\psi_{l} $ with $n=0$ and the fact that for any $f \in L^{2}(\partial D_{l}) $, $\norm{S^{d_{\kappa_{0}}}_{D_l} f}_{H^{1}(\partial D_{l})}=O(a^2 \norm{f}_{L^2(\partial D_{l})}) $.  \\
Using \eqref{mult-obs-102r3-1-3} and \eqref{mult-obs-102r3-1-4}, we can write this as 
%\begin{equation}
\begin{align}\label{est-calc-Sg}  
 g_l(s) =&\frac{i}{4\pi}(\kappa_0-\kappa_l)\left[\int_{\partial D_l}\psi_l\right](S^{0}_{D_{l}})^{-1}\left(1\right)(s)+O\left(a^2+a^2\|\phi_l\|+\sum_{m\neq\,l}\frac{a^3}{d_{ml}}\norm{\phi_m}\right), \mbox{ in } L^2 \\
        =&\frac{i}{4\pi}(\kappa_0-\kappa_l)\left[Cap_l\ u^{I}(z_l)+\int_{\partial D_l}\phi_l+\sum_{m\neq\,l}\Phi_{\kappa_0}(z_l,z_m)\,Q_m Cap_l\right](S^{0}_{D_{l}})^{-1}\left(1\right)(s)\nonumber\\
        &+O\left(a^2+a^2\|\phi_l\|+\sum_{m\neq\,l}\frac{a^3}{d^2_{ml}}\|\phi_m\|\right), \mbox{ in } L^2,\nonumber
\end{align}
%\label{est-calc-Sg}
%\end{equation}
whence \eqref{def-Sg-c} follows.
\subsection{Proof of Proposition \ref{Est-intAlminhatAlphil}}\label{proof-Al-hat}
First of all, we note that using the definition of $g_l$ and $\tilde{g}_l$ and \eqref{claim-single-Lay-dif-2}, \eqref{mult-obs-108r-int} can be rewritten as
%\begin{equation}
\begin{align}\label{mult-obs-108r1}
\phi_{l}=&  -[\lambda_l \ Id + (K^{{0}}_{D_{l}})^{*}]^{-1} \Big[ \frac{\partial u^{I}}{\partial \nu^{l}}
+\sum_{{m=1} \atop {m \neq l}}^{M} \frac{\partial (S^{\kappa_{0}}_{D_{m}} \phi_{m})}{\partial \nu^{l}}\Big\vert_{\partial D_{l}} +[(K^{{\kappa_0}}_{D_{l}})^{*} - (K^{{0}}_{D_{l}})^{*}] \phi_{l}\\
& +\Big(1-\frac{\rho_{l}}{\rho_{0}} \Big)^{-1}[(K^{\kappa_{l}}_{D_{l}})^{*}-(K^{\kappa_{0}}_{D_{l}})^{*}] \psi_{l}  \nonumber\\
& +\Big(1-\frac{\rho_{l}}{\rho_{0}} \Big)^{-1}[\frac{1}{2}Id+(K^{\kappa_{0}}_{D_{l}})^{*}] \left(S_{D_l}^{\kappa_0}\right)^{-1}\left(S_{D_l}^{\kappa_0}-S_{D_l}^{\kappa_l}\right)\psi_l \Big], 
\mbox{  on } \partial D_l.\nonumber
\end{align}
%\label{mult-obs-108r1}
%\end{equation}
 Then we have
 \begin{equation}
\begin{aligned}
& \int_{\partial D_l}\Big(A_l(s)-\hat{A}_l\Big)\phi_{l}(s)\ d\sigma_{l}(s)\\
&=-\int_{\partial D_l} \Big[\Big(\lambda_l\ Id+K^{{0}}_{D_{l}} \Big)^{-1}\Big(A_l(\cdot)-\hat{A}_l\Big)\Big](s)  \left[ \frac{\partial u^{I}}{\partial \nu^{l}} (s) + \Big(1-\frac{\rho_{l}}{\rho_{0}} \Big)^{-1}\Bigg([(K^{\kappa_{l}}_{D_{l}})^{*}-(K^{\kappa_{0}}_{D_{l}})^{*}] \psi_{l}(s) \right.\\
&\left.+[\frac{1}{2}Id+(K^{\kappa_{0}}_{D_{l}})^*]  \left(S_{D_l}^{\kappa_0}\right)^{-1}\left(S_{D_l}^{\kappa_0}-S_{D_l}^{\kappa_l}\right)\psi_l (s)
 \Bigg)+\sum_{{m=1} \atop {m \neq l}}^{M} \frac{\partial (S^{\kappa_{0}}_{D_{m}} \phi_{m})}{\partial \nu^{l}}\Big\vert_{\partial D_{l}}(s)+[(K^{{\kappa_0}}_{D_{l}})^{*} - (K^{{0}}_{D_{l}})^{*}] \phi_{l}(s)\right] d\sigma_{l}(s).
\end{aligned}
\label{mult-obs-108r-pr-beg}
\end{equation}
To estimate the terms in the right hand side of equation \eqref{mult-obs-108r-pr-beg}, we make use of \eqref{estimate-Al} and the facts that $A_{l}-\hat{A}_{l} \in L^{2}_{0}$, and the operator $\Big(\lambda_l\ Id +K^{{0}}_{D_{l}}\Big)^{-1}$ maps $L_0^2$ to $L_0^2$ and is uniformly bounded with respect to $a$ and proceed as follows.
%(3) Also, using $\left(S_{D_l}^{0}\right)^{-1}$ does not scale on $L_0^2$ and the duality between $H^{-1}$ and $H^1$ to estimate the integral.
%In the above equation we used the fact that $\frac{1}{2}Id+(K^{\kappa_{0}}_{D_{l}})^{*}$ does not scale as an operator between $L^2$ spaces and $\|g_l\|_{L^2(\partial D_l)}$ is of less order comparing to $\|GER_l\|_{L^2(\partial D_l)}$ and behaves as $a\|\psi_l\|$.
%
%\begin{enumerate}
By Cauchy-Schwarz inequality and as $\norm{A_l-\hat{A}_l}_{L^2(\partial D_l)}=O(a^3) $, we have
%\begin{equation}
  \begin{align} \label{clarify-1}
     &\int_{\partial D_l}\Big[\Big(\lambda_{l} Id+ K^{0}_{D_{l}} \Big)^{-1}(A_l(\cdot)-\hat{A}_l)\Big] (s) \frac{\partial u^{I}}{\partial \nu^l}(s) d\sigma_{l}(s) =O(a^4),
  \end{align}
 % \label{clarify-1}
%\end{equation}  
%  
and as $\norm{(K^{\kappa_{l}}_{D_{l}})^{*}-(K^{\kappa_{0}}_{D_{l}})^{*}}_{L^2(\partial D_l)}=O(a^2) $, we get 
 %\begin{equation}
  \begin{align}\label{clarify-1prps}
     \int_{\partial D_l} \Big[\Big(\lambda_{l} Id+ K^{0}_{D_{l}} \Big)^{-1}(A_l(\cdot)-\hat{A}_l)\Big] (s)\  [(K^{\kappa_{l}}_{D_{l}})^{*}-(K^{\kappa_{0}}_{D_{l}})^{*}] \psi_{l}(s) d\sigma_{l}(s) &=O(a^3\cdot\,a^2\|\psi_l\|)=O(a^5\|\psi_l\|).
  \end{align}
  %\label{clarify-1prps}
 % \end{equation}
  Similarly, we have
 % \begin{equation}
    \begin{align}\label{clarify-1prph}
     \int_{\partial D_l} \Big[\Big(\lambda_{l} Id+ K^{0}_{D_{l}} \Big)^{-1}(A_l(\cdot)-\hat{A}_l)\Big] (s)\  [(K^{\kappa_{0}}_{D_{l}})^{*}-(K^{{0}}_{D_{l}})^{*}] \phi_{l}(s) d\sigma_{l}(s)&=O(a^5\|\phi_l\|).
  \end{align}
  %\label{clarify-1prph}
 % \end{equation}
 % 
 Next for $s \in \partial D_{l} $, we use the expansion $\frac{\partial (S^{\kappa_{0}}_{D_{m}} \phi_{m})}{\partial \nu^{l}} (s)=\nabla_x\Phi_{\kappa_0}(z_l,z_m)\cdot\nu_l(s) Q_m+O\left(\frac{a^3}{d_{ml}^3}\|\phi_m\|\right)$ with the error estimate in the $L^2$ sense, 
 to deduce that
% \begin{equation}
   \begin{align}\label{clarify-2}
     &\int_{\partial D_l}\Big[\Big(\lambda_{l} Id+ K^{0}_{D_{l}} \Big)^{-1}(A_l(\cdot)-\hat{A}_l)\Big] (s)\  \sum_{{m=1} \atop {m \neq l}}^{M} \frac{\partial (S^{\kappa_{0}}_{D_{m}} \phi_{m})}{\partial \nu^{l}} (s)    d\sigma_{l}(s)\\
     &\qquad \qquad = R_l\cdot \sum_{{m=1} \atop {m \neq l}}^{M} \nabla_s\Phi_{\kappa_0}(z_l,z_m) Q_m+O\left(a^3\sum_{{m=1}\atop {m\neq l}}^{M}\frac{a^3}{d_{ml}^3}\|\phi_m\|\right),\nonumber
  \end{align}
 % \label{clarify-2}
%  \end{equation}
  where the term $R_l$ behaves as $O(a^4)$ and is defined as
 % \begin{equation}
  \begin{align} \label{def-Rl}
  R_l:=\int_{\partial D_l} \Big[\Big(\lambda_{l} Id+ K^{0}_{D_{l}} \Big)^{-1}(A_l(\cdot)-\hat{A}_l)\Big] (s)\nu_l(s)\,d\sigma_{l}(s).
  \end{align}
%  \end{equation}
%
%\newpage  
%  
The remaining term can be dealt with in the following manner. First we write 
%\begin{equation}
\begin{align*}
&\int_{\partial D_l}\Big[\Big(\lambda_l Id+K^{{0}}_{D_{l}}\Big)^{-1}\Big(A_l(\cdot)-\hat{A}_l\Big)\Big](s)\ [\frac{1}{2}Id+(K^{\kappa_{0}}_{D_{l}})^{*}] \left(S_{D_l}^{\kappa_0}\right)^{-1}\left(S_{D_l}^{\kappa_0}-S_{D_l}^{\kappa_l}\right)\psi_l (s)\ d\sigma_{l}(s)\\
&=\Big(\frac{1}{2}-\lambda_l\Big)\int_{\partial D_l}\Big[\Big(\lambda_l Id+K^{{0}}_{D_{l}}\Big)^{-1}\Big(A_l(\cdot)-\hat{A}_l\Big)\Big](s) \  \left(S_{D_l}^{\kappa_0}\right)^{-1}\left(S_{D_l}^{\kappa_0}-S_{D_l}^{\kappa_l}\right)\psi_l (s)\ d\sigma_{l}(s)\\
&\qquad+\int_{\partial D_l}\Big(A_l(s)-\hat{A}_l\Big)\ \left(S_{D_l}^{\kappa_0}\right)^{-1}\left(S_{D_l}^{\kappa_0}-S_{D_l}^{\kappa_l}\right)\psi_l (s)\ d\sigma_{l}(s)\\
&\qquad+\int_{\partial D_l} \Big[\Big(\lambda_l Id+K^{{0}}_{D_{l}}\Big)^{-1}\Big(A_l(\cdot)-\hat{A}_l\Big)\Big](s) \ [(K^{\kappa_{0}}_{D_{l}})^{*}-(K^{{0}}_{D_{l}})^{*}]  \left(S_{D_l}^{\kappa_0}\right)^{-1}\left(S_{D_l}^{\kappa_0}-S_{D_l}^{\kappa_l}\right)\psi_l (s)\ d\sigma_{l}(s)\\
&=\Big(\frac{1}{2}-\lambda_l\Big)\int_{\partial D_l} \Big[\Big(\lambda_l Id+K^{{0}}_{D_{l}}\Big)^{-1}\Big(A_l(\cdot)-\hat{A}_l\Big)\Big](s)\   \sum_{n=0}^{\infty}(-1)^{n}\left(\left(S_{D_l}^{0}\right)^{-1}S_{D_l}^{d_{\kappa_0}}\right)^n\left(S_{D_l}^{0}\right)^{-1}\left(S_{D_l}^{\kappa_0}-S_{D_l}^{\kappa_l}\right)\psi_l (s)d\sigma_{l}(s)\\
&\qquad+\int_{\partial D_l}\Big(A_l(\cdot)-\hat{A}_l\Big)(s) \  \left[Id+\sum_{n=1}^{\infty}(-1)^{n}\left(\left(S_{D_l}^{0}\right)^{-1}S_{D_l}^{d_{\kappa_0}}\right)^n\right]\left(S_{D_l}^{0}\right)^{-1}\left(S_{D_l}^{\kappa_0}-S_{D_l}^{\kappa_l}\right)\psi_l (s)d\sigma_{l}(s)\\
&\qquad+\int_{\partial D_l}\Big[\Big(\lambda_l Id+K^{{0}}_{D_{l}}\Big)^{-1}\Big(A_l(\cdot)-\hat{A}_l\Big)\Big](s)\ [(K^{\kappa_{0}}_{D_{l}})^{*}-(K^{{0}}_{D_{l}})^{*}]  \left(S_{D_l}^{\kappa_0}\right)^{-1}\left(S_{D_l}^{\kappa_0}-S_{D_l}^{\kappa_l}\right)\psi_l (s)d\sigma_{l}(s)\\
%&=\Big(\frac{1}{2}-\lambda_l\Big)O(a^3{\cdot}a^{-1}{\cdot}a^2\|\psi_l\|)+\int_{\partial D_l}\Big(A_l(s)-\hat{A}_l\Big)\  \left(S_{D_l}^{0}\right)^{-1}\left(S_{D_l}^{\kappa_0}-S_{D_l}^{\kappa_l}\right)\psi_l (s)\ d\sigma_{l}(s)\\
%&\qquad+O(a^3{\cdot}a^{-1}{\cdot}a^{2}{\cdot}a^{-1}a^2\|\psi_l\|)+O(a^3\cdot{a^2}{\cdot}a^{-1}{\cdot} a^2\|\psi_l\|)\\
&=\frac{\rho_l}{\rho_l-\rho_0}O(a^4\|\psi_l\|)+O(a^5\|\psi_l\|)+O(a^6\|\psi_l\|)+\int_{\partial D_l}\Big(A_l(s)-\hat{A}_l\Big)\   \left(S_{D_l}^{0}\right)^{-1}\left(S_{D_l}^{\kappa_0}-S_{D_l}^{\kappa_l}\right)\psi_l (s)d\sigma_{l}(s)\\
%&=O(a^5\|\psi_l\|)+\int_{\partial D_l}\left(S_{D_l}^{0}\right)^{-1}\Big((A_l(\cdot)-\hat{A}_l\Big)(s)   \left(S_{D_l}^{\kappa_0}-S_{D_l}^{\kappa_l}\right)\psi_l (s)ds\\
%&=O(a^5\|\psi_l\|)+\int_{\partial D_l}\Big(A_l(s)-\hat{A}_l\Big)\  \left(S_{D_l}^{0}\right)^{-1} \Big[\frac{i}{4\pi}(\kappa_0-\kappa_l)\int_{\partial D_{l}} \psi_{l}(t) d\sigma_{l}(t)\\
%&\qquad \qquad \qquad \qquad \qquad \qquad \qquad \qquad \qquad \qquad +\sum_{n=2}^\infty\frac{i^n(\kappa_0^n-\kappa_l^n)}{4\pi n!}\int_{\partial D_l}\vert{t-s}\vert^{n-1}\psi_l(t)\,d\sigma_{l}(t)\Big]d\sigma_{l}(s)\\
 &=\frac{i}{4\pi}(\kappa_0-\kappa_l)\Big(\int_{\partial D_l}\psi_l(t)\ d\sigma_{l}(t) \Big)\, \int_{\partial D_l}\Big(A_l(s)-\hat{A}_l\Big)\ \left(S_{D_l}^{0}\right)^{-1}(1) (s) d\sigma_{l}(s) +O(a^5\|\psi_l\|)\\
 &\qquad \qquad +\int_{\partial D_l}\Big(A_l(s)-\hat{A}_l\Big)\  \left(S_{D_l}^{0}\right)^{-1} \left[
 \sum_{n=2}^\infty\frac{i^n(\kappa_0^n-\kappa_l^n)}{4\pi n!}\int_{\partial D_l}\vert{t-s}\vert^{n-1}\psi_l(t)\,d\sigma_{l}(t)\right]d\sigma_{l}(s),
 \end{align*}
 %\notag
 %\end{equation}
 where we use lemma \ref{Claimest} and the fact that $\rho_l \sim a^{1+\gamma}, \gamma \geq 0 $. \\
 Using \eqref{Sinv_int_difpts_psi} and \eqref{int-invs-Al}, we can then deduce
 %\begin{equation}
 \begin{align}\label{clarify-3}
 &\int_{\partial D_l}\Big[\Big(\lambda_l Id+K^{{0}}_{D_{l}}\Big)^{-1}\Big(A_l(\cdot)-\hat{A}_l\Big)\Big](s)\ [\frac{1}{2}Id+(K^{\kappa_{0}}_{D_{l}})^{*}] \left(S_{D_l}^{\kappa_0}\right)^{-1}\left(S_{D_l}^{\kappa_0}-S_{D_l}^{\kappa_l}\right)\psi_l (s)\ d\sigma_{l}(s)\\
  &=\frac{i}{4\pi}(\kappa_0-\kappa_l)\Big(\int_{\partial D_l}\psi_l(t) d\sigma_{l}(t)\Big)\int_{\partial D_l}\Big(A_l(s)-\hat{A}_l\Big)\  \left(S_{D_l}^{0}\right)^{-1}(1) (s) \ d\sigma_{l}(s) +O(a^5\|\psi_l\|)+O(a^3{\cdot}a^2\|\psi_l\|)\nonumber\\
  &=-\frac{i}{4\pi}(\kappa_0-\kappa_l)\Big(8\pi\vert{D_l}\vert+\hat{A}_lCap_l\Big)\int_{\partial D_l}\psi_l(t)\ d\sigma_{l}(t)
  +O(a^5\|\psi_l\|).\nonumber
\end{align}
%\label{clarify-3}
%\end{equation}
%
%\newpage
 %\end{enumerate}
% 
Now by substituting (\ref{clarify-1}-\ref{clarify-3}) in \eqref{mult-obs-108r-pr-beg} and using \eqref{claim-int- diff-phpsi}, we have
%\begin{equation} 
\begin{align}\label{mult-obs-108r-pr}
& \int_{\partial D_l}\Big(A_l(s)-\hat{A}_l\Big)\phi_{l}(s) d\sigma_{l}(s)\\
&=-R_l\cdot \sum_{{m=1} \atop {m \neq l}}^{M} \nabla_s\Phi_{\kappa_0}(z_l,z_m) Q_m
+\Big(1-\frac{\rho_{l}}{\rho_{0}} \Big)^{-1} \frac{i}{4\pi}(\kappa_0-\kappa_l)\Big(8\pi\vert{D_l}\vert+\hat{A}_lCap_l\Big)\int_{\partial D_l}\psi_l(s)\ d\sigma_{l}(s) \nonumber\\
&\qquad
 +O\left(a^4+a^3\sum_{{m=1}\atop {m\neq l}}^{M}\frac{a^3}{d_{ml}^3}\|\phi_m\|+a^5\|\phi_l\|+a^5\|\psi_l\|\right)\nonumber\\\
&=-R_l\cdot \sum_{{m=1} \atop {m \neq l}}^{M} \nabla_s\Phi_{\kappa_0}(z_l,z_m) Q_m\nonumber\\\  
&\qquad +\Big(1-\frac{\rho_{l}}{\rho_{0}} \Big)^{-1} \frac{i}{4\pi}(\kappa_0-\kappa_l)\,\big[Q_l+\sum_{m\neq\,l}Cap_l\Phi_{\kappa_0}(z_l,z_m)Q_m\big]\,\Big(8\pi\vert{D_l}\vert+\hat{A}_lCap_l\Big)\nonumber\\\
&\qquad
 +O\left(a^4+a^3\sum_{{m=1}\atop {m\neq l}}^{M}\frac{a^3}{d_{ml}^3}\|\phi_m\|+a^5\|\phi_l\|+a^5\|\psi_l\|\right),\nonumber\
\end{align}
%\label{mult-obs-108r-pr}
%\end{equation}
whence \eqref{mult-obs-108r-pr-th} follows.
\subsection{Proof of proposition \ref{estimate-Vm}}\label{Vm} 
We note that using \eqref{mult-obs-108r1}, we can write 
%\begin{equation} 
 \begin{align}\label{mult-obs-108r3}
\int_{\partial D_l}(s-z_l)_p\;&\phi_{l}\;d\sigma_{l}(s)=  -\underbrace{\int_{\partial D_l}(s-z_l)_p\;[\lambda_l Id+ (K^{{0}}_{D_{l}})^{*}]^{-1}\frac{\partial u^{I}}{\partial \nu^{l}}\;d\sigma_{l}(s)}_{:=E^p_l}\\
&-\underbrace{\sum_{{m=1} \atop {m \neq l}}^{M}\int_{\partial D_l}(s-z_l)_p\;[\lambda_l Id+ (K^{{0}}_{D_{l}})^{*}]^{-1} \frac{\partial (S^{\kappa_{0}}_{D_{m}} \phi_{m})}{\partial \nu^{l}}\Big\vert_{\partial D_{l}} \;d\sigma_{l}(s)}_{:=B^p_{\phi{l}}}\nonumber\\
&-\underbrace{\int_{\partial D_l}(s-z_l)_p\;[\lambda_l Id+ (K^{{0}}_{D_{l}})^{*}]^{-1}[(K^{{\kappa_0}}_{D_{l}})^{*} - (K^{{0}}_{D_{l}})^{*}] \phi_{l}\;d\sigma_{l}(s)}_{=:K^p_{\phi{l}}}\nonumber\\
&-\Big(1-\frac{\rho_{l}}{\rho_{0}} \Big)^{-1}\underbrace{\int_{\partial D_l}(s-z_l)_p\;[\lambda_l Id+ (K^{{0}}_{D_{l}})^{*}]^{-1}  [(K^{\kappa_{l}}_{D_{l}})^{*}-(K^{\kappa_{0}}_{D_{l}})^{*}] \psi_{l}  \;d\sigma_{l}(s)}_{=:K^p_{\psi{l}}}\nonumber\\
&-\Big(1-\frac{\rho_{l}}{\rho_{0}} \Big)^{-1}\underbrace{\int_{\partial D_l}(s-z_l)_p\;[\lambda_l Id+ (K^{{0}}_{D_{l}})^{*}]^{-1} [\frac{1}{2}Id+(K^{\kappa_{0}}_{D_{l}})^{*}] \left(S_{D_l}^{\kappa_0}\right)^{-1}\left(S_{D_l}^{\kappa_0}-S_{D_l}^{\kappa_l}\right)\psi_l \ d\sigma_{l}(s) }_{=:K^p_{Gg{l}}}. \nonumber
\end{align}
%\label{mult-obs-108r3}
%\end{equation}
\begin{comment}
Observe that, we have that $(s-z_l)\in\,L^2_0(\partial D_l)$. We also have  results saying (1) $(\lambda_l+(K_{D_l}^{0})^*)^{-1}$ does not scale on function with zero average and maps $L_0^2$ to $L_0^2$, and (2) $((S_{D_l}^{0})^*)^{-1}$ does not scale on functions with zero average. Using these properties, we can get below;
\begin{eqnarray}
{\bullet\,} E_l^p&=B_{l}^p\cdot\nabla u^{I}(z_{l}) +\underbrace{\int_{\partial D_l}(s-z_l)_p\;[\lambda_l + (K^{{0}}_{D_{l}})^{*}]^{-1}\nabla_s (u^{I}(s)-u^I(z_l))\cdot \nu^{l}\;ds}_{\textcolor{blue}{=:E_{l1}^p=O(a^4)}}=O(a^3)\label{est-E-p-l}.
\end{eqnarray}
\end{comment}
Let us recall that $\overline{z}_{l}=\frac{1}{\vert \partial D_{l} \vert} \int_{\partial D_{l}} s \ d\sigma_{l}(s) .$ By definition, we have $(s-\overline{z}_{l})\in (L^{2}_{0}(\partial D_{l}))^3 $. In the sequel, we will repeatedly use this property.\\
 To estimate $E^{p}_{l}$, using $\overline{z}_{l} $ we write 
% \begin{equation}
 \begin{align} \label{corr1000}
 E^{p}_{l}&:= \int_{\partial D_{l}} (s-z_{l})_{p}\ [\lambda_{l} Id+ (K^{0}_{D_{l}})^*]^{-1} \frac{\partial u^{I}}{\partial \nu^{l}} \ d\sigma_{l}(s) \\
 &= \underbrace{\int_{\partial D_{l}} (s-\overline{z}_{l})_{p}\ [\lambda_{l}Id+ (K^{0}_{D_{l}})^*]^{-1} \frac{\partial u^{I}}{\partial \nu^{l}} \ d\sigma_{l}(s)}_{I} + \underbrace{\int_{\partial D_{l}} (\overline{z}_{l}-z_{l})_{p}\ [\lambda_{l}Id+ (K^{0}_{D_{l}})^*]^{-1} \frac{\partial u^{I}}{\partial \nu^{l}} \ d\sigma_{l}(s)}_{II}.\nonumber
 \end{align}
% \label{corr1000}
% \end{equation}
To estimate the term $II$, we proceed as follows. We note that
%\begin{equation}
\begin{align}\label{corr1001}
II &= \int_{\partial D_{l}} (\overline{z}_{l}-z_{l})_{p}\ [\lambda_{l}Id+ (K^{0}_{D_{l}})^*]^{-1} \frac{\partial u^{I}}{\partial \nu^{l}} \ d\sigma_{l}(s) 
=(\overline{z}_{l}-z_{l})_{p} \int_{\partial D_{l}}\ \underbrace{[\lambda_{l}Id+ (K^{0}_{D_{l}})^*]^{-1} \frac{\partial u^{I}}{\partial \nu^{l}}}_{f} \ d\sigma_{l}(s),
\end{align}
%\label{corr1001}
%\end{equation}
and we can write \[[\lambda_{l}Id+ (K^{0}_{D_{l}})^*]f= \frac{\partial u^I}{\partial \nu^{l}}. \]
Now integrating over $\partial D_{l}$, we find that
%\begin{equation}
\begin{align}\label{corr1002}
&\int_{\partial D_{l}} [\lambda_{l}Id+ (K^{0}_{D_{l}})^*]f(s)\ d\sigma_{l}(s) =\int_{\partial D_{l}} \frac{\partial u^I}{\partial \nu^{l}}(s)\ d\sigma_{l}(s) = -\kappa_0^2 \int_{D_{l}} u^I(x)\ dx \nonumber\\
\Rightarrow &\Big(\lambda_{l}-\frac{1}{2} \Big)\int_{\partial D_{l}} f(s)\ d\sigma_{l}(s)= \int_{\partial D_{l}} f(s) \  [\lambda_{l}Id+ K^{0}_{D_{l}}](1) \ d\sigma_{l}(s) = -\kappa_0^2 \int_{D_{l}} u^I(x)\ dx \nonumber\\
\Rightarrow &\int_{\partial D_{l}} f(s)\ d\sigma_{l}(s) = - \Big(\lambda_{l}-\frac{1}{2} \Big)^{-1} \kappa_0^{2} \int_{D_{l}} u^I(x)\ dx.
\end{align}
%\label{corr1002}
%\end{equation} 
Using this in \eqref{corr1001}, it follows that
\begin{equation}
II= -(\overline{z}_{l}-z_{l})_{p} \Big(\lambda_{l}-\frac{1}{2} \Big)^{-1} \kappa_0^{2} \int_{D_{l}} u^I(x)\ dx.
\label{corr1003}
\end{equation}
In order to estimate $I$, we note that,
\begin{equation}
I= \int_{\partial D_{l}} (s-\overline{z}_{l})_{p} \ [\lambda_{l} Id+ (K^{0}_{D_{l}})^{*}]^{-1} \frac{\partial u^{I}}{\partial \nu^{l}} \ d\sigma_{l}(s)=\int_{\partial D_{l}} [\lambda_{l} Id+ K^{0}_{D_{l}}]^{-1}(s-\overline{z}_{l})_{p}\ \frac{\partial u^{I}}{\partial \nu^{l}} \ d\sigma_{l}(s).
\notag
\end{equation}
We recall that $\left[\lambda Id+ K^{0}_{D_l} \right]^{-1}: L^{2}_{0}(\partial D_l) \rightarrow  L^{2}_{0}(\partial D_l)$ is uniformly bounded with respect to $\lambda $ for $\vert \lambda \vert \in [\frac{1}{2}, +\infty ) $. Using the fact that $(s-\overline{z}_{l}) $ is mean-free, we can now conclude that
%\begin{equation}
\begin{align}\label{corr1010}
\vert I \vert &\leq \norm{[\lambda_{l} Id+ K^{0}_{D_{l}}]^{-1}(s-\overline{z}_{l})_{p}}_{L^2(\partial D_l)} \Big\Vert\frac{\partial u^{I}}{\partial \nu^{l}}\Big\Vert_{L^2(\partial D_l)} \leq C \norm{(s-\overline{z}_{l})_{p}}_{L^2(\partial D_l)}\Big\Vert \frac{\partial u^{I}}{\partial \nu^{l}} \Big\Vert_{L^2(\partial D_l)} = O(a^3).
\end{align}
%\label{corr1010}
%\end{equation}
Combining \eqref{corr1003} and \eqref{corr1010}, we can therefore write
%\begin{equation}
\begin{align}\label{Ep}
-E^{p}_{l}&= (\overline{z}_{l}-z_{l})_{p} \Big(\lambda_{l}-\frac{1}{2} \Big)^{-1} \kappa_0^{2} \int_{D_{l}} u^I(x)\ dx+O(a^3)\\
&=(\overline{z}_{l}-z_{l})_{p} \Big(\lambda_{l}-\frac{1}{2} \Big)^{-1} \kappa_0^{2} u^{I}(z_{l}) \vert D_{l} \vert+ O(a^3) + O(a^5 \rho_{l}^{-1})=O(a^{3-\gamma}),\nonumber
\end{align}
%\label{Ep}
%\end{equation}
as $\lambda_l -\frac{1}{2} \sim \rho_l $ and we assume that $\rho_{l}\simeq a^{1+\gamma}$, with $\gamma \geq 0$. \\
Next let us estimate the term $K^p_{Gg{l}} $. 
%\begin{equation}
 \begin{align*}
K^p_{Gg{l}}&=\int_{\partial D_l}(s-z_l)_p\;[\lambda_l Id + (K^{{0}}_{D_{l}})^{*}]^{-1} \Big[\frac{1}{2}Id+(K^{\kappa_{0}}_{D_{l}})^{*} \Big] \left(S_{D_l}^{\kappa_0}\right)^{-1}\left(S_{D_l}^{\kappa_0}-S_{D_l}^{\kappa_l}\right)\psi_l(s)\ d\sigma_{l}(s)\\
&=\int_{\partial D_l}[\lambda_l Id + K^{{0}}_{D_{l}}]^{-1}(s-z_l)_{p} \cdot \Big[\frac{1}{2}Id+(K^{\kappa_{0}}_{D_{l}})^{*} \Big] \left(S_{D_l}^{\kappa_0}\right)^{-1}\left(S_{D_l}^{\kappa_0}-S_{D_l}^{\kappa_l}\right)\psi_l(s)\ d\sigma_{l}(s)\\
&=\Big(\frac{1}{2}-\lambda_l\Big) \int_{\partial D_l}[\lambda_l Id+ K^{{0}}_{D_{l}}]^{-1}(s-z_l)_{p} \cdot \left(S_{D_l}^{\kappa_0}\right)^{-1}\left(S_{D_l}^{\kappa_0}-S_{D_l}^{\kappa_l}\right)\psi_l(s) \ d\sigma_{l}(s) \\
&\quad +\int_{\partial D_l}(s-z_l)_p\;  \left(S_{D_l}^{\kappa_0}\right)^{-1}\left(S_{D_l}^{\kappa_0}-S_{D_l}^{\kappa_l}\right)\psi_l(s) \ d\sigma_{l}(s)\\
&\quad +\int_{\partial D_l}[\lambda_lId + K^{{0}}_{D_{l}}]^{-1}(s-z_l)_p \cdot [(K^{\kappa_{0}}_{D_{l}})^{*}-(K^{{0}}_{D_{l}})^{*}] \left(S_{D_l}^{\kappa_0}\right)^{-1}\left(S_{D_l}^{\kappa_0}-S_{D_l}^{\kappa_l}\right)\psi_l(s)\ d\sigma_{l}(s)\\
&=\frac{1}{2}\frac{\rho_l}{\rho_l-\rho_0}\int_{\partial D_l} (s-z_l)_{p}  [\lambda_l Id + (K^{{0}}_{D_{l}})^{*}]^{-1} \left(S_{D_l}^0\right)^{-1}\sum_{n=0}^{\infty}\left(S_{D_l}^{d_{\kappa_0}}\left(S_{D_l}^{0}\right)^{-1}\right)^n  \left(S_{D_l}^{\kappa_0}-S_{D_l}^{\kappa_l}\right)\psi_l(s)\ d\sigma_{l}(s)\\
&\quad +\int_{\partial D_l} \left(S_{D_l}^{0}\right)^{-1}(s-z_l)_p \cdot  \sum_{n=0}^{\infty}\left(S_{D_l}^{d_{\kappa_0}}\left(S_{D_l}^{0}\right)^{-1}\right)^n\left(S_{D_l}^{\kappa_0}-S_{D_l}^{\kappa_l}\right)\psi_l(s)\ d\sigma_{l}(s) \\
&\quad +\int_{\partial D_l} (s-z_l)_p [\lambda_l Id + (K^{{0}}_{D_{l}})^{*}]^{-1} [(K^{\kappa_{0}}_{D_{l}})^{*}-(K^{{0}}_{D_{l}})^{*}] \left(S_{D_l}^{\kappa_0}\right)^{-1}\left(S_{D_l}^{\kappa_0}-S_{D_l}^{\kappa_l}\right)\psi_l(s)\ d\sigma_{l}(s).
\end{align*}
%\notag
%\end{equation}
Then using \eqref{InvSsmzl--ori}, we can deduce that
%\begin{equation}
\begin{align}\label{KpG}
K^p_{Gg{l}}&=O(\rho_{l} a^{2} \rho_{l}^{-1}  a^{-1} a^{2}\norm{\psi_{l}})+O(a\ a^2 \norm{\psi_{l}})+O( a^2 \rho_{l}^{-1} a^2 a^{-1} a^2 \|\psi_l\|)\\
&=O(a^3\|\psi_l\|)+O(a^{5} \rho_{l}^{-1} \norm{\psi_{l}}).\nonumber
\end{align}
%\label{KpG}
%\end{equation}
%
Next, we shall estimate the term $K^{p}_{\phi_{l}}$. 
%\begin{equation}
\begin{align*}
K^p_{\phi_{l}}&=\int_{\partial D_l}(s-z_l)_p\;[\lambda_l Id + (K^{{0}}_{D_{l}})^{*}]^{-1}[(K^{\kappa_{0}}_{D_{l}})^{*}-(K^{0}_{D_{l}})^{*}]\phi_l(s)\ d\sigma_{l}(s)\\
&=\underbrace{\int_{\partial D_l}(s-\overline{z}_l)_p\;[\lambda_l Id + (K^{{0}}_{D_{l}})^{*}]^{-1}[(K^{\kappa_{0}}_{D_{l}})^{*}-(K^{0}_{D_{l}})^{*}]\phi_l(s)\ d\sigma_{l}(s)}_{K^{p,1}_{\phi_{l}}}\\
&\qquad \qquad \qquad+ \underbrace{\int_{\partial D_l}(\overline{z}_{l}-z_l)_p\;[\lambda_l Id + (K^{{0}}_{D_{l}})^{*}]^{-1}[(K^{\kappa_{0}}_{D_{l}})^{*}-(K^{0}_{D_{l}})^{*}]\phi_l(s)\ d\sigma_{l}(s)}_{K^{p,2}_{\phi_{l}}}.
\end{align*}
%\notag
%\end{equation}
Now, the first term in the right hand side above can be estimated in the following manner using the fact that $(s-\overline{z}_{l})_{p} \in L^{2}_{0}(\partial D_{l}) $.
%\begin{equation}
\begin{align*}
K^{p,1}_{\phi_{l}}&=\int_{\partial D_l}(s-\overline{z}_l)_p\;[\lambda_l Id + (K^{{0}}_{D_{l}})^{*}]^{-1}[(K^{\kappa_{0}}_{D_{l}})^{*}-(K^{0}_{D_{l}})^{*}]\phi_l(s)\ d\sigma_{l}(s)\\
&\qquad=\int_{\partial D_l}[\lambda_l Id + K^{{0}}_{D_{l}}]^{-1}(s-\overline{z}_l)_p\cdot [(K^{\kappa_{0}}_{D_{l}})^{*}-(K^{0}_{D_{l}})^{*}] \phi_l(s)\ d\sigma_{l}(s)\\
&\qquad=O(a^2\,a^2\|\phi_l\|)=O(a^4\|\phi_l\|).
\end{align*}
%\notag
%\end{equation}
To estimate the second term $K^{p,2}_{\phi_{l}} $, we observe that
%\begin{equation}
\begin{align*}
K^{p,2}_{\phi_{l}}&=(\overline{z}_{l}-z_{l})_{p} \int_{\partial D_{l}} [\lambda_{l}Id+K^{0}_{D_{l}}]^{-1}(1) \cdot [(K^{\kappa_{0}}_{D_{l}})^{*}-(K^{0}_{D_{l}})^{*}] \phi_{l}(s)\ d\sigma_{l}(s)  \\
&= (\overline{z}_{l}-z_{l})_{p} \int_{\partial D_{l}} [K^{\kappa_{0}}_{D_{l}}-K^{0}_{D_{l}}][\lambda_{l}Id+K^{0}_{D_{l}}]^{-1}(1) \cdot \phi_{l}(s)\ d\sigma_{l}(s).
\end{align*}
%\notag
%\end{equation}
Let us denote $h:= [\lambda_{l} Id+K^{0}_{D_{l}}]^{-1} (1) $. Then 
%\begin{equation}
\begin{align*}
K^{p,2}_{\phi_{l}}&= (\overline{z}_{l}-z_{l})_{p} \int_{\partial D_{l}} \phi_{l}(s) \Big[\int_{\partial D_{l}} (\nabla_{t}\Phi_{\kappa_{0}}(s,t)-\nabla_{t}\Phi_{0}(s,t))\cdot \nu^{l}(t) h(t)\ d\sigma_{l}(t) \Big] d\sigma_{l}(s) \\
&=(\overline{z}_{l}-z_{l})_{p} \int_{\partial D_{l}} \phi_{l}(s) \Big[\int_{\partial D_{l}} \Big(\sum_{n=2}^{\infty} -\frac{\kappa^{n}_{0} i^{n}}{4 \pi n!} (n-1) \vert s-t \vert^{n-2} \frac{(s-t)}{\vert s-t \vert}  \Big)\cdot \nu^{l}(t) h(t) d\sigma_{l}(t) \Big] d\sigma_{l}(s) \\
&=\frac{1}{8 \pi} \kappa^{2}_{0} (\overline{z}_{l}-z_{l})_{p} \int_{\partial D_{l}} \phi_{l}(s) \Big[\int_{\partial D_{l}} \frac{(s-t)}{\vert s-t\vert}\cdot \nu^{l}(t) h(t)\ d\sigma_{l}(t) \Big] d\sigma_{l}(s) \\
&\quad + \frac{i \kappa_{0}^{3}}{12 \pi} (\overline{z}_{l}-z_{l})_{p} \int_{\partial D_{l}} \phi_{l}(s) \Big[\int_{\partial D_{l}} [(s-t)\cdot \nu^{l}(t)] h(t) d\sigma_{l}(t) \Big] d\sigma_{l}(s) \\
&\quad +(\overline{z}_{l}-z_{l})_{p} \int_{\partial D_{l}} \phi_{l}(s) \Big[\int_{\partial D_{l}} \Big(\sum_{n=4}^{\infty} -\frac{\kappa^{n}_{0} i^{n}}{4 \pi n!} (n-1) \vert s-t \vert^{n-2} \frac{(s-t)}{\vert s-t \vert}  \Big)\cdot \nu^{l}(t) h(t) d\sigma_{l}(t) \Big] d\sigma_{l}(s).
\end{align*}
%\notag
%\end{equation}
Note that the third term in the right hand side of the above identity is of order $O(a^{6} \rho_{l}^{-1} \norm{\phi_{l}}) $. To understand the first two terms better, we now proceed as follows.\\
Let us write 
\[\int_{\partial D_{l}} [(s-t)\cdot \nu^{l}(t)] h(t) d\sigma_{l}(t)= \int_{\partial D_{l}} \underbrace{[\lambda_{l} Id+(K^{0}_{D_{l}})^{*}]^{-1} [(s-t)\cdot \nu^{l}(t)]}_{f_{1}}\ d\sigma_{l}(t).\]
Now $[\lambda_{l} Id+(K^{0}_{D_{l}})^{*}]f_{1}= (s-t)\cdot \nu^{l}(t) $ and hence 
%\begin{equation}
\begin{align*}
&\int_{\partial D_{l}} [\lambda_{l} Id+(K^{0}_{D_{l}})^{*}] f_{1}(t)\ d\sigma_{l}(t) = \int_{\partial D_{l}} (s-t)\cdot \nu^{l}(t) d\sigma_{l}(t) \\
&\Longrightarrow \Big(\lambda_{l}-\frac{1}{2}\Big) \int_{\partial D_{l}} f_{1}(t)\ d\sigma_{l}(t)=\int_{\partial D_{l}} f_{1}(t) [\lambda_{l} Id+K^{0}_{D_{l}}](1)\ d\sigma_{l}(t)= \int_{\partial D_{l}} (s-t)\cdot \nu^{l}(t) d\sigma_{l}(t) =-3 \vert D_{l} \vert  \\
&\Longrightarrow \int_{\partial D_{l}} [(s-t)\cdot \nu^{l}(t)]h(t) d\sigma_{l}(t) =-3 \Big(\lambda_{l}-\frac{1}{2}\Big)^{-1} \vert D_{l} \vert .
\end{align*}
%\notag
%\end{equation}
Therefore
%\begin{equation}
\begin{align*}
&\frac{i \kappa_{0}^{3}}{12 \pi} (\overline{z}_{l}-z_{l})_{p} \int_{\partial D_{l}} \phi_{l}(s) \Big[\int_{\partial D_{l}} [(s-t)\cdot \nu^{l}(t)] h(t) d\sigma_{l}(t)\Big] d\sigma_{l}(s) =-\frac{i \kappa_{0}^{3}}{4 \pi} \Big(\lambda_{l}-\frac{1}{2} \Big)^{-1} \vert D_{l} \vert (\overline{z}_{l}-z_{l})_{p} \int_{\partial D_{l}} \phi_{l}(s) d \sigma_{l}(s).
\end{align*}
%\notag
%\end{equation}
Next, we consider the term $\frac{1}{8 \pi} \kappa^{2}_{0} (\overline{z}_{l}-z_{l})_{p} \int_{\partial D_{l}} \phi_{l}(s) \Big[\int_{\partial D_{l}} \frac{(s-t)}{\vert s-t\vert}\cdot \nu^{l}(t) h(t)\ d\sigma_{l}(t) \Big] d\sigma_{l}(s)$. Again we write
\begin{equation}
\int_{\partial D_{l}} \frac{(s-t)}{\vert s-t\vert}\cdot \nu^{l}(t) h(t)\ d\sigma_{l}(t) = \int_{\partial D_{l}} \underbrace{[\lambda_{l} Id+ (K^{0}_{D_{l}})^{*}]^{-1} \Big(\frac{(s-t)}{\vert s-t \vert}\cdot \nu^{l}(t) \Big)}_{f_{2}}\ d\sigma_{l}(t),
\notag
\end{equation}
then arguing as in the previous case, we obtain
%\begin{equation}
\begin{align*}
&\int_{\partial D_{l}} f_{2}(t) \ d\sigma_{l}(t) = \Big(\lambda_{l}-\frac{1}{2} \Big)^{-1} \int_{\partial D_{l}} \frac{s-t}{\vert s-t \vert} \cdot \nu^{l}(t) \ d\sigma_{l}(t)=\Big(\lambda_{l}-\frac{1}{2} \Big)^{-1} A_{l}(s) 
\end{align*}
%\notag
%\end{equation}
and hence using \eqref{mult-obs-108r-pr-th}, it follows that
%\begin{equation}
\begin{align*}
&\frac{1}{8 \pi} \kappa^{2}_{0} (\overline{z}_{l}-z_{l})_{p} \int_{\partial D_{l}} \phi_{l}(s) \Big[\int_{\partial D_{l}} \frac{(s-t)}{\vert s-t\vert}\cdot \nu^{l}(t) h(t)\ d\sigma_{l}(t) \Big] d\sigma_{l}(s)\\
&=\frac{1}{8 \pi} \kappa_{0}^{2} (\overline{z}_{l}-z_{l})_{p} (\lambda_{l}-\frac{1}{2})^{-1} \int_{\partial D_{l}} A_{l}(s) \phi_{l}(s)\ d\sigma_{l}(s) \\
&=\frac{1}{8 \pi} \kappa_{0}^{2} (\overline{z}_{l}-z_{l})_{p} (\lambda_{l}-\frac{1}{2})^{-1} \hat{A}_{l} Q_{l}+\frac{1}{8 \pi} \kappa_{0}^{2} (\overline{z}_{l}-z_{l})_{p} (\lambda_{l}-\frac{1}{2})^{-1} \int_{\partial D_{l}} (A_{l}-\hat{A}_{l})(s) \phi_{l}(s)\ d\sigma_{l}(s) \\
&=\frac{1}{8 \pi} \kappa_{0}^{2} (\overline{z}_{l}-z_{l})_{p} (\lambda_{l}-\frac{1}{2})^{-1} \hat{A}_{l} Q_{l}  +\frac{1}{8 \pi} \kappa_{0}^{2} (\overline{z}_{l}-z_{l})_{p} (\lambda_{l}-\frac{1}{2})^{-1} \Big[-R_l\cdot \sum_{{m=1} \atop {m \neq l}}^{M} \nabla_s\Phi_{\kappa_0}(z_l,z_m) Q_m\\
 &\qquad \qquad \qquad \qquad \qquad \qquad-\frac{\rho_{0}}{\rho_{l}-\rho_{0}}  \frac{i}{4\pi}(\kappa_0-\kappa_l)\big[Q_l+\sum_{m\neq\,l}Cap_l\Phi_{\kappa_0}(z_l,z_m)Q_m\big]\Big(8\pi\vert{D_l}\vert+\hat{A}_lCap_l\Big)\\
 &\qquad \qquad \qquad \qquad \qquad \qquad \qquad+O\left(a^4+a^3\sum_{{m=1}\atop {m\neq l}}^{M}\frac{a^3}{d_{ml}^3}\|\phi_m\|+a^5\|\phi_l\|+a^5\|\psi_l\|\right) \Big].
\end{align*}
%\notag
%\end{equation}
Combining the estimates above, we finally have
%\begin{equation}
\begin{align}\label{Kphi}
-K^{p}_{\phi_{l}}&=\frac{i \kappa_{0}^{3}}{4 \pi} \Big(\lambda_{l}-\frac{1}{2} \Big)^{-1} \vert D_{l} \vert (\overline{z}_{l}-z_{l})_{p} Q_{l} 
-\frac{1}{8 \pi} \kappa_{0}^{2} (\overline{z}_{l}-z_{l})_{p} (\lambda_{l}-\frac{1}{2})^{-1} \hat{A}_{l} Q_{l} \\
&\qquad +\frac{1}{8 \pi} \kappa_{0}^{2} (\overline{z}_{l}-z_{l})_{p} (\lambda_{l}-\frac{1}{2})^{-1} \Big[R_l\cdot \sum_{{m=1} \atop {m \neq l}}^{M} \nabla_s\Phi_{\kappa_0}(z_l,z_m) Q_m\nonumber\\
 &\qquad \qquad \qquad+\frac{\rho_{0}}{\rho_{l}-\rho_{0}}  \frac{i}{4\pi}(\kappa_0-\kappa_l)\big[Q_l+\sum_{m\neq\,l}Cap_l\Phi_{\kappa_0}(z_l,z_m)Q_m\big]\Big(8\pi\vert{D_l}\vert+\hat{A}_lCap_l\Big)\nonumber\\
 &\qquad \qquad \qquad -O\left(a^4+a^3\sum_{{m=1}\atop {m\neq l}}^{M}\frac{a^3}{d_{ml}^3}\|\phi_m\|+a^5\|\phi_l\|+a^5\|\psi_l\|\right) \Big]+ O(a^{4} \norm{\phi_{l}})+O(a^{6} \rho_{l}^{-1} \norm{\phi_{l}})\nonumber\\
&= -\frac{1}{8 \pi} \kappa_{0}^{2} (\overline{z}_{l}-z_{l})_{p} (\lambda_{l}-\frac{1}{2})^{-1} \hat{A}_{l} Q_{l}+ O\left(a^{4-\gamma}+a^{3-\gamma}\sum_{{m=1}\atop {m\neq l}}^{M}\frac{a^3}{d_{ml}^3}\|\phi_m\|+a^{5-\gamma}\|\phi_l\|+a^{5-\gamma}\|\psi_l\|\right)\nonumber \\
 &\qquad +O(a^{4-\gamma} \norm{\phi_{l}})+O(\sum_{{m=1}\atop {m\neq l}}^{M}\frac{a^{5-\gamma}}{d_{ml}^2}\|\phi_m\|)+O(\sum_{{m=1}\atop {m\neq l}}^{M}\frac{a^{5-\gamma}}{d_{ml}}\|\phi_m\|).\nonumber
\end{align}
%label{Kphi}
%\end{equation}
%
Proceeding as in the case of $K^{p}_{\phi_{l}} $, we can write
%\begin{equation}
\begin{align*}
-K^{p}_{\psi_{l}}&= \frac{i \kappa_{0}^{3}}{4 \pi} \Big(\lambda_{l}-\frac{1}{2} \Big)^{-1} \vert D_{l} \vert (\overline{z}_{l}-z_{l})_{p} \int_{\partial D_{l}} \psi_{l}(s)\ d\sigma_{l}(s)-\frac{1}{8 \pi} \kappa_{0}^{2} (\overline{z}_{l}-z_{l})_{p} (\lambda_{l}-\frac{1}{2})^{-1} \hat{A}_{l} \int_{\partial D_{l}} \psi_{l}(s)\ d\sigma_{l}(s) \\
&\qquad -\frac{1}{8 \pi} \kappa_{0}^{2} (\overline{z}_{l}-z_{l})_{p} (\lambda_{l}-\frac{1}{2})^{-1} \int_{\partial D_{l}} (A_{l}-\hat{A}_{l})(s) \psi_{l}(s)\ d\sigma_{l}(s)  +O(a^{4} \norm{\psi_{l}}) +O(a^{6} \rho_{l}^{-1} \norm{\psi_{l}}),
\end{align*}
%\notag
%\end{equation}
and therefore using \eqref{claim- diff-phpsi},\eqref{claim-int- diff-phpsi} and \eqref{mult-obs-108r-pr-th}, it follows that
%\begin{equation}
\begin{align*}
-K^{p}_{\psi_{l}}&= O(a^{6} \rho_{l}^{-1} \norm{\psi_{l}})+O(a^{4} \norm{\psi_{l}})\\
&+ \Big(\lambda_{l}-\frac{1}{2} \Big)^{-1}  (\overline{z}_{l}-z_{l})_{p} \Big[\frac{i \kappa^{3}_{0}}{4 \pi}\vert D_{l} \vert-\frac{1}{8 \pi}\kappa^{2}_{0} \hat{A}_{l} \Big] \Big[Q_{l} +Cap_{l}\ u^{I}(z_{l})
+\frac{i(\kappa_{0}-\kappa_{l})}{4 \pi} Q_{l} Cap_{l} \\
&+\sum_{{m=1} \atop {m \neq l}}^{M} \Big(\Phi_{\kappa_{0}}(z_{l},z_{m}) Q_{m} Cap_{l}+\nabla_{t} \Phi_{\kappa_{0}}(z_{l},z_{m})\cdot V_{m} Cap_{l}-\frac{i \kappa_{l}}{4 \pi}\Phi_{\kappa_{0}}(z_{l},z_{m}) Q_{m} C_{l}^{2}\\
&+\nabla_{s}\Phi_{\kappa_{0}}(z_{l},z_{m}) Q_{m}  \int_{\partial D_{l}} (S^{0}_{D_{l}})^{-1} (\cdot-z_{l})(s)\ d\sigma_{l}(s)\Big) + O(a^2+a^3 \norm{\phi_{l}}+\sum_{{m=1} \atop {m \neq l}}^{M} \frac{a^4}{d^{3}_{ml}} \norm{\phi_{m}} )\Big]
\\
&+\frac{1}{8 \pi} \kappa_{0}^{2} (\overline{z}_{l}-z_{l})_{p} (\lambda_{l}-\frac{1}{2})^{-1} \Big[R_l\cdot \sum_{{m=1} \atop {m \neq l}}^{M} \nabla_s\Phi_{\kappa_0}(z_l,z_m) Q_m\\
 &\qquad \qquad \qquad+\frac{\rho_{0}}{\rho_{l}-\rho_{0}}  \frac{i}{4\pi}(\kappa_0-\kappa_l)\big[Q_l+\sum_{m\neq\,l}Cap_l\Phi_{\kappa_0}(z_l,z_m)Q_m\big]\Big(8\pi\vert{D_l}\vert+\hat{A}_lCap_l\Big)\\
 &\qquad \qquad \qquad \qquad \qquad \qquad-O\left(a^4+a^3\sum_{{m=1}\atop {m\neq l}}^{M}\frac{a^3}{d_{ml}^3}\|\phi_m\|+a^5\|\phi_l\|+a^5\|\psi_l\|\right) \Big]\\
 &-\frac{1}{8 \pi} \kappa_{0}^{2} (\overline{z}_{l}-z_{l})_{p} (\lambda_{l}-\frac{1}{2})^{-1} \int_{\partial D_{l}} (A_{l}(s)-\hat{A}_{l}) \Big[\left(S^{{0}}_{D_{l}}\right)^{-1}\left(\frac{i(\kappa_0-\kappa_l)}{4\pi}Q_l\right)\\
&\quad +\left(S^{{0}}_{D_{l}}\right)^{-1}\left(\sum_{m\neq\;l}\left[\left(1-\frac{i\kappa_l}{4\pi}Cap_l\right) \Phi_{\kappa_0}(z_l,z_m)+(s-z_l)\cdot\nabla_s\Phi_{\kappa_0}(z_l,z_m)\right]Q_m\right)\\
&\quad +\left(S^{{0}}_{D_{l}}\right)^{-1}\left( \sum_{m\neq\;l}\nabla_t\Phi_{\kappa_0}(z_l,z_m)\cdot\,V_m\right)+\left(S^{{0}}_{D_{l}}\right)^{-1} u^{I}(z_l)+O\left(a+a^2\|\phi_l\|+\sum\limits_{m\neq\,l}\frac{a^3}{d^3}\|\phi_m\|\right) \Big],
\end{align*}
%\notag
%\end{equation}
where the last error estimate mentioned above is in the sense of $L^2$, while the others are point-wise. Therefore we can write
%\begin{equation}
\begin{align}\label{Kpsi}
-&\left(1-\frac{\rho_l}{\rho_0} \right)^{-1} K^{p}_{\psi_{l}}=-\frac{1}{8 \pi} \kappa_{0}^{2} \Big(1-\frac{\rho_{l}}{\rho_{0}}\Big)^{-1} (\overline{z}_{l}-z_{l}) (\lambda_{l}-\frac{1}{2})^{-1} \hat{A}_{l} Q_{l}\\
&-\frac{1}{8\pi} \kappa_{0}^{2} \Big(1-\frac{\rho_{l}}{\rho_{0}}\Big)^{-1} (\overline{z}_{l}-z_{l}) (\lambda_{l}-\frac{1}{2})^{-1} \hat{A}_{l} Cap_{l} \sum_{{m=1} \atop {m \neq l}}^{M} \Phi_{\kappa_{0}}(z_{l},z_{m}) Q_{m} \nonumber\\
&-\frac{1}{8\pi} \kappa_{0}^{2} \Big(1-\frac{\rho_{l}}{\rho_{0}}\Big)^{-1} (\overline{z}_{l}-z_{l}) (\lambda_{l}-\frac{1}{2})^{-1}  \int_{\partial D_{l}} (A_{l}(s)-\hat{A}_{l}) \Big(\sum_{{m=1} \atop {m \neq l}}^{M} \Phi_{\kappa_{0}}(z_{l},z_{m}) Q_{m} \Big) (S^{0}_{D_{l}})^{-1}(s) d\sigma_{l}(s)\nonumber\\
&+O(a^{5-\gamma} \norm{\psi_{l}})+O(a^{4} \norm{\psi_{l}})+O(a^{4-\gamma} \norm{\phi_{l}})
+O(\sum_{{m=1}\atop {m\neq l}}^{M}\frac{a^{5-\gamma}}{d_{ml}}\|\phi_m\|)\nonumber\\
&+O(a^{3-\gamma})
+O(\sum_{{m=1}\atop {m\neq l}}^{M}\frac{a^{5-\gamma}}{d_{ml}^2}\|\phi_m\|)
+O(\sum_{{m=1} \atop {m \neq l}}^{M} \frac{a^{6-\gamma}}{d^{3}_{ml}} \norm{\phi_{m}}).\nonumber
\end{align}
%\label{Kpsi}
%\end{equation}
%
Finally, we deal with the term $B^p_{\phi_{l}}$ in the following manner. As in the previous cases, using $\overline{z}_{l}$, we first write
%\begin{equation}
\begin{align}
B^p_{\phi_{l}}&= \sum_{{m=1} \atop {m \neq l}}^{M}\int_{\partial D_l}(s-z_l)_p\;[\lambda_l Id + (K^{{0}}_{D_{l}})^{*}]^{-1} \frac{\partial (S^{\kappa_{0}}_{D_{m}} \phi_{m})}{\partial \nu^l} d\sigma_{l}(s)\\
&= \underbrace{\sum_{{m=1} \atop {m \neq l}}^{M}\int_{\partial D_l}(s-\overline{z}_l)_p\;[\lambda_l Id + (K^{{0}}_{D_{l}})^{*}]^{-1} \frac{\partial (S^{\kappa_{0}}_{D_{m}} \phi_{m})}{\partial \nu^l} d\sigma_{l}(s)}_{B^{p,1}_{\phi_{l}}}\nonumber\\
&\qquad \qquad \qquad \qquad \qquad \qquad+ \underbrace{\sum_{{m=1} \atop {m \neq l}}^{M}\int_{\partial D_l}(\overline{z}_{l}-z_l)_p\;[\lambda_l Id + (K^{{0}}_{D_{l}})^{*}]^{-1} \frac{\partial (S^{\kappa_{0}}_{D_{m}} \phi_{m})}{\partial \nu^l} d\sigma_{l}(s)}_{B^{p,2}_{\phi_{l}}}.\nonumber
\end{align}
%\end{equation}
%
The first term can be dealt with as
%\begin{equation}
\begin{align*}
B^{p,1}_{\phi_{l}}&={\sum_{{m=1} \atop {m \neq l}}^{M}\int_{\partial D_l}(s-\overline{z}_l)_p\;[\lambda_l Id + (K^{{0}}_{D_{l}})^{*}]^{-1} \frac{\partial (S^{\kappa_{0}}_{D_{m}} \phi_{m})}{\partial \nu^l}\ d\sigma_{l}(s)}\\
&=\sum_{{m=1} \atop {m \neq l}}^{M} \int_{\partial D_l}[\lambda_l Id+ K^{{0}}_{D_{l}}]^{-1}(s-\overline{z}_l)_p \cdot \frac{\partial (S^{\kappa_{0}}_{D_{m}} \phi_{m})}{\partial \nu^{l}} d\sigma_{l}(s)=\sum_{{m=1} \atop {m \neq l}}^{M} O\left(\frac{a^4}{d_{ml}^2}\|\phi_m\|\right),
%&=\int_{\partial D_l}[\lambda_l + (K^{{0}}_{D_{l}})]^{-1}(s-z_l)_p\; \left[\nabla_x\Phi^{\kappa_0}(z_l,z_m)\cdot\nu_l(s) Q_m+O\left(\frac{a^3}{d_{ml}^3}\|\phi_m\|\right)\right] ds\\
%&=O(con\,a^2\,\frac{a^2}{d_{ml}^2})\|\phi_m\|\\
\end{align*}
%\notag
%\end{equation}
using the fact that $(s-\overline{z}_l)_p \in L^{2}_{0}$. To deal with the second term, we note that
\[\int_{\partial D_{l}} (\overline{z}_{l}-z_{l})_{p}\ [\lambda_{l} Id+(K^{0}_{D_{l}})^{*}]^{-1} \frac{\partial (S^{\kappa_{0}}_{D_{m}} \phi_{m})}{\partial \nu^{l}} d\sigma_{l}(s) =(\overline{z}_{l}-z_{l})_{p} \int_{\partial D_{l}} f_{3}(s) \ d\sigma_{l}(s),  \]
where $f_{3}$ satisfies \[[\lambda_{l} Id+(K^{0}_{D_{l}})^{*}] f_{3}=\frac{\partial (S^{\kappa_{0}}_{D_{m}} \phi_{m})}{\partial \nu^{l}}.\] 
From this we can deduce, as in the earlier cases and using \eqref{claim-normsingle-Lay}, that
%\begin{equation}
\begin{align*}
\int_{\partial D_{l}} f_{3}(s) \ d\sigma_{l}(s)&=\Big(\lambda_{l} -\frac{1}{2} \Big)^{-1} \int_{\partial D_{l}} \frac{\partial (S^{\kappa_{0}}_{D_{m}}\phi_{m})}{\partial \nu^l} d\sigma_{l}(s)\\
&=\Big(\lambda_{l} -\frac{1}{2} \Big)^{-1} \Big[-\kappa_0^2\vert{ D_l}\vert \Big( \Phi_{\kappa_0}(z_l,z_m) Q_{m} +\nabla_t\Phi_{\kappa_0}(z_l,z_m)\cdot V_{m} \Big)\\
&\qquad \qquad \qquad \qquad \qquad -\kappa_{0}^{2}\nabla_x\Phi_{\kappa_{0}}(z_{l},z_m)\cdot\Big[ \int_{D_{l}}(x-z_l) dx\Big] Q_{m}-Err4_{m} \Big].
\end{align*}
%\notag
%\end{equation} 
Therefore
%\begin{equation}
\begin{align}\label{Bp}
-B^{p}_{\phi_{l}}&=-\sum_{{m=1} \atop {m \neq l}}^{M} (\overline{z}_{l}-z_{l})_{p} \Big(\lambda_{l} -\frac{1}{2} \Big)^{-1} \Big[-\kappa_0^2\vert{ D_l}\vert \Big( \Phi_{\kappa_0}(z_l,z_m) Q_{m} +\nabla_t\Phi_{\kappa_0}(z_l,z_m)\cdot V_{m} \Big)\\
&\qquad \qquad \qquad \qquad \qquad -\kappa_{0}^{2}\nabla_x\Phi_{\kappa_{0}}(z_{l},z_m)\cdot\Big[ \int_{D_{l}}(x-z_l) dx\Big] Q_{m}-Err4_{m} \Big]+O\left(\frac{a^4}{d_{ml}^2}\|\phi_m\|\right)\nonumber\\
&=\sum_{{m=1} \atop {m \neq l}}^{M} (\overline{z}_{l}-z_{l}) \Big(\lambda_{l} -\frac{1}{2} \Big)^{-1} \kappa_0^2\vert{ D_l}\vert  \Phi_{\kappa_0}(z_l,z_m) Q_{m}\nonumber\\
&\qquad +\sum_{{m=1} \atop {m \neq l}}^{M} \Big[O(\frac{a^{5-\gamma}}{d^{2}_{ml}} \norm{\phi_{m}}) +O(\frac{a^{6-\gamma}}{d^{3}_{ml}} \norm{\phi_{m}}) +O\left(\frac{a^4}{d_{ml}^2}\|\phi_m\|\right)\Big].\nonumber
\end{align}
%\label{Bp}
%\end{equation}
%Here, we have used the fact that $\frac{\partial S^{\kappa_{0}}_{D_{m}}}{\partial \nu^{l}} \phi_{m}(s)=\frac{\partial }{\partial \nu^{l}}\Phi^{\kappa_0}(s_l,z_m) Q_m+O\left(\frac{a^3}{d_{ml}^3}\|\phi_m\|\right)=\nabla_x\Phi^{\kappa_0}(z_l,z_m)\cdot\nu_l(s) Q_m+O\left(\frac{a^3}{d_{ml}^3}\|\phi_m\|\right)$.
Using \eqref{Ep},\eqref{KpG},\eqref{Kphi}, \eqref{Kpsi}, \eqref{Bp} in \eqref{mult-obs-108r3}, we can write
%\begin{equation}
\begin{align}\label{est-v-ball}
V_l&=\int_{\partial D_l}(s-z_l)\phi_{l}\;d\sigma_{l}(s) = V^{dom}_{l}+V^{rem}_{l},
%&=O\left(a^3+a^3\|\psi_l\|+a^4\|\phi_l\|+\frac{a^4}{d^2_{ml}}\|\phi_m\|\right) \\
%&{{=}\atop\eqref{claim- diff-phpsi}}&O\left(a^3+a^3\|\phi_l\|+\sum_{m\neq\,l}\frac{a^4}{d^2_{ml}}\|\phi_m%\|\right).
\end{align}
%\label{est-v-ball}
%\end{equation}
where 
%\begin{equation}
\begin{align*}
&V^{dom}_{l}:=\sum_{{m=1} \atop {m \neq l}}^{M} (\overline{z}_{l}-z_{l}) \Big(\lambda_{l} -\frac{1}{2} \Big)^{-1} \kappa_0^2\vert{ D_l}\vert  \Phi_{\kappa_0}(z_l,z_m) Q_{m}-\frac{1}{8 \pi} \kappa_{0}^{2} (\overline{z}_{l}-z_{l}) (\lambda_{l}-\frac{1}{2})^{-1} \hat{A}_{l} Q_{l} \\
&-\frac{1}{8 \pi} \kappa_{0}^{2} \Big(1-\frac{\rho_{l}}{\rho_{0}}\Big)^{-1} (\overline{z}_{l}-z_{l}) (\lambda_{l}-\frac{1}{2})^{-1} \hat{A}_{l} Q_{l}\\
&-\frac{1}{8\pi} \kappa_{0}^{2} \Big(1-\frac{\rho_{l}}{\rho_{0}}\Big)^{-1} (\overline{z}_{l}-z_{l}) (\lambda_{l}-\frac{1}{2})^{-1} \hat{A}_{l} Cap_{l} \sum_{{m=1} \atop {m \neq l}}^{M} \Phi_{\kappa_{0}}(z_{l},z_{m}) Q_{m} \\
&-\frac{1}{8\pi} \kappa_{0}^{2} \Big(1-\frac{\rho_{l}}{\rho_{0}}\Big)^{-1} (\overline{z}_{l}-z_{l}) (\lambda_{l}-\frac{1}{2})^{-1}  \int_{\partial D_{l}} (A_{l}(s)-\hat{A}_{l}) \Big(\sum_{{m=1} \atop {m \neq l}}^{M} \Phi_{\kappa_{0}}(z_{l},z_{m}) Q_{m} \Big) (S^{0}_{D_{l}})^{-1}(s) d\sigma_{l}(s)\\
&=V^{dom}_{l,1}+V^{dom}_{l,2},
\end{align*}
%\notag
%\end{equation}
with
%\begin{equation}
\begin{align*}
V^{dom}_{l,1}:=&-\frac{1}{8 \pi} \kappa_{0}^{2} (\overline{z}_{l}-z_{l}) (\lambda_{l}-\frac{1}{2})^{-1} \hat{A}_{l} Q_{l} -\frac{1}{8 \pi} \kappa_{0}^{2} \Big(1-\frac{\rho_{l}}{\rho_{0}}\Big)^{-1} (\overline{z}_{l}-z_{l}) (\lambda_{l}-\frac{1}{2})^{-1} \hat{A}_{l} Q_{l},\\
%\notag
%\end{equation}
%%
%\begin{equation}
%\begin{aligned}
V^{dom}_{l,2}:=&\sum_{{m=1} \atop {m \neq l}}^{M} (\overline{z}_{l}-z_{l}) \Big(\lambda_{l} -\frac{1}{2} \Big)^{-1} \kappa_0^2\vert{ D_l}\vert  \Phi_{\kappa_0}(z_l,z_m) Q_{m}\\
&-\frac{1}{8\pi} \kappa_{0}^{2} \Big(1-\frac{\rho_{l}}{\rho_{0}}\Big)^{-1} (\overline{z}_{l}-z_{l}) (\lambda_{l}-\frac{1}{2})^{-1} \hat{A}_{l} Cap_{l} \sum_{{m=1} \atop {m \neq l}}^{M} \Phi_{\kappa_{0}}(z_{l},z_{m}) Q_{m}\\
&-\frac{1}{8\pi} \kappa_{0}^{2} \Big(1-\frac{\rho_{l}}{\rho_{0}}\Big)^{-1} (\overline{z}_{l}-z_{l}) (\lambda_{l}-\frac{1}{2})^{-1}  \int_{\partial D_{l}} (A_{l}(s)-\hat{A}_{l}) \Big(\sum_{{m=1} \atop {m \neq l}}^{M} \Phi_{\kappa_{0}}(z_{l},z_{m}) Q_{m} \Big) (S^{0}_{D_{l}})^{-1}(s) d\sigma_{l}(s),
\end{align*}
%\notag
%\end{equation}
and by $V^{rem}_{l}$, we denote the rest of the terms. The remainder $V^{rem}_{l}$
satisfies the estimate
%\begin{equation}
\begin{align*}
\vert V^{rem}_{l} \vert&=O(a^{3-\gamma})+O(a^{4-\gamma} \norm{\phi_{l}})+O(a^{3} \norm{\phi_{l}})\\
&\qquad +O\Big(\sum_{{m=1} \atop {m \neq l}}^{M}\frac{a^{5-\gamma}}{d^{2}_{ml}} \norm{\phi_{m}}\Big)+O\Big(\sum_{{m=1} \atop {m \neq l}}^{M}\frac{a^{5-\gamma}}{d_{ml}}\|\phi_m\|\Big) +O\Big(\sum_{{m=1} \atop {m \neq l}}^{M}\frac{a^{6-\gamma}}{d^{3}_{ml}} \norm{\phi_{m}}\Big) +O\Big(\sum_{{m=1} \atop {m \neq l}}^{M}\frac{a^4}{d_{ml}^2}\|\phi_m\|\Big)\nonumber\\
&=O(a^{3-\gamma})+O\Big( \Big(a^{3}+a^{4-\gamma}+\frac{a^{5-\gamma}}{d^2}+\frac{a^{5-\gamma}}{d^{3\alpha}}+\frac{a^4}{d^2}+\frac{a^4}{d^{3 \alpha}}+\frac{a^{6-\gamma}}{d^{3}}+\frac{a^{6-\gamma}}{d^{3 \alpha+1}} \Big) \norm{\phi} \Big),  \nonumber
\end{align*}
%\notag
%\end{equation}
where we use the fact that $\rho_{l}\simeq a^{1+\gamma}$, with $\gamma \geq 0$ and $0\leq \alpha \leq 1$. Also note that in the above estimate, we have majorised $\norm{\phi_{i}}$ by $\norm{\phi}$.\\

\subsection{Proof of the invertibility of the algebraic system}\label{alg-sys} 
%\begin{proof}
We rewrite \eqref{mult-obs-109r6-pr4} in the following compact form;
\begin{equation}\label{compacfrm1}
 (\mathbf{C_I}+\mathbf{B}+\mathbf{B^\prime}+\mathbf{R}_{1}+\mathbf{R}_{2})\mathbf{Q}=\mathbf{Y},
\end{equation}
\noindent
where $\mathbf{Q},\mathbf{Y} \in \mathbb{C}^{M\times 1}\mbox{ and } \mathbf{C_I},\mathbf{B},\mathbf{B^\prime},\mathbf{R}_{1}, \mathbf{R}_{2}\in\mathbb{C}^{M\times M}$ are defined as
\begin{eqnarray}
\mathbf{B}(l,m):=\left\{\begin{array}{ccccc}
  \Phi_{\kappa_0}(z_l,z_m),& \mbox{ if }&l\neq\,m\\
  0,& \mbox{ if }&l=m\\
   \end{array}\right.,\label{mainmatrix-acoustic-small}\\
\nonumber\\
\mathbf{B}^\prime(l,m):=\left\{\begin{array}{ccccc}
 \frac{F_l^\prime}{\kappa_l^2} \nabla_x\Phi_{\kappa_0}(z_l,z_m),& \mbox{ if }&l\neq\,m\\
  0,& \mbox{ if }&l=m\\
  \end{array}\right.,\label{mainmatrix-acoustic-small-prime}\\
  \nonumber\\
  \mathbf{C_I}(l,m):=\left\{\begin{array}{ccccc}
 0,& \mbox{ if }&l\neq\,m\\
  \mathbf{C}_l^{-1},& \mbox{ if }&l=m\\
  \end{array}\right.,\label{mainmatrix-acoustic-small-capinv}\\
  \nonumber\\
%\begin{split}
\mathbf{Q}:=\left(\begin{array}{cccc}
   {Q}_1 & {Q}_2 & \ldots  & {Q}_M
   \end{array}\right)^\top \text{ and } 
\mathrm{Y}:=\left(\begin{array}{cccc}
     Y_1 & Y_2& \ldots &  Y_M
   \end{array}\right)^\top,\label{coefficient-and-incidentvectors-acoustic-small}
%\end{split}
\end{eqnarray}
and $\mathbf{R}_{1}=\mathbf{P}\mathbf{P}_{1}, \mathbf{R}_{2}=\mathbf{P}\mathbf{P}_{2}$ with the matrices $\mathbf{P}, \mathbf{P}_{1}, \mathbf{P}_{2} $ defined as
\begin{equation}
\mathbf{P}(l,m):=\left\{\begin{array}{ccccc}
  \nabla_{t}\Phi_{\kappa_0}(z_l,z_m),& \mbox{ if }&l\neq\,m\\
  0,& \mbox{ if }&l=m\\
   \end{array}\right.,
\label{matrix-P}
\end{equation}
\begin{equation}
\mathbf{P}_{1}(m,n):=\left\{\begin{array}{ccccc}
 \kappa_{0}^{2} (\overline{z}_{m}-z_{m})(\lambda_{m}-\frac{1}{2})^{-1}\Big[\vert D_{m}\vert\\
 \qquad \qquad -\frac{1}{8 \pi} (1-\frac{\rho_{m}}{\rho_{0}})^{-1} \int_{\partial D_{m}} A_{m}(s) (S^0_{D_{m}})^{-1}(s)d\sigma_{m}(s)\Big] \Phi_{\kappa_0}(z_m,z_n),& \mbox{ if }&m\neq\,n\\
  0,& \mbox{ if }&m=n\\
   \end{array}\right.,
\label{matrix-P1}
\end{equation}
\begin{equation}
\mathbf{P}_{2}(m,n):=\left\{\begin{array}{ccccc}
  0,& \mbox{ if }&m\neq\,n\\
  -\frac{1}{4 \pi} \kappa_{0}^{2} (\overline{z}_{m}-z_{m})(\lambda_{m}-\frac{1}{2})^{-1} \hat{A}_{m},& \mbox{ if }&m=n\\
   \end{array}\right..
\label{matrix-P2}
\end{equation}
Our strategy here is to follow the methodology in \cite{Challa-Sini-1}. In this direction, we multiply the system with $\mathbf{Q}^r $ and $\mathbf{Q}^i $, add the resulting identities and then use the fact that the matrices $\mathbf{C_I}, \mathbf{B}$ are self-adjoint to derive the inequality
%\begin{equation}
\begin{align}\label{quadr-est}
&\langle\mathbf{C_I}^r\mathbf{Q}^r, \mathbf{Q}^r\rangle+\langle\mathbf{B}^r\mathbf{Q}^r,\mathbf{Q}^r \rangle+\langle\mathbf{B^\prime}^r\mathbf{Q}^r,\mathbf{Q}^r \rangle+\langle\mathbf{C_I}^r\mathbf{Q}^i, \mathbf{Q}^i\rangle+\langle\mathbf{B}^i\mathbf{Q}^i,\mathbf{Q}^i \rangle+\langle\mathbf{B^\prime}^r\mathbf{Q}^i,\mathbf{Q}^i \rangle\\
&+\langle\mathbf{B^\prime}^i\mathbf{Q}^r, \mathbf{Q}^i\rangle-\langle\mathbf{B^\prime}^i\mathbf{Q}^i, \mathbf{Q}^r\rangle+\langle\mathbf{R_{1}}^r\mathbf{Q}^r, \mathbf{Q}^r\rangle+\langle\mathbf{R_{1}}^r\mathbf{Q}^i, \mathbf{Q}^i\rangle+\langle\mathbf{R_{1}}^i\mathbf{Q}^r, \mathbf{Q}^i\rangle-\langle\mathbf{R_{1}}^i\mathbf{Q}^i, \mathbf{Q}^r\rangle \nonumber\\
&+\langle\mathbf{R_{2}}^r\mathbf{Q}^r, \mathbf{Q}^r\rangle+\langle\mathbf{R_{2}}^r\mathbf{Q}^i, \mathbf{Q}^i\rangle+\langle\mathbf{R_{2}}^i\mathbf{Q}^r, \mathbf{Q}^i\rangle-\langle\mathbf{R_{2}}^i\mathbf{Q}^i, \mathbf{Q}^r\rangle \nonumber\\
&=\langle\mathbf{Y}^r, \mathbf{Q}^r\rangle+\langle\mathbf{Y}^i,\mathbf{Q}^i \rangle\leq2\left(\sum_{m=1}^M \vert Y_m\vert^2\right)^\frac{1}{2}\left(\sum_{m=1}^M \vert Q_m\vert^2\right)^\frac{1}{2}.\nonumber
\end{align}
%\label{quadr-est}
%\end{equation}
%\begin{itemize}
%\item 
Let us first consider the case when $(\mathbf{C}_l^{-1})^{r}>0,\ \forall\ l=1,\dots,M $. 
In this case, proceeding as in \cite{Challa-Sini-1}, we can obtain 
\begin{equation}
\langle\mathbf{B}^r\mathbf{Q}^r,\mathbf{Q}^r \rangle+\langle\mathbf{B}^r\mathbf{Q}^i,\mathbf{Q}^i \rangle\geq -\frac{3\tau}{5\pi\,d}\sum_{m=1}^M \vert Q_m\vert^2,
\label{estBrQrquad}
\end{equation}
where $\tau:=\min_{1\leq j,m \leq M,\ j\neq m} cos(\kappa_{0}\vert z_{m}-z_{j} \vert)$ and is assumed to be non-negative. \\
We can also observe that
%\begin{equation}
\begin{align}\label{estCrQrquad}
\langle\mathbf{C_I}^r\mathbf{Q}^r, \mathbf{Q}^r\rangle+\langle\mathbf{C_I}^r\mathbf{Q}^i, \mathbf{Q}^i\rangle&\geq \min\limits_{m} ({\mathbf{C}_m^{-1}})^r \sum_{m=1}^M \vert Q_m\vert^2 
\geq\frac{\min\limits_{1\leq{m}\leq{M}}({\mathbf{C}_m})^r}{(\max\limits_{1\leq{m}\leq{M}} \vert{\mathbf{C}_m}\vert)^2} \sum_{m=1}^M \vert Q_m\vert^2.
\end{align}
%\label{estCrQrquad}
%\end{equation}
We would like to note that $(\mathbf{C}_m^{-1})^{r} $ and $({\mathbf{C}_m})^r $ have the same sign.\\
Next using Cauchy-Schwarz inequality, we can write
%\begin{equation}
\begin{align*}
&\langle\mathbf{B^\prime}^r\mathbf{Q}^r, \mathbf{Q}^r\rangle+\langle\mathbf{B^\prime}^r\mathbf{Q}^i, \mathbf{Q}^i\rangle \geq -\|\mathbf{B^\prime}\|_2  \sum_{m=1}^M \vert Q_m\vert^2,\\
&\langle\mathbf{B^\prime}^i\mathbf{Q}^r, \mathbf{Q}^i\rangle \geq -\norm{\mathbf{B^\prime}^i}_{2} \norm{Q^r} \norm{Q^i} \geq - \norm{\mathbf{B^\prime}}_{2} \sum_{m=1}^{M} \vert Q_{m} \vert^2,\\
&-\langle\mathbf{B^\prime}^i\mathbf{Q}^i, \mathbf{Q}^r\rangle \geq -\norm{\mathbf{B^\prime}^i}_{2} \norm{Q^i} \norm{Q^r} \geq - \norm{\mathbf{B^\prime}}_{2} \sum_{m=1}^{M} \vert Q_{m} \vert^2,
\end{align*}
%\notag
%\end{equation}
and therefore
%\begin{equation}
\begin{align}\label{estBrprQrquad}
&\langle\mathbf{B^\prime}^r\mathbf{Q}^r, \mathbf{Q}^r\rangle+\langle\mathbf{B^\prime}^r\mathbf{Q}^i, \mathbf{Q}^i\rangle+\langle\mathbf{B^\prime}^i\mathbf{Q}^r, \mathbf{Q}^i\rangle-\langle\mathbf{B^\prime}^i\mathbf{Q}^i, \mathbf{Q}^r\rangle \\
&\geq-3\max\limits_{1\leq{m}\leq{M}} \left\vert\frac{F_m^\prime}{\kappa_m^2}\right\vert \frac{\kappa_0+1}{4\pi}  \left[\sum_{m,l=1\atop m\neq l}^M\frac{1}{d_{ml}^4}\right]^{\frac{1}{2}}\sum_{m=1}^M \vert Q_m\vert^2\nonumber\\
&\geq-3\max\limits_{1\leq{m}\leq{M}} \left\vert\frac{F_m^\prime}{\kappa_m^2}\right\vert \frac{\kappa_0+1}{4\pi} C \sqrt{M M_{max}} \left[\frac{1}{ d^4 }+\sum^{[d^{-\alpha}]}_{n=1}\hspace{-.1cm} [(2n+1)^3-(2n-1)^3]\frac{{1}}{ n^4d^{4\alpha}}\right]^\frac{1}{2}\sum_{m=1}^M \vert Q_m\vert^2\quad\nonumber\\
&\geq-\max\limits_{1\leq{m}\leq{M}} \left\vert\frac{F_m^\prime}{\kappa_m^2}\right\vert C \sqrt{M M_{max}}  \left[\frac{1}{ d^4 }+\frac{1}{ d^{5\alpha}}\right]^\frac{1}{2}\sum_{m=1}^M \vert Q_m\vert^2\quad,\nonumber
\end{align}
%\label{estBrprQrquad}
%\end{equation}
where $C$ denotes a generic constant that is bounded in terms of $a$. Similarly we derive
%\begin{equation}
\begin{align}\label{est-R1}
&\langle\mathbf{R_{1}}^r\mathbf{Q}^r, \mathbf{Q}^r\rangle+\langle\mathbf{R_{1}}^r\mathbf{Q}^i, \mathbf{Q}^i\rangle+\langle\mathbf{R_{1}}^i\mathbf{Q}^r, \mathbf{Q}^i\rangle-\langle\mathbf{R_{1}}^i\mathbf{Q}^i, \mathbf{Q}^r\rangle \\
&\qquad \qquad \geq -C a^{3-\gamma} M M_{max} \left[\frac{1}{ d^2 }+\frac{1}{ d^{3\alpha}}\right]^\frac{1}{2}\left[\frac{1}{ d^4 }+\frac{1}{ d^{5\alpha}}\right]^\frac{1}{2} \sum_{m=1}^M \vert Q_m\vert^2\quad\nonumber
\end{align}
%\label{est-R1}
%\end{equation}
and
\begin{equation}
\begin{aligned}
&\langle\mathbf{R_{2}}^r\mathbf{Q}^r, \mathbf{Q}^r\rangle+\langle\mathbf{R_{2}}^r\mathbf{Q}^i, \mathbf{Q}^i\rangle+\langle\mathbf{R_{2}}^i\mathbf{Q}^r, \mathbf{Q}^i\rangle-\langle\mathbf{R_{2}}^i\mathbf{Q}^i, \mathbf{Q}^r\rangle \geq- C a^{2-\gamma} \sqrt{M M_{max}}  \left[\frac{1}{ d^4 }+\frac{1}{ d^{5\alpha}}\right]^\frac{1}{2}\sum_{m=1}^M \vert Q_m\vert^2\quad.
\end{aligned}
\label{est-R2}
\end{equation}
Making use of (\ref{estBrQrquad}-\ref{est-R2}) in \eqref{quadr-est}, the required estimate \eqref{mazya-fnlinvert-small-ela-2} follows.\\
Let us next consider the case when $(\mathbf{C}_l^{-1})^{r}<0,\ \forall\ l=1,\dots,M $. \\
In this case, we multiply the identity \eqref{quadr-est} with $-1$ and note that using Cauchy-Schwarz inequality, we can still write
\begin{equation}
-\langle\mathbf{Y}^r, \mathbf{Q}^r\rangle-\langle\mathbf{Y}^i,\mathbf{Q}^i \rangle\leq2\left(\sum_{m=1}^M \vert Y_m\vert^2\right)^\frac{1}{2}\left(\sum_{m=1}^M \vert Q_m\vert^2\right)^\frac{1}{2}.
\notag
\end{equation}
As in the previous case, we derive the estimates
%\begin{equation}
\begin{align*}
-\langle\mathbf{B^\prime}^r\mathbf{Q}^r, \mathbf{Q}^r\rangle-\langle\mathbf{B^\prime}^r\mathbf{Q}^i, \mathbf{Q}^i\rangle-\langle\mathbf{B^\prime}^i\mathbf{Q}^r, \mathbf{Q}^i\rangle &+\langle\mathbf{B^\prime}^i\mathbf{Q}^i, \mathbf{Q}^r\rangle  \nonumber \\
%&\geq-3\max\limits_{1\leq{m}\leq{M}} \left\vert\frac{F_m^\prime}{\kappa_m^2}\right\vert \frac{\kappa_0+1}{4\pi}  \left[\sum_{m,l=1\atop m\neq l}^M\frac{1}{d_{ml}^4}\right]^{\frac{1}{2}}\sum_{m=1}^M \vert Q_m\vert^2\\
%&\geq-3\max\limits_{1\leq{m}\leq{M}} \left\vert\frac{F_m^\prime}{\kappa_m^2}\right\vert \frac{\kappa_0+1}{4\pi} C \sqrt{M M_{max}} \left[\frac{1}{ d^4 }+\sum^{[d^{-\alpha}]}_{n=1}\hspace{-.1cm} [(2n+1)^3-(2n-1)^3]\frac{{1}}{ n^4d^{4\alpha}}\right]^\frac{1}{2}\sum_{m=1}^M \vert Q_m\vert^2\quad\\
&\geq-\max\limits_{1\leq{m}\leq{M}} \left\vert\frac{F_m^\prime}{\kappa_m^2}\right\vert C \sqrt{M M_{max}}  \left[\frac{1}{ d^4 }+\frac{1}{ d^{5\alpha}}\right]^\frac{1}{2}\sum_{m=1}^M \vert Q_m\vert^2,
\end{align*}
%\notag
%\end{equation}
%\begin{equation}
\begin{align*}
-\langle\mathbf{R_{1}}^r\mathbf{Q}^r, \mathbf{Q}^r\rangle-\langle\mathbf{R_{1}}^r\mathbf{Q}^i, \mathbf{Q}^i\rangle-\langle\mathbf{R_{1}}^i\mathbf{Q}^r, \mathbf{Q}^i\rangle &+\langle\mathbf{R_{1}}^i\mathbf{Q}^i, \mathbf{Q}^r\rangle \nonumber \\
& \geq -C a^{3-\gamma} M M_{max} \left[\frac{1}{ d^2 }+\frac{1}{ d^{3\alpha}}\right]^\frac{1}{2}\left[\frac{1}{ d^4 }+\frac{1}{ d^{5\alpha}}\right]^\frac{1}{2} \sum_{m=1}^M \vert Q_m\vert^2\quad
\end{align*}
%\notag
%\end{equation}
and
%\begin{equation}
\begin{align*}
-\langle\mathbf{R_{2}}^r\mathbf{Q}^r, \mathbf{Q}^r\rangle-\langle\mathbf{R_{2}}^r\mathbf{Q}^i, \mathbf{Q}^i\rangle-\langle\mathbf{R_{2}}^i\mathbf{Q}^r, \mathbf{Q}^i\rangle &+\langle\mathbf{R_{2}}^i\mathbf{Q}^i, \mathbf{Q}^r\rangle \nonumber \\
&\geq- C a^{2-\gamma} \sqrt{M M_{max}}  \left[\frac{1}{ d^4 }+\frac{1}{ d^{5\alpha}}\right]^\frac{1}{2}\sum_{m=1}^M \vert Q_m\vert^2\quad.
\end{align*}
%\notag
%\end{equation}
To deal with the terms $-\langle\mathbf{B}^r\mathbf{Q}^r,\mathbf{Q}^r \rangle $ and $-\langle\mathbf{B}^r\mathbf{Q}^i,\mathbf{Q}^i \rangle $, in contrast to the earlier case, we use Cauchy-Schwarz inequality to derive
\begin{equation}
-\langle\mathbf{B}^r\mathbf{Q}^r,\mathbf{Q}^r \rangle-\langle\mathbf{B}^r\mathbf{Q}^i,\mathbf{Q}^i \rangle \geq -C \sqrt{M M_{max}} \left[\frac{1}{ d^2 }+\frac{1}{ d^{3\alpha}}\right]^\frac{1}{2}\sum_{m=1}^M \vert Q_m\vert^2.
\notag
\end{equation}
Also
%\begin{equation}
\begin{align*}
-\langle\mathbf{C_I}^r\mathbf{Q}^r, \mathbf{Q}^r\rangle-\langle\mathbf{C_I}^r\mathbf{Q}^i, \mathbf{Q}^i\rangle\geq \min\limits_{1\leq m \leq M} (-({\mathbf{C}_m^{-1}})^r) \sum_{m=1}^M \vert Q_m\vert^2 
&= \min\limits_{1\leq{m}\leq{M}} \left(\frac{\vert \mathbf{C}^{r}_{m} \vert}{\vert \mathbf{C}_m \vert^2} \right)\sum_{m=1}^M \vert Q_m\vert^2\\
& \geq\frac{\min\limits_{1\leq{m}\leq{M}}\vert \mathbf{C}_m^r \vert}{(\max\limits_{1\leq{m}\leq{M}} \vert{\mathbf{C}_m}\vert)^2} \sum_{m=1}^M \vert Q_m\vert^2.
\end{align*}
%\notag
%\end{equation}
Combining the above estimates, we can now conclude \eqref{mazya-fnlinvert-small-ela-3}. We would like to remark that the argument in this case can be applied also to the case when $(\mathbf{C}_l^{-1})^{r}>0,\ \forall\ l=1,\dots,M  $, but the estimate would be worse than that already achieved.\\
We note that in view of remark \ref{sign}, these two are the only possible cases and hence the proof is complete.
%\item 
%\end{itemize}
%\end{proof}
%
\subsection{Proof of proposition \ref{Prop-phi-estimate}}\label{phi-estimate}
First of all, we note that using \eqref{claim-single-Lay-dif-2},\eqref{def-Sg} and \eqref{def-SGer}, we can rewrite \eqref{mult-obs-108r-int} on $\partial D_l$ as
%\begin{equation}
\begin{align}\label{mult-obs-108r}
 \left[\lambda_l Id+(K^{{\kappa_0}}_{D_{l}})^{*}\right] \phi_{l} &+\sum_{{m=1} \atop {m \neq l}}^{M} \frac{\partial (S^{\kappa_{0}}_{D_{m}} \phi_{m})}{\partial \nu^{l}}\Big\vert_{\partial D_{l}} +\frac{\partial u^{I}}{\partial \nu^{l}}\\
%= - \Big(1-\frac{\rho_{l}}{\rho_{0}} \Big)^{-1}\left([(K^{\kappa_{l}}_{D_{l}})^{*}-(K^{\kappa_{0}}_{D_{l}})^{*}] \psi_{l} +[\frac{1}{2}Id+(K^{\kappa_{0}}_{D_{l}})^{*}] g_{l} \right)-Er_3 \nonumber\\
%&=  - \Big(1-\frac{\rho_{l}}{\rho_{0}} \Big)^{-1}\left([(K^{\kappa_{l}}_{D_{l}})^{*}-(K^{\kappa_{0}}_{D_{l}})^{*}] \psi_{l} +[\frac{1}{2}Id+(K^{\kappa_{0}}_{D_{l}})^{*}] (g_{l}+\tilde{g}_l)\right) \nonumber\\
&=  - \Big(1-\frac{\rho_{l}}{\rho_{0}} \Big)^{-1}\left([(K^{\kappa_{l}}_{D_{l}})^{*}-(K^{\kappa_{0}}_{D_{l}})^{*}] \psi_{l} +[\frac{1}{2}Id+(K^{\kappa_{0}}_{D_{l}})^{*}] \left(S_{D_l}^{\kappa_0}\right)^{-1}\left(S_{D_l}^{\kappa_0}-S_{D_l}^{\kappa_l}\right) \psi_l\right)
\nonumber\\
&=  - \Big(1-\frac{\rho_{l}}{\rho_{0}} \Big)^{-1}\left([(K^{\kappa_{l}}_{D_{l}})^{*}-(K^{\kappa_{0}}_{D_{l}})^{*}] +[(K^{\kappa_{0}}_{D_{l}})^{*}-(K^{{0}}_{D_{l}})^{*}] \left(S_{D_l}^{\kappa_0}\right)^{-1}\left(S_{D_l}^{\kappa_0}-S_{D_l}^{\kappa_l}\right) \right)\psi_l
\nonumber\\
&\qquad - \Big(1-\frac{\rho_{l}}{\rho_{0}} \Big)^{-1}\left({[\frac{1}{2}Id+(K^{{0}}_{D_{l}})^{*}]} \left(S_{D_l}^{\kappa_0}\right)^{-1}\left(S_{D_l}^{\kappa_0}-S_{D_l}^{\kappa_l}\right) \psi_l\right)
\nonumber\\
%%&{=\atop\eqref{mult-obs-102r}}  - \Big(1-\frac{\rho_{l}}{\rho_{0}} \Big)^{-1}\left([(K^{\kappa_{l}}_{D_{l}})^{*}-(K^{\kappa_{0}}_{D_{l}})^{*}] +[(K^{\kappa_{0}}_{D_{l}})^{*}-(K^{{0}}_{D_{l}})^{*}] \left(S_{D_l}^{\kappa_0}\right)^{-1}\left(S_{D_l}^{\kappa_0}-S_{D_l}^{\kappa_l}\right) \right)\psi_l\nonumber\\
%%&\qquad - \Big(1-\frac{\rho_{l}}{\rho_{0}} \Big)^{-1}{[\frac{1}{2}Id+(K^{{0}}_{D_{l}})^{*}]} \left(S_{D_l}^{\kappa_0}\right)^{-1}\left(S_{D_l}^{\kappa_0}-S_{D_l}^{\kappa_l}\right) \psi_l\nonumber\\
&=  O(a^2\|\psi_l\|)+O(a^2\cdot a^{-1}\cdot a^2 \|\psi_l\|)+\underbrace{O(a^{-1}{\cdot}a^2\|\psi_l\|)}_{ \in L^2_0} =  O(a^2\|\psi_l\|)+\underbrace{O(a\|\psi_l\|)}_{\in L^2_0} \mbox{ in }L^2
\nonumber\\
&=O\left(a^2\left[1+\|\phi_l\|+\sum_{m\neq\,l}\frac{a}{d_{ml}}\|\phi_m\|+\sum_{m\neq\,l}\frac{a^2}{d^2_{ml}}\|\phi_m\|\right]\right)
\nonumber\\
&\qquad \qquad \qquad \qquad +\underbrace{O\left(a\left[1+\|\phi_l\|+\sum_{m\neq\,l}\frac{a}{d_{ml}}\|\phi_m\|+\sum_{m\neq\,l}\frac{a^2}{d^2_{ml}}\|\phi_m\|\right]\right)}_{ \in {L^2_0}} \mbox { in }L^2
\nonumber\\
&=E_{1,l}+E_{2,l}.\nonumber
\end{align}
%\label{mult-obs-108r}
%\end{equation}
Now let us consider the system
\begin{equation}\label{mult-obs-108pr} 
 [\lambda_l Id+(K^{\kappa_0}_{D_{l}})^{*}] \phi_{l}+\sum_{{m=1} \atop {m \neq l}}^{M} \frac{\partial (S^{\kappa_{0}}_{D_{m}} \phi_{m})}{\partial \nu^{l}}\Big\vert_{\partial D_{l}} =  -\frac{\partial u^{I}}{\partial \nu^{l}} \Big\vert_{\partial D_{l}}+E_{1,l}+E_{2,l}.
\end{equation}
We can further rewrite it as 
\begin{eqnarray}\label{cmp-1}
 (\bm{L}+\bm{K})\phi=-\partial_{\nu}u^{\bm{I}}+E_{1}+E_{2},
\end{eqnarray}
where
$\bm{L}:=(\bm{L}_{lm})_{l,m=1}^{M}$ and $\bm{K}:=(\bm{K}_{lm})_{l,m=1}^{M}$, with
\begin{eqnarray}\label{definition-L_K}
\bm{L}_{lm}=\left\{\begin{array}{ccc}
            [\lambda_l Id+(K^{\kappa_0}_{D_{l}})^{*}], & l=m\\
            0, & else
           \end{array}\right.,
&  \ &\bm{K}_{lm}=\left\{\begin{array}{ccc}
           \frac{\partial}{\partial \nu^l} S_{D_m}^{\kappa_0}, & l\neq m\\
            0, & else
           \end{array}\right., 
\end{eqnarray}
% $U^{\bm{I}}=(U^i,...,U^i)^T$ and $\sigma=(\sigma_1,...,\sigma_M)^T$.
%\vspace{-.3cm}
\begin{eqnarray}
\partial_{\nu}u^{I}:=\left(\frac{\partial u^I}{\partial \nu^1} \dots \frac{\partial u^I}{\partial \nu^M}\right)^T,\\
\phi:=\left(\phi_1 \dots \phi_M \right)^T,
 \end{eqnarray}
 and $E_{i}:=(E_{i,1}\ ...\ E_{i,M})^T,\ i=1,2 $.
Let us also set
\[\bm{\Phi}^{c}_{\kappa_0}\phi:=\left((\bm{\Phi}^{c}_{\kappa_0}\phi)_1 \dots (\bm{\Phi}^{c}_{\kappa_0}\phi)_M \right) \] where \[(\bm{\Phi}^{c}_{\kappa_0}\phi)_l(s):=\sum_{{m=1}\atop {m \neq l}}^{M} \frac{\partial}{\partial \nu^l} \Phi_{\kappa_0}(s,z_m) Q_m ,\]
\[\bm{\nabla^{1}\Phi}^{c}_{\kappa_0}\phi:=\left((\bm{\nabla^{1}\Phi}^{c}_{\kappa_0}\phi)_1 \dots (\bm{\nabla^{1}\Phi}^{c}_{\kappa_0}\phi)_M \right) \] where \[(\bm{\nabla^{1}\Phi}^{c}_{\kappa_0}\phi)_l(s):=\sum_{{m=1} \atop {m \neq l}}^{M} \nabla_{s} \nabla_{t} \Phi_{\kappa_{0}} (s,z_m)\cdot \nu^{l}(s) \cdot V_m ,\]
\[\bm{\nabla^{2}\Phi}^{c}_{\kappa_0}\phi:=\left((\bm{\nabla^{2}\Phi}^{c}_{\kappa_0}\phi)_1 \dots (\bm{\nabla^{2}\Phi}^{c}_{\kappa_0}\phi)_M \right)  \] where \[(\bm{\nabla^{2}\Phi}^{c}_{\kappa_0}\phi)_l (s):=\sum_{{m=1} \atop {m \neq l}}^{M}(s-z_l)\cdot \frac{\partial}{\partial \nu^l} \nabla_t \nabla_s \Phi_{k_0}(s,z_m) \cdot V_m ,\]
and 
\begin{equation}
\acute{\bm{K}}\phi:=\bm{K}\phi-\bm{\Phi}^{c}_{0}\phi-[\bm{\Phi}^{c}_{\kappa_{0}}-\bm{\Phi}^{c}_{0}]\phi-\bm{\nabla^{1}\Phi}^{c}_{0}\phi-[\bm{\nabla^{1}\Phi}^{c}_{\kappa_0}-\bm{\nabla^{1}\Phi}^{c}_{0}]\phi_z-[\bm{\nabla^{2}\Phi}^{c}_{\kappa_0}-\bm{\nabla^{2}\Phi}^{c}_{0}]\phi,
\notag
\end{equation}
with $(\bm{\nabla^{1}\Phi}^{c}_{0}\phi_z)_l:=(\bm{\nabla^{1}\Phi}^{c}_{\kappa_0}\phi)_l(z_l)=\sum_{m \neq l}\nabla_{s} \nabla_{t} \Phi_{\kappa_{0}} (z_l,z_m)\cdot \nu^{l}(s) \cdot V_m $ .\\
Using the above notations and the fact that the matrix $L$ is invertible, we can write  $\phi$ as
%\begin{equation}  
\begin{align} \label{phi-estimate-1}
\phi=&-\bm{L}^{-1}\partial_{\nu}{u}^{I}-\bm{L}^{-1}\bm{\Phi}^{c}_{0}\phi-\bm{L}^{-1}[\bm{\Phi}^{c}_{\kappa_{0}}-\bm{\Phi}^{c}_{0}]\phi-\bm{L}^{-1}\bm{\nabla^{1}\Phi}^{c}_{0}\phi-\bm{L}^{-1}[\bm{\nabla^{1}\Phi}^{c}_{\kappa_0}-\bm{\nabla^{1}\Phi}^{c}_{0}]\phi_z\\
&-\bm{L}^{-1}[\bm{\nabla^{2}\Phi}^{c}_{\kappa_0}-\bm{\nabla^{2}\Phi}^{c}_{0}]\phi-\bm{L}^{-1}\acute{\bm{K}}\phi+\bm{L}^{-1}E_1+\bm{L}^{-1}E_2.
\nonumber
 \end{align}
% \label{phi-estimate-1}
% \end{equation}
Now using the fact that $\bm{\Phi}^{c}_{0}\phi $, $\bm{\nabla^{1}\Phi}^{c}_{0}\phi $,$[\bm{\nabla^{1}\Phi}^{c}_{\kappa_0}-\bm{\nabla^{1}\Phi}^{c}_{0}]\phi_z$ and $E_2 $ are mean-free and $\bm{L}^{-1}$ doesn't scale while acting on mean-free vectors but in general $\norm{\bm{L}^{-1}}_{\mathcal{L}(L^2,L^2)}=O\left(\rho_{l}^{-1}\right) $, we obtain
\[\norm{(\bm{L}^{-1}\partial_{\nu}{u}^{I})_l}= O\left(\frac{a}{\vert{\rho_l}\vert}\right),\quad
\norm{(\bm{L}^{-1}\bm{\Phi}^{c}_{0}\phi+\bm{L}^{-1}[\bm{\Phi}^{c}_{\kappa_{0}}-\bm{\Phi}^{c}_{0}]\phi)_l}=O\left(\left[\frac{1}{d^2}+\frac{1}{\vert\rho_l\vert}\right]a\sum_{m \neq l}\vert{Q_m}\vert\right), \]
\[\norm{(\bm{L}^{-1}\bm{\nabla^{1}\Phi}^{c}_{0}\phi)_l}=O\left(\sum_{m\neq\,l}\frac{1}{d_{ml}^3}a\vert{V_m}\vert\right) ,\quad
\norm{(\bm{L}^{-1}[\bm{\nabla^{1}\Phi}^{c}_{\kappa_0}-\bm{\nabla^{1}\Phi}^{c}_{0}]\phi_z)_l}=O\left(\sum_{m\neq\,l} \frac{1}{d_{ml}}a\vert{V_m}\vert \right) ,\]
\[\norm{(\bm{L}^{-1}[\bm{\nabla^{2}\Phi}^{c}_{\kappa_0}-\bm{\nabla^{2}\Phi}^{c}_{0}]\phi)_l}=O\left(\sum_{m\neq\,l}\frac{1}{\vert\rho_l\vert} \frac{1}{d^2_{ml}}a^{2}\vert{V_m}\vert \right), \quad 
\norm{(\bm{L}^{-1}\acute{\bm{K}}\phi)_l}=O\left(\frac{1}{\vert \rho_l \vert}\max_l\sum_{m\neq\,l}\frac{a^4}{d_{ml}^4} \|\phi\|\right) ,\]
\[\norm{(\bm{L}^{-1}E_1)_l}=O\left(\frac{a^2}{\vert \rho_l \vert}\left[1+\|\phi_l\|+\sum_{m\neq\,l}\frac{a}{d_{ml}}\|\phi_m\|+\sum_{m\neq\,l}\frac{a^2}{d^2_{ml}}\|\phi_m\|\right]\right), \]
and
\[\norm{(\bm{L}^{-1}E_2)_l}=O\left(a\left[1+\|\phi_l\|+\sum_{m\neq\,l}\frac{a}{d_{ml}}\|\phi_m\|+\sum_{m\neq\,l}\frac{a^2}{d^2_{ml}}\|\phi_m\|\right]\right) .\]
Therefore we can write
 %\begin{equation}
 \begin{align} \label{phi-estimate-2}
 \|\phi_{l}\|  &=O\left(\frac{a}{\vert{\rho_l}\vert}\right)+O\left(\left[\frac{1}{d^2}+\frac{1}{\vert\rho_l\vert}\right]a\sum_{m \neq l}\vert{Q_m}\vert\right)+O\left(\sum_{m\neq\,l}\left(\frac{1}{d_{ml}^3}a\vert{V^{rem}_m}\vert+\frac{1}{\vert\rho_l\vert} \frac{1}{d^2_{ml}}a^{2}\vert{V^{rem}_m}\vert \right)\right)
 \\
 &+O\left(\sum_{m\neq\,l}\left(\frac{1}{d_{ml}^3}a\vert{V^{dom}_{m,1}}\vert+\frac{1}{\vert\rho_l\vert} \frac{1}{d^2_{ml}}a^{2}\vert{V^{dom}_{m,1}}\vert \right)\right)+O\left(\sum_{m\neq\,l}\left(\frac{1}{d_{ml}^3}a\vert{V^{dom}_{m,2}}\vert+\frac{1}{\vert\rho_l\vert} \frac{1}{d^2_{ml}}a^{2}\vert{V^{dom}_{m,2}}\vert \right)\right)
 \nonumber\\
 &+O\left(\frac{1}{\vert \rho_l \vert}\max_l\sum_{m\neq\,l}\frac{a^4}{d_{ml}^4} \|\phi\| \right)+O\left(\frac{a^2}{\vert \rho_l \vert}\left[1+\|\phi_l\|+\sum_{m\neq\,l}\frac{a}{d_{ml}}\|\phi_m\|+\sum_{m\neq\,l}\frac{a^2}{d^2_{ml}}\|\phi_m\|\right]\right)
 \nonumber\\
 &+O\left(a\left[1+\|\phi_l\|+\sum_{m\neq\,l}\frac{a}{d_{ml}}\|\phi_m\|+\sum_{m\neq\,l}\frac{a^2}{d^2_{ml}}\|\phi_m\|\right]\right),
 \nonumber
 \end{align}
% \label{phi-estimate-2}
% \end{equation}
 where we have ignored the contribution of the term $\bm{L}^{-1}[\bm{\nabla^{1}\Phi}^{c}_{\kappa_0}-\bm{\nabla^{1}\Phi}^{c}_{0}]\phi_{z} $ since the term $\bm{L}^{-1}\bm{\nabla^{1}\Phi}^{c}_{0}\phi $ is clearly more singular.\\
 To deal with the terms involving $V^{rem} $, we note that
% \begin{equation}
\begin{align}\label{est-vterms-using roughestiv-1prp}
 \sum_{m\neq\,l}\frac{1}{d_{ml}^3}a\vert{V^{rem}_m}\vert
&=a\ O\Big(a^{3-\gamma}+ \Big[a^3+a^{4-\gamma}+\frac{a^{5-\gamma}}{d^2}+\frac{a^{5-\gamma}}{d^{3\alpha}}+\frac{a^4}{d^2}+\frac{a^4}{d^{3 \alpha}}+\frac{a^{6-\gamma}}{d^{3}}+\frac{a^{6-\gamma}}{d^{3 \alpha+1}} \Big] \norm{\phi} \Big) \sum_{m\neq\,l}\frac{1}{d_{ml}^3} \\
&=O\Big(a^{4-\gamma}+ \Big[a^4+a^{5-\gamma}+\frac{a^{6-\gamma}}{d^2}+\frac{a^{6-\gamma}}{d^{3\alpha}}+\frac{a^5}{d^2}+\frac{a^5}{d^{3 \alpha}}+\frac{a^{7-\gamma}}{d^{3}}+\frac{a^{7-\gamma}}{d^{3 \alpha+1}} \Big] \norm{\phi} \Big)O\left(\frac{1}{d^3}+\frac{1}{d^{3\alpha+1}}\right)
\nonumber\\
&=O\Big(\frac{a^{4-\gamma}}{d^3}+\frac{a^{4-\gamma}}{d^{3\alpha+1}}+\Big[\frac{a^4}{d^3}+\frac{a^4}{d^{3\alpha+1}}+\frac{a^{5-\gamma}}{d^3}+\frac{a^{5-\gamma}}{d^{3\alpha+1}}+\frac{a^{6-\gamma}}{d^5}+\frac{a^{6-\gamma}}{d^{3\alpha+3}}+\frac{a^{6-\gamma}}{d^{6\alpha+1}}
\nonumber\\
&\qquad \qquad \qquad \qquad \qquad \qquad +\frac{a^5}{d^5}+\frac{a^5}{d^{3\alpha+3}}+\frac{a^5}{d^{6\alpha+1}}+\frac{a^{7-\gamma}}{d^6}+\frac{a^{7-\gamma}}{d^{3\alpha+4}}+\frac{a^{7-\gamma}}{d^{6\alpha+2}} \Big]\|\phi\|\Big)
\nonumber\\
&=O\Big(a^{4-\gamma-3t}+a^{4-\gamma-s-t}+\Big[a^{4-3t}+a^{4-s-t}+a^{5-\gamma-3t}+a^{5-\gamma-s-t}+a^{6-\gamma-5t}+a^{6-\gamma-s-3t}+a^{6-\gamma-2s-t}
\nonumber\\
&\qquad \qquad  +a^{5-5t}+a^{5-s-3t}+a^{5-2s-t}+a^{7-\gamma-6t}+a^{7-\gamma-s-4t}+a^{7-\gamma-2s-2t} \Big]\|\phi\|\Big).\nonumber
\end{align}
%\label{est-vterms-using roughestiv-1prp}
%\end{equation}
%
%\begin{equation}
\begin{align}\label{est-vterms-using roughestiv-2prp}
 &\sum_{m\neq\,l}\frac{1}{\vert\rho_l\vert} \frac{1}{d^2_{ml}}a^{2}\vert{V^{rem}_m}\vert
=\frac{a^2}{\vert\rho_l\vert} O\Big(a^{3-\gamma}+ \Big[a^{3}+a^{4-\gamma}+\frac{a^{5-\gamma}}{d^2}+\frac{a^{5-\gamma}}{d^{3\alpha}}+\frac{a^4}{d^2}+\frac{a^4}{d^{3 \alpha}}+\frac{a^{6-\gamma}}{d^{3}}+\frac{a^{6-\gamma}}{d^{3 \alpha+1}} \Big] \norm{\phi} \Big) \sum_{m\neq\,l}\frac{1}{d^2_{ml}} \\
&=\frac{1}{\vert\rho_l\vert} O\Big(a^{5-\gamma}+ \Big[a^{5}+a^{6-\gamma}+\frac{a^{7-\gamma}}{d^2}+\frac{a^{7-\gamma}}{d^{3\alpha}}+\frac{a^6}{d^2}+\frac{a^6}{d^{3 \alpha}}+\frac{a^{8-\gamma}}{d^{3}}+\frac{a^{8-\gamma}}{d^{3 \alpha+1}} \Big] \norm{\phi} \Big) O\left(\frac{1}{d^2}+\frac{1}{d^{3\alpha}}\right)
\nonumber\\
&=O\Big(a^{4-2\gamma}+ \Big[a^{4-\gamma}+a^{5-2\gamma}+\frac{a^{6-2\gamma}}{d^2}+\frac{a^{6-2\gamma}}{d^{3\alpha}}+\frac{a^{5-\gamma}}{d^2}+\frac{a^{5-\gamma}}{d^{3 \alpha}}+\frac{a^{7-2\gamma}}{d^{3}}+\frac{a^{7-2\gamma}}{d^{3 \alpha+1}} \Big] \norm{\phi} \Big) O\left(\frac{1}{d^2}+\frac{1}{d^{3\alpha}}\right)
\nonumber\\
&=O\Big(\frac{a^{4-2\gamma}}{d^2}+\frac{a^{4-2\gamma}}{d^{3\alpha}}+\Big[\frac{a^{4-\gamma}}{d^2}+\frac{4-\gamma}{d^{3\alpha}}+\frac{a^{5-2\gamma}}{d^2}+\frac{a^{5-2\gamma}}{d^{3\alpha}}+\frac{a^{6-2\gamma}}{d^4}+\frac{a^{6-2\gamma}}{d^{3\alpha+2}}+\frac{a^{6-2\gamma}}{d^{6\alpha}}
\nonumber\\
&\qquad \qquad \qquad \qquad \qquad \qquad +\frac{a^{5-\gamma}}{d^4}+\frac{a^{5-\gamma}}{d^{3\alpha+2}}+\frac{a^{5-\gamma}}{d^{6\alpha}}+\frac{a^{7-2\gamma}}{d^5}+\frac{a^{7-2\gamma}}{d^{3\alpha+3}}+\frac{a^{7-2\gamma}}{d^{6\alpha+1}} \Big]\|\phi\|\Big)
\nonumber\\
&=O\Big(a^{4-2\gamma-2t}+a^{4-2\gamma-s}+\Big[a^{4-\gamma-2t}+a^{4-\gamma-s}+a^{5-2\gamma-2t}+a^{5-2\gamma-s}+a^{6-2\gamma-4t}+a^{6-2\gamma-s-2t}+a^{6-2\gamma-2s}
\nonumber\\
&\qquad \qquad \qquad +a^{5-\gamma-4t}+a^{5-\gamma-s-2t}+a^{5-\gamma-2s}+a^{7-2\gamma-5t}+a^{7-2\gamma-s-3t}+a^{7-2\gamma-2s-t} \Big]\|\phi\|\Big).\nonumber
\end{align}
%\label{est-vterms-using roughestiv-2prp}
%\end{equation}
%
Now if we assume that $0\leq t < \frac{1}{2}, \; 0 \leq s \leq \frac{3}{2},\; 0 \leq \gamma \leq 1, \frac{s}{3}\leq t$ and $s+\gamma \leq 2$, then we can derive the estimate
\begin{equation}
%\begin{aligned}
\sum_{m\neq\,l}\left(\frac{1}{d_{ml}^3}a\vert{V^{rem}_m}\vert+\frac{1}{\vert\rho_l\vert} \frac{1}{d^2_{ml}}a^{2}\vert{V^{rem}_m}\vert \right)= O(a+a^{{\frac{3}{2}}} \norm{\phi}).
%\end{aligned}
\label{est-vrem}
\end{equation}
For the terms involving $V^{dom}_{m,1} $, we can deduce that
\begin{equation}
%\begin{aligned}
\sum_{m\neq\,l}\frac{1}{d_{ml}^3}a\vert{V^{dom}_{m,1}}\vert
%&
=a^{4-\gamma} \norm{\phi} O(\frac{1}{d^{3}}+\frac{1}{d^{3\alpha+1}})
=O([a^{4-\gamma-3t}+a^{4-\gamma-s-t}]\norm{\phi}),
%\end{aligned}
\label{est-vdom-1-1}
\end{equation}
\begin{equation}
%\begin{aligned}
\sum_{m\neq\,l}\frac{1}{\vert\rho_l\vert} \frac{1}{d^2_{ml}}a^{2}\vert{V^{dom}_{m,1}}\vert
%&
=a^{4-2\gamma} \norm{\phi} O(\frac{1}{d^{2}}+\frac{1}{d^{3\alpha}})
=O([a^{4-2\gamma-2t}+a^{4-2\gamma-s}]\norm{\phi}).
%\end{aligned}
\label{est-vdom-1-2}
\end{equation}
Assuming that $0\leq t < \frac{1}{2}, \; 0 \leq s \leq \frac{3}{2},\; 0 \leq \gamma \leq 1,\; \frac{s}{3}\leq t$ and $s+\gamma \leq 2$, we can derive the estimate
\begin{equation}
%\begin{aligned}
\sum_{m\neq\,l}\left(\frac{1}{d_{ml}^3}a\vert{V^{dom}_{m,1}}\vert+ \frac{1}{\vert\rho_l\vert} \frac{1}{d^2_{ml}}a^{2}\vert{V^{dom}_{m,1}}\vert\right)
%&
=O(a\norm{\phi}).
%\end{aligned}
\label{est-vdom-1}
\end{equation}
Similarly for the terms involving $V^{dom}_{m,2} $, we can write
%\begin{equation}
\begin{align}\label{est-vdom-2-1}
\sum_{m\neq\,l}\frac{1}{d_{ml}^3}a\vert{V^{dom}_{m,2}}\vert
&=O\left(a^{3-\gamma} a \Big(\sum_{m\neq l} \frac{1}{d^{3}_{ml}}\Big) \frac{1}{d} \sum_{n=1}^{M} \vert Q_{n} \vert \right)=O\left(a^{4-\gamma} \Big(\frac{1}{d^4}+\frac{1}{d^{3\alpha+2}} \Big)\sum_{n=1}^{M} \vert Q_{n} \vert  \right)\\
&=O([a^{4-\gamma-4t}+a^{4-\gamma-s-2t}]\sum_{n=1}^{M} \vert Q_{n} \vert),\nonumber
\end{align}
%\label{est-vdom-2-1}
%\end{equation}
%\begin{equation}
\begin{align}\label{est-vdom-2-2}
\sum_{m\neq\,l}\frac{1}{\vert\rho_l\vert} \frac{1}{d^2_{ml}}a^{2}\vert{V^{dom}_{m,2}}\vert
&=O\left(a^{-1-\gamma} a^{3-\gamma} a^2 \Big(\sum_{m\neq l} \frac{1}{d^{2}_{ml}} \Big) \frac{1}{d} \sum_{n=1}^{M} \vert Q_{n} \vert \right)=O\left(a^{4-2\gamma} (\frac{1}{d^3}+\frac{1}{d^{3\alpha+1}})\sum_{n=1}^{M} \vert Q_{n} \vert \right)\\
&=O([a^{4-2\gamma-3t}+a^{4-2\gamma-s-t}]\sum_{n=1}^{M} \vert Q_{n} \vert).\nonumber
\end{align} 
%\label{est-vdom-2-2}
%\end{equation}
Again assuming that $0\leq t < \frac{1}{2}, \; 0 \leq s \leq \frac{3}{2},\; 0 \leq \gamma \leq 1,\; \frac{s}{3}\leq t$ and $s+\gamma \leq 2$, we can derive the estimate
\begin{equation}
%\begin{aligned}
\sum_{m\neq\,l}\left(\frac{1}{d_{ml}^3}a\vert{V^{dom}_{m,2}}\vert+\frac{1}{\vert\rho_l\vert} \frac{1}{d^2_{ml}}a^{2}\vert{V^{dom}_{m,2}}\vert\right)
%&
=O(a^{\frac{1}{2}+}\sum_{n=1}^{M} \vert Q_{n} \vert).
%\end{aligned}
\label{est-vdom-2}
\end{equation}
Using \eqref{alg-system-koneqkl-Q-generalshape}, \eqref{est-vrem}, \eqref{est-vdom-1}, \eqref{est-vdom-2} in \eqref{phi-estimate-2}, we can deduce that for $l=1,\dots, M $,
%\begin{equation}
\begin{align*}
\norm{\phi_l}&=O(a^{-\gamma})+O\left([a^{1-2t}+a^{-\gamma}] M \max \vert \bm{C}_m\vert\right)\\
& +O\left([a^{1-2t}+a^{-\gamma}] M \max \vert \bm{C}_m \vert \left(a^2+a^{3-3t}+a^{3-s-t}+a^{4-2s} \right) \right)\norm{\phi}
\nonumber\\
& +O(a)+O(a^{\frac{3}{2}})\norm{\phi}+O(a)\norm{\phi}+O\left(a^{\frac{1}{2}+} M \max \vert \bm{C}_m \vert \left[1+a+\left(a^2+a^{3-3t}+a^{3-s-t}+a^{4-2s} \right) \norm{\phi} \right]\right)
\nonumber\\
& +O\left(a^{3-\gamma} \sum_{m \neq l} \frac{1}{d^{4}_{ml}} \right)\norm{\phi}+O\left(a^{1-\gamma} \left[1+\norm{\phi}+a\norm{\phi}\left(\sum_{m \neq l} \frac{1}{d_{ml}}\right)+a^{2} \norm{\phi} \left(\sum_{m \neq l} \frac{1}{d^{2}_{ml}}\right) \right] \right).\nonumber
\end{align*}
%\notag
%\end{equation}
Now if $M \max \vert \bm{C}_m \vert=O(a^{-h})$, then we obtain
%\begin{equation}
\begin{align*}
\norm{\phi_l}&=O(a^{-\gamma})+O\left(a^{-\gamma} \cdot a^{-h} \right)
+O\left(a^{-\gamma-h} \left(a^2+a^{3-3t}+a^{3-s-t}+a^{4-2s} \right) \right)\norm{\phi}\\
&\qquad +O(a)+O(a^{\frac{3}{2}})\norm{\phi}+O(a)\norm{\phi}+O\left(a^{-h+\frac{1}{2}+} \left[1+a+\left(a^2+a^{3-3t}+a^{3-s-t}+a^{4-2s} \right) \norm{\phi} \right]\right)\\
&\qquad +O\left(a^{3-\gamma} \sum_{m \neq l} \frac{1}{d^{4}_{ml}} \right)\norm{\phi}+O\left(a^{1-\gamma} \left[1+\norm{\phi}+a\norm{\phi}\left(\sum_{m \neq l} \frac{1}{d_{ml}}\right)+a^{2} \norm{\phi} \left(\sum_{m \neq l} \frac{1}{d^{2}_{ml}}\right) \right] \right) \\
&=O(a^{-\gamma})+O(a^{-\gamma-h})+O((a^{2-\gamma-h}+a^{3-3t-\gamma-h}+a^{3-s-t-\gamma-h}+a^{4-2s-\gamma-h}) \norm{\phi})\\
&\qquad +O(a^{1-2t}) \norm{\phi}+O(a^{1-\gamma}) \norm{\phi}+O(a^{2-\gamma-s})\norm{\phi}.
\end{align*}
%\notag
%\end{equation}
Therefore provided $ h < \frac{1}{2} $,
%\begin{equation}
\begin{align*}
\norm{\phi}&=O(a^{-\gamma})+O(a^{-\gamma-h})+O(a^{0+})\norm{\phi}+O(a^{1-2t})\norm{\phi}+O(a^{1-\gamma})\norm{\phi}+O(a^{2-\gamma-s})\norm{\phi},
\end{align*}
%\notag
%\end{equation}
whence it follows that 
\[\norm{\phi}= O(a^{-\gamma})+ O(a^{-\gamma-h}).\]
Note that if $\gamma=1 $ or $\gamma+s=2 $, to deduce the last step we need to assume that the constant $C_{\rho} $ in \eqref{constant} is large enough.

%==========================================================
%\section*{Acknowledgments}
%\small 

\end{document}